\theoremstyle{definition}
\newtheorem{definition}{Definition}[section]
\newtheorem{example}[definition]{Example}
\newtheorem{remark}[definition]{Remark}
\theoremstyle{plain}
\newtheorem*{ThmA}{Theorem A}
\newtheorem*{ThmB}{Theorem B}
\newtheorem*{ThmC}{Theorem C}
\newtheorem{theorem}[definition]{Theorem}
\newtheorem{proposition}[definition]{Proposition}
\newtheorem{lemma}[definition]{Lemma}
\newtheorem{corollary}[definition]{Corollary}
\numberwithin{equation}{section}
\def \alt96 {`}
\def \RN {\mathds{R}^N}
\def \R {\mathds{R}}
\def \loc {\mathrm{loc}}
\def \N {\mathds{N}}
\def \G {\mathbb{G}}
\def \dela {{\delta_\lambda}}
\def \LL {{\mathcal{L}}}
\def \d {\mathrm{d}}
\def \de {\partial}
\def \emptyset {\varnothing}
\def \Lie {\mathrm{Lie}}
\def \longto {\longrightarrow}
\def \txt {\textstyle}
\def \LL {\mathcal{L}}
\newcommand{\meanint}{-\!\!\!\!\!\!\!\int}
\begin{document}
  \author{Stefano Biagi}
 \address{Stefano Biagi: Dipartimento di Matematica,
 Politecnico di Milano, Via Bonardi 9, I-20133 Milano, Italy.}
 \email{stefano.biagi@polimi.it}
%%%%%%%%%%%%%%%%%%%%%%%%%%%%%%%%%%%%%%%%%%%%%%%%%%%%
  \author{Andrea Bonfiglioli}
 \address{Andrea Bonfiglioli: Dipartimento di Matematica,
 Alma Mater Studiorum - Università di Bologna,
 Piazza Porta San Donato 5, I-40126 Bologna, Italy.}
 \email{andrea.bonfiglioli6@unibo.it}
%%%%%%%%%%%%%%%%%%%%%%%%%%%%%%%%%%%%%%%%%%%%%%%%%%%%
  \author{Marco Bramanti}
 \address{Marco Bramanti: Dipartimento di Matematica, Politecnico di Milano,
 Via Bonardi 9, I-20133 Milano, Italy.}
 \email{marco.bramanti@polimi.it}

%%%%%%%%%%%%%%%%%%%%%%%%%%%%%%%%%%%%%%%%%%%%%%%
 \title[Global estimates for the fundamental solution]{Global estimates for the fundamental solution\\ of homogeneous H\"ormander operators}

\begin{abstract}
 Let $\mathcal{L}=\sum_{j=1}^{m}X_{j}^{2}$ be a H\"{o}rmander sum of squares
 of vector fields in $\mathbb{R}^{n}$, where any $X_{j}$ is homogeneous of
 degree $1$ with respect to a family of non-isotropic dilations in $\mathbb{R}^{n}$.
 Then $\mathcal{L}$ is known to admit a global fundamental solution $\Gamma (x;y)$,
 that can be represented as the integral of a fundamental
 solution of a sublaplacian operator on a lifting space 
 $\mathbb{R}^{n}\times \mathbb{R}^{p}$, equipped with a Carnot group structure. The aim
 of this paper is to prove global pointwise (upper and lower) estimates of $\Gamma $,
 in terms of the Carnot-Carath\'{e}odory distance induced by 
 $X=\{X_{1},\ldots ,X_{m}\}$ on $\mathbb{R}^{n}$, as well as global pointwise
 (upper) estimates for the $X$-derivatives of any order of $\Gamma $,
 together with suitable integral representations of these derivatives.
 The least dimensional case $n=2$ presents several peculiarities which are also investigated.
 Applications to the potential theory for $\mathcal{L}$ and to singular-integral 
 estimates for the kernel $X_{i}X_{j}\Gamma $ are also provided.
 Finally, most of the results about $\Gamma$ are extended to the case of
 H\"{o}rmander operators with drift $\sum_{j=1}^{m}X_{j}^{2}+X_{0}$, where $X_{0}$ is
 $2$-homogeneous and $X_{1},...,X_{m}$ are $1$-homogeneous.
\end{abstract}
%35A08: Fundamental solutions
%35B45: A priori estimates
%35J25: Boundary value problems for second-order elliptic equations
%35B50: Maximum principles
%31B05: Harmonic, subharmonic, superharmonic functions
%35C15: Integral representations of solutions
%31B10: Integral representations, integral operators, integral equations methods
%35J70: Degenerate elliptic equations
%35H20: subelliptic equations
%32A19: Normal families of functions, mappings
%35R03: Partial differential equations on Heisenberg groups, Lie groups, Carnot groups, etc.
%35J08:  Green's functions
%35H20 Subelliptic equations
%35B45 A priori estimates
%35R03 Partial differential equations on Heisenberg groups, Lie groups, Carnot groups, etc.
%26D10 Inequalities involving derivatives and differential and integral operators
%35B60 Continuation and prolongation of solutions
%26D10 Inequalities involving derivatives and differential and integral operators
%35H10 Hypoelliptic equations
\maketitle

\subjclass{\footnotesize{\textbf{Mathematics Subject Classification}:
 35A08, 35C15, 35B45 (primary), 35J70, 35H10, 26D10 (secondary).
% 35A08, 35H20, 35J70.
% 35C15; 35J70
% %35H20; 35B45; 35J08; 35J70.

%

\textbf{Keywords}:
 Fundamental solution;
 Global a priori estimates;
 Homogeneous H\"ormander operators;
 Carnot-Carathéodory spaces;
 Integral representation of solutions.
% Heat type operators; Fundamental solution; Heat kernel; Lifting technique; Cauchy problem; Integral representation of solutions;
% Sub-elliptic operators; Degenerate-elliptic operators.
% Harnack inequality; Carnot-Carathéodory spaces;
% Doubling metric spaces; Poincaré inequality; Green function.
}}
%%%%%%%%%%%%%%%%%%%%%%%%%%%%%%%%%%%%%%%%%%%%%%%%%%%%55
\section{Introduction and main results}\label{sec:introductionMain}
 In this paper we consider a class of linear
 second order partial differential operators
\begin{equation*}
 \mathcal{L}=X_{1}^{2}+\cdots +X_{m}^{2},
\end{equation*}
 where $X=\{X_{1},\ldots ,X_{m}\}$ is a set of H\"{o}rmander vector fields in
 $\mathbb{R}^{n}$ ($n\geq 2$), and any element of $X$ is homogeneous of
 degree $1$ with respect to a family $\{\delta_\lambda\}_{\lambda>0}$ of non-isotropic dilations. (Precise
 definitions will be given below). Our main aim is to prove global pointwise
 (upper and lower) estimates, in terms of the Carnot-Carath\'{e}odory
 distance $d_{X}$ induced by $X$ on $\mathbb{R}^{n}$, of a suitable positive global
 fundamental solution $\Gamma (x;y)$ for $\mathcal{L}$, and global
 pointwise (upper) estimates for the $X$-derivatives of any order of $\Gamma $
 (see Theorem \ref{th.teoremone}). The least dimensional case $n=2$ seems to
 be particularly delicate, as fundamental solutions near the diagonal may
 exhibit different behavior from point to point (logarithmic or power-like).
 Furthermore, applications to the potential theory for $\mathcal{L}$
 (Section \ref{sec:potentialtheory}) and to singular integral estimates for the kernel
 $X_{i}X_{j}\Gamma $ (Section \ref{sec:meanvalue}) are provided.\medskip

 Most profound ideas in the study of the geometrical subelliptic analysis of
 general H\"{o}rmander sums of squares of vector fields $L=\sum_{j=1}^{m}Y_{j}^{2}$
 (of which our $\mathcal{L}$ is a particular case)
 are contained in the seminal papers by H\"{o}rmander \cite{Hormander}, by
 Folland \cite{Fo2}, by Rothschild and Stein \cite{RothschildStein}, by
 Nagel, Stein and Wainger \cite{NSW}, by S\'{a}nchez-Calle \cite{SC}. A
 paramount tool in the analysis of $L$ is Rothschild-Stein's lifting
 technique, which locally approximates $L$ with a sublaplacian operator on
 some higher dimensional free Carnot group.

 Broadly speaking, in the cited papers,
 a large part of the most relevant theory for $L$ (geometric analysis,
 function spaces, subelliptic estimates, etc.) was ultimately settled under
 its \emph{local} form. On the other hand, a \emph{global} theory (to which
 we are interested) is developed by Folland \cite{Fo2} in the special case of
 homogeneous left invariant H\"{o}rmander operators on homogeneous groups,
 but is inevitably missing in the general case of $L$, since one cannot
 expect that $L$ be equipped with a global fundamental solution $\Gamma $
 defined out of the diagonal of $\mathbb{R}^{n}\times \mathbb{R}^{n}$,
 without further assumptions on $L$.
 Analogously, for such general operators $L$, geometrically meaningful
 results involving CC-balls $B_{X}(x,r)$ are mainly available when the radius
 $r$ is sufficiently small and the center $x$ is located in some fixed
 compact set. The locality of Rothschild-Stein's lifting technique is also an
 implicit (hardly avoidable) obstruction to a global theory for $L$.

 Hence, if we aim to give global estimates for a globally defined fundamental
 solution, some further assumptions on the operator must be made. Roughly
 put, the homogeneity of $L$ with respect to a family of dilations as in
 \eqref{intro.dela} can help recovering large $r$'s, whereas the invariance
 of $L$ with respect to a family of Lie-group translations is of aid in
 dealing with arbitrary $x$'s. As is well known, a simultaneous
 homogeneity/translation-invariance boils down to the case when $L$ is a
 sublaplacian operator on a Carnot group $\mathbb{G}$, for which a global
 fundamental solution $\Gamma_{\mathbb{G}}$ is known to exist after
 Folland's paper \cite{Fo2}, and the CC-ball $B_{X}(x,r)$ is just the left
 translation by $x$ of the $\delta_{r}$-dilated of the ball $B_{X}(0,1)$, so
 that the underlying subelliptic geometry is much simpler. One can easily say
 that the worthwhile results of the subelliptic analysis for the
 sublaplacians in the Carnot group case are nowadays well
 established.\medskip

 A convenient framework, more general than the Carnot group setting, is the
 one considered in this paper, that is the case when $\mathcal{L}$ is
 $\delta_{\lambda }$-homogeneous of degree $2$, but \emph{not left-invariant}.
\begin{example}
 (1).\,\,A first instance of
 such operators are the Grushin-type PDO's in $\mathbb{R}^{2}$
\begin{equation*}
 (\partial_{x_{1}})^{2}+(x_{1}^{k}\,\partial_{x_{2}})^{2}\qquad \text{(with $k\in \mathbb{N}$)},
\end{equation*}
 associated with the dilations $(\lambda x_{1},\lambda ^{k+1}x_{2})$. These
 are not left-invariant PDO's on any Lie group on $\mathbb{R}^{2}$ (when $k\geq 1$)
 as $x_{1}^{k}\partial_{x_{2}}$ vanishes when $x_{1}=0$ without being the
 null vector field.\medskip

 (2).\,\,Another class of operators to which our theory applies is given by
  $$ (\partial_{x_1})^2+\Big(x_{1}\partial_{x_{2}}+x_{2}\partial_{x_{3}}+\ldots +x_{n-1}\partial_{x_{n}}\Big)^{2}\quad \text{in $\R^n$,}$$
 which is $\dela$-homogeneous of degree $2$ (but not left invariant on $\R^n$, for the same reasons as in (1)) with respect to the dilations
 $\delta_{\lambda}(x)  =(\lambda x_{1},\lambda^{2}x_{2},\cdots,\lambda^{n}x_{n})$.\medskip

 (3).\,\, A further example is the operator
 \begin{equation*}
  X_1^2+X_2^2=(\de_{x_1})^2+\Big(x_1\,\de_{x_2}+x_1^2\,\de_{x_3}\Big)^2\quad \text{on $\R^3$,}
   \end{equation*}
  which is homogeneous of degree $2$ (but not left invariant on $\R^3$) with respect to
\begin{equation*}
 \delta_\lambda(x) =     (\lambda x_1,\lambda^2 x_2,\lambda^3 x_3).
\end{equation*}
 The Lie algebra generated by $X_1,X_2$ is the Lie algebra of the so-called
 Engel group on $\R^4$. \medskip

 (4).\,\,Finally, the operator
\begin{equation*}
  (\de_{x_1})^2 + \Big(x_1\,\de_{x_2}+x_1^2\,\de_{x_3}+\cdots+x_1^{n-1}\,\de_{x_n}\Big)^2\quad\text{on $\R^n$}
\end{equation*}
 is homogeneous of degree $2$ with respect to the same dilations as in (2), but not left invariant on $\R^n$.
\end{example}
 Let us now precisely fix our assumptions. We assume that $X=\{X_{1},\ldots,X_{m}\}$ fulfils the following conditions (H.1), (H.2), (H.3):
\begin{itemize}
\item[\textbf{(H.1)}]
 there exists a family of (non-isotropic) dilations $\{\delta_{\lambda }\}_{\lambda >0}$ of the form
\begin{equation}\label{intro.dela}
 \delta_{\lambda }:\mathbb{R}^{n}\longrightarrow \mathbb{R}^{n}\qquad \delta_{\lambda }(x)=(\lambda ^{\sigma_{1}}x_{1},\ldots ,\lambda ^{\sigma_{n}}x_{n}),
\end{equation}
 where $1=\sigma_{1}\leq \cdots \leq \sigma_{n}$, such that $X_{1},\ldots ,X_{m}$ are $\delta_{\lambda }$-homogeneous of degree $1$,
 i.e.,
\begin{equation*}
 X_{j}(f\circ \delta_{\lambda })=\lambda \,(X_{j}f)\circ \delta_{\lambda},
 \quad \text{for every $\lambda >0$, $f\in C^{\infty}(\mathbb{R}^{n})$ and $j=1,\ldots,m$.}
\end{equation*}
\end{itemize}
 \noindent In what follows, we denote by
\begin{equation}\label{eq.defq}
 q:=\textstyle\sum_{j=1}^{m}\sigma_{j}
\end{equation}
 the so-called $\delta_{\lambda }$-homogeneous dimension of $(\mathbb{R}^{n},\delta_{\lambda })$.\medskip
\begin{itemize}
 \item[\textbf{(H.2)}] $X_{1},\ldots ,X_{m}$
 are linearly independent\footnote{The linear independence of the $X_i$'s
 is meant with respect to the vector space of the smooth vector fields on $\R^n$;
 this must not be confused with the linear independence of the
 vectors $X_1(x),\ldots,X_m(x)$ in $\R^n$
 (when $x\in\R^n$):
 the latter is sufficient but not necessary to the former linear independence.
 Thus, $X_1=\de_{x_1}$ and $X_2=x_1\,\de_{x_2}$ are linearly independent vector fields, even if $X_1(0,x_2)\equiv (1,0)$ and $X_2(0,x_2)\equiv(0,0)$
 are dependent vectors of $\R^2$.}
 and satisfy H\"{o}rmander's rank condition at $0$, i.e.,
\begin{equation*}
 \dim \big\{Y(0):Y\in \mathrm{Lie}(X)\big\}=n.
\end{equation*}
 Here $\mathrm{Lie}(X)$ stands for the smallest Lie subalgebra of $\mathcal{X}(\mathbb{R}^{n})$
 containing $X$, where $\mathcal{X}(\mathbb{R}^{n})$ is
 the Lie algebra of all the smooth vector fields on $\mathbb{R}^{n}.$
\end{itemize}
 \noindent Finally, we make the following dimensional assumptions:
\begin{itemize}
\item[\textbf{(H.3)}] we require that $q>2$ and
\begin{equation}\label{ipotesidimensinale}
 N:=\mathrm{dim}(\Lie\{X\})>n\geq 2.
\end{equation}
\end{itemize}
 Throughout the paper we let $p:=N-n\geq 1$ and we denote the points of $\mathbb{R}^{N}\equiv \mathbb{R}^{n}\times \mathbb{R}^{p}$ by
 $$(x,\xi ),\quad \text{with $x\in \R^n$ and $\xi\in\R^p$.}$$
\begin{remark}[Some consequences of (H.1)-to-(H.3)] \label{rem.assumptionsH}
 Assumptions (H.1) and (H.2) together imply that the $\sigma_i$'s in \eqref{intro.dela} are integers, and that $\Lie\{X\}$
 is nilpotent of step $\sigma_n$ (see e.g., \cite{BB}).

 We observe that, by (H.1), the validity of H\"{o}rmander's rank condition at
 $0$ (condition (H2)) implies its validity at any other point $x\in \mathbb{R}^{n}$
 (this is proved in Remark \ref{hormandereverywhere}).
 Thus, the H\"{o}rmander operator $\mathcal{L}=\sum_{j=1}^m X_{j}^{2}$
 is $C^{\infty }$-hypoelliptic on every open subset of $\mathbb{R}^{n}$.

 The assumption $q>2$ in (H3) is harmless since the case $q=2$ only happens
 when $\mathcal{L}$ is a strictly-elliptic constant-coefficient operator in $\mathbb{R}^{2}$
  (which is also left invariant on $(\R^2,+)$), a well-known situation we are not interested in. The
 assumption $N>n$ in (H3) is rather harmless as well, for the following reason. Since $X$ is
 a H\"{o}rmander set, $N$ defined in \eqref{ipotesidimensinale} cannot be $<n$;
 on the other hand, when $N=n$, it follows from a general result contained
 in \cite{BBCCM} (and exploiting the $\delta_{\lambda }$-homogeneity of the
 elements of $X$) that $\mathcal{L}$ is necessarily a sublaplacian on a homogeneous
 Carnot group on $\mathbb{R}^{n}$, a well-studied situation in which the
 results of this paper are already known (see \cite{Fo2}).
\end{remark}

 Very recently, under assumption \eqref{ipotesidimensinale}, the existence of
 a global fundamental solution $\Gamma $ for $\delta_{\lambda }$-homogenous
 $\mathcal{L}$'s has been obtained in \cite{BB} via a lifting procedure due to
 Folland \cite{Folland}, a global simplified version of Rothschild-Stein's
 lifting. Folland's technique consists in lifting $\mathcal{L}$ directly to a
 sublaplacian $\mathcal{L}_{\mathbb{G}}$ on a (strictly higher dimensional)
 Carnot group $(\mathbb{G},\ast)$ (which is not necessarily free). After an appropriate change of variable (performed in \cite{BB}),
 one can suppose that the manifold of $\mathbb{G}$ takes
 the product form $\mathbb{G}=\mathbb{R}_{x}^{n}\times \mathbb{R}_{\xi}^{p}$,
 with $p=N-n$. Under assumption \eqref{ipotesidimensinale}, this $p$ is at
 least $1$. We are now going to review this result, which also gives an integral
 representation for $\Gamma $; this representation will be used throughout
 the paper.

 In what follows, we refer to \cite[\S 1.4]{BLUlibro} for the
 notions of sublaplacian and of homogeneous Carnot group, with the sole difference that we do not
 require the exponents of the associated dilations $D_\lambda$ to be increasingly ordered; this is because
 we have already performed a change of variable on $\R^N\equiv \R^n\times \R^p$, which separates the unlifted variables $x$ from the
 lifting variables $\xi$.
 We also implicitly
 invoke Folland's result \cite{Fo2} on the existence of a global fundamental
 solution for any sublaplacian on any Carnot group.
\begin{ThmA}[{{\protect\cite[Theorems 3.2 and 4.4]{BB}}}] \label{theoA}
 Assume that $X=\{X_{1},\ldots ,X_{m}\}$ satisfies \emph{(H.1)}-to-\emph{(H.3)},
 of which we inherit the notation. Then the following facts hold:\medskip

\emph{(1).} There exist a homogeneous Carnot group $\mathbb{G}=
 (\mathbb{R}^{N},\ast ,D_{\lambda })$ of homogeneous dimension $Q>q$ and a system
 $\{\widetilde{X}_{1},\ldots ,\widetilde{X}_{m}\}$ of Lie-generators of
 $\mathrm{Lie}(\mathbb{G})$ such that $\widetilde{X}_{i}$ is a lifting of $X_{i}$ for
 every $i=1,\ldots ,m$; by this we mean that
\begin{equation}\label{lifting}
 \widetilde{X}_{i}(x,\xi )=X_{i}(x)+R_{i}(x,\xi ),
\end{equation}
 where $R_{i}(x,\xi )$ is a smooth vector field operating only in the
 variable $\xi \in \mathbb{R}^{p}$, with coefficients possibly depending on
 $(x,\xi )$. In particular, the $\widetilde{X}_{i}$'s are
 $D_{\lambda}$-homogeneous of degree $1$. \medskip

\emph{(2).} If $\widetilde{\Gamma }$ is the \emph{(}unique\emph{)}
 smooth fundamental solution of $\sum_{i=1}^{m}\widetilde{X}_{i}^{2}$
 vanishing at infinity con\-struc\-ted
 in \cite{Fo2},
 then $\mathcal{L}$ admits a global fundamental solution
 $\Gamma (x;y)$ under the form
\begin{equation}\label{sec.one:mainThm_defGamma}
 \Gamma (x;y):=\int_{\mathbb{R}^{p}}\widetilde{\Gamma}
 \big((x,0);(y,\eta )\big)\, \d\eta \qquad (\text{for $x\neq y$
 in $\mathbb{R}^{n}$}).
\end{equation}
 By saying that $\Gamma $ is a global fundamental solution of $\mathcal{L}$
 we mean that\footnote{Note that $\mathcal{L}$ is formally selfadjoint on test functions, due to
 simple arguments based on (H.1).} the map $y\mapsto \Gamma (x;y)$
 is locally integrable on $\mathbb{R}^{n}$ and that
\begin{equation*}
 \int_{\mathbb{R}^{n}}\Gamma (x;y)\,\mathcal{L}\varphi (y)
 \,\d y=-\varphi (x)\qquad \text{for every $\varphi \in
 C_{0}^{\infty }(\mathbb{R}^{n})$ and every $x\in \mathbb{R}^{n}$.}
\end{equation*}
 Furthermore, setting $\Gamma_\G(\cdot):=\widetilde{\Gamma }(0;\cdot)$, the
 integrand in \eqref{sec.one:mainThm_defGamma} takes the convolution form
\begin{equation}\label{sec.one:mainThm_defGamma2}
 \widetilde{\Gamma }\big((x,0);(y,\eta )\big)=\Gamma_\G
 \Big((x,0)^{-1}\ast (y,\eta )\Big),
 \end{equation}
 valid for any $x\neq y$, so that \eqref{sec.one:mainThm_defGamma}
 becomes
\begin{equation}\label{sec.one:mainThm_defGamma22222}
 \Gamma (x;y)=\int_{\mathbb{R}^{p}}\Gamma_\G\Big(
 (x,0)^{-1}\ast (y,\eta )\Big)\,\d\eta \qquad (\text{for $x\neq
 y$ in $\mathbb{R}^{n}$}).
\end{equation}

\emph{(3).} $\Gamma $ enjoys further properties: it is smooth out of the
 diagonal; it is symmetric in $x,y$; it is strictly positive; it is
 locally integrable on $\mathbb{R}^{n}\times \mathbb{R}^{n}$; it vanishes
 when $x$ or $y$ go to infinity; it is jointly homogeneous of
 degree $2-q<0$, i.e.,
\begin{equation}\label{sec.one:mainThm_defGamma3}
 \Gamma \big(\delta_{\lambda }(x);\delta_{\lambda }(y)\big)
 =\lambda ^{2-q}\,\Gamma (x,y),\qquad x\neq y,\,\,\lambda >0.
\end{equation}
\end{ThmA}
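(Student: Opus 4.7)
The plan is to prove (1), (2), (3) in that order, with Folland's theorem on fundamental solutions of sublaplacians on Carnot groups used as a black box. For part (1), Remark \ref{rem.assumptionsH} tells us that $\Lie(X)$ is nilpotent of step $\sigma_n$, so the universal property of the free nilpotent Lie algebra $\frk{f}_{m,\sigma_n}$ on generators $e_1,\dots,e_m$ yields a surjective Lie-algebra homomorphism $\pi\colon\frk{f}_{m,\sigma_n}\to\Lie(X)$ with $\pi(e_i)=X_i$. Choosing a graded complement $V$ of $\ker\pi$, of dimension $p=N-n$, one constructs vector fields $\widetilde X_i=X_i+R_i$ on $\R^n\times\R^p$ whose iterated brackets span an $N$-dimensional nilpotent algebra; after the change of variable of \cite{BB} separating unlifted and lifting coordinates, the Campbell--Baker--Hausdorff formula turns $\R^N$ into a homogeneous Carnot group $(\G,\ast,D_\lambda)$, with $D_\lambda$ restricting to $\dela$ on the first $n$ coordinates, so that $R_i$ acts only on $\xi$ and $\widetilde X_i$ is left-invariant and $D_\lambda$-homogeneous of degree $1$. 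The homogeneous dimension is $Q=q+\sum_{j=1}^p\tau_j>q$, where $\tau_1,\dots,\tau_p$ are the $D_\lambda$-exponents of the $\xi$-block.

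For part (2), Folland's theorem provides a smooth fundamental solution $\widetilde\Gamma$ of $\widetilde\LL:=\sum_i\widetilde X_i^{\,2}$ vanishing at infinity, and left-invariance of $\widetilde\LL$ immediately yields \eqref{sec.one:mainThm_defGamma2}. To verify that $\Gamma$ defined by \eqref{sec.one:mainThm_defGamma} is a fundamental solution of $\LL$, the crucial observation is that for any $g\in C^\infty(\R^n)$, the function $\widetilde g(y,\eta):=g(y)$ satisfies $\widetilde X_i\widetilde g=(X_ig)(y)$ (since $R_i$ acts only on $\xi$), and hence $\widetilde\LL\widetilde g=(\LL g)(y)$. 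Fix $\varphi\in C_0^\infty(\R^n)$ and a family of cutoffs $\chi_R\in C_0^\infty(\R^p)$ identically $1$ on $\{|\eta|\leq R\}$. Applying the fundamental-solution identity for $\widetilde\Gamma$ to $\psi_R(y,\eta):=\varphi(y)\chi_R(\eta)$ and using the Leibniz decomposition $\widetilde\LL\psi_R=\chi_R(\eta)\LL\varphi(y)+E_R$, where $E_R$ gathers the terms in which at least one derivative falls on $\chi_R$, Fubini and dominated convergence identify the main piece with $\int_{\R^n}\Gamma(x;y)\LL\varphi(y)\,\d y$ in the limit, while the $D_\lambda$-homogeneity-based bounds on $\Gamma_\G$ and its derivatives force $E_R\to 0$.

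The properties in part (3) then follow from the integral representation: local integrability is integration of $\Gamma_\G\in O(\|\cdot\|_\G^{2-Q})$ along the $\eta$-fiber; smoothness off the diagonal and strict positivity descend from $\Gamma_\G$ (positivity via Bony's maximum principle for $\widetilde\LL$); vanishing of $\Gamma(x;y)$ as either variable diverges is dominated convergence; symmetry $\Gamma(x;y)=\Gamma(y;x)$ follows from the formal self-adjointness of $\LL$ on test functions (cf.\ the footnote) together with uniqueness of the vanishing-at-infinity fundamental solution; and the joint $\dela$-homogeneity \eqref{sec.one:mainThm_defGamma3} is obtained by the substitution $\eta\mapsto D_\lambda^{(\xi)}\eta$ in \eqref{sec.one:mainThm_defGamma22222}, whose Jacobian $\lambda^{Q-q}$ combines with the $\lambda^{2-Q}$-homogeneity of $\Gamma_\G$ to produce exactly $\lambda^{2-q}$. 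The principal obstacle is the limit $R\to\infty$ in part (2): the coefficients of the $R_i$'s are $D_\lambda$-homogeneous polynomials growing in $|\eta|$, so controlling $E_R$ pits this polynomial growth against the decay of $\widetilde\Gamma$ and its first derivatives on the annuli $|\eta|\simeq R$, and the cleanest resolution is a $D_\lambda$-rescaling of these annuli reducing everything to an $R$-independent bound multiplied by a vanishing prefactor.
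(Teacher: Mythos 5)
First, a point of reference: the paper does not prove Theorem A at all --- it is imported from \cite{BB} (Theorems 3.2 and 4.4), with the construction of $\G$ only reviewed later in Remark \ref{rem.costruzioneG}. Your sketch follows the same overall strategy as that source (a Folland-type lifting, then saturation of the lifted fundamental solution along the fibre $\R^p$), and parts (2) and (3) are essentially viable: with the anisotropic cutoff $\chi_R=\chi\circ E_{1/R}$ the error terms $R_i\chi_R$ and $\widetilde{X}_iR_i\chi_R$ are $O(R^{-1})$ and $O(R^{-2})$ on an annulus of measure $O(R^{Q-q})$ where $\widetilde{\Gamma}=O(R^{2-Q})$, so $E_R\to 0$ indeed since $q>1$; and your substitution argument for \eqref{sec.one:mainThm_defGamma3} is exactly the right one.

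Two points are genuinely off, however. In part (1), the detour through the free nilpotent algebra $\frk{f}_{m,\sigma_n}$ is the Rothschild--Stein construction rather than the one the theorem asserts: a graded complement of $\ker\pi$ has dimension $N$, not $p=N-n$, and, more seriously, the whole difficulty of \cite[Thm.\,3.2]{BB} is bypassed. What is actually done is to take $\mathfrak{a}=\Lie(X)$ itself (already graded by $\dela$-homogeneity), equip $\R^N\cong\mathfrak{a}$ with the BCH group law, and prove that $T(a)=(\Pi(a),a_{j_1},\ldots,a_{j_p})$, with $\Pi(a)=\Phi_1^{a\cdot X}(0)$, is a \emph{global polynomial diffeomorphism} for a suitable choice of indices; this is precisely what yields the product form $\R^n_x\times\R^p_\xi$ with $R_i$ acting only on $\xi$, and your sketch offers no argument for it. Secondly, your symmetry argument risks circularity: uniqueness of the vanishing-at-infinity fundamental solution requires knowing that $x\mapsto\Gamma(x;y)$ is a fundamental solution in the first variable and vanishes as $|x|\to\infty$, facts most naturally obtained \emph{from} symmetry. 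The non-circular route is to use the inversion-invariance $\Gamma_\G=\Gamma_\G\circ\iota$ of Folland's kernel together with the measure-preserving fibre change of variables of Lemma \ref{lem.tecnico}, which carries $\int_{\R^p}\Gamma_\G((y,0)^{-1}\ast(x,\eta))\,\d\eta$ into $\int_{\R^p}\Gamma_\G((x,0)^{-1}\ast(y,\eta))\,\d\eta$.
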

\bigskip

 Once we have uniquely defined the global fundamental solution $\Gamma$ for $\LL$ we are interested in estimating, 
 our main results are contained in the following Theorem \ref{th.teoremone}, collecting the
 content of Theorems \ref{thm.estimatesDERIVgamma}, \ref{th.stimebasso}, \ref{thm.derivn2}, \ref{thm.estimateGamman2}, \ref{thm.estimateGamman2bis}
 and Lemma \ref{Thm 0} of the paper. Here and throughout, by `structural constant' we mean a constant only depending on 
 the objects introduced in the axioms (H.1)-to-(H.3) (like $X$, the $\sigma_i$'s, $q,n,N$, etc.)
 or other fixed parameters (usually explicitly declared).

\begin{theorem}\label{th.teoremone}
 Let $\mathcal{L}=\sum_{j=1}^{m}X_{j}^{2}$ satisfy
 assumptions \emph{(H1)-(H2)-(H3)}, and let $\Gamma$ and $\Gamma_\G$ be as in
 Theorem A. Then the following facts hold.\medskip

\emph{(I).}
 For any $s,t\geq 1$, and any choice of $i_{1},\ldots,i_{s},j_{1},\ldots ,j_{t}\in \{1,\ldots ,m\}$, we have the following
 representation formulas for the $X$-derivatives of $\Gamma $
 \emph{(}holding true for $x\neq y$ in $\mathbb{R}^{n}$\emph{)}:
\begin{align*}
 & X_{i_{1}}^{y}\cdots X_{i_{s}}^{y}\big(\Gamma (x;\cdot )\big)(y)=
 \int_{\mathbb{R}^{p}}\Big(\widetilde{X}_{i_{1}}\cdots \widetilde{X}_{i_{s}}\Gamma_{\mathbb{G}}\Big)
 \Big((x,0)^{-1}\ast (y,\eta )\Big)\,\d\eta \,; \\[0.2cm]
 & X_{j_{1}}^{x}\cdots X_{j_{t}}^{x}\big(\Gamma (\cdot ;y)\big)(x)=
 \int_{\mathbb{R}^{p}}\Big(\widetilde{X}_{j_{1}}\cdots \widetilde{X}_{j_{t}}
 \Gamma_{\mathbb{G}}\Big)\Big((y,0)^{-1}\ast (x,\eta )\Big)\,\d\eta \,; \\[0.2cm]
 & X_{j_{1}}^{x}\cdots X_{j_{t}}^{x}X_{i_{1}}^{y}\cdots X_{i_{s}}^{y}\Gamma(x;y) \\
 & \qquad \qquad =\int_{\mathbb{R}^{p}}\bigg(\widetilde{X}_{j_{1}}\cdots
 \widetilde{X}_{j_{t}}\Big(\big(\widetilde{X}_{i_{1}}\cdots
 \widetilde{X}_{i_{s}}\Gamma_{\mathbb{G}}\big)\circ \iota \Big)\bigg)\Big((y,0)^{-1}\ast
 (x,\eta )\Big)\,\d\eta\,.
\end{align*}
 Here $\iota $ denotes the inversion map of the Lie group $\mathbb{G}$.\medskip

\emph{(II).}
 For any integer $r\geq 1$ there exists $C_{r}>0$ such that
\begin{equation*}
 \Big\vert Z_{1}\cdots Z_{r}\Gamma (x;y)\Big\vert\leq C_{r}\,\frac{
 d_{X}(x,y)^{2-r}}{\big\vert B_{X}(x,d_{X}(x,y))\big\vert},
\end{equation*}
 for any $x,y\in \mathbb{R}^{n}$ \emph{(}with $x\neq y$\emph{)} and any
 choice of $Z_{1},\ldots ,Z_{r}\in \big\{X_{1}^{x},\ldots,
 X_{m}^{x},X_{1}^{y},\ldots ,X_{m}^{y}\big\}$. In particular,
 for every fixed $x\in\R^n$ we have
 $$
  \lim_{|y|\to\infty}Z_{1}\cdots Z_{r}\Gamma (x;y) = 0.
 $$

\emph{(III).}
 Suppose that $n>2$. Then one has
\begin{equation*}
 C^{-1}\frac{d_{X}(x,y)^{2}}{\big|B_{X}\big(x,d_{X}(x,y)\big)\big|}\leq
 \Gamma (x;y)\leq C\,\frac{d_{X}(x,y)^{2}}{\big\vert B_{X}(x,d_{X}(x,y))\big\vert},
\end{equation*}
 for any $x,y\in \mathbb{R}^{n}$ \emph{(}with $x\neq y$\emph{)}. Here $C\geq 1 $ is a structural constant.\medskip

\emph{(IV).}
 Suppose that $n=2$. For every compact set $K\subseteq \mathbb{R}^{n}$
 there exist structural constants $c_{1},c_{2}>0$ and real numbers
 $R_{1},R_{2}>0$ \emph{(}all depending on $K$\emph{)} %, such that $2d_{X}\left(x,y\right) <R_{2}$ for $x,y\in K$\emph{)}
 such that
\begin{equation*}
 c_{1}\,\log \Big(\frac{R_{1}}{d_{X}(x,y)}\Big)\leq \Gamma (x;y)\leq c_{2}\,
 \frac{d_{X}(x,y)^{2}}{\big|B_{X}(x,d_{X}(x,y))\big|}\cdot \log \Big(\frac{R_{2}}{d_{X}(x,y)}\Big),
\end{equation*}
 uniformly for $x\neq y$ in $K$. Moreover,
 for every fixed pole $x\in \mathbb{R}^{n}$,
 there exist constants $\gamma_{1}(x),\gamma_{2}(x)>0$ and $0<\varepsilon(x)<1$ such that
\begin{equation*}
 \gamma_{1}(x)\,F(x,y)\leq \Gamma (x;y)\leq \gamma_{2}(x)\,F(x,y),
\end{equation*}
 for any $y$ such that $0<d_{X}(x,y)<\varepsilon (x)$, where
\begin{equation*}
 F(x,y)=
\begin{cases}
 \log \left( \dfrac{1}{d_{X}(x,y)}\right) & \text{if $f_{2}(x)>0$,} \\[0.4cm]
 \dfrac{d_{X}(x,y)^{2}}{\big|B_{X}(x,d_{X}(x,y))\big|} & \text{if $f_{2}(x)=0$.}
\end{cases}
\end{equation*}
 Here $f_{2}$ is the nonnegative function which will be defined in Theorem B.
 In the case $f_{2}(x)=0$, the estimate of $\Gamma (x;y)$
 holds true with $\varepsilon (x)=1/2$ and $\gamma_{1}(x)$ independent of $x$;
 in this case, $F(x,y)$ diverges like $d_{X}(x,y)^{2-k}$, for some $k\in \{3,\ldots ,q\}$ which depends on $x$.\medskip

\emph{(V).}
 In particular, for any $n\geq 2$, $\Gamma (x;\cdot )$ has a pole at $x\in \mathbb{R}^{n}$, i.e.,
\begin{equation*}
 \lim_{y\rightarrow x}\Gamma (x;y)=\infty.
\end{equation*}
\end{theorem}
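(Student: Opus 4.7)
The plan hinges on the integral representation \eqref{sec.one:mainThm_defGamma22222} from Theorem A, which reduces every statement about $\Gamma$ to a corresponding statement about the Carnot group fundamental solution $\Gamma_\G$, transferred from $\G$ to $\R^n$ by fiber integration in $\eta\in\R^p$. For part (I), I would differentiate under the integral sign and exploit the structural decomposition $\widetilde{X}_i=X_i+R_i$ from \eqref{lifting}: acting with $X_i^y$ on the integrand produces, thanks to the left-invariance of $\widetilde{X}_i$ on $\G$, the quantity $(\widetilde{X}_i\Gamma_\G)((x,0)^{-1}\ast(y,\eta))$ minus an $R_i$-piece acting purely in $\eta$; a divergence-type integration-by-parts argument on $\R^p$, justified by Folland's decay of $\Gamma_\G$ and its $\widetilde{X}$-derivatives at infinity, kills this piece and yields the first formula upon iteration. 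The second formula is obtained by applying the first to $\Gamma(y;x)$ through the symmetry $\Gamma(x;y)=\Gamma(y;x)$ of Theorem A(3). The third, mixed formula combines the two, the inversion $\iota$ appearing because differentiation of a convolution in its left factor is transported to the right factor through composition with $\iota$.

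For part (II), I would insert Folland's sharp pointwise estimates on $\G$,
\[
  |\widetilde{X}_{i_1}\cdots\widetilde{X}_{i_r}\Gamma_\G(u)|\leq C_r\,d_\G(0,u)^{2-Q-r},
\]
into the representation formulas of (I) and reduce matters to the geometric fiber-integration estimate
\[
  \int_{\R^p} d_\G\big((x,0),(y,\eta)\big)^{2-Q-r}\,\d\eta \;\leq\; C'_r\,\frac{d_X(x,y)^{2-r}}{|B_X(x,d_X(x,y))|}.
\]
This I would prove by a dyadic decomposition of the fiber $\{y\}\times\R^p$ according to the value of $d_\G((x,0),(y,\eta))$, and by comparing the restrictions of $B_\G$-balls to $\R^n\times\{0\}$ with $B_X$-balls on $\R^n$ through Nagel--Stein--Wainger type volume estimates; the projection $\widetilde{X}_i\mapsto X_i$ guarantees that such a comparison is available. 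The asymptotic vanishing as $|y|\to\infty$ then follows from the decay of $\Gamma_\G$ at infinity on $\G$.

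For part (III), the upper bound is simply (II) with $r=0$. The lower bound exploits positivity, the $\delta_\lambda$-homogeneity of $\Gamma$ of degree $2-q$, and the matching homogeneity of the right-hand side: by rescaling, it suffices to establish a uniform strict lower bound on the quasi-sphere $\{d_X(x,y)=1\}$, where $\Gamma$ is positive and continuous. Uniformity in $x$ is achieved by fiber-integrating Folland's sharp lower bound for $\Gamma_\G$ and using the left-invariance of the lifted sublaplacian on $\G$. Part (V) for $n>2$ is then immediate, since $d_X^2/|B_X(\cdot,d_X)|\asymp d_X^{2-q}\to\infty$ as $y\to x$ and $q>2$.

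Part (IV) is the main obstacle and explains why $n=2$ requires separate treatment. The fiber integral in \eqref{sec.one:mainThm_defGamma22222} exhibits, near the pole $y=x$, a dichotomy governed by the function $f_2(x)$ of Theorem B: geometrically, $f_2(x)=0$ signals that the local homogeneous dimension at $x$ drops to some integer $k\in\{3,\dots,q\}$, whereas $f_2(x)>0$ signals that it remains equal to $q$ and a logarithmic divergence appears at the critical exponent. The plan is to localize and anisotropically rescale the integrand near $y=x$, split the $\eta$-domain into an annular \emph{main} region (producing the $d_X^2/|B_X|$ factor) and a \emph{critical} region near $\eta=0$, and extract the logarithmic correction, when $f_2(x)>0$, from the critical region. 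The compact-set bounds follow by choosing $R_1,R_2$ uniformly on $K$ through continuity of the underlying geometric quantities, while the pointwise bounds require a finer local analysis keyed to the precise value of $f_2(x)$; in the degenerate case $f_2(x)=0$ the scaling $|B_X(x,r)|\asymp r^k$ produces a power-like profile $d_X(x,y)^{2-k}$. Finally, (V) for $n=2$ follows at once from the lower bounds of (IV), as all the asymptotic profiles $\log(1/d_X)$ and $d_X^{2-k}$ blow up as $y\to x$.
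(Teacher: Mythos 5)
Your overall architecture matches the paper's: fiber integration of Folland's sharp estimates for $\Gamma_\G$, dyadic decomposition of the fiber combined with the Nagel--Stein--Wainger and S\'anchez-Calle volume comparisons, and a local-to-global homogeneity argument. Parts (II), the upper half of (III), and the general strategy for (IV)--(V) are essentially what the paper does. But two steps you treat as routine are precisely the delicate points, and as described they do not go through. The first is the mixed-derivative formula in (I): saying that differentiation in the left factor of a convolution is transported to the right factor via $\iota$ would be fine for a genuine group convolution, but here the inner integral runs over the fiber $\{(y,\eta):\eta\in\R^p\}$, which is not adapted to that manipulation. After taking the $y$-derivatives you hold an integral of a function of $(x,0)^{-1}\ast(y,\eta)$; to apply $X_{j}^{x}$ and recognize left-invariant derivatives you must first rewrite the integrand as a function of $(y,0)^{-1}\ast(x,\zeta)$. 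The paper does this via the change of variables $\eta=\Phi_{x,y}(\zeta)$ of Lemma \ref{lem.tecnico}, which satisfies $(x,0)^{-1}\ast(y,\Phi_{x,y}(\zeta))=(x,\zeta)^{-1}\ast(y,0)$ and has Jacobian $\pm 1$; proving this (the unimodularity in particular, since the dilation exponents of $\G$ are not increasingly ordered) is the step your sketch omits.

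The second gap is the lower bound in (III). The reduction ``by rescaling to the quasi-sphere $\{d_X(x,y)=1\}$, where $\Gamma$ is positive and continuous'' cannot give a uniform bound: that set is unbounded in $x$, so positivity and continuity yield nothing uniform, and the comparison function $d_X^2/|B_X(x,d_X)|$ genuinely varies with $x$ through the functions $f_k$. The correct homogeneity reduction is to $x,y$ in a fixed compact set, where the difficulty as $y\to x$ persists. More importantly, you never explain how the fiber integral $\int_{\R^p}d_{\widetilde X}^{\,2-Q}\big((x,0)^{-1}\ast(y,\eta)\big)\,\d\eta$ is bounded \emph{from below} by $d_X^2/|B_X|$: this requires the lower volume estimate \eqref{eq.SanchezII} of Theorem C together with a construction, by an intermediate-value argument along the fiber, of points $\eta_j$ such that lifted balls centred at $(y,\eta_j)$ sit inside prescribed dyadic annuli around $(x,0)$. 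That is the heart of Proposition \ref{prop1.stimabasso} and is absent from your plan. Relatedly, in (IV) the paper obtains both bounds by sandwiching $\Gamma$ between one-dimensional integrals $\int_{d_X(x,y)}^{R}\rho\,|B_X(x,\rho)|^{-1}\,\d\rho$ and reading the $f_2(x)$ dichotomy off the polynomial form of $|B_X(x,\rho)|$; your ``critical region near $\eta=0$'' picture is plausible but would need the same two missing ingredients to be made rigorous.
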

\begin{remark}
\label{rem.drift}
As a matter of facts, most of the results in Theorem
\ref{th.teoremone} can be naturally
extended to homogeneous H\"{o}rmander operators of the kind
$$
\mathcal{L}=\sum_{i=1}^{m}X_{i}^{2}+X_{0},
$$
where $X_{1},...,X_{m}$ are $\delta_{\lambda}$-homogeneous of degree $1$ and
$X_{0}$ (the drift) is $\delta_{\lambda}$-homogeneous of degree $2$
(see Theorem \ref{th.teoremone_drift} for the precise statement).
In order to keep more
readable our presentation, we have only briefly sketched
in Section \ref{sec.stimeDrift} the adjustments of the theory 
necessary to cover this more general
setting, while the core of the paper is written for sum of squares.
\end{remark}

 Let us now say a few words about the techniques used in the proofs of our
 results, thus seizing the opportunity to put our paper in the context of the existing literature. Our
 first step is to combine the integral representation of $\Gamma $ given in
 \eqref{sec.one:mainThm_defGamma}, and similar representation formulas which
 will be established for the derivatives of $\Gamma $ (those in (I) of Theorem \ref{th.teoremone}), with the
 global growth estimates satisfied, for homogeneity reasons, by $\Gamma_\G$
 and its derivatives: this combination gives
\begin{equation*}
 \Big\vert Z_{1}\cdots Z_{r}\Gamma (x;y)\Big\vert
 \leq c_{r}\int_{\mathbb{R}^{p}}
 {d}_{\widetilde{X}}^{2-Q-r}\left( \left( x,0\right)^{-1}\ast \left( y,\eta \right) \right) \d\eta,
\end{equation*}
 for every $x,y\in \mathbb{R}^{n},x\neq y$, where ${d}_{\widetilde{X}}$
 is the CC-distance induced in the Carnot group $\mathbb{R}^{N}$ by the
 lifted vector fields $\widetilde{X}_{1},\ldots,\widetilde{X}_{m}$.
 Once this is accomplished, we shall bound the above
 integral by means of two deep results related to the geometry of H\"{o}rmander
 vector fields and established in the papers \cite{NSW} and \cite{SC},
 here suitably extended to a global version (thanks to the underlying $\delta_{\lambda }$-homogenous structure): see Theorems B and C in Section \ref{sec:notations.review}.

 One could object to our procedure the fact that local estimates for
 $\Gamma $ and its derivatives are also contained in \cite{NSW, SC}. So one
 could think to derive global estimates from the existing local estimates, just by
 dilation arguments. However, what remains a bit unclear in those papers is \emph{which}
 object referred to as $\Gamma $ is actually being estimated. For instance, in \cite[Thm.\,5]{NSW}
 a conditional statement is proved, saying that if, in the space
 of the lifted variables, a kernel $\widetilde{\Gamma }$ satisfies an
 estimate of the kind
\begin{equation*}
 \widetilde{\Gamma }( ( x,\xi ) ;( y,\eta ))
 \leq c\,{d}_{\widetilde{X}}^{2-Q}( ( x,\xi ),( y,\eta ) ),
\end{equation*}
 then the kernel $\Gamma $ that we get by locally saturating the lifted variables similarly to \eqref{sec.one:mainThm_defGamma}
  satisfies local estimates in $\mathbb{R}^{n}$
 of the kind
\begin{equation*}
 \Gamma ( x;y) \leq c\frac{d_X^{2}( x,y) }{\vert B_X( x,d_X( x,y) ) \vert }
\end{equation*}
 (with analogous statements about the derivatives of $\widetilde{\Gamma }$
 and $\Gamma $). In \cite{NSW} the alluded kernel $\widetilde{\Gamma }$ is
 the \emph{parametrix} for the lifted operator constructed in the paper
 \cite{RothschildStein} (no \emph{fundamental solution} is built in \cite{RothschildStein}).
 However, the kernel $\Gamma $ obtained by this procedure
 is hopefully a \emph{local parametrix} for $\LL$, but not necessarily a
 \emph{fundamental solution}. Also, it is a function defined \emph{only locally},
 and in a \emph{non-unique way}. Actually, to produce a true local
 fundamental solution saturating a parametrix in a lifted space, a hard
 extra-work is needed (see e.g., \cite{BBMP}). %, where this procedure is implemented in a non-smooth context.
 In contrast with this, the function
 $\Gamma $ that we consider is a uniquely defined, global, fundamental
 solution for $\LL$; for this object, and its derivatives, global estimates are
 proved, together with representation formulas which also contain some
 additional information, not limited to the \emph{size} of these functions.
 An example of the relevance of this last statement will be given in Section \ref{sec:meanvalue},
 see Theorem \ref{Thm prop sing kern}-(iii) and Remark \ref{Remark final}.

 We also note that the estimates that we shall prove for the derivatives of
 $\Gamma \left( x;y\right) $ apply to derivatives of any order, with respect
 to both variables $x$ and $y$. As we shall see, the case of mixed derivatives
 requires a more delicate proof (see Lemma \ref{Thm 0}). On
 the other hand, this bound has interesting consequences, as we shall see in
 Section \ref{sec:meanvalue} (Theorem \ref{Ch1-Thm Lagrange}).
 Incidentally, in \cite[p.\,141]{NSW} a proof is written only for the
 basic estimate of $\widetilde{\Gamma}$, while the proof of the derivative
 estimates is left to the reader.\bigskip

 Once the results in Theorem \ref{th.teoremone} are established, we consider
 some possible applications. Firstly, in Section \ref{sec:potentialtheory} we
 deal with potential-theoretic properties of $\mathcal{L}$. Indeed, the
 estimates of $\Gamma $ and the presence of a blowing-up pole (see (V) in
 Theorem \ref{th.teoremone}) allow us to verify, for our operators $\mathcal{L}$,
 all the axioms of potential theory required for the analysis contained in
 the series of papers \cite{AbbondanzaBonfiglioli, BattagliaBonfiglioli, BattBonf, BLJems}.
 Secondly, in Section \ref{sec:meanvalue} we shall show that
 the kernel
\begin{equation*}
 k\left( x,y\right) =X_{i}^{x}X_{j}^{x}\Gamma \left( x;y\right)
\end{equation*}
 satisfies, globally in $\mathbb{R}^{n}$, the so-called standard estimates of
 singular integrals, together with a suitable cancelation property, with
 respect to both variables. These facts will be proved as a consequence of
 the estimates on second and third order (pure or mixed) derivatives of
 $\Gamma $, together with the explicit integral representation formula of
 $k\left( x,y\right) $ in terms of the homogeneous fundamental solution
 $\widetilde{\Gamma }$ on the Carnot group $\mathbb{R}^{N}$. These properties
 of $k\left( x,y\right) $ could be a starting point to prove global Sobolev
 estimates for solutions to $Lu=f$, both for our operator $\LL$ and
 for more general classes of non-variational operators
 $\sum_{i,j}a_{ij}(x) X_{i}X_{j}$ modeled on our vector fields (with low regular $a_{i,j}$'s and $
 A=(a_{i,j}(x))$ in some class of ellipticity). This theory will be
 developed elsewhere.
%We intend to derive further applications in future investigations; these
%will deal with:
%
%\begin{itemize}
%\item global estimates in Sobolev spaces and global regularity results for $
%\mathcal{L}$ (see \cite{BBB2});
%
%\item Heat-type operators $\mathcal{H}=\mathcal{L}-\partial_{t}$ modeled on
%$\mathcal{L}$ (a representation of the Heat-kernel for $\mathcal{H}$
%analogous to \eqref{sec.one:mainThm_defGamma22222000000} is available, see
%\cite{BBheat});
%\end{itemize}

%%%%%%%%%%%%%%%%%%%%%%%%%%
\section{Notations and a review of known results}\label{sec:notations.review}
 In what follows, we denote by $d_{X}$ the Carnot-Carath\'{e}odory (CC, shortly) distance associated with the set of H\"{o}rmander vector fields
 $X=\{X_1,\ldots,X_m\}$, that is,
\begin{equation} \label{eq.defdCC}
 d_{X}(x,y):=\inf \bigg\{r>0:\,\text{there exists $\gamma \in C(r)$ with $\gamma (0)=x$ and $\gamma (1)=y$}\bigg\},
\end{equation}
 where $C(r)$ is the set of the absolutely continuous maps $\gamma:[0,1]\rightarrow \mathbb{R}^{n}$ satisfying (a.e.\thinspace on $[0,1]$)
\begin{equation*}
 \gamma ^{\prime }(t)=\sum_{j=1}^{m}a_{j}(t)\,X_{j}(\gamma (t)),\qquad \text{with $|a_{j}(t)|\leq r$ for all $j=1,\ldots ,m$}.
\end{equation*}
 Given any $x\in \mathbb{R}^{n}$ and $r>0$, we denote by $B_{X}(x,r)$ the
 $d_{X}$-ball of center $x$ and radius $r$. Without risk of confusion, $|\cdot|$ will denote
 the Lebesgue measure in $\mathbb{R}^{n}$ (whatever the $n$).

Thanks to (H.1), $d_{X}$ and $B_{X}$ enjoy the homogeneity properties:
 \begin{gather}\label{propertiesdCC}
  \begin{split}
  & d_{X}(\delta_{\lambda}(x),\delta_{\lambda}(y))=\lambda\, d_{X}(x,y),\\
  & y\in B_{X}(x,r) \Longleftrightarrow \delta_{\lambda}(y) \in B_{X} (\delta_{\lambda}(x),
  \lambda\,r),\\
  & | B_{X}(\delta_{\lambda}(x),\lambda\,r) | =\lambda^{q}\,
  | B_{X}(x,r)|,
 \end{split}
 \end{gather}
 for any $x,y\in \mathbb{R}^{n}$ and any $\lambda ,r>0$. Moreover, it is
 well known from \cite{NSW} that, for $x$ in some compact set and for $r$ small enough, one
 has the doubling property
\begin{equation}\label{globaldoubling}
 |B_{X}(x,2\,r)|\leq C_{d}\,|B_{X}(x,r)|.
\end{equation}
 As a matter of fact, by a homogeneity argument based on \eqref{propertiesdCC},
 the above doubling property holds true for every $x\in \mathbb{R}^{n}$ and
 every $r>0$ (see, e.g., \cite{BattBonf}, or see Section \ref{sec:localtoglobal}).

 In the next sections, $d_{\widetilde{X}}$ will stand for the CC-distance
 associated with the system of vector fields $\widetilde{X}=\{\widetilde{X}_{1},\ldots ,\widetilde{X}_{m}\}$ introduced in point (1) of Theorem A.
 Since the $\widetilde{X}_{j}$'s lift the $X_{j}$'s
 (and all these vector fields are homogeneous with respect
 to appropriate dilations)
 it is known that (here, $\pi_{n}$ is the projection of $\mathbb{R}^{N}=\mathbb{R}^{n}\times \mathbb{R}^{p}$ onto $\mathbb{R}^{n}$)
 \begin{gather} \label{eq.proiezionipalled}
 \begin{split}
 & d_X(x,y)\leq d_{\widetilde{X}}\big((x,\xi),(y,\eta)\big),
 \quad \text{for any $x,y\in\R^n$ and $\xi,\eta\in\R^p$}, \\
 & \pi_n\big(B_{\widetilde{X}}\big((x,\xi),r\big)\big) =
 B_X(x,r), \quad \text{for any $x\in\R^n$, $\xi\in\R^p$
 and $r > 0$}.
 \end{split}
 \end{gather}
Furthermore, since the $\widetilde{X}_{j}$'s are left-invariant on the group
$\mathbb{G}=(\mathbb{R}^{N},\ast )$, one has
\begin{equation*}
 d_{\widetilde{X}}(z,z^{\prime })=d_{\widetilde{X}}\big(0,z^{-1}\ast z^{\prime }\big),\quad \text{for every $z,z^{\prime }\in \mathbb{G}$}.
\end{equation*}
 By an abuse of notation, we shall systematically denote $d_{\widetilde{X}}(0,\cdot )$ by $d_{\widetilde{X}}$. Simple arguments on ho\-mo\-ge\-neous
 groups also show that
\begin{equation}\label{eq.misurapallehom}
 \big|B_{\widetilde{X}}(z,r)\big|=\omega_{Q}\,r^{Q},\quad \text{where $\omega_{Q}=\big|B_{\widetilde{X}}(0,1)\big|$}.
\end{equation}
 It can also be proved that $\mathbb{G}$ is a homogeneous group with dilations
\begin{equation}
 D_{\lambda }:\mathbb{R}^{N}=\mathbb{R}^{n}\times \mathbb{R}^{p}\longrightarrow \mathbb{R}^{N},
 \qquad D_{\lambda }(x,\xi )=(\delta_{\lambda }(x),E_{\lambda }(\xi )),
\end{equation}
 where $E_{\lambda }(\xi )=(\lambda ^{\tau_{1}}\xi ,\ldots ,\lambda ^{\tau_{p}}\xi_{p})$
 for suitable integers $1\leq \tau_{1}\leq \cdots \leq \tau_{p}$.
 We point out that the exponents of $D_{\lambda }$ are not
 increasingly ordered (but this is true of the $\sigma_{i}$'s and of $\tau_{j}$'s separately).
 This will cause some subtleties in handling the group
 structure of $\mathbb{G}$ (see, e.g., Lemma \ref{lem.tecnico}).\medskip

 In this paper we shall make use of the following two theorems, concerning
 the volume of the $d_{X}$-metric balls (Theorem B) and concerning the relation
 between the volumes of lifted and unlifted balls (Theorem C). We shall
 derive them in Section \ref{sec:localtoglobal} via a local-to-global
 homogeneity-argument, starting from their local counterparts. These local
 counterparts will be obtained from the deep investigations on subelliptic
 distances carried out by Nagel, Stein, Wainger \cite[Theorem 1]{NSW}, and by
 S\'{a}nchez-Calle \cite[Theorem 4]{SC} (see also \cite[Lemma 3.2]{NSW} and
 Jerison \cite{J}).

\begin{ThmB}
 Let $q$ be as in \eqref{eq.defq}. For any $k\in \{n,\ldots ,q\}$ there
 exists a function $f_{k}:\mathbb{R}^{n}\rightarrow \mathbb{R}$ which is
 continuous, nonnegative and $\delta_{\lambda }$-homogeneous of degree $q-k$,
 and there exist structural constants $\gamma_{1},\gamma_{2}>0$ such that
\begin{equation}\label{eq.NSWmodificata}
 \gamma_{1}\,\sum_{k=n}^{q}f_{k}(x)\,\rho ^{k}\leq {\big|B_{X}(x,\rho )\big|}
 \leq \gamma_{2}\,\sum_{k=n}^{q}f_{k}(x)\,\rho ^{k},
\end{equation}
 for every $x\in \mathbb{R}^{n}$ and every $\rho >0$. Moreover, $f_{q}(x)$ is constant in $x$, and
 strictly positive.
\end{ThmB}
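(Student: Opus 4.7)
The plan is to derive Theorem~B from the local ball-box formula of Nagel--Stein--Wainger by promoting it to a global statement via the $\delta_\lambda$-homogeneity granted by (H.1). First I would recall the local NSW formula in the form proved in \cite{NSW}: for each $n$-tuple $I=(Y_{I_1},\ldots,Y_{I_n})$ of iterated commutators of the $X_j$'s, with total commutator weight $d(I):=\sum_{j=1}^n d(I_j)$, set
$$\lambda_I(x):=\det\bigl(Y_{I_1}(x)\mid\cdots\mid Y_{I_n}(x)\bigr).$$
Then on a fixed compact set $K\subseteq\R^n$ and for $0<\rho\le\rho_0$ one has $|B_X(x,\rho)|\asymp\sum_I|\lambda_I(x)|\,\rho^{d(I)}$. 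The weights $d(I_j)$ lie in $\{1,\ldots,\sigma_n\}$, and the values of $d(I)$ making $\lambda_I\not\equiv 0$ range over $\{n,\ldots,q\}$, so grouping by degree it is natural to set
$$f_k(x):=\sum_{d(I)=k}|\lambda_I(x)|,\qquad k\in\{n,\ldots,q\},$$
each $f_k$ being continuous and nonnegative by construction.

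Next I would compute the $\delta_\lambda$-scaling of $f_k$. Writing a weight-$\ell$ commutator as $Y=\sum_j c_j^Y(x)\,\de_{x_j}$, the $\delta_\lambda$-homogeneity of $Y$ forces $c_j^Y(\delta_\lambda(x))=\lambda^{\sigma_j-\ell}\,c_j^Y(x)$ (in particular $c_j^Y\equiv 0$ when $\sigma_j<\ell$). Expanding $\lambda_I(\delta_\lambda(x))$ by the Leibniz formula and using $\sum_{j=1}^n\sigma_j=q$ yields $\lambda_I(\delta_\lambda(x))=\lambda^{q-d(I)}\lambda_I(x)$, hence $f_k$ is $\delta_\lambda$-homogeneous of degree $q-k$.

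For the global bound I would fix a $\delta_\lambda$-homogeneous norm $\|\cdot\|$ on $\R^n$ and apply the local formula with $K=\{\|x\|\le 1\}$ and $\rho_0=1$. Given any $(x,\rho)$ outside this region, set $\lambda:=\max(\|x\|,\rho)>0$; then $\delta_{1/\lambda}(x)\in K$ and $\rho/\lambda\le 1$, so the local two-sided bound applies at $(\delta_{1/\lambda}(x),\rho/\lambda)$. Combining the identities $|B_X(\delta_{1/\lambda}(x),\rho/\lambda)|=\lambda^{-q}\,|B_X(x,\rho)|$ from \eqref{propertiesdCC} with $f_k(\delta_{1/\lambda}(x))(\rho/\lambda)^k=\lambda^{-q}\,f_k(x)\rho^k$ from the previous step, the common factor $\lambda^{-q}$ cancels and the estimate \eqref{eq.NSWmodificata} is obtained for arbitrary $(x,\rho)$.

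Finally, to see that $f_q$ is a strictly positive constant, note that its degree-$0$ homogeneity together with continuity forces it to be globally constant, so it suffices to check $f_q(0)>0$. From $c_j^Y(\delta_\lambda(x))=\lambda^{\sigma_j-\ell}c_j^Y(x)$, a weight-$\ell$ commutator evaluated at $0$ lies in $\Span\{\de_{x_j}:\sigma_j=\ell\}$; Hörmander's condition at $0$ then forces, for each value $\ell\in\{\sigma_1,\ldots,\sigma_n\}$, the weight-$\ell$ commutators to span $\Span\{\de_{x_j}:\sigma_j=\ell\}$ at $0$. Choosing for each $j$ a weight-$\sigma_j$ commutator $Y_{I_j}$ so that $(Y_{I_1}(0),\ldots,Y_{I_n}(0))$ is a basis of $\R^n$ produces an $n$-tuple $I$ with $d(I)=q$ and $\lambda_I(0)\ne 0$, hence $f_q(0)>0$. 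I expect the main delicate point of the proof to be the homogeneity bookkeeping -- verifying that the NSW determinants scale as $\lambda^{q-d(I)}$ and that (H.2) really delivers commutators of \emph{exactly} the weights $\sigma_1,\ldots,\sigma_n$ at the origin -- after which the local-to-global step reduces to a single rescaling.
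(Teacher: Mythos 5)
Your proposal is correct and follows essentially the same route as the paper: the local Nagel--Stein--Wainger ball-volume formula, the observation that each determinant $\lambda_I$ is $\delta_\lambda$-homogeneous of degree $q-d(I)$ (hence vanishes identically when $d(I)>q$ and is constant when $d(I)=q$), and a single rescaling via the homogeneity of $d_X$ and of the $f_k$'s to pass from the local to the global two-sided bound. The only point treated differently is the strict positivity of $f_q$: you argue directly that H\"ormander's condition at $0$ produces a basis of commutators of total weight exactly $q$, whereas the paper simply evaluates the already-established global estimate at $(x,\rho)=(0,1)$ and uses that $f_k(0)=0$ for $n\leq k\leq q-1$ by positive-degree homogeneity; both arguments are valid.
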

\begin{remark}[Explicit form of the $f_{k}$'s]\label{Remark explicit f_k}
 In the following, we shall occasionally need the
 explicit form of the functions $f_{k}$. To explain their definition
 (according to \cite{NSW}), we need to introduce some more notation, that will
 also be useful for other reasons. For a multi-index
\begin{equation*}
 I=(i_{1},\ldots ,i_{k}),\quad \text{with $i_{1},\ldots ,i_{k}\in\{1,2,\ldots ,m\}$,}
\end{equation*}
 let us define (when $k=1$) $X_{[I]}=X_{i_{1}}$, and (when $k>1$)
\begin{equation*}
 X_{[I]}:= \left[  \left[  \left[ X_{i_{1}},X_{i_{2}} \right] ,X_{i_{3}} \right],\ldots ,X_{i_{k}} \right] ;
\end{equation*}
 we also define the weight (or length) of $I$ as $|I|=k$. At any fixed point $x\in \mathbb{R}^{n}$, let us consider a basis
 of $\mathbb{R}^{n}$ consisting in a set  $\{X_{[I]}(x):\,I\in \mathcal{A}\}$ (where $\mathcal{A}$ is a set of $n$ multi-indices)
 of $n$ commutators evaluated at $x$, and let us arrange these $n$
vectors in an $n\times n$ matrix which we denote by:
\begin{equation*}
 \mathcal{B}(x)=\Big( X_{ [ I ] }(x)\Big)_{I\in \mathcal{A}}.
\end{equation*}
 The weight of this basis will be, by definition,
\begin{equation*}
 |\mathcal{B}(x) | =\sum_{I\in \mathcal{A}} |I | .
\end{equation*}
 Then, according to \cite{NSW}, the functions $f_{k}$ appearing in Theorem B
 are equal to
\begin{equation*}
 f_{k} ( x ) =\sum_{ | \mathcal{B}(x) | =k}
 \big|\det \big(\mathcal{B}(x) \big)\big|,
\end{equation*}
 where the sum is taken over all the possible bases of $\mathbb{R}^{n}$ having weight $k$.
\end{remark}
\begin{ThmC}
There exist constants $\kappa \in (0,1)$ and $c_{1},c_{2}>0$ such that, for
every $x\in \mathbb{R}^{n}$, $\xi \in \mathbb{R}^{p}$ and $r>0$ one has the
following estimates:
\begin{align}
 \Big|\{\eta \in \mathbb{R}^{p}:(y,\eta )\in B_{\widetilde{X}}((x,\xi ),r)\}\Big|
 & \leq c_{1}\frac{\big|B_{\widetilde{X}}((x,\xi ),r)\big|}{\big|B_{X}(x,r)\big|},\quad
 \text{for all $y\in \mathbb{R}^{n}$},  \label{eq.SanchezI} \\[0.3cm]
 \Big|\{\eta \in \mathbb{R}^{p}:(y,\eta )\in B_{\widetilde{X}}((x,\xi ),r)\}\Big|
 & \geq c_{2}\frac{\big|B_{\widetilde{X}}((x,\xi ),r)\big|}{\big|B_{X}(x,r)\big|},\quad
 \text{for all $y\in B_{X}(x,\kappa \,r)$}.  \label{eq.SanchezII}
\end{align}
\end{ThmC}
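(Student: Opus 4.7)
The plan is to derive Theorem C from its local counterparts contained in \cite[Lemma 3.2]{NSW} and \cite[Theorem 4]{SC} (see also \cite{J}), via a two-step homogeneity argument. In the first step I establish a \emph{uniform local version} of the estimates: there exist $\rho_{0}>0$, $\kappa \in (0,1)$ and $c_{1},c_{2}>0$ such that \eqref{eq.SanchezI}-\eqref{eq.SanchezII} hold for \emph{every} $(x,\xi)\in \mathbb{R}^{N}$ and every $0<r\leq \rho_{0}$. The cited local results yield the bounds for $(x,\xi)$ in a prescribed compact $K_{0}\subset \mathbb{R}^{N}$; since $\widetilde{X}$ is left-invariant on the nilpotent (hence unimodular) Lie group $\mathbb{G}$, and both $d_{\widetilde{X}}$ and the CC-balls are preserved by left-translations, one transports the estimate from $K_{0}$ to any $(x,\xi)\in \mathbb{R}^{N}$ via a left-translation mapping some chosen $z_{0}\in K_{0}$ to $(x,\xi)$.

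In the second step I use the $D_{\lambda}$-homogeneity to extend to all $r>0$. Given arbitrary $(x,\xi)\in \mathbb{R}^{N}$, $r>0$ and $y\in \mathbb{R}^{n}$, set $\mu = \rho_{0}/r$. Combining the identity $B_{\widetilde{X}}((x,\xi),r)=D_{1/\mu}\big(B_{\widetilde{X}}(D_{\mu}(x,\xi),\rho_{0})\big)$, the change of variable $\eta'=E_{\mu}(\eta)$ in the $\eta$-slice (with Jacobian $\mu^{Q-q}$), and the scaling $|B_{X}(x,r)| = \mu^{-q}\,|B_{X}(\delta_{\mu}x,\rho_{0})|$ from the last line of \eqref{propertiesdCC}, one verifies that the dimensionless ratio
\begin{equation*}
 \Big|\{\eta\in \mathbb{R}^{p} : (y,\eta)\in B_{\widetilde{X}}((x,\xi),r)\}\Big|\cdot \frac{|B_{X}(x,r)|}{|B_{\widetilde{X}}((x,\xi),r)|}
\end{equation*}
is invariant under the simultaneous substitution $(x,\xi,y,r)\mapsto (D_{\mu}(x,\xi),\delta_{\mu}y,\mu r)$. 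Choosing $\mu = \rho_{0}/r$ reduces matters to the uniform local estimate of the first step, which yields the upper bound. For the lower bound, one further observes, via the second line of \eqref{propertiesdCC}, that $y\in B_{X}(x,\kappa r)$ is equivalent to $\delta_{\mu}y\in B_{X}(\delta_{\mu}x,\kappa \rho_{0})$, so the local lower bound applies.

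I expect the main obstacle to lie in the left-translation argument of the first step. The projection $\pi_{n}$ is \emph{not} a group homomorphism in general: whenever some $\sigma_{i}\geq 2$, the Baker-Campbell-Hausdorff formula mixes $\xi$-coordinates of lower weight into the higher $x$-coordinates, so that left-multiplication does not send the fiber $\{y\}\times \mathbb{R}^{p}$ to a single fiber of the same form. Transferring the slice estimate therefore requires either a careful polynomial change of variables in the $\eta$-integral accounting for the BCH correction terms, or a direct inspection of the arguments of \cite{NSW, SC} confirming that the constants they produce are invariant under left-translations on $\mathbb{G}$---as one expects for a left-invariant system.
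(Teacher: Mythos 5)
Your Step 2 (the dilation step) is essentially the paper's entire argument, and it is correct: the slice measure, the ratio $|B_{\widetilde{X}}|/|B_X|$, and the constraint $y\in B_X(x,\kappa r)$ all transform consistently under $(x,\xi,y,r)\mapsto(\dela(x),E_\lambda(\xi),\dela(y),\lambda r)$, via the identity
$$\big\{\eta: (\dela(y),\eta)\in B_{\widetilde{X}}((\dela(x),E_\lambda(\xi)),\lambda r)\big\}=E_\lambda\Big(\big\{\eta':(y,\eta')\in B_{\widetilde{X}}((x,\xi),r)\big\}\Big)$$
and \eqref{propertiesdCC}. The genuine gap is Step 1, and it is exactly the obstacle you flag but do not resolve. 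Left-translation on $\G$ does not map the fiber $\{y\}\times\R^p$ to a fiber: already for the Grushin lifting $(x_1,x_2,\xi)\ast(x_1',x_2',\xi')=(x_1+x_1',\,x_2+x_2'+x_1\xi',\,\xi+\xi')$, the image of a vertical fiber under $L_g$ is a slanted line. Consequently the quantity $|\{\eta:(y,\eta)\in B_{\widetilde{X}}((x,\xi),r)\}|$ is \emph{not} a left-invariant functional of the ball, and neither of your proposed repairs addresses this: "checking that the constants of \cite{NSW,SC} are translation-invariant" is not the right question, because the object being estimated itself changes under translation. As written, Step 1 does not go through.

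The fix is to delete Step 1 entirely. You do not need translations to reach arbitrary base points, because the anisotropic dilations are global contractions: $F_\lambda(x,\xi,y,r)=(\dela(x),E_\lambda(\xi),\dela(y),\lambda r)\to 0$ as $\lambda\to 0^+$ for \emph{every} $(x,\xi,y,r)$. Hence the local estimates of \cite[Theorem 4]{SC} -- valid for $(x,\xi,y)$ near the origin and $r$ small -- combined with the homogeneity of degree $Q-q$ of both sides of \eqref{eq.SanchezI}--\eqref{eq.SanchezII} under $F_\lambda$, already give the global statement by the scheme of Remark \ref{rem.usehomog}; for \eqref{eq.SanchezII} one only adds that $y\in B_X(x,\kappa r)$ iff $\dela(y)\in B_X(\dela(x),\kappa\lambda r)$, so the domain of validity is $F_\lambda$-invariant. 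This is precisely the paper's proof. (Separately, the paper also records in Remark \ref{rem.dacontrollare} why the local results of \cite{NSW,SC}, stated for the Rothschild--Stein lifting, apply to Folland's lifting used here; your proposal does not address this point, though it is a lesser issue than the translation gap.)
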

\begin{remark} \label{rem.dacontrollare}
 As a matter of fact, the local version of Theorem
 C is proved in \cite{NSW, SC} (see also \cite{J}) in a slightly different
 framework: namely \cite{NSW,SC} assume that the vector fields in $\widetilde{X}$
 lift those in $X$ in the sense of the lifting by Rothschild-Stein \cite{RothschildStein}.
 Nevertheless, scrutinizing the proof of Theorem C in its
 local version (as written in detail, e.g., in \cite[Chap.\,10]{BBbook}), one
 can see that the estimates in \eqref{eq.SanchezI}-\eqref{eq.SanchezII} hold
 true for our vector fields as well, where $\widetilde{X}$ lift $X$ \emph{in
 the sense of Folland} \cite{Folland}.

 Namely, most of the proof of Theorem C
 relies on the deep properties established by Nagel, Stein, Wainger \cite{NSW}
 for \emph{every} system of H\"{o}rmander vector fields, while the only
 special
 properties of the vector fields $\widetilde{X}_{i}$ which are exploited are
 the following:
\begin{enumerate}
 \item the vector field $\widetilde{X}_{i}$ projects onto $X_{i}$ for any $i$;

 \item if a family of commutators of $\big\{\widetilde{X}_{[I]}\big\}_{I\in
 \mathcal{A}}$ is a basis of $\mathbb{R}^{N}$ at some point of $\mathbb{R}^{N}$,
 then the same is true at every other point;

\item if any two families of commutators
 $\big\{\widetilde{X}_{[I]}\big\}_{I\in \mathcal{A}}$ and $\big\{\widetilde{X}_{[I']}\big\}_{I'\in \mathcal{A}'}$
 are bases of $\mathbb{R}^{N}$ at some point of $\mathbb{R}^{N}$, then it holds that
\begin{equation*}
\sum_{I\in \mathcal{A}}|I|=\sum_{I'\in \mathcal{A}'}|I'|.
\end{equation*}
\end{enumerate}
 In our setting, property (1) holds true by our very definition of lifting
 \eqref{lifting}; properties (2) and (3) (fulfilled by the Rothschild-Stein's
 lifted vector fields as they are \emph{free} vector fields) hold true in our
 case since the $\widetilde{X}_{i}$'s are the Lie-generators of a \emph{stratified Carnot group $\mathbb{G}$} (see Theorem A).

 More precisely, property (2) is a consequence of the left invariance (see
 \cite[Prop.\,C.5]{BiagiBonfBook}), while property (3) is a consequence of
 the stratification of the Lie algebra. Namely, assume that
\begin{equation*}
 \mathcal{X}=\big\{\widetilde{X}_{[I]}\big\}_{I\in \mathcal{A}}
\end{equation*}
 is a basis of $\mathbb{R}^{N}$ at some point in space: then $\mathcal{X}$ is
 also a basis of $\mathrm{Lie}(\mathbb{G})$ (see again \cite[Prop.\,C.5]{BiagiBonfBook});
 moreover, by grouping together the elements of $\mathcal{X} $
 with the same $D_{\lambda }$-homogeneity, one gets bases of the layers of
 the stratification of $\mathrm{Lie}(\mathbb{G})$. It then suffices to apply
 \cite[Prop.\,2.2.8]{BLUlibro}, ensuring that $\sum_{I\in \mathcal{A}}|I|$ is
 nothing but the so-called $D_{\lambda }$-homogeneous dimension of $\mathbb{G}$,
 which is independent of $\mathcal{X}$. \medskip

 A last remark is in order. Our statement of Theorem C involves lifted balls
 $B_{\widetilde{X}}((x,\xi ),r)$, while the analogous (local) statement in
 \cite{NSW} deals with lifted balls centred at points of the form $(x,0)$.
 However, this latter choice is immaterial, motivated (as appears by a close
 inspection of the proof in \cite{NSW}) by notational convenience.
\end{remark}
%%%%%%%%%%%%%%%%%%%%%%%%%%%%%%%%%%
\section{Local to global via homogeneity}\label{sec:localtoglobal}
 The following fact is a very simple consequence of homogeneity; we shall use it
 so many times that we provide it in details for the sake of reference convenience.
\begin{remark}\label{rem.usehomog}
 Let $m\in \N$; let $\{M_\lambda\}_{\lambda>0}$ be the family of dilations of $\R^m$ defined by
  $$M_\lambda(w_1,\ldots,w_m)=\big(\lambda^{\mu_1}w_1,\ldots,\lambda^{\mu_m}w_m\big), $$
  where $\mu_1,\ldots,\mu_m$ are fixed positive real numbers. Let $\Omega\subseteq\R^m$ be closed under $\{M_\lambda\}_\lambda$, i.e.,
\begin{equation}\label{chiusoMlambdaOmega}
  \text{$M_\lambda(w)\in \Omega$ for every $w\in\Omega$ and every $\lambda>0$.}
\end{equation}
  Suppose that $F,G:\Omega\to \R$ are two $M_\lambda$-homogeneous functions of the same degree, say $\alpha$, i.e.,
  $$\begin{cases}
       F(M_\lambda(w))=\lambda^\alpha\,F(w)\\
       G(M_\lambda(w))=\lambda^\alpha\,G(w),
 \end{cases}
 \quad \text{for every $w\in\Omega$ and every $\lambda>0$.}$$
 Finally, suppose that there exists a neighborhood $O$ of $0\in\R^m$ such that
 $O\cap \Omega\neq \emptyset$ and $F\leq G$   on $O\cap \Omega$; then $F\leq G$ on $\Omega$. Indeed, let $w\in \Omega$ be arbitrary;
 then there exists a small $\lambda>0$ such that $M_\lambda(w)\in O\cap \Omega$
 (this follows from \eqref{chiusoMlambdaOmega} and since $M_\lambda(w)\to 0\in\R^m$ as $\lambda\to 0^+$). As $F\leq G$ on  $O\cap \Omega$
 we infer that $F(M_\lambda(w))\leq G(M_\lambda(w))$; due to the $M_\lambda$-homogeneity of $F$ and $G$, this is equivalent to
 $\lambda^\alpha\,F(w)\leq \lambda^\alpha\,G(w)$. Canceling out $\lambda^\alpha>0$, this gives $F(w)\leq G(w)$.

 A completely analogous result holds true if we replace ``$F\leq G$'' with ``$F= G$'' or ``$F<G$.''
\end{remark}
 As a first application of Remark \ref{rem.usehomog}, we prove the global doubling inequality \eqref{globaldoubling}:
 indeed, by classical  results in \cite{NSW}, one knows that there exist a constant $C_d>0$, a neighborhood $U_0$ of $0\in\R^n$
 and some $r_0>0$ such that
 $|B_X(x,2\,r)|\leq C_d\,|B_X(x,r)|$ for every $x\in U_0$ and every $r\in (0,r_0)$. Then we apply
 Remark \ref{rem.usehomog} with the choices $m=n+1$, $\Omega=\R^n\times (0,\infty)$,
 $$M_\lambda(x,r)=(\dela(x),\lambda\,r)\quad \text{for $x\in \R^n$, $r>0$ and $\lambda>0$,} $$
 and with the functions
 $ F(x,r)=|B_X(x,2\,r)|$ and $G(x,r)=C_d\,|B_X(x,r)|$.
 These choices satisfy the assumptions in Remark \ref{rem.usehomog}, since
 $\Omega$ is invariant under $\{M_\lambda\}_\lambda$,
 $F\leq G$ on $U_0\times (0,r_0)$ (which is of the form $O\cap \Omega$ for some
 neighborhood $O$ of $0\in\R^m$) and since $F$ and $G$ are both $M_\lambda$-homogeneous
 of degree $q$, due to \eqref{propertiesdCC}.

 Another application of Remark \ref{rem.usehomog} is the following:
\begin{remark}\label{hormandereverywhere}
 We prove that, due to assumptions (H.1) and (H.2),
 the validity of H\"{o}rmander's rank condition at $0$ implies its validity at any other point
 $x\in\mathbb{R}^{n}$.
 Indeed, it is easy to check that, by (H.1), the vector field $X_{[I]}$ is
 $\dela$-homogeneous of degree $|I|$, which is equivalent to
\begin{equation}\label{equidellahomogvf}
 X_{[I]}(\dela(x))=\lambda^{-|I|}\dela(X_{[I]}(x)),\quad \forall\,\,\lambda>0,\,\,x\in\R^n.
\end{equation}
 Next, we observe that the iterated (left nested) brackets $X_{[I]}$ span $\mathrm{Lie}(X)$.
 Hence, by (H.2), we can find a family $X_{[I_1]},\ldots,X_{[I_n]}$
 such that  $X_{[I_1]}(0),\ldots,X_{[I_n]}(0)$ is a basis of $\R^n$.
 Thus, the function
 $$x\mapsto F(x):=\det \big(X_{[I_1]}(x)\cdots X_{[I_n]}(x)\big)$$
 is non-null on a neighborhood $O$ of $0\in\R^n$. If we show that $F$ is $\dela$-homogeneous, then
 Remark \ref{rem.usehomog} will prove that $F(x)\neq 0$ for every $x\in \Omega:=\R^n$.
 The $\dela$-homogeneity of $F$ can be proved as follows:
 \begin{align*}
  F(\delta_\lambda(x))&\stackrel{\eqref{equidellahomogvf}}{=}
    \det\Big(\lambda^{-|I_1|}\,\delta_\lambda\big(X_{[I_1]}(x)\big)\cdots
    \lambda^{-|I_n|}\,\delta_\lambda\big(X_{[I_n]}(x)\big)\Big) \\
    &\,\,\,=\,\, \lambda^{-|I_1|-\cdots-|I_n|}\,
    \det\Big(\delta_\lambda\big(X_{[I_1]}(x)\big)\cdots
    \delta_\lambda\big(X_{[I_n]}(x)\big)\Big)\\
    &\,\,\,=\,\, \lambda^{q-|I_1|-\cdots-|I_n|}\,
    \det\Big(X_{[I_1]}(x)\cdots X_{[I_n]}(x)\Big)=
     \lambda^{q-|I_1|-\cdots-|I_n|}\,F(x).
   \end{align*}
\end{remark}
 \begin{remark} \label{rem.confrontoBramanti}
   As a matter of fact, the CC-distance $d_X$ in \eqref{eq.defdCC} is not the unique distance one can attach to
   the H\"ormander family $\mathcal{X}$: for instance, following the notation
  in Remark \ref{rem.dacontrollare}, one can deal with the so-called subelliptic distance
  \begin{equation*}
  \rho(x,y) := \inf\bigg\{r > 0:\,\text{there exists $\gamma\in C'(r)$
  with $\gamma(0) = x$ and $\gamma(1) = y$}\bigg\},
  \end{equation*}
 where $C'(r)$ is the set of absolutely continuous maps $\gamma:[0,1]\to\R^n$ satisfying (a.e.\,on $[0,1]$)
 $$\gamma'(t) = \sum_{|I|\leq\sigma_n}a_I(t)\,X_{[I]}(\gamma(t)),
 \qquad \text{with $|a_I(t)|\leq r^{|I|}$ for all $I$}.$$
 We remind that $\sigma_n$ is the largest length of some non-vanishing commutator $X_{[I]}$
 (see Remark \ref{rem.assumptionsH}). On account of
 the results in \cite[Section 1.4]{NSW}, $\rho$ and $d_X$ are locally equivalent:
 \begin{equation} \label{eq.localequivdrho}
  c_1\,d_X(x,y)\leq \rho(x,y)\leq c_2\,d_X(x,y),
 \end{equation}
 for every $x,y$ in some neighborhood $U_0$ of the origin, and some constants $c_1, c_2 > 0$. Moreover,
 it is not difficult to check that
 $\gamma\in C'(r)$ if and only if $\dela\circ\gamma\in C'(\lambda\,r)$;
 as a consequence,
 \begin{equation} \label{eq.rhohomog}
 \text{$\rho(\dela(x),\dela(y)) = \lambda\,\rho(x,y)$ for every $x,y\in\R^n$ and every $\lambda > 0$}.
 \end{equation}
 Starting from
 \eqref{eq.localequivdrho}-\eqref{eq.rhohomog},
 and using Remark \ref{rem.usehomog}, one can prove that \eqref{eq.localequivdrho} holds for every
 $x,y\in\R^n$. The global version of \eqref{eq.localequivdrho} implies the following inclusions:
 \begin{equation} \label{eq.inclusionballs}
  B_\rho(x,c_1\,r)\subseteq B_X(x,r)\subseteq B_\rho(x,c_2\,r),\quad \text{for all $x\in\R^n$ and $r > 0$}.
\end{equation}
 \end{remark}
 We shall now use Remarks \ref{rem.usehomog}-\ref{rem.confrontoBramanti}
 in deriving Theorems B and C from their local counterparts.
\begin{proof}[Proof of Theorem B]
 We start from the notable estimate proved in \cite[Theorem 1]{NSW}: there exist a neighborhood $U_0$ of the origin
 in $\R^n$, a real $r'_0 > 0$ and two constants $\gamma_1',\gamma_2' > 0$ such that
 \begin{equation} \label{eq.estimNSWlocal}
   \gamma_1'\,\Lambda(x,r) \leq \big|B_\rho(x,r)\big| \leq \gamma_2'\,\Lambda(x,r), \quad \text{for every $x\in U_0$ and every $0<r\leq r'_0$},
\end{equation}
 where (following the notation in Remark \ref{rem.dacontrollare} for $I_j$, $X_{[I_j]}$ and $|I_j|$)
 \begin{equation} \label{eq.defLambdaxr}
 \Lambda(x,r) := \sum_{B = (I_1,\ldots,I_n)}\Big|\mathrm{det}\big(X_{[I_1]}(x)\cdots X_{[I_n]}(x)\big)\Big|\,r^{|I_1|+\cdots+|I_n|},
\end{equation}
 and the sum runs over all the $n$-tuples $B$ of multi-indexes $I_j$'s with $|I_j|\leq \sigma_n$. We now observe that, by definition,
 $|I_1|+\cdots+|I_n|\geq n$; on the other hand, from Remark \ref{rem.confrontoBramanti}
 we know that
 $$x\mapsto \mathrm{det}\big(X_{[I_1]}(x)\cdots X_{[I_n]}(x)\big)$$
 is $\dela$-homogeneous of degree $q-(|I_1|+\cdots+|I_n|)$. Hence, the latter function (which is smooth on $\R^n$) must vanish whenever
 $|I_1|+\cdots+|I_n| > q$ and it is constant when the equality holds.

 Gathering together all these facts, we can reorder the sum
 in the right-hand side of \eqref{eq.defLambdaxr} with respect to $|I_1|+\cdots+|I_n|$, obtaining
 the representation
 $$\Lambda(x,r) = \sum_{k = n}^q f_k(x)\,r^k,$$
 where $f_k$ is a nonnegative continuous function on $\R^n$ which is $\dela$-homogeneous of degree $q-k$. In particular, $f_q$ is constant and
 \begin{equation} \label{eq.Lambdaxrhom}
  \Lambda(\dela(x),\lambda r) = \lambda^q\,\Lambda(x,r), \quad \text{for every $x\in\R^n$, every $r,\lambda > 0$}.
 \end{equation}
 On account of \eqref{eq.inclusionballs} and \eqref{eq.estimNSWlocal}, we get the estimate
\begin{equation} \label{eq.touseinsiemeremark}
   \gamma_1\,\sum_{k = n}^q f_k(x)\,r^k \leq \big|B_X(x,r)\big|\leq \gamma_2\,\sum_{k = n}^q f_k(x)\,r^k, \quad \text{for $x\in U_0$ and $0<r\leq r_0$},
\end{equation}
 for some structural positive constants $\gamma_1,\gamma_2$ and $r_0$. One can pass from the local \eqref{eq.touseinsiemeremark} to
 the global \eqref{eq.NSWmodificata} by using Remark \ref{rem.usehomog} and the fact that
 all members of \eqref{eq.touseinsiemeremark} are homogeneous of degree $q$ with respect to the dilations
 $(x,r)\mapsto (\dela(x),\lambda r)$. Finally, taking $x = 0$ and $r = 1$ in \eqref{eq.NSWmodificata}, we infer that
 the constant $f_q$ is strictly positive, since
 $f_k(0) = 0$ for all $n\leq k\leq q-1$ (which is a consequence of the $\dela$-homogeneity of $f_k$ of degree $q-k$).
\end{proof}
 Finally we provide the
\begin{proof}[Proof of Theorem C]
% Arguing as in the proof of Theorem B, one can show that \eqref{eq.estimNSWlocal}
% actually holds for every $x\in\R^n$ and every $r > 0$ (this also follows from \eqref{eq.rhohomog}). This fact and
% the estimate \eqref{eq.NSWmodificata} in Theorem B imply that
% $|B_X(x,r)|$ and $|B_\rho(x,r)|$ are globally equivalent. We are then entitled to invoke
% the deep result proved in \cite[Theorem 4]{SC}, which provides a local version
% of \eqref{eq.SanchezI}-\eqref{eq.SanchezII}.
 The deep result proved in \cite[Theorem 4]{SC} provides a local version
 of \eqref{eq.SanchezI}-\eqref{eq.SanchezII}. As for \eqref{eq.SanchezI}, one can
 pass from local to global via an
 application of Remark \ref{rem.usehomog}, once noticed that the members in
 \eqref{eq.SanchezI} are homogeneous of degree $Q-q$ with respect to
 (see also \eqref{eq.dilatazioniG})
 $$(x,\xi,y,r)\mapsto (\dela(x), E_\lambda(\xi),\dela(y), \lambda\,r).$$
 In this argument it may help observing that
 $$\Big\{\eta\in\mathbb{R}^{p}: (\dela(y),\eta)  \in
 B_{\widetilde{X}} ((\dela(x),E_\lambda(\xi)), \lambda\,r)\Big\}
 = E_\lambda\bigg(\Big\{\eta'\in\mathbb{R}^{p}: (y,\eta')  \in
 B_{\widetilde{X}} ((x,\xi), r)\Big\}\bigg).$$
 As for \eqref{eq.SanchezII}, one chooses $\lambda > 0$ so small that
 $$\Big|\Big\{\eta\in\mathbb{R}^{p}: (\dela(y),\eta)  \in
 B_{\widetilde{X}} ((\dela(x),E_\lambda(\xi)), \lambda\,r)\Big\}\Big|
 \geq c_{2}\frac{|B_{\widetilde{X}}((\dela(x),E_\lambda(\xi)), \lambda\,r)|}{|B_{X}(\dela(x), \lambda\,r)|},
 $$
 holds true when $\dela(y)\in B_X(\dela(x),\kappa\,\lambda\,r)$ (here, $\kappa > 0$ is the same as in
 \cite[Theorem 4]{SC}). From this, by applying \eqref{propertiesdCC} and by arguing
 as above, one gets the desired \eqref{eq.SanchezII}.

\end{proof}

%%%%%%%%%%%%%%%%%%%%%%%%%%%%%%%
\section{Global upper estimates of $\Gamma$ and its derivatives}\label{sec:pointwise.estimates}
 In this section we tacitly inherit the notations in Section \ref{sec:notations.review}, and the
 assumptions (H.1)-to-(H.3);
 in particular $q$ is always assumed to be larger than $2$. This assumption is not restrictive, as the
 case $q=2$ boils down to the well-known case when
 $\LL$ is a (strictly) elliptic operator in $\R^2$ with constant coefficients. Indeed
 $q=2$ implies that $n=2$ and $\delta_\lambda(x_1,x_2)=(\lambda\,x_1,\lambda\,x_2)$;
 as a consequence, since $X_1,\ldots,X_m$ are linearly independent H\"ormander vector fields, $\dela$-homogeneous of degree $1$,
 one necessarily has $m=2$ and
  $$X_1=a\,\de_{x_1}+b\,\de_{x_2},\quad X_2=\alpha\,\de_{x_1}+\beta\,\de_{x_2}\quad \text{with $a\beta-b\alpha\neq 0$}.$$

 The aim of the present section is to prove the following main result:
\begin{theorem}[Global estimates of $\Gamma$ and
 its derivatives]\label{thm.estimatesDERIVgamma}
 Let $\Gamma$ be the fundamental solution of $\LL$ in
  \eqref{sec.one:mainThm_defGamma}; we also assume that $n>2$.
 Then, for any integer $r\geq 0$, there exists $C>0$
 \emph{(}depending on $r$ and on the set $X=\{X_1,\ldots,X_m\}$\emph{)} such that
\begin{equation}\label{goal}
 \Big\vert Z_1\cdots Z_r\Gamma(x;y) \Big\vert
   \leq C\,
   \frac{d_{X}(x,y)^{2-r}}
   {\big\vert B_{X}(x,d_{X}(x,y)) \big\vert },
\end{equation}
 holding true for any $x,y\in\mathbb{R}^{n}$ \emph{(}with $x\neq y$\emph{)} and any
 choice of
\begin{equation}\label{goalZZZZZ}
 Z_1,\ldots,Z_r\in\Big\{X^x_1,\ldots,X^x_m,X^y_1,\ldots,X^y_m\Big\},
\end{equation}
 where superscripts denote the variable with respect to which differentiation is performed.
 In particular, for every fixed $x\in\R^n$ we have
 \begin{equation} \label{eq.derGammavanish}
  \lim_{|y|\to\infty}Z_1\cdots Z_r\Gamma(x;y) = 0.
 \end{equation}
\end{theorem}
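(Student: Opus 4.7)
The plan is to derive an integral representation for every mixed $X$-derivative of $\Gamma$ in terms of $\Gamma_\G$ on the lifted Carnot group $\G$, and then to reduce \eqref{goal} to a single scalar $\eta$-integral that can be estimated dyadically via Theorems B and C. Concretely, I would first establish that, for every choice of operators in \eqref{goalZZZZZ},
$$Z_1\cdots Z_r\,\Gamma(x;y)=\int_{\R^p}\big(\widetilde{Z}_1\cdots\widetilde{Z}_r\Gamma_\G\big)\big((x,0)^{-1}\ast(y,\eta)\big)\,\d\eta,$$
where each $\widetilde{Z}_j$ is the left-invariant lift on $\G$ of the corresponding $X_i^x$ or $X_i^y$ (applied, in the mixed-variable case, to $\Gamma_\G\circ\iota$). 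For pure $y$-derivatives this follows by differentiating \eqref{sec.one:mainThm_defGamma22222} repeatedly under the integral sign and exploiting left-invariance of $\widetilde{X}_i$ on $\G$ (the vertical component $R_i$ of $\widetilde{X}_i$ contributes a total $\eta$-derivative that is absorbed by the integration); pure $x$-derivatives follow by the symmetry $\Gamma(x;y)=\Gamma(y;x)$ already granted by Theorem A, and the more delicate mixed case is the content of Lemma \ref{Thm 0}.

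Next, because $\Gamma_\G$ is smooth on $\G\setminus\{0\}$ and $D_\lambda$-homogeneous of degree $2-Q$, any iterated $\widetilde{X}$-derivative of order $r$ is $D_\lambda$-homogeneous of degree $2-Q-r$; a standard compactness argument on the unit $d_{\widetilde{X}}$-sphere then gives
$$\big|\widetilde{X}_{i_1}\cdots\widetilde{X}_{i_r}\Gamma_\G(z)\big|\leq C_r\,d_{\widetilde{X}}(z)^{2-Q-r},\qquad z\in\G\setminus\{0\},$$
with the same estimate for $\widetilde{X}_{i_1}\cdots\widetilde{X}_{i_r}(\Gamma_\G\circ\iota)$ since $\iota$ is a homogeneous diffeomorphism. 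Combined with the representation above, this produces the key reduction
$$\big|Z_1\cdots Z_r\,\Gamma(x;y)\big|\leq C_r\int_{\R^p}d_{\widetilde{X}}\big((x,0)^{-1}\ast(y,\eta)\big)^{2-Q-r}\,\d\eta.$$

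To bound this integral I would set $\rho:=d_X(x,y)$; by \eqref{eq.proiezionipalled} the integrand is supported on $\{d_{\widetilde{X}}\geq\rho\}$, so I decompose $\R^p$ into dyadic annuli $E_j:=\{\eta:2^j\rho\leq d_{\widetilde{X}}((x,0)^{-1}\ast(y,\eta))<2^{j+1}\rho\}$, $j\geq 0$. On $E_j$ the integrand is at most $(2^j\rho)^{2-Q-r}$, while \eqref{eq.SanchezI} together with \eqref{eq.misurapallehom} controls $|E_j|\leq c_1\omega_Q(2^{j+1}\rho)^Q/|B_X(x,2^{j+1}\rho)|$. Theorem B additionally yields $|B_X(x,2^{j+1}\rho)|\geq c\,2^{n(j+1)}|B_X(x,\rho)|$, because every index $k$ entering \eqref{eq.NSWmodificata} satisfies $k\geq n$; assembling these ingredients gives
$$\int_{E_j}d_{\widetilde{X}}^{2-Q-r}\,\d\eta\leq \frac{C\,2^{j(2-r-n)}\,\rho^{2-r}}{|B_X(x,\rho)|}.$$
The main obstacle is precisely the convergence of the resulting geometric series, which requires $2-r-n<0$: for $r\geq 1$ this is automatic from $n\geq 2$, but for $r=0$ it is exactly the standing hypothesis $n>2$ of the theorem, reflecting the logarithmic singularity of $\Gamma$ when $n=2$ (treated separately in Theorems \ref{thm.estimateGamman2}--\ref{thm.estimateGamman2bis}). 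Summing over $j\geq 0$ yields \eqref{goal}; finally, \eqref{eq.derGammavanish} follows from \eqref{goal} together with $|B_X(x,\rho)|\geq\gamma_1f_q\,\rho^q$ (Theorem B, since $f_q>0$ is constant), whence $|Z_1\cdots Z_r\,\Gamma(x;y)|\leq C\,d_X(x,y)^{2-r-q}\to 0$ as $|y|\to\infty$, because $q>2$ and $d_X(x,\cdot)$ is proper on $\R^n$.
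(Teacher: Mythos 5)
Your proposal is correct, and while the first half (the representation formulas of Lemma \ref{Thm 0}, the homogeneity bound $|\widetilde{X}_{i_1}\cdots\widetilde{X}_{i_r}\Gamma_\G|\leq C_r\,d_{\widetilde{X}}^{\,2-Q-r}$, and the reduction to $\int_{\R^p}d_{\widetilde{X}}^{\,2-Q-r}((x,0)^{-1}\ast(y,\eta))\,\d\eta$) coincides with the paper's Proposition \ref{Thm 00}, your treatment of that integral is genuinely different. The paper works first on a compact neighborhood $K$ of the origin, splits the integral into $\{|\eta|\geq\delta\}$ (handled by compactness and homogeneity, Proposition \ref{Prop 1}) and $\{|\eta|<\delta\}$ (handled by an \emph{inward} dyadic sum over annuli of radii $R_0/2^j$, $j=1,\dots,k$, reducing to the one-dimensional integral $\int_{d_X(x,y)}^{R_0}\rho^{\beta-1}|B_X(x,\rho)|^{-1}\d\rho$ in Lemma \ref{Lemma NSW}, then evaluated via Lemma \ref{Lemma nonintegrale}), and finally removes the restriction $x,y\in K$ by the homogeneity argument of Remark \ref{rem.usehomog}. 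You instead decompose all of $\R^p$ into \emph{outward} annuli $E_j$ starting at $\rho=d_X(x,y)$ and sum a geometric series; since \eqref{eq.SanchezI}, \eqref{eq.misurapallehom} and Theorem B are already stated globally, this yields \eqref{goal} for all $x\neq y$ in one stroke, with no far/near splitting and no local-to-global step, and it makes the role of the hypothesis $n>2$ (equivalently $2-r-n<0$ when $r=0$) completely transparent as the convergence condition of the series. What the paper's longer route buys is reusability: passing through $\int_a^{R_0}\rho^{\beta-1}|B_X(x,\rho)|^{-1}\d\rho$ produces, for $\beta=n$, precisely the logarithmic factor needed in the borderline case $n=2$, $r=0$ of Theorem \ref{thm.estimateGamman2}, whereas your series simply diverges there and one would have to rerun the computation keeping the finite number $k\approx\log_2(R_0/\rho)$ of contributing annuli to recover the $\log$. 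Two minor points of wording: the integrand is not literally ``supported'' on $\{d_{\widetilde{X}}\geq\rho\}$ — rather, \eqref{eq.proiezionipalled} guarantees that the whole domain $\R^p$ is covered by $\bigcup_{j\geq0}E_j$, which is what your computation actually uses; and in the final step the passage from $|y|\to\infty$ to $d_X(x,y)\to\infty$ deserves a word (it follows since $d_X$ induces the Euclidean topology and, by Theorem B, $d_X$-balls have finite volume hence are bounded), though the paper is equally terse on this point.
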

 The estimate \eqref{goal} results from an integral representation of
 $Z_1\cdots Z_r\Gamma(x;y)$ (see precisely Lemma \ref{Thm 0}), which seems to have
 an interest in its own.

 The case $n = 2$, which is not comprised in Theorem \ref{thm.estimatesDERIVgamma},
 will be investigated in Section \ref{sec:casen2}.
 \begin{remark} \label{rem.simmetriafrac}
  The function $H(x,y)$ in the right-hand side of
  \eqref{goal} is not symmetric (in $x,y$) as it stands; however, one can recognize that
  it is equivalent (up to a structural constant) to the function $H(y,x)$. Indeed, one has
  the following computation based on the doubling inequality \eqref{globaldoubling}
  (and on the trivial inclusion $B_X(y,d_X(x,y))\subseteq B_X(x,2\,d_X(x,y))$):
  \begin{align*}
    H(x,y) & = \frac{d_{X}(x,y)^{2-r}}
   {\big\vert B_{X}(x,d_{X}(x,y)) \big\vert } = \frac{d_{X}(y,x)^{2-r}}
   {\big\vert B_{X}(x,d_{X}(y,x)) \big\vert } \leq C_d\,\frac{d_{X}(y,x)^{2-r}}
   {\big\vert B_{X}(y,d_{X}(y,x)) \big\vert } = C_d\,H(y,x).
  \end{align*}
 \end{remark}
 The proof of Theorem  \ref{goal} is long and requires several preliminary results, the first
 of which is the following lemma, where the role of mixed derivatives is unexpectedly delicate.
\begin{lemma}\label{Thm 0}
  For any $s,t\geq 1$, and any
  choice of $i_1,\ldots,i_s,j_1,\ldots,j_t\in \{1,\ldots,m\}$, we have the following
  representation formulas \emph{(}holding true for $x\neq y$ in $\R^n$\emph{)}:
\begin{align}
  & X^y_{i_1}\cdots X^y_{i_s} \big(\Gamma(x;\cdot)\big)(y)=
   \int_{\R^p}
   \Big(\widetilde{X}_{i_1}\cdots \widetilde{X}_{i_s}\Gamma_\G \Big)\Big((x,0)^{-1}
   *(y,\eta)\Big) \,\d\eta\,;\label{deriyyyyy}\\[0.2cm]
   & X^x_{j_1}\cdots X^x_{j_t} \big(\Gamma(\cdot;y)\big)(x)=
   \int_{\R^p}
   \Big(\widetilde{X}_{j_1}\cdots
   \widetilde{X}_{j_t}\Gamma_\G \Big)
   \Big((y,0)^{-1}*(x,\eta)\Big) \,\d\eta\,;\label{derixxxx0}\\[0.2cm]
   & X^x_{j_1}\cdots X^x_{j_t}X^y_{i_1}\cdots X^y_{i_s} \Gamma(x;y)\label{derixxxxyyyy0}  \\
   &\qquad \qquad=
   \int_{\R^p}
   \bigg(\widetilde{X}_{j_1}\cdots \widetilde{X}_{j_t}\Big(\big(\widetilde{X}_{i_1}
   \cdots \widetilde{X}_{i_s}\Gamma_\G \big)\circ\iota\Big)\bigg)
   \Big((y,0)^{-1}*(x,\eta)\Big) \,\d\eta\,.\nonumber
\end{align}
 Here $\iota$ denotes the inversion map on the Lie group $\G$; moreover,
 $\widetilde{X}_1,\ldots,\widetilde{X}_m$ are lifting vector fields of
 ${X}_1,\ldots,{X}_m$ as in Theorem A.
\end{lemma}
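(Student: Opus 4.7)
The plan is to prove the three formulas in sequence: \eqref{deriyyyyy} via differentiation under the integral, \eqref{derixxxx0} by the symmetry of $\Gamma$, and \eqref{derixxxxyyyy0} by combining the two with the aid of the group inversion $\iota$. The common tool is an identity that converts $X^{y}_{i}$ applied to $\Gamma_{\G}((x,0)^{-1}\ast(y,\eta))$ into $\widetilde{X}_{i}$ applied to the same expression, modulo an $\eta$-divergence that vanishes upon integration.

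\textbf{Formulas \eqref{deriyyyyy} and \eqref{derixxxx0}.} Starting from $\Gamma(x;y)=\int_{\R^{p}}\Gamma_{\G}((x,0)^{-1}\ast(y,\eta))\,\d\eta$ (Theorem A), I argue by induction on $s$. The lifting relation $\widetilde{X}_{i}=X_{i}+R_{i}$, with $R_{i}$ operating only in $\xi$, combined with left-invariance of $\widetilde{X}_{i}$ on $\G$, gives the pointwise identity
\begin{equation*}
 X^{y}_{i}\,\Gamma_{\G}\bigl((x,0)^{-1}\ast(y,\eta)\bigr)=(\widetilde{X}_{i}\Gamma_{\G})\bigl((x,0)^{-1}\ast(y,\eta)\bigr)-R_{i}(y,\eta)\cdot\nabla_{\eta}\Gamma_{\G}\bigl((x,0)^{-1}\ast(y,\eta)\bigr).
\end{equation*}
Integration by parts in $\eta$ turns the last summand into $\int_{\R^{p}}(\mathrm{div}_{\eta}R_{i})\,\Gamma_{\G}(\cdots)\,\d\eta$, and this vanishes because $\mathrm{div}_{\eta}R_{i}\equiv 0$: indeed $\widetilde{X}_{i}$ is divergence-free on $\R^{N}$ (being left-invariant on the Carnot group $\G$, whose Haar measure is Lebesgue), and $X_{i}$ is divergence-free on $\R^{n}$ (by the formal self-adjointness of $\LL$ recalled after Theorem A). The $s=1$ case of \eqref{deriyyyyy} follows; iteration on $s$ gives the full formula, justified by homogeneity-based decay estimates on iterated $\widetilde{X}$-derivatives of $\Gamma_{\G}$. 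Formula \eqref{derixxxx0} is then an immediate consequence of the symmetry $\Gamma(x;y)=\Gamma(y;x)$ from Theorem A, which yields the alternative representation $\Gamma(x;y)=\int_{\R^{p}}\Gamma_{\G}((y,0)^{-1}\ast(x,\eta))\,\d\eta$ to which the same argument applies with $x$ and $y$ interchanged.

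\textbf{Formula \eqref{derixxxxyyyy0}.} Set $F_{I}:=\widetilde{X}_{i_{1}}\cdots\widetilde{X}_{i_{s}}\Gamma_{\G}$. Using the group identity $F(z^{-1}\ast w)=(F\circ\iota)(w^{-1}\ast z)$, I recast \eqref{deriyyyyy} as
\begin{equation*}
 X^{y}_{i_{1}}\cdots X^{y}_{i_{s}}\Gamma(x;y)=\int_{\R^{p}}(F_{I}\circ\iota)\bigl((y,\eta)^{-1}\ast(x,0)\bigr)\,\d\eta,
\end{equation*}
so that $x$ now sits in the second slot of the group element. Applying the same differentiation-under-the-integral mechanism in the variable $x$, the lifted-field identity at $(x,0)$ produces a main term $\int_{\R^{p}}(\widetilde{X}_{j}(F_{I}\circ\iota))((y,\eta)^{-1}\ast(x,0))\,\d\eta$ together with a correction equal to $R_{j}(x,0)$ dotted with the $\xi$-gradient, at $\xi=0$, of $\int_{\R^{p}}(F_{I}\circ\iota)((y,\eta)^{-1}\ast(x,\xi))\,\d\eta$. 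By inversion this last integral equals $\int_{\R^{p}}F_{I}((x,\xi)^{-1}\ast(y,\eta))\,\d\eta$, and it is independent of $\xi$: writing $(x,\xi)=(x,0)\ast(0,\xi)$ and using that left translation by $(0,-\xi)$ preserves the first coordinate of $\G$ and shifts the second by $-\xi$, a Jacobian-one change of variable in $\eta$ reduces it to $\int_{\R^{p}}F_{I}((x,0)^{-1}\ast(y,\eta))\,\d\eta$. Thus the correction vanishes. Iterating in $j$ and finally performing the Jacobian-one change of variable $\eta\mapsto-\eta$---which converts $(y,\eta)^{-1}\ast(x,0)$ into $(y,0)^{-1}\ast(x,\eta)$ on the common coset of $\{0\}\times\R^{p}$ they parametrize---yields \eqref{derixxxxyyyy0}.

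\textbf{Main obstacle.} The delicate points lie in the mixed case, where both the vanishing of the $R_{j}$-correction and the final change-of-variable rest on structural properties of Folland's lifted Carnot group $\G$---in particular the factorization $(x,\xi)=(x,0)\ast(0,\xi)$ and the fact that left translations by elements of $\{0\}\times\R^{p}$ preserve the first coordinate and act as $\eta\mapsto\eta+\mathrm{const}$ on the second. Both are built into the construction of $\G$ underlying Theorem A together with the bi-invariance of Lebesgue measure as Haar measure on $\G$. Once these structural facts are secured, the mixed-derivative case reduces to a careful iteration of the differentiation-under-integral identity used for \eqref{deriyyyyy}.
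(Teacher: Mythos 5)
Your treatment of \eqref{deriyyyyy} and \eqref{derixxxx0} is essentially the paper's: write $\widetilde{X}_i=X_i+R_i$, use left-invariance of $\widetilde{X}_i$, and kill the $R_i$-correction by integration by parts in $\eta$ (the paper observes that each coefficient $\alpha_{i,j}$ of $R_i$ is $D_\lambda$-homogeneous of degree $\tau_j-1$ and hence independent of $\eta_j$, giving term-by-term exactness; your divergence-free argument is an equivalent route, though the decay of the boundary terms still has to be checked as the paper does). The genuine gap is in the mixed formula \eqref{derixxxxyyyy0}, where the structural identities you invoke are false for Folland's lifted group. The factorization $(x,\xi)=(x,0)\ast(0,\xi)$ would hold if $\G$ were a direct product of $\R^n$ and $\R^p$, but it is not: in the lifted group of Example \ref{exa.grushin} one has $(x_1,x_2,0)\ast(0,0,\xi)=(x_1,\,x_2+x_1\xi,\,\xi)\neq(x_1,x_2,\xi)$, and in fact $(x,0)^{-1}\ast(x,\xi)$ is not of the form $(0,\star)$ at all. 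Likewise, the final substitution converting $(y,\eta)^{-1}\ast(x,0)$ into $(y,0)^{-1}\ast(x,\zeta)$ is not $\zeta=-\eta$ in general (that happens in the Grushin case only because all the $\tau_j$'s coincide); it is a nontrivial triangular polynomial map. So both the vanishing of the $R_j$-correction and the closing change of variable are, as written, unjustified.

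What your argument actually needs, and what the paper isolates as Lemma \ref{lem.tecnico}, is the existence of a polynomial diffeomorphism $\zeta\mapsto\Phi_{x,y}(\zeta)$ of $\R^p$ with Jacobian $\pm1$ such that
\begin{equation*}
 (x,0)^{-1}\ast\bigl(y,\Phi_{x,y}(\zeta)\bigr)=(x,\zeta)^{-1}\ast(y,0).
\end{equation*}
This is not "built into the construction of $\G$": its proof requires (a) the triangular structure of the last $p$ components of the group law, exploiting that the exponents $\tau_1\leq\cdots\leq\tau_p$ are increasingly ordered, to obtain the unit Jacobian, and (b) the flow identity $\Pi\bigl(a\bullet(-b)\bullet c\bigr)=\Phi_1^{c\cdot X}\bigl(\Phi_1^{-b\cdot X}(\Phi_1^{a\cdot X}(0))\bigr)$ from Remark \ref{rem.costruzioneG} together with the Baker--Campbell--Hausdorff theorem for ODEs, to show that the first $n$ components of $(x,0)\ast(x,\zeta)^{-1}\ast(y,0)$ equal $y$. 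Once this lemma is supplied, your scheme for \eqref{derixxxxyyyy0} (move the fiber variable onto the pole, invert, then differentiate in $x$ by the mechanism of Step 1) coincides with the paper's proof; without it, the mixed case does not close.
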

 Whereas \eqref{derixxxx0} follows from \eqref{deriyyyyy} and from the symmetry of $\Gamma$,
 the representation \eqref{derixxxxyyyy0} of the \emph{mixed}
 derivatives is more delicate and it requires a suitable change of variable
 argument. This extra work is motivated by the investigation on singular integrals carried out in Section \ref{sec:meanvalue}.
 \begin{proof}
 We split the proof in four parts: (I) contains a general argument in order to
 pass one vector field $Z$ (indifferently operating in $x$ or $y$) under $\int_{\R^p} g(x,y,
 \eta)\,\d \eta $,
 for a suitable homogeneous $g$; next (II)-to-(IV) contain the proofs of
 \eqref{deriyyyyy}-to-\eqref{derixxxxyyyy0}.\medskip

 (I) Let us consider the following families of dilations:
\begin{gather} \label{eq.dilatazioniEFG}
\begin{split}
 & \text{$D_\lambda(x,\xi)=(\delta_\lambda(x),E_\lambda(\xi))$
 \quad on $\R^n_x\times\R^p_\xi$\,\,\,as in \eqref{eq.dilatazioniG}};\\
 & F_\lambda(x,y,\eta):= \Big(\dela(x),
 \delta_\lambda(y),E_\lambda(\eta)\Big)\quad
 \text{on $\R^n_x\times \R^n_y\times \R^p_\eta$};\\
 & G_\lambda(x,y):=\big(\dela(x),\dela(y)\big)\quad \text{on $\R^n_x\times \R^n_y$}.
 \end{split}
\end{gather}
 Let $\Omega:=\{(x,y,\eta)\in \R^{n}\times \R^{n}\times \R^p\,:\,(x,0)\neq (y,\eta)\}$, and
 suppose $g\in C^\infty(\Omega)$ is homogeneous of degree $\alpha<q-Q$ with respect to $F_\lambda$.
 Let $Z$ be any smooth vector field in the $(x,y)$-variables, homogeneous of degree $m>0$
 with respect to $G_\lambda$.
 Then, the following facts hold:
 \begin{enumerate}
   \item[(i)] for any fixed $(x,y)\in \R^{n}\times \R^{n}$ with $x\neq y$, the map $\eta\mapsto g(x,y,\eta)$
   belongs to $L^1(\R^p)$;
   \item[(ii)] $Z$ can pass under the integral sign as follows
 \begin{equation} \label{eq.ZpassIntegral}
  Z\bigg\{(x,y)\mapsto \int_{\R^p} g(x,y,\eta)\,\d\eta\bigg\}=
  \int_{\R^p} Z\Big\{(x,y)\mapsto g(x,y,\eta)\Big\}\,\d\eta,\qquad \text{for $x\neq y$.}
  \end{equation}
 \end{enumerate}
 We prove (i). Let us fix $x_0,y_0\in\R^{n}$ such that $x_0\neq y_0$ and let
 $S,\,N$ be the homogeneous norms (with respect to $\delta_\lambda$ and $E_\lambda$, respectively)
 \begin{equation} \label{eqSedNdausare}
  \textstyle S(x):=\sum_{j=1}^n |x_j|^{1/\sigma_j}\quad \text{and} \quad
 N(\eta):=\sum_{j=1}^p |\eta_j|^{1/\tau_j}.
 \end{equation}
 Since, obviously,
 $\eta\mapsto g(x_0,y_0,\eta)$ belongs to $L^1_{\loc}(\R^p)$, assertion (i) will follow if we prove that
 $$\int_{\{N > 1\}}g(x_0,y_0,\eta)\,\d\eta < \infty.$$
 To this end, we first choose $\rho_0 > 0$ in such a way that $x_0,y_0\in\{S(x)\leq \rho_0\}$ and we observe that,
 since the set $K:= \{x:S(x)\leq \rho_0\}\times\{y:S(y)\leq \rho_0\} \times\{\eta:N(\eta) = 1\}$ is compact and contained in $\Omega$,
 there exists $C > 0$ such that
 \begin{equation} \label{eq.boundgcompact}
  |g(x,y,\eta)|\leq C \qquad\text{for every $(x,y,\eta)\in K$}.
  \end{equation}
 On the other hand, if $\eta\in\R^p$ is such that $N(\eta) > 1$ and if we set $\lambda := 1/N(\eta)\in (0,1)$,
 it is readily seen that $F_\lambda(x_0,y_0,\eta)\in K$;
 thus, by \eqref{eq.boundgcompact} and the $F_\lambda$-homogeneity of $g$, we get
 $$|g(x_0,y_0,\eta)| \leq C\,N(\eta)^{\alpha} \quad \text{for every $\eta\in\R^p$ with $N(\eta) > 1$}.$$
 Since $\alpha < q-Q$, we conclude that $\eta\mapsto g(x_0,y_0,\eta)$
 is integrable on $\{N > 1\}$, as a simple homogeneity argument shows (see e.g., \cite[eq.\,(5.14)]{BB}); here one
 also exploits the fact that the $E_\lambda$-homogeneous dimension is $\sum_{j=1}^p\tau_j=Q-q$.

 We prove (ii). We show that, if $Z$ is as above, then
 $\Phi(x,y,\eta):= Z\{(x,y)\mapsto g(x,y,\eta)\}$ is $\eta$-integrable in $\R^p$
 (for any $x\neq y$ in $\R^{n}$).
 To this end we observe that, if we think of $Z$ as a vector field
 defined on $\R^{n}_x\times \R^{n}_y\times \R^p_\eta$ but acting only in the $(x,y)$ variables
 (and not on $\eta$), then
 $Z$ is $F_\lambda$-homogeneous of degree $m$; as a consequence,
 $\Phi$ is $F_\lambda$-homogeneous of degree $\alpha - m$.
 Since, by assumption, $m > 0$ and $\alpha < q-Q$, we derive from statement (i) that
 $\Phi(x,y,\cdot)$  belongs to $L^1(\R^p)$ for every $x\neq y$ in $\R^{n}$.
   We now prove \eqref{eq.ZpassIntegral} with a dominated-convergence argument.

   To this aim, we write
   $$\int_{\R^p}|\Phi(x,y,\eta)|\,\d\eta =
   \int_{\{N(\eta)\leq 1\}}|\Phi(x,y,\eta)|\,\d\eta +
   \int_{\{N(\eta) > 1\}}|\Phi(x,y,\eta)|\,\d\eta.$$
   We fix $x_0\neq y_0$ in $\R^{n}$ and we provide integrable dominant
   functions for both the above integrals,
   independent of
   $(x,y)$ near $(x_0,y_0)$. As for the first integral,
   we choose $r > 0$ in such a way that
   $\overline{B(x_0,r)}\cap\overline{B(y_0,r)}=\varnothing$ and we set $K :=
    \overline{B(x_0,r)}\times\overline{B(y_0,r)}\times\{N\leq 1\}$. By the choice of $r$,
    we see that $K$ is a compact subset of $\Omega$; thus,
    there exists $C > 0$ such that
    $$|\Phi(x,y,\eta)|\leq C \quad
    \text{for every $(x,y,\eta)\in K$}.$$
    As for the second integral, we argue as in the proof of the previous statement (i):
    if $\rho_0 > 0$ is such that $x_0,y_0\in\{S(x)\leq \rho_0\}$,
    from the $F_\lambda$-homogeneity
    of $\Phi$ we infer the existence of $C' > 0$ such that
    $$|\Phi(x,y,\eta)| \leq C'\,N(\eta)^{\alpha-m} \quad
     \text{for every $x,y\in\{S\leq \rho_0\}$ and every $\eta\in \{N > 1\}$};$$
    since
    $\alpha-m <\alpha < q-Q$, the function $N^{\alpha-m}$ is integrable
    on $\{N > 1\}$.\medskip

  (II) We prove \eqref{deriyyyyy}: for any $s\geq1$, a repeated application of (I) shows that
\begin{equation} \label{sec.one:mainThm_defGammakkkkkkk}
 \begin{split}
   & X^y_{i_1}\cdots X^y_{i_s} \big(\Gamma(x;\cdot )\big)(y)=
   \int_{\R^p}
    X^y_{i_1}\cdots X^y_{i_s}
 \bigg\{y\mapsto
 \Gamma_\G \Big((x,0)^{-1}*(y,\eta)\Big)
 \bigg\}
    \,\d\eta \qquad (\text{for $x\neq y$}).
 \end{split}
\end{equation}
  We claim that in the above right-hand side it is legitimate to replace the vector fields $X^y_i$
  with their lifted $\widetilde{X}^{(y,\eta)}_i$. Indeed, this will follow upon an inductive argument based
  on the next fact: if $h$ is smooth on $\RN\setminus\{0\}$, and it is $D_\lambda$-homogeneous of degree $<q-Q$, then
\begin{equation} \label{sec.one:mainThm_defGammakkkkkkkeq1}
 \begin{split}
   & \int_{\R^p}
 X^y_{i}
   \bigg\{y\mapsto
   h\Big((x,0)^{-1}*(y,\eta)\Big) \bigg\}
    \,\d\eta
   =
   \int_{\R^p}
    \big(\widetilde{X}_{i}h\big)\Big((x,0)^{-1}*(y,\eta)\Big)
    \,\d\eta.
 \end{split}
\end{equation}
 This will inductively prove \eqref{sec.one:mainThm_defGammakkkkkkk}, since
 $\Gamma_\G , \widetilde{X}_{i_1}\Gamma_\G ,\widetilde{X}_{i_1}\widetilde{X}_{i_2}\Gamma_\G ,\ldots$
 are smooth out of $0$ and $D_\lambda$-ho\-mo\-ge\-neous of degrees $2-Q, 1-Q, -Q,\ldots$, respectively. Thus, we turn to show
 \eqref{sec.one:mainThm_defGammakkkkkkkeq1}. To this end, by (I), both integrals in \eqref{sec.one:mainThm_defGammakkkkkkkeq1} are finite.
 Moreover, we remind that
\begin{equation*} %\label{sec.one:mainThm_defGammakkkkkkkeq2}
 \widetilde{X}^{(y,\eta)}_i=X^y_i+R_i,\qquad \text{with $R_i=\sum_{j=1}^p \alpha_{i,j}(y,\eta)\,\frac{\de}{\de \eta_j}$,}
\end{equation*}
  where $\alpha_{i,j}$ is smooth and $D_\lambda$-homogeneous of degree $\tau_j-1$; in particular $\alpha_{i,j}$ does not depend on $\eta_j$.
  Now, by exploiting the left-invariance of $\widetilde{X}_{i}$ on $\G = (\RN,*)$ we have
\begin{align*}
   &\int_{\R^p}
 X_i^y
   \bigg\{y\mapsto
   h\Big((x,0)^{-1}*(y,\eta)\Big) \bigg\}
    \,\d\eta=
    \int_{\R^p}
 \Big(\widetilde{X}^{(y,\eta)}_i-R_i\Big)
   \bigg\{(y,\eta)\mapsto
   h\Big((x,0)^{-1}*(y,\eta)\Big) \bigg\}
    \,\d\eta\\
    &=
    \int_{\R^p}
 \widetilde{X}^{(y,\eta)}_i
   \bigg\{(y,\eta)\mapsto
   h\Big((x,0)^{-1}*(y,\eta)\Big) \bigg\}
    \,\d\eta
    -
    \int_{\R^p}
 R_i
   \bigg\{\eta\mapsto
   h\Big((x,0)^{-1}*(y,\eta)\Big) \bigg\}
    \,\d\eta \\
    & = \int_{\R^p}
    \big(\widetilde{X}_{i}h\big)\Big((x,0)^{-1}*(y,\eta)\Big)
    \,\d\eta -
    \int_{\R^p}
 R_i
   \bigg\{\eta\mapsto
   h\Big((x,0)^{-1}*(y,\eta)\Big) \bigg\}
    \,\d\eta.
        \end{align*}
  This will imply \eqref{sec.one:mainThm_defGammakkkkkkkeq1} once we prove that the second integral
  in the far right-hand side is null. The last statement is a consequence of the following
  computation:
  \begin{align*}
   & \int_{\R^p}
 R_i
   \bigg\{\eta\mapsto h\Big((x,0)^{-1}*(y,\eta)\Big) \bigg\}
    \,\d\eta = \sum_{j=1}^p
    \int_{\R^p}\alpha_{i,j}(y,\eta)\,\frac{\de}{\de \eta_j}
    \bigg\{h\Big((x,0)^{-1}*(y,\eta)\Big) \bigg\}\,\d\eta \\
    & = \sum_{j=1}^p
    \int_{\R^p}\frac{\de}{\de \eta_j}\bigg\{\alpha_{i,j}(y,\eta)\,
    h\Big((x,0)^{-1}*(y,\eta)\Big) \bigg\}\,\d\eta = 0;
  \end{align*}
  the second equality is a consequence of the fact that $\alpha_{i,j}$ does not depend on $
  \eta_j$, while the
  last equality derives from
  $$\lim_{\eta_j\to\pm\infty}\alpha_{i,j}(y,\eta)\,
    h\Big((x,0)^{-1}*(y,\eta)\Big) =
    \alpha_{i,j}(y,\eta)\,\lim_{\eta_j\to\pm\infty}h\Big((x,0)^{-1}*(y,\eta)\Big) = 0.$$
    The latter fact follows from $\lim_{|z|\to\infty}h(z) = 0$ (a consequence of
    the $D_\lambda$-homogeneity of $h$ of negative degree), and since
    $(x,0)^{-1}*(y,\eta)$ goes to infinity as $|\eta_j|$ diverges.
\medskip

  (III) We prove \eqref{derixxxx0}. First, since $\Gamma$ is symmetric (see Theorem A) we have
  $$\Gamma(x,y) = \int_{\R^p}\Gamma_\G \Big((x,0)^{-1}*(y,\eta)\Big)\,\d \eta
  = \int_{\R^p}\Gamma_\G \Big((y,0)^{-1}*(x,\eta)\Big)\,\d \eta.$$
  By virtue of this last representation of $\Gamma$, we can argue exactly
  as in Step II: for every choice of indexes
  $j_1,\ldots,j_t\in\{1,\ldots,m\}$ one has
  \begin{align*}
   X^x_{j_1}\cdots X^x_{j_t} \big(\Gamma(\cdot;y)\big)(x)=
   \int_{\R^p}
   \big(\widetilde{X}_{j_1}\cdots \widetilde{X}_{j_t}\Gamma_\G \big)
   \Big((y,0)^{-1}*(x,\eta)\Big)\,\d \eta.
  \end{align*}

 (IV) We finally prove \eqref{derixxxxyyyy0}. By Step II,
 for any $i_1,\ldots,i_s\in\{1,\ldots,m\}$
 one gets
 $$X^y_{i_1}\cdots X^y_{i_s}\big\{y\mapsto\Gamma(x;y)\big\}
 = \int_{\R^p}
   \Big(\widetilde{X}_{i_1}\cdots \widetilde{X}_{i_s}\Gamma_\G \Big)\Big((x,0)^{-1}
   *(y,\eta)\Big) \,\d\eta;$$
  then, we apply the change of variable
  $\eta = \Phi_{x,y}(\zeta)$ in Lemma \ref{lem.tecnico}, obtaining
  (by \eqref{eq.tousechangeofvariables})
  \begin{align*}
   X^y_{i_1}\cdots X^y_{i_s}\big\{y\mapsto\Gamma(x;y)\big\}
  & = \int_{\R^p}
   \Big(\widetilde{X}_{i_1}\cdots \widetilde{X}_{i_s}\Gamma_\G \Big)\Big((x,\zeta)^{-1}
   *(y,0)\Big)\,\d \zeta \\[0.15cm]
  & = \int_{\R^p}
   \Big(\big(\widetilde{X}_{i_1}\cdots \widetilde{X}_{i_s}\Gamma_\G \big)
   \circ\iota\Big)\Big((y,0)^{-1}
   *(x,\zeta)\Big) \,\d \zeta.
   \end{align*}
   On account of
   an analogous $x$-derivative formulation
   of \eqref{sec.one:mainThm_defGammakkkkkkkeq1}
   (with $h = (\widetilde{X}_{i_1}\cdots \widetilde{X}_{i_s}\Gamma_\G )
   \circ\iota$),
   for every choice of $j_1,\ldots, j_t\in\{1,\ldots,m\}$ we obtain
   $$X^x_{j_1}\cdots X^x_{j_t}X^y_{i_1}\cdots X^y_{i_s} \Gamma(x;y)
   =
   \int_{\R^p}
   \bigg(\widetilde{X}_{j_1}\cdots \widetilde{X}_{j_t}\Big(\big(\widetilde{X}_{i_1}
   \cdots \widetilde{X}_{i_s}\Gamma_\G \big)\circ\iota\Big)\bigg)
   \Big((y,0)^{-1}*(x,\zeta)\Big) \,\d \zeta.$$
   This is precisely \eqref{derixxxxyyyy0}, and the proof is complete.
\end{proof}
 In the previous proof we used the technical Lemma \ref{lem.tecnico} concerning
 the operation $*$. First we need to closely scrutinize the construction of the lifting group in Theorem A:
\begin{remark} \label{rem.costruzioneG}
 In the sequel, we shall need to invoke the explicit construction
 of the group $\G$ in point (1) of Theorem A, which is now in order;
 for all the details, see \cite{BB}.

 First of all, since $X_1,\ldots,X_m$ are $\dela$-homogeneous
 of degree $1$, then $\mathfrak{a} :=
 \mathrm{Lie}(X)$ is nilpotent; as a consequence, if
 $\diamond$ denotes the Baker-Campbell-Hausdorff series
 on $\mathfrak{a}$ (boiling down to a finite sum), it is well known that
 $(\mathfrak{a},\diamond)$ is a Lie group (whose inversion
 map is $X\mapsto -X$). Moreover, as
 $X_1,\ldots,X_m$ are linearly independent in $\mathfrak{a}$,
 we can choose a basis for $\mathfrak{a}$ as
 $$\mathcal{E} = \{X_1,\ldots,X_m,X_{m+1},\ldots,X_N\}.$$
 We can equip $\RN$ with a Lie group structure $(\RN,\bullet)$
 by reading $\diamond$ in $\mathcal{E}$-coordinates, i.e.,
 \begin{equation} \label{eq.defiBullet}
 \txt\sum_{j = 1}^N(a\bullet b)_j X_j = \Big(\sum_{j = 1}^N
 a_jX_j\Big)\diamond\Big(\sum_{j = 1}^N
 b_jX_j\Big), \quad \text{for every $a,b\in\RN$}.
 \end{equation}
 For a future use, we set $a\cdot X := \sum_{j = 1}^Na_jX_j$
 for any $a\in\RN$. Folland \cite{Folland} proved that
 the map
 \begin{equation} \label{eq.defiPiFolland}
  \Pi:\RN\longto\R^n, \qquad
 \Pi(a) := \Phi^{a\cdot X}_1(0)
 \end{equation}
 is surjective; here, given a vector field $Y\in\mathfrak{a}$,
 we denote by $\Phi^Y_t(z)$ the flow of $Y$ at time $t$ starting
 from $z$ at $t = 0$. In \cite{BB} it is proved that there always
 exist indexes $j_1,\ldots,j_p\in\{1,\ldots,N\}$ such that
 \begin{equation} \label{eq.definizioneT}
  T:\RN\longto\RN, \qquad T(a) := \big(\Pi(a),a_{j_1},\ldots,a_{j_p}\big)
  \end{equation}
 is a smooth diffeomorphism. Finally, the group $\G = (\RN,*)$ is
 obtained as follows:
 \begin{equation*}% \label{eq.operazionestella}
  z*z' := T\Big(T^{-1}(z)\bullet T^{-1}(z')\Big), \qquad\text{for every
 $z,z'\in\RN$}.
 \end{equation*}
 It can also be proved that $\G$ is a homogeneous group with
 dilations
 \begin{equation} \label{eq.dilatazioniG}
  D_\lambda:\RN = \R^n\times\R^p
 \longto \RN, \qquad
 D_\lambda(x,\xi)=(\delta_\lambda(x),E_\lambda(\xi)),
 \end{equation}
 where $E_\lambda(\xi)=(\lambda^{\tau_1}\xi,\ldots,\lambda^{\tau_p}\xi_p)$
 for suitable integers $1\leq \tau_1\leq \cdots \leq \tau_p$.
 We point out that the exponents of $D_\lambda$ are not increasingly ordered (but this is true of the
 $\sigma_i$'s and of $\tau_j$'s separately). This will cause some subtleties
 in handling the group structure of $\G$ (see, e.g., Lemma \ref{lem.tecnico}).
\end{remark}
 The proof of the next lemma is quite delicate, due to the
 lack of ordering of the exponents of the dilation $D_\lambda$ of $\G$.
 \begin{lemma} \label{lem.tecnico}
  For every $x,y\in\R^n$ and every $\eta\in\R^p$, we set
  $$\theta(x,y,\eta) := (x,0)*(x,\eta)^{-1}*(y,0).$$
  Denoting by $\pi$ the projection of $\R^n\times\R^p$ onto $\R^p$, the map
  $$\eta\mapsto \Phi_{x,y}(\eta) := \pi(\theta(x,y,\eta))$$
  is a smooth diffeomorphism of $\R^p$ \emph{(}for any fixed $x,y\in\R^n$\emph{)}
  whose Jacobian determinant is $\pm 1$. Furthermore,
  the following identity holds:
  \begin{equation} \label{stefanoid}
    (x,0)^{-1}*(y,\Phi_{x,y}(\zeta )) = (x,\zeta )^{-1}*(y,0),
    \quad \text{for every $x,y\in\R^n$ and every $\zeta \in\R^p$}.
  \end{equation}
  In particular,
  $y_1,\ldots,y_n$ are the first
  $n$ components of $\theta(x,y,\eta)$.
  \end{lemma}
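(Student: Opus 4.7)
The plan is to prove the lemma in three stages, built around a single \emph{projectability} principle for the group law on $\G$.

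The key structural fact to be established first is that for every $h\in\G$, the $n$-tuple $\pi_n((x,\xi)*h)$ depends only on $x$ and on $h$, not on $\xi$. To check this, one may argue by induction on the length of iterated commutators of $\widetilde X_1,\dots,\widetilde X_m$: using the splitting $\widetilde X_i = X_i + R_i$ (with $R_i$ purely vertical), a short computation shows that every such commutator—and hence every element of $\mathrm{Lie}(\G)$, since the Lie algebra is generated by the $\widetilde X_i$'s—has a horizontal ($x$-direction) component depending only on $x$. Because left-multiplication by $(x,\xi)$ in $\G$ is the time-$1$ flow of the left-invariant vector field $Y$ with $\exp(Y)=h$, projectability yields $\pi_n((x,\xi)*h) = \Phi^{\bar Y}_1(x)$, with $\bar Y$ the projected vector field on $\R^n$; this depends only on $x$ and $h$. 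Applying this with $h := (x,\eta)^{-1}*(y,0)$ at the two values $\xi=\eta$ (where $(x,\eta)*h=(y,0)$) and $\xi=0$ (which gives $\theta(x,y,\eta)$) forces $\pi_n(\theta(x,y,\eta))=y$, so $\theta(x,y,\eta)=(y,\Phi_{x,y}(\eta))$, which is the last assertion of the lemma. The identity \eqref{stefanoid} then follows immediately upon left-multiplying this equation by $(x,0)^{-1}$.

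For the diffeomorphism property and the Jacobian, the plan is as follows. Applying the same projectability argument with the roles of $x$ and $y$ interchanged gives $\theta(y,x,\xi)=(x,\Phi_{y,x}(\xi))$; taking the group inverse of the identity $\theta(x,y,\eta)=(y,\Phi_{x,y}(\eta))$ and simplifying algebraically yields $\Phi_{y,x}\circ\Phi_{x,y}=\mathrm{id}_{\R^p}$, so $\Phi_{x,y}$ is a smooth diffeomorphism of $\R^p$ with smooth inverse $\Phi_{y,x}$. For the Jacobian, the $D_\lambda$-equivariance of $\theta$ (which follows from $D_\lambda$ being a group automorphism of $\G$) translates into
\[
\Phi_{\delta_\lambda x,\,\delta_\lambda y}(E_\lambda\,\eta) \;=\; E_\lambda\,\Phi_{x,y}(\eta),
\]
and differentiating in $\eta$ and taking determinants (the $E_\lambda$ factors cancel by conjugation) shows that $\det(d\Phi_{x,y}|_\eta)$ is invariant under $(x,y,\eta)\mapsto(\delta_\lambda x,\delta_\lambda y,E_\lambda\eta)$. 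Letting $\lambda\to 0^+$ and invoking the joint smoothness of $\Phi$, the Jacobian must equal $\det(d\Phi_{0,0}|_0)$. Now $\theta(0,0,\eta)=(0,\eta)^{-1}$; the $T$-coordinate construction of $\G$ forces the $\xi$-part of $(0,\eta)^{-1}$ to be exactly $-\eta$ (since $(-T^{-1}(0,\eta))_{j_k}=-\eta_k$), while the projectability principle applied to $(0,\eta)*(0,\eta)^{-1}=e$ forces its $x$-part to vanish. Hence $\Phi_{0,0}(\eta)=-\eta$ and the Jacobian is the constant $(-1)^p$.

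The main obstacle will be making the Jacobian step rigorous: although the $D_\lambda$-argument is conceptually clean, it requires the joint smoothness of $\Phi_{x,y}(\eta)$ in all three variables (so that the limit $\lambda\to 0^+$ can be taken), together with the explicit identification $(0,\eta)^{-1}=(0,-\eta)$. The latter relies on the concrete form of the Folland coordinates $T$ for the vertical part and on the projectability principle for the horizontal part. The projectability principle itself, though structurally central, reduces to a first-principles induction within $\mathrm{Lie}(\G)$.
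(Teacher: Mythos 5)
Your proposal is correct, but it reaches the lemma by a genuinely different route than the paper. For the identity \eqref{stefanoid} the paper unwinds the explicit Folland construction of $*$ (the coordinates $T$, the map $\Pi$, the auxiliary group $\bullet$) and computes $\Pi\big(a\bullet(-b)\bullet c\big)=y$ via the Baker--Campbell--Hausdorff theorem for flows; your projectability principle --- every left-invariant vector field on $\G$ is a lifting of a field on $\R^n$, so the right translation $g\mapsto g*h=\Phi_1^{Y}(g)$ (with $\exp Y=h$) projects onto a flow downstairs, making $\pi_n\big((x,\xi)*h\big)$ independent of $\xi$ --- reaches the same conclusion more structurally, and is essentially the mechanism behind \eqref{eq.proiezionipalled}; note only that the map being flowed is \emph{right} translation by $h$, not ``left-multiplication by $(x,\xi)$'' as you phrase it (the formula you actually use is the correct one). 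For the diffeomorphism and the Jacobian the paper exploits the graded polynomial form of the group law (the ordering $\tau_1\leq\cdots\leq\tau_p$), obtaining $\theta_{n+k}(x,y,\eta)=-\eta_k+Q_k(x,y,\eta)$ with $Q_k$ depending only on the $\eta_j$ of strictly lower weight, which yields a unipotent-triangular polynomial map of determinant $\pm1$ in one stroke; you instead get invertibility from the clean algebraic identity $\Phi_{y,x}\circ\Phi_{x,y}=\mathrm{id}_{\R^p}$ (which checks out: inverting $\theta(x,y,\eta)=(y,\Phi_{x,y}(\eta))$ gives $\theta(y,x,\Phi_{x,y}(\eta))=(x,\eta)$), and the Jacobian from $D_\lambda$-equivariance plus the computation $(0,\eta)^{-1}=(0,-\eta)$ at the origin --- legitimate, since the $\xi$-coordinates of $T$ are literally the entries $a_{j_1},\ldots,a_{j_p}$ and the $\bullet$-inverse is $a\mapsto -a$, while the horizontal part vanishes by your projectability principle. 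Your route sidesteps the bookkeeping with the non-ordered exponents of $D_\lambda$ that the paper flags as delicate, and even identifies the Jacobian as the constant $(-1)^p$; the price is the limit $\lambda\to 0^+$, which is harmless because $\theta$ is polynomial, so $\det\big(d\Phi_{x,y}|_\eta\big)$ is jointly continuous.
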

  In other words, the change of variable
  $$\eta = \Phi_{x,y}(\zeta)$$
  satisfies $\d \eta = \d \zeta$ and
  \begin{equation} \label{eq.tousechangeofvariables}
   (x,0)^{-1}*(y,\eta) = (x,\zeta)^{-1}*(y,0).
  \end{equation}
 \begin{proof}
  We follow the notation in Remark \ref{rem.costruzioneG}.
  As observed at the end of that remark, the last $p$ exponents
  of the dilation $D_\lambda$ of $\G$ are increasingly ordered; as a consequence,
  arguing as in \cite[Theorem 1.3.15]{BLUlibro},
  it is not difficult to check that (for every $k = 1,\ldots,p$)
  \begin{equation} \label{eq.thetanpiuk}
  \theta_{n+k}(x,y,\eta) = \sum_{j\,:\,\tau_j = \tau_k}c_j\eta_j + Q_k(x,y,\eta),
  \end{equation}
  where the $c_j$'s are real numbers and $Q_k$ is a polynomial only depending
  the $\eta_j$'s such that $\tau_j < \tau_k$. Since $\theta(0,0,\eta) = (0,\eta)^{-1}$
  (whose components are $E_\lambda$-homogeneous functions), one can reproduce
  the arguments in \cite[Corollary 1.3.16]{BLUlibro} and infer that
  \begin{equation} \label{eq.thetanpiuk2}
   \theta_k(0,0,\eta) = -\eta_k + q_k(\eta),
  \end{equation}
  where $q_k$ is a polynomial only depending
  the $\eta_j$'s such that $\tau_j < \tau_k$. By gathering together
  \eqref{eq.thetanpiuk} and \eqref{eq.thetanpiuk2} we obtain
  $$\theta_{n+k}(x,y,\eta) = -\eta_k + Q_k(x,y,\eta),$$
  which readily proves that $\Phi_{x,y}$ is a polynomial diffeomorphism whose Jacobian determinant
  is $\pm 1$.

  We now prove \eqref{stefanoid}, which is equivalent to
  $$(y,\Phi_{x,y}(\zeta )) = (x,0)*(x,\zeta )^{-1}*(y,0) = \theta(x,y,\zeta ).$$
  On account of the very definition of $\Phi_{x,y}$, this last identity is
  will follow if we prove that
  $y_1,\ldots, y_n$ are the first $n$ components of $\theta(x,y,\zeta )$.
  We now invoke the explicit construction of the operation $*$
  in Remark \ref{rem.costruzioneG}, whose notation we fully inherit.
  In this notation, we have
  $$\theta(x,y,\zeta ) = T\Big(T^{-1}(x,0)\bullet T^{-1}\big((x,\zeta )^{-1}\big)\bullet
  T^{-1}(y,0)\Big).$$
  As a consequence, we need to prove that
  (see the notation in \eqref{eq.definizioneT})
  $$\Pi\Big(T^{-1}(x,0)\bullet T^{-1}\big((x,\zeta )^{-1}\big)\bullet
  T^{-1}(y,0)\Big) = y.$$
  Let then $a,b,c\in\RN$
  be such that
  $T(a) = (x,0)$, $T(b) = (x,\zeta )$ and $T(c) = (y,0)$. In particular, we have
  $\Pi(a) = \Pi(b) = x$ and $\Pi(c) = y$. Since, by definition,
  $T$ is a homomorphism of $(\RN,\bullet)$ onto $(\RN,*)$,
  and since the inversion map of $(\RN,\bullet)$ is $z\mapsto -z$
  (see Remark \ref{rem.costruzioneG}), we can write
  $$T^{-1}(x,0)\bullet T^{-1}\big((x,\zeta )^{-1}\big)\bullet
  T^{-1}(y,0)
  = a\bullet(-b)\bullet c;$$
  as a consequence, by the very definitions
  of $\bullet$ and $\Pi$ (see \eqref{eq.defiBullet}
  and \eqref{eq.defiPiFolland}), we have
  $$\Pi\big(a\bullet(-b)\bullet c\big) =
  \Phi_1^{(a\bullet(-b)\bullet c)\cdot X}(0)
  = \Phi_1^{(a\cdot X)\diamond (-b\cdot X)\diamond (c\cdot X)}(0).$$
  By exploiting
  the Baker-Campbell-Hausdorff
  Theorem for ODEs
  (see \cite[Chapter 13]{BiagiBonfBook}) we obtain
  \begin{align*}
   \Phi_1^{(a\cdot X)\diamond (-b\cdot X)\diamond (c\cdot X)}(0)
   = \Phi_1^{c\cdot X}
   \Big(\Phi_1^{-b\cdot X}\big(\Phi_1^{a\cdot X}(0)\big)\Big)
   = \Phi_1^{c\cdot X}\big(\Phi_1^{-b\cdot X}(x)\big)
   = \Phi_1^{c\cdot X}(0) = y.
  \end{align*}
  The second equality follows from
  $\Pi(a) = x$; the third equality
  is a consequence of $\Phi_1^{b\cdot X}(0) = x$ together with
  the semigroup property $(\Phi_t^Y)^{-1} = \Phi_t^{-Y}$; the last equality
  follows from $\Pi(c) = y$.
 \end{proof}
 From now on, we adopt our abused notation
 $d_{\widetilde{X}}(z,z') = d_{\widetilde{X}}(z^{-1}*z')$.
\begin{proposition}\label{Thm 00}
  For every integer $r\geq0$ there exists $c>0$ \emph{(}depending on $r$ and on
  the set $X$\emph{)} such that, for every $x,y\in\mathbb{R}^{n}$
 \emph{(}with $x\neq y$\emph{)}, one has
\begin{equation}\label{stimadel_THEO0}
 \Big\vert Z_1\cdots Z_r\Gamma(x;y) \Big\vert
  \leq c
   \int_{\mathbb{R}^{p}}
    d_{\widetilde{X}}^{2-Q-r}\Big(  (x,0)^{-1}*(y,\eta)  \Big)\,\d\eta,
\end{equation}
 for any choice of $Z_1,\ldots,Z_r$ as in \eqref{goalZZZZZ}.
 Here, we remind that $Q$ is the homogeneous dimension of the Carnot group $\G=(\RN,*,D_\lambda)$.
 In particular, the map
$$
\eta\mapsto
 d_{\widetilde{X}}^{2-Q-r}\Big(  (x,0)^{-1}*(y,\eta)  \Big)
$$
 belongs to $L^{1}(\mathbb{R}^{p})$ for every $x\neq y\in \mathbb{R}^{n}$ and every $r\geq0$.
\end{proposition}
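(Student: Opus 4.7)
The strategy is to combine the integral representation formulas of Lemma \ref{Thm 0} with sharp pointwise bounds on the $\widetilde{X}$-derivatives of $\Gamma_\G$ coming from homogeneity on the Carnot group $\G$. The $r=0$ case simply amounts to the definition of $\Gamma$ in \eqref{sec.one:mainThm_defGamma22222}, together with the pointwise estimate on $\Gamma_\G$ described below; for $r\geq 1$, Lemma \ref{Thm 0} expresses $Z_1\cdots Z_r\Gamma(x;y)$ as an integral over $\R^p$ whose integrand is one of the functions
$$
h_1(z)=\widetilde{X}_{i_1}\cdots \widetilde{X}_{i_s}\Gamma_\G(z),\quad
h_2(z)=\widetilde{X}_{j_1}\cdots \widetilde{X}_{j_t}\Gamma_\G(z),\quad
h_3(z)=\widetilde{X}_{j_1}\cdots \widetilde{X}_{j_t}\Big((\widetilde{X}_{i_1}\cdots \widetilde{X}_{i_s}\Gamma_\G)\circ\iota\Big)(z),
$$
evaluated at $z=(x,0)^{-1}\ast(y,\eta)$ or $(y,0)^{-1}\ast(x,\eta)$, where in the mixed case $s+t=r$.

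The key pointwise bound is: if $h\in C^\infty(\G\setminus\{0\})$ is $D_\lambda$-homogeneous of degree $\alpha$, then $|h(z)|\leq C\,d_{\widetilde{X}}(z)^{\alpha}$ for all $z\neq 0$, where $C$ is the supremum of $|h|$ over the compact set $\{z:d_{\widetilde{X}}(z)=1\}$; this is obtained by writing $z=D_{d_{\widetilde{X}}(z)}(w)$ with $d_{\widetilde{X}}(w)=1$, using that $d_{\widetilde{X}}$ is itself $D_\lambda$-homogeneous of degree $1$. Since $\Gamma_\G$ is $D_\lambda$-homogeneous of degree $2-Q$ (Folland), and each $\widetilde{X}_i$ is $D_\lambda$-homogeneous of degree $1$ (so it lowers the degree of homogeneity by one), $h_1$ and $h_2$ are $D_\lambda$-homogeneous of degree $2-Q-r$ and smooth away from the origin. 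For $h_3$ one uses in addition that $\iota$ is smooth and $D_\lambda$-equivariant ($D_\lambda\circ\iota=\iota\circ D_\lambda$), so that $(\widetilde{X}_{i_1}\cdots \widetilde{X}_{i_s}\Gamma_\G)\circ\iota$ is smooth on $\G\setminus\{0\}$ and $D_\lambda$-homogeneous of the same degree $2-Q-s$; applying $t$ further left-invariant vector fields of degree $1$ yields a function smooth outside the origin and $D_\lambda$-homogeneous of degree $2-Q-r$. In all three cases we obtain
$$
|h_k(z)|\leq c\,d_{\widetilde{X}}(z)^{2-Q-r}\qquad (z\neq 0),
$$
and inserting this into the representation formulas of Lemma \ref{Thm 0} gives \eqref{stimadel_THEO0}.

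It remains to check the integrability statement. The map
$$(x,y,\eta)\longmapsto g(x,y,\eta):=d_{\widetilde{X}}\Big((x,0)^{-1}\ast(y,\eta)\Big)^{2-Q-r}$$
is continuous on the open set $\Omega=\{(x,y,\eta):(x,0)\neq(y,\eta)\}$, strictly positive there, and a direct calculation using $D_\lambda(a\ast b)=D_\lambda(a)\ast D_\lambda(b)$ and the $D_\lambda$-homogeneity of $d_{\widetilde{X}}$ shows that $g$ is $F_\lambda$-homogeneous of degree $2-Q-r$ (with $F_\lambda$ as in \eqref{eq.dilatazioniEFG}). Since $2-Q-r<q-Q$ (because $q>2$ and $r\geq 0$), the same homogeneity/compactness argument used in Step~(I) of the proof of Lemma \ref{Thm 0}---which only uses continuity of $g$ on $\Omega$ and not smoothness---applies verbatim and yields that $\eta\mapsto g(x,y,\eta)$ belongs to $L^1(\R^p)$ for every fixed $x\neq y$.

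The main subtlety is the mixed-derivative case: one must verify that the composition with $\iota$ does not spoil either the smoothness off the origin or the $D_\lambda$-homogeneity; this is the place where we use crucially that $\iota$ is a smooth group homomorphism-like map on $\G$ that commutes with $D_\lambda$ and that $\iota^{-1}(\{0\})=\{0\}$. The remaining steps (integrability and the pointwise estimate of homogeneous functions by powers of $d_{\widetilde{X}}$) are standard once the homogeneity bookkeeping is in place.
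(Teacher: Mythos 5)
Your overall strategy is exactly the paper's: plug the representation formulas of Lemma \ref{Thm 0} into the pointwise bound $|h(z)|\leq C\,d_{\widetilde{X}}(z)^{2-Q-r}$ valid for any $h$ smooth off the origin and $D_\lambda$-homogeneous of degree $2-Q-r$, and handle integrability by the homogeneity/compactness argument of Step (I). The homogeneity bookkeeping for $h_1,h_2,h_3$, including the role of $\iota$, is correct, and the $r=0$ case via the equivalence of $\Gamma_\G^{1/(2-Q)}$ with $d_{\widetilde{X}}(0,\cdot)$ is as in the paper.

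There is, however, one step you skip that is needed to land on the inequality \emph{as stated}. In \eqref{derixxxx0} and \eqref{derixxxxyyyy0} the integrand is evaluated at $(y,0)^{-1}\ast(x,\eta)$, so your pointwise bound produces
$c\int_{\mathbb{R}^{p}}d_{\widetilde{X}}^{2-Q-r}\big((y,0)^{-1}\ast(x,\eta)\big)\,\d\eta$,
which is the right-hand side of \eqref{stimadel_THEO0} with $x$ and $y$ interchanged; since the integration variable $\eta$ sits in a different slot, this integral is not \emph{a priori} the one appearing in \eqref{stimadel_THEO0}. To identify the two you need the unit-Jacobian change of variable $\eta=\Phi_{y,x}(\zeta)$ of Lemma \ref{lem.tecnico} together with \eqref{eq.tousechangeofvariables} and the invariance $d_{\widetilde{X}}\circ\iota=d_{\widetilde{X}}$. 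This is precisely why the paper first rewrites \eqref{derixxxx0}--\eqref{derixxxxyyyy0} in the forms \eqref{derixxxx}--\eqref{derixxxxyyyy}, where all integrands are ($\iota$-composed) homogeneous functions evaluated at the common point $(x,0)^{-1}\ast(y,\eta)$, before applying the homogeneity bound. Once you insert that conversion, your argument coincides with the paper's proof.
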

\begin{proof}
 When $r=0$ (so that no derivatives apply), \eqref{stimadel_THEO0} is a simple consequence of
  \eqref{sec.one:mainThm_defGamma22222} and
\begin{equation}\label{equivalaaaanza}
 c^{-1}\,d_{\widetilde{X}}(0,z)^{2-Q}\leq \Gamma_\G (z)\leq c\,d_{\widetilde{X}}(0,z)^{2-Q},\qquad \forall\,\,z\in \R^{N}\setminus\{0\},
\end{equation}
  where $c\geq 1$ is a constant only depending on the group $\G$ and the set $\widetilde{X}=\{\widetilde{X}_1,\ldots,\widetilde{X}_m\}$; the latter estimate
  trivially follows from the fact that
   $\Gamma_\G (z)^{1/(2-Q)}$ and $d_{\widetilde{X}}(0,z)$
   are homogeneous norms on $\G$, and the
  equivalence of all homogeneous norms on $\G$ (see e.g., \cite[Prop.\,5.1.4]{BLUlibro}).

  When $r \geq 1$,
  a repeated use of the map $\iota$ in \eqref{derixxxx0}-\eqref{derixxxxyyyy0} will allow us
  to express the therein integrands as functions
  of $(x,0)^{-1}*(y,\eta)$: this will lead to a unitary proof
  of \eqref{goal}.
  Indeed, we claim that \eqref{derixxxx0}-\eqref{derixxxxyyyy0} can be rewritten as follows:
  \begin{align}
   & X^x_{j_1}\cdots X^x_{j_t} \big(\Gamma(\cdot;y)\big)(x)=
   \int_{\R^p}
   \Big(\big(\widetilde{X}_{j_1}\cdots
   \widetilde{X}_{j_t}\Gamma_\G \big)\circ\iota\Big)
   \Big((x,0)^{-1}*(y,\eta)\Big) \,\d\eta\,;\label{derixxxx}\\[0.2cm]
   & X^x_{j_1}\cdots X^x_{j_t}X^y_{i_1}\cdots X^y_{i_s} \Gamma(x;y)\label{derixxxxyyyy}  \\
   &\qquad \qquad=
   \int_{\R^p}
   \bigg\{\bigg(\widetilde{X}_{j_1}\cdots \widetilde{X}_{j_t}\Big(\big(\widetilde{X}_{i_1}
   \cdots \widetilde{X}_{i_s}\Gamma_\G \big)\circ\iota\Big)\bigg)\circ \iota \bigg\}
   \Big((x,0)^{-1}*(y,\eta)\Big) \,\d\eta\,.\nonumber
\end{align}
  As for \eqref{derixxxx}, it suffices to apply to \eqref{derixxxx0}
  the change of variable $\eta = \Phi_{y,x}(\zeta )$ in Lemma \ref{lem.tecnico}, together with
  \eqref{eq.tousechangeofvariables} (with $x,y$ interchanged).
  As for \eqref{derixxxxyyyy}, we start from
  \eqref{derixxxxyyyy0} and we argue as above (plus another insertion of the $\iota$ map). \medskip

  We now exploit \eqref{deriyyyyy} (with $s=r$), \eqref{derixxxx} (with $t=r$) and
  \eqref{derixxxxyyyy} (with $s=r_1$, $t=r_2$ and $r_1+r_2=r$):
  in all three cases, the functions
  $$
   \widetilde{X}_{i_1}\cdots \widetilde{X}_{i_s}\Gamma_\G ,\qquad
   \big(\widetilde{X}_{j_1}\cdots \widetilde{X}_{j_t}\Gamma_\G \big)\circ\iota,\qquad
   \Big(\widetilde{X}_{j_1}\cdots \widetilde{X}_{j_{r_1}}\Big(\big(\widetilde{X}_{i_1}\cdots
   \widetilde{X}_{i_{r_2}}\Gamma_\G \big)\circ\iota\Big)\Big)\circ \iota
     $$
     are smooth out of the origin of $\R^N$, and $D_\lambda$-homogeneous of degree
     $2-Q-r$; thus, for simple homogeneity arguments,
     their absolute values are bounded from above by
     $c\,d_{\widetilde{X}}^{2-Q-r}(0,\cdot)$,
     where $c>0$ is a constant only depending on $r$ (and the system $X$).
     This readily gives \eqref{stimadel_THEO0}.

     As for the last statement of the proposition, it follows from (I)
     in the proof of Lemma \ref{Thm 0}, taking into account that
     $2-Q-r<q-Q$ (as $q>2$).
\end{proof}
 On account of Proposition \ref{Thm 00},
 the next step towards the proof of Theorem \ref{thm.estimatesDERIVgamma} is to
 estimate the integral in the right-hand side of \eqref{stimadel_THEO0}.
 This is accomplished as follows:
 \begin{itemize}
   \item[-] the estimate of the integral for arbitrary $x,y$ will follow by a
    homogeneity argument as in Remark \ref{rem.usehomog}, once it is proved for $x,y$ in some fixed compact neighborhood $K$ of $0\in\R^n$;
   \item[-] for $x\neq y\in K$, the right-hand side of \eqref{stimadel_THEO0} will be estimated by splitting the
   integral in the two parts $\{|\eta|\geq\delta\}$ and $\{|\eta|<\delta\}$, where $\delta > 0$ is some fixed positive number.
 \end{itemize}
    The two needed estimates are provided in the following
   Propositions \ref{Prop 1} and \ref{Prop 2}, which we now state.
 \begin{proposition} \label{Prop 1}
 For every compact neighborhood of the origin, say $K\subset\mathbb{R}^{n}$,
 for every integer $r\geq0$ and every $\delta > 0$, there exists $C_1 = C_1(K,r,\delta)>0$
 such that
\begin{equation} \label{eq.mainestimstepI}
 \int_{|\eta| \geq \delta}d_{\widetilde{X}}^{2-Q-r}\left((x,0)
 ^{-1}*(y,\eta)\right)\,\d\eta\leq C_1\,\frac{d_{X}(x,y)^{2-r}}{\big|B_{X}(x,d_{X}(x,y))\big|},
\end{equation}
 for every $x,y\in K$ with $x\neq y$.
\end{proposition}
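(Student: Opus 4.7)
My approach is to bound the left-hand side of \eqref{eq.mainestimstepI} uniformly from above by a constant $M=M(K,r,\delta)$ and the right-hand side uniformly from below by a positive constant $c_K$, for all $x,y\in K$ with $x\neq y$; the estimate will then follow with $C_1:=M/c_K$.

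For the upper bound, I would fix a $D_\lambda$-homogeneous norm $N$ on $\mathbb{G}$ equivalent to $d_{\widetilde{X}}$ and satisfying the pseudo-triangle inequalities $N(a*b)\leq c_N(N(a)+N(b))$ and $N(a^{-1})\leq c_N N(a)$ (see e.g.\ \cite[\S 5.1]{BLUlibro}). Setting $z:=(x,0)^{-1}*(y,\eta)$, the identity $(y,\eta)=(x,0)*z$ gives
$$N(z)\;\geq\;\frac{N(y,\eta)}{c_N}\;-\;\sup_{w\in K}N(w,0).$$
A second application of the pseudo-triangle inequality to $(y,\eta)=(y,0)*[(y,0)^{-1}*(y,\eta)]$, combined with the polynomial structure of the group law on $\G$ (here Lemma \ref{lem.tecnico} shows in particular that the $\eta$-components of $(y,0)^{-1}*(y,\eta)$ form a polynomial diffeomorphism of $\R^p$ with Jacobian $\pm1$ and leading term $\eta$), yields
$$N(y,\eta)\;\asymp\;\sum_{j=1}^p|\eta_j|^{1/\tau_j}\qquad\text{as $|\eta|\to\infty$, uniformly in $y\in K$.}$$
Consequently, for $|\eta|\geq R_0=R_0(K)$ large enough one has $N(z)\geq c_1\sum_j|\eta_j|^{1/\tau_j}$; while on the compact shell $\{\delta\leq|\eta|\leq R_0\}$ the positivity and continuity of the map $(x,y,\eta)\mapsto N(z)$ on $K\times K\times\{\delta\leq|\eta|\leq R_0\}$ (the argument never vanishes since $|\eta|\geq\delta>0$) provide a uniform lower bound $c_2=c_2(K,\delta)>0$. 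Since $2-Q-r<q-Q$ (thanks to $q>2$ and $r\geq 0$) and since the $E_\lambda$-homogeneous dimension of $\R^p$ equals $\sum_{j=1}^p\tau_j=Q-q$, a polar-type integration on $\R^p$ (analogous to the one used in part (I) of Lemma \ref{Thm 0}) delivers the required $M(K,r,\delta)$.

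For the lower bound on the right-hand side of \eqref{eq.mainestimstepI}, set $\rho:=d_X(x,y)$, so that $0<\rho\leq D_K:=\sup_{x,y\in K}d_X(x,y)<\infty$. Theorem B together with the boundedness of the continuous functions $f_n,\ldots,f_q$ on the compact set $K$ gives
$$\bigl|B_X(x,\rho)\bigr|\;\leq\;\gamma_2\sum_{k=n}^q f_k(x)\,\rho^k\;\leq\;C_K\,\rho^n,$$
using $\rho^k\leq \max\{1,D_K^{k-n}\}\,\rho^n$ for each $n\leq k\leq q$. Hence
$$\frac{d_X(x,y)^{2-r}}{|B_X(x,d_X(x,y))|}\;\geq\;\frac{\rho^{2-r-n}}{C_K}\;\geq\;c_K>0,$$
since $2-r-n\leq 0$ (as $n\geq 2$ and $r\geq 0$) forces $\rho\mapsto\rho^{2-r-n}$ to attain its minimum on $(0,D_K]$ at $\rho=D_K$.

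The most delicate ingredient is the asymptotic $N(y,\eta)\asymp\sum_j|\eta_j|^{1/\tau_j}$ for large $|\eta|$, which is sensitive to the possibly non-monotone ordering of the exponents $\tau_j$ highlighted in Remark \ref{rem.costruzioneG}; the cleanest route is to chain two pseudo-triangle inequalities so as to reduce the lower bound on $N(z)$ to an estimate at the single point $(y,0)^{-1}*(y,\eta)$, where the polynomial computations of Lemma \ref{lem.tecnico} (together with the stratification of $\G$) can be applied directly.
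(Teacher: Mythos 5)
Your proposal is correct and shares the paper's overall architecture: both proofs reduce \eqref{eq.mainestimstepI} to showing that the left-hand side is bounded above by a constant $M(K,r,\delta)$ and that the right-hand side is bounded below by a positive constant on $K\times K$, and your treatment of the second half (Theorem B, continuity of the $f_k$'s on $K$, and $2-r-n\leq 0$) is essentially identical to the paper's. The difference is in how the uniform upper bound on the integral is obtained. The paper exploits the \emph{joint} homogeneity of $g(x,y,\eta):=d_{\widetilde{X}}^{2-Q-r}\big((x,0)^{-1}*(y,\eta)\big)$, of degree $2-Q-r$ with respect to the dilation $F_\lambda(x,y,\eta)=(\delta_\lambda(x),\delta_\lambda(y),E_\lambda(\eta))$: bounding $g$ on the compact set $\{S(x)\le R\}\times\{S(y)\le R\}\times\{N(\eta)=\varepsilon\}$ and rescaling gives $g(x,y,\eta)\le c\,\varepsilon^{Q+r-2}N(\eta)^{2-Q-r}$ on all of $\{N(\eta)\ge\varepsilon\}\supseteq\{|\eta|\ge\delta\}$ in one stroke, after which integrability of $N^{2-Q-r}$ at infinity (degree strictly less than $-(Q-q)$) finishes the job. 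You instead split $\{|\eta|\ge\delta\}$ into a compact shell, handled by continuity and non-vanishing of the argument, and a far region, handled by the pseudo-triangle inequality for a homogeneous norm on $\G$; this works equally well and is perhaps more elementary, at the cost of an extra case distinction and of having to control the uniformity in $(x,y)$ and the growth in $\eta$ separately rather than simultaneously. One simplification: the asymptotic $N(y,\eta)\asymp\sum_j|\eta_j|^{1/\tau_j}$ that you single out as the delicate point needs neither Lemma \ref{lem.tecnico} nor the stratification; it is immediate if you take $N$ to be the coordinate homogeneous norm $N(x,\xi)=\sum_i|x_i|^{1/\sigma_i}+\sum_j|\xi_j|^{1/\tau_j}$ (equivalent to $d_{\widetilde{X}}(0,\cdot)$ by the equivalence of all $D_\lambda$-homogeneous norms), since then the $y$-contribution is bounded on $K$ and the possible non-monotonicity of the full exponent list of $D_\lambda$ plays no role.
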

\begin{proposition} \label{Prop 2}
 For every compact neighborhood of the origin, say $K\subset\mathbb{R}^{n}$,
 for every integer $r\geq0$ and every $\delta > 0$, there exists $C_2 = C_2(r)>0$
 such that
\begin{equation} \label{eq.mainestimstepII}
 \int_{|\eta| <\delta}d_{\widetilde{X}}^{2-Q-r}
 \left((x,0)^{-1}*(y,\eta)\right)\d\eta\leq C_2\,\frac{d_{X}(x,y)^{2-r}}{\left\vert B_{X}\left(x,d_{X}(x,y)\right)\right\vert},
\end{equation}
for every $x,y\in K$ with $x\neq y$.
\end{proposition}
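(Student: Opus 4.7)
The plan is to perform a dyadic decomposition of the integration domain $\{|\eta|<\delta\}$ according to the value of the lifted CC-distance $d_{\widetilde{X}}((x,0),(y,\eta))$, and to estimate each dyadic slice via Theorem C.

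Set $d:=d_X(x,y)$. From the first inequality in \eqref{eq.proiezionipalled} we have $d_{\widetilde{X}}((x,0),(y,\eta))\geq d$ for every $\eta\in\R^p$, so the annuli
\[
A_k := \bigl\{\eta\in\R^p : |\eta|<\delta,\ 2^k d\leq d_{\widetilde{X}}((x,0),(y,\eta)) < 2^{k+1}d\bigr\},\qquad k\geq 0,
\]
form a disjoint partition of $\{|\eta|<\delta\}$. On $A_k$ the integrand is at most $(2^kd)^{2-Q-r}$ (recall $2-Q-r<0$ since $Q>q>2$). For the measure of $A_k$ I would apply the upper bound \eqref{eq.SanchezI} from Theorem C together with the volume formula \eqref{eq.misurapallehom}, obtaining
\[
|A_k| \leq c_1\,\frac{|B_{\widetilde{X}}((x,0), 2^{k+1}d)|}{|B_X(x, 2^{k+1}d)|} = c_1\omega_Q\,\frac{(2^{k+1}d)^Q}{|B_X(x, 2^{k+1}d)|}.
\]
Multiplying the two bounds, the contribution of $A_k$ is at most a structural constant times $(2^{k+1}d)^{2-r}/|B_X(x, 2^{k+1}d)|$.

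To sum the resulting series, I would use a reverse-doubling consequence of Theorem B: the representation $|B_X(x,s)|\asymp \sum_{k=n}^q f_k(x)\,s^k$ yields $|B_X(x,2s)|\geq c_*\,2^n\,|B_X(x,s)|$ for a structural $c_*>0$, so the quantity $s\mapsto s^{2-r}/|B_X(x,s)|$ is contracted under $s\mapsto 2s$ with ratio at most $2^{2-r-n}/c_*$. In the regime $n>2$ relevant to Theorem \ref{thm.estimatesDERIVgamma} this ratio is strictly less than $1$ for every $r\geq 0$, so the geometric series converges and is controlled by its first term, a structural multiple of $d^{2-r}/|B_X(x,d)|$; this is exactly \eqref{eq.mainestimstepII}. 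Since the summation runs all the way up to $k=+\infty$, the compactness of $K$ is never invoked to truncate it: this is precisely what allows $C_2$ to depend only on $r$, as stated.

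The main obstacle I anticipate is the reverse-doubling inequality for $|B_X|$: it is not a consequence of ordinary doubling alone, but it follows readily from the structural representation in Theorem B, and it is exactly what dictates the threshold $n+r>2$ for convergence of the dyadic sum. Once this quantitative input is secured, the rest is a routine layer-cake estimate combining Theorem C with the homogeneity of lifted volumes.
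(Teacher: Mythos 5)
Your dyadic decomposition into the annuli $A_k$ in the lifted distance, the bound on $|A_k|$ via \eqref{eq.SanchezI} and \eqref{eq.misurapallehom}, and the reduction to the sum $\sum_{k\ge 0}(2^{k+1}d)^{2-r}/|B_X(x,2^{k+1}d)|$ are all correct, and this is essentially a compressed version of the argument in the paper, which runs the same dyadic decomposition inside Lemma \ref{Lemma NSW} (there truncated at a scale $R_0(K,\delta)$, producing the one-dimensional integral $\int_{d_X(x,y)}^{R_0}\rho^{\beta-1}|B_X(x,\rho)|^{-1}\,\d\rho$) and then evaluates that integral in Lemma \ref{Lemma nonintegrale} using the same reverse-doubling input from Theorem B. Your variant has the small bonus of never using $K$ or $\delta$, so it proves Corollary \ref{corollatastimaamam} directly.

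The one step that fails as written is the summation. Theorem B only yields $|B_X(x,2s)|\ge c_*\,2^n|B_X(x,s)|$ with $c_*=\gamma_1/\gamma_2\le 1$, so your two-scale contraction ratio is $2^{2-r-n}/c_*$, and this is \emph{not} strictly less than $1$ for every $n>2$ and $r\ge 0$: for $n=3$, $r=0$ it equals $1/(2c_*)$, which exceeds $1$ whenever $\gamma_2/\gamma_1>2$. Moreover, iterating the two-scale inequality $k$ times accumulates a factor $c_*^{-k}$, so on this reasoning the series need not be dominated by a convergent geometric one at all. The repair is to use the one-shot form of reverse doubling, exactly as in the proof of Lemma \ref{Lemma nonintegrale}: Theorem B gives directly $|B_X(x,\lambda s)|\ge(\gamma_1/\gamma_2)\,\lambda^n\,|B_X(x,s)|$ for all $\lambda\ge 1$, whence the $k$-th term of your series is at most $(\gamma_2/\gamma_1)\,2^{(k+1)(2-r-n)}\,d_X(x,y)^{2-r}/|B_X(x,d_X(x,y))|$, carrying only a single factor $\gamma_2/\gamma_1$; since $2-r-n<0$ when $n>2$ and $r\ge 0$, the geometric series converges and the total is a structural multiple of $d_X(x,y)^{2-r}/|B_X(x,d_X(x,y))|$. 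With this correction your argument is complete, and it correctly identifies $n+r>2$ as the threshold below which (namely for $n=2$, $r=0$) the logarithmic correction of Section \ref{sec:casen2} becomes unavoidable.
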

 With the above propositions at hand (whose proofs will be
 shortly provided), we can give the
 \begin{proof}[Proof of Theorem \ref{thm.estimatesDERIVgamma}]
 Let the notation of Theorem \ref{thm.estimatesDERIVgamma} be understood;
 we arbitrarily fix a compact neighborhood $K$ of the origin in $\R^n$ and a number
 $\delta > 0$.
 Fixing $x\neq y \in K$, we have the following estimate, based on
 Propositions \ref{Thm 00}, \ref{Prop 1} and \ref{Prop 2}:
 \begin{align}
 & \Big\vert Z_1\cdots Z_r\Gamma(x;y) \Big\vert
   \stackrel{\eqref{stimadel_THEO0}}{\leq} c
   \int_{\mathbb{R}^{p}}
    d_{\widetilde{X}}^{2-Q-r}\Big(  (x,0)^{-1}*(y,\eta)  \Big)\,\d\eta \notag\\
    & = c\,\int_{|\eta| \geq \delta}
    d_{\widetilde{X}}^{2-Q-r}\Big(  (x,0)^{-1}*(y,\eta)  \Big)\,\d\eta
    + c\,\int_{|\eta| < \delta}
    d_{\widetilde{X}}^{2-Q-r}\Big(  (x,0)^{-1}*(y,\eta)  \Big)\,\d\eta \notag\\[0.15cm]
    & \qquad \text{(we use \eqref{eq.mainestimstepI} and \eqref{eq.mainestimstepII}, and we set $C := c(r)\,\max\{C_1(K,r,\delta), C_2(r)\}$)} \notag\\
    & \leq C\,\frac{d_{X}(x,y)^{2-r}}{\big|B_{X}(x,d_{X}(x,y))\big|}. \label{eq.estimdausare}
\end{align}
 We remove the condition $x,y\in K$ by a homogeneity argument.
 It suffices to apply Remark \ref{rem.usehomog} to
 $$F(x,y):= \Big\vert Z_1\cdots Z_r\Gamma(x;y) \Big\vert
 \quad \text{and} \quad
 G(x,y):=C\,\frac{d_{X}(x,y)^{2-r}}{\big|B_{X}(x,d_{X}(x,y))\big|},$$
 with the choices (see the notation in Remark \ref{rem.usehomog})
 $m=2\,n$, $\Omega=\{(x,y)\in\R^{2n}:\,x\neq y\}$ and the dilation $M_\lambda$
 given by $G_\lambda$ in \eqref{eq.dilatazioniEFG}. We can apply the cited lemma, as
 $F$ and $G$ are both $M_\lambda$-homogeneous of degree $2-q-r$
 (a simple consequence of \eqref{sec.one:mainThm_defGamma3},
 \eqref{propertiesdCC} and the $\dela$-homogeneity of $Z_1,\ldots,Z_r$), and since
 $F\leq G$ is valid on $(K\times K)\cap\Omega$ (see \eqref{eq.estimdausare}).

 Finally, we prove the vanishing property in \eqref{eq.derGammavanish}.
 To this end, it is sufficient to prove that, for any fixed $x\in \R^n$,
 the right-hand side of \eqref{goal} goes to $0$ as $|y|\to \infty$.
 This is a simple consequence of \eqref{eq.NSWmodificata} which indeed gives
 $$\frac{d_{X}(x,y)^{2-r}}
   {\big\vert B_{X}(x,d_{X}(x,y)) \big\vert }\leq
   \frac{d_{X}(x,y)^{2-r}}
   {\gamma_{1}\,\sum_{k=n}^{q}f_{k}(x)\,d_X(x,y)^{k}}
   \leq \frac{1}{\gamma_1\,f_q\,d_X(x,y)^{q-2+r}}\longto 0\quad \text{as $|y|\to \infty$.}
  $$
 The latter follows from $f_q(x)=f_q>0$ and  $q-2+r\geq q-2>0$. This ends the proof.
 \end{proof}
 We now give the proofs of Propositions \ref{Prop 1} and \ref{Prop 2}.
\begin{proof} [Proof of Proposition \ref{Prop 1}]
  Let $K, r,\delta$ be as in the statement of the proposition. We consider the open set
  $\Omega:=\{(x,y,\eta)\in \R^{n}\times \R^{n}\times \R^p\,:\,(x,0)\neq (y,\eta)\}$, and we set
 $$g(x,y,\eta) := d_{\widetilde{X}}^{2-Q-r}\left(\left(x,0\right)^{-1}*\left(  y,\eta\right)  \right).$$
 If $S$ and $N$ are as in \eqref{eqSedNdausare}, we choose $\varepsilon = \varepsilon(\delta) > 0$ so small that
 $\{|\eta|\geq\delta\}\subseteq\{N(\eta)\geq \varepsilon\}$ and we choose
 $R = R(K) > 0$ so large that $K\subseteq \{S(x)\leq R\}$.
 Since
 $$T := \{x:S(x)\leq R\}\times\{y:S(y)\leq R\}\times\{\eta:N(\eta) = \varepsilon\}$$
 is compact and contained in $\Omega$, there exists $c = c(\varepsilon,R)> 0$ such that
 $g(x',y',\eta')\leq c$ for every
 $(x',y',\eta')\in T$. Now, if $x,y\in K$ and $N(\eta)\geq\varepsilon$, choosing
 $\lambda = \varepsilon/N(\eta)\leq 1$ we clearly have
 $$(x',y',\eta') := F_\lambda(x,y,\eta)\in T,$$
 where $F_\lambda$ is as in \eqref{eq.dilatazioniEFG}. As a consequence, since $g$ is $F_\lambda$-homogeneous of
 degree $2-Q-r$ we get
 $$g(x,y,\eta) = g\big(F_{1/\lambda}(x',y',\eta')\big) = \frac{1}{\lambda^{2-Q-r}}\,g(x',y',\eta') \leq
 \frac{c}{\varepsilon^{2-Q-r}}\,N(\eta)^{2-Q-r}.$$
 Summing up, we have
 \begin{align*}
  & \int_{|\eta| \geq \delta}d_{\widetilde{X}}^{2-Q-r}\left(\left(x,0\right)
^{-1}*\left(  y,\eta\right)  \right)\,\d\eta
\leq \int_{N(\eta)\geq\varepsilon}g(x,y,\eta)\,\d\eta \\
& \leq \frac{c}{\varepsilon^{2-Q-r}}\int_{\{N(\eta)\geq\varepsilon\}}N(\eta)^{2-Q-r}\,\d\eta
= c(\varepsilon,R)\,\varepsilon^{Q-q}\int_{N(\eta)\geq 1}N(\eta)^{2-Q-r}\,\d\eta =: C(K,r,\delta).
 \end{align*}
 Note that $C(K,r,\delta)$ is finite, since the function $N^{2-Q-r}$ is integrable on $\{N\geq 1\}$ (as one can readily deduce from
 $2-Q-r < q-Q$). The above estimate will give \eqref{eq.mainestimstepI} once we prove that
 \begin{equation} \label{eq.NSWserve}
\inf_{\begin{subarray}{c}
x,y\in K \\
x\neq y
\end{subarray}}\frac{d_{X}(x,y)^{2-r}}{\big|B_{X}(x,d_{X}(x,y))\big| } > 0.
 \end{equation}
 In order to prove \eqref{eq.NSWserve} we make use of Theorem B in the introduction: thanks to \eqref{eq.NSWmodificata}, we have
\begin{align} \label{eq.tojustifyn2}
  \sup_{\begin{subarray}{c}
x,y\in K \\
x\neq y
\end{subarray}}
 \frac{\big|B_{X}(x,d_X(x,y))\big|}{d_X(x,y)^{2-r}} \leq \gamma_2\sup_{\begin{subarray}{c}
x,y\in K \\
x\neq y
\end{subarray}}\bigg(\sum_{k=n}^{q}f_{k}(x)\,d_X(x,y)^{k+r-2}\bigg) =: M(K,r) < \infty,
\end{align}
since the functions $f_k$'s are continuous and, by assumption, $n + r- 2 \geq 0$. This implies at once
\eqref{eq.mainestimstepI} with the choice $C_1(K,r,\delta) := C(K,r,\delta)\,M(K,r)$, and the proof is complete.
\end{proof}
 \begin{proof} [Proof of Proposition \ref{Prop 2}]
   It
   follows by combining the next Lemmas \ref{Lemma NSW} and
   \ref{Lemma nonintegrale}, with the choice
   $\beta = 2-r$ (note that, since $r\geq 0$, we have
   $\beta\leq 2 < n+1$), and $C_2(r) := C_3(2-r)\,C_4(2-r)$.
 \end{proof}
\begin{lemma} \label{Lemma NSW}
 Let $K\subset\mathbb{R}^{n}$ be any compact neighborhood of the origin,
 let $\delta$ be any positive real number, and finally let $\beta\leq n+1$.

 Then there exist positive numbers $R_0=R_0(K,\delta)$ and $C_3 = C_3(\beta)$ such that
\begin{equation} \label{eq.lemmaSanchezCalleeNSW}
 \int_{|\eta| <\delta}d_{\widetilde{X}}^{\beta-Q}
 \left(\left(x,0\right)^{-1}*\left(  y,\eta\right)  \right)\d\eta\leq C_3
 \int_{d_X(x,y)}^{R_0}\frac{\rho^{\beta-1}}{\big|B_{X}(x,\rho)\big|}\,\d\rho
\end{equation}
 for every $x,y\in K$ with $x\neq y$. Estimate \eqref{eq.lemmaSanchezCalleeNSW} is meaningful, since
 $R_0$ is chosen in such a way that $R_0 > 2d_X(x,y)$ for all $x,y\in K$ \emph{(}and the integral in the right-hand side
 does not vanish\emph{)}.
\end{lemma}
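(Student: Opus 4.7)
The plan is to estimate the left-hand side of \eqref{eq.lemmaSanchezCalleeNSW} via a dyadic decomposition of the integration region in $\eta$-space, combined with an application of Theorem C to bound the $\eta$-measure of each dyadic shell.

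First, since $K\times K\times \{|\eta|\leq\delta\}$ is a compact subset of $\R^n\times\R^n\times\R^p$ and the map $(x,y,\eta)\mapsto d_{\widetilde{X}}((x,0)^{-1}*(y,\eta))$ is continuous, I would fix $R_0=R_0(K,\delta)>0$ larger than $2\sup\{d_X(x,y):x,y\in K\}$ and such that $d_{\widetilde{X}}((x,0)^{-1}*(y,\eta))\leq R_0$ for all $x,y\in K$ and $|\eta|<\delta$. Second, by \eqref{eq.proiezionipalled} one has $d_{\widetilde{X}}((x,0)^{-1}*(y,\eta))\geq d_X(x,y)$ uniformly in $\eta$; this provides the lower endpoint of the dyadic scale.

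Fixing now $x\neq y$ in $K$, I would set $\rho_k:=2^k\,d_X(x,y)$ and let $N$ be the smallest integer with $\rho_N\geq R_0$, so that the integration region splits (up to null sets) into the disjoint shells
\begin{equation*}
A_k:=\big\{\eta\in\R^p : \rho_k\leq d_{\widetilde{X}}((x,0)^{-1}*(y,\eta))<\rho_{k+1}\big\},\qquad k=0,\ldots, N-1.
\end{equation*}
Since $\beta\leq n+1$ while $Q>q\geq n$ together with $p\geq 1$ force $Q\geq n+1$, we have $\beta-Q\leq 0$, so the integrand is bounded on $A_k$ by $\rho_k^{\beta-Q}$. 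Moreover, since $(x,0)*B_{\widetilde{X}}(0,\rho_{k+1})=B_{\widetilde{X}}((x,0),\rho_{k+1})$, we have $A_k\subseteq\{\eta:(y,\eta)\in B_{\widetilde{X}}((x,0),\rho_{k+1})\}$; \eqref{eq.SanchezI} combined with \eqref{eq.misurapallehom} then gives $|A_k|\leq c_1\omega_Q\,\rho_{k+1}^Q/|B_X(x,\rho_{k+1})|$. Summing the contributions yields a bound of the left-hand side of \eqref{eq.lemmaSanchezCalleeNSW} by a structural constant times $\sum_{k=0}^{N-1}\rho_k^\beta/|B_X(x,\rho_{k+1})|$.

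The final step is to recognize this dyadic sum as an upper Riemann-sum-type comparison for the integral in the right-hand side of \eqref{eq.lemmaSanchezCalleeNSW}. On each subinterval $[\rho_k,\rho_{k+1}]$, the monotonicity of $\rho\mapsto|B_X(x,\rho)|$ together with an explicit computation of $\int_{\rho_k}^{\rho_{k+1}}\rho^{\beta-1}\d\rho$ (which equals a positive multiple of $\rho_k^\beta$, with a multiplicative constant depending only on $\beta$, after treating the three cases $\beta>0$, $\beta=0$, $\beta<0$ separately) yields
\begin{equation*}
\frac{\rho_k^\beta}{|B_X(x,\rho_{k+1})|}\leq c(\beta)\int_{\rho_k}^{\rho_{k+1}}\frac{\rho^{\beta-1}}{|B_X(x,\rho)|}\,\d\rho.
\end{equation*}
Telescoping over $k=0,\ldots,N-1$ then produces \eqref{eq.lemmaSanchezCalleeNSW} with $C_3=C_3(\beta)$. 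The delicate point will be to carefully track the dependence of the final constant on $\beta$ (in particular, handling uniformly the change of sign of $\beta$ in the elementary integral $\int \rho^{\beta-1}\d\rho$) and to verify that everything remains uniform in $x,y\in K$; the compactness argument giving $R_0$ together with the homogeneity identity for left translations of $d_{\widetilde{X}}$-balls ensures that no additional dependence on $x,y$ leaks into the constants.
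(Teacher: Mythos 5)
Your proposal is correct and follows the same overall strategy as the paper: a dyadic decomposition of the $\eta$-region into shells measured by $d_{\widetilde{X}}\big((x,0)^{-1}*(y,\eta)\big)$, with the shell measures controlled by Theorem C (estimate \eqref{eq.SanchezI}) and \eqref{eq.misurapallehom}. The one genuine difference is the final summation step. The paper keeps the numerator and denominator at the same dyadic level, arriving at $\sum_j (2^{-j}R_0)^{\beta}/|B_X(x,2^{-j}R_0)|$, and then must invoke Theorem B to show that $\rho\mapsto\rho^{\beta-1}/|B_X(x,\rho)|$ is comparable to a monotone decreasing function $g_x$ --- this is precisely where the hypothesis $\beta\leq n+1$ is used (it makes all exponents $h+1-\beta$, $h\geq n$, non-negative). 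Your mismatched quotient $\rho_k^{\beta}/|B_X(x,\rho_{k+1})|$ lets you compare each term to $\int_{\rho_k}^{\rho_{k+1}}\rho^{\beta-1}|B_X(x,\rho)|^{-1}\,\d\rho$ using only the trivial fact that $\rho\mapsto|B_X(x,\rho)|$ is nondecreasing, plus the elementary evaluation of $\int\rho^{\beta-1}\d\rho$; the hypothesis on $\beta$ then enters only through $\beta\leq n+1\leq N\leq Q$, which makes the integrand monotone on each shell. This is a mild but real simplification. Two small points to tidy up: (a) your telescoped integral runs up to $\rho_N$, which may lie in $[R_0,2R_0)$; either enlarge the $R_0$ declared in the statement by a factor $2$, or absorb $\int_{R_0}^{\rho_N}$ into the main term via the doubling inequality \eqref{globaldoubling}; (b) you should record that the constant $c(\beta)=\beta/(2^{\beta}-1)$ (interpreted as $1/\log 2$ at $\beta=0$) is finite and positive for every real $\beta$, so $C_3$ depends on $\beta$ alone, as required.
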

\begin{proof}
 The following argument is adapted from \cite[Theorem 5]{NSW}.
 We shall use in a crucial way the estimate \eqref{eq.SanchezI} (with $\xi = 0$):
 there exists a structural constant $c_1 > 0$ such that
\begin{equation} \label{eq.Sancheztousedopo}
\Big|\{\eta\in\mathbb{R}^{p}: (y,\eta)  \in
 B_{\widetilde{X}} ((x,0), \rho)\}\Big|
  \leq c_{1}\frac{|B_{\widetilde{X}}((x,0), \rho)|}{|B_{X}(x,\rho)|},
\end{equation}
 for every $x,y\in \R^n$ and every $\rho>0 $.

 Let now $K,\delta,\beta$ be as in the statement of the lemma, and let $R_0=R_0(K,\delta)>0$ be such that
 \begin{equation} \label{eq.estimdXdXtildeusare}
  d_{\widetilde{X}}
  \left(\left(x,0\right)^{-1}*\left(y,\eta\right)\right) <
  \frac{R_0}{2}, \qquad\text{for any $x,y\in K$
  and any $|\eta|\leq \delta$}.
 \end{equation}
Henceforth, we fix $x\neq y$ in $K$. Since we have
$$
d_{X}(x,y)\stackrel{\eqref{eq.proiezionipalled}}{\leq} d_{\widetilde{X}}\left(\left(x,0\right)^{-1}*\left(y,\eta\right)\right)
 \stackrel{\eqref{eq.estimdXdXtildeusare}}{<} \frac{R_0}{2},
$$
 we choose the unique integer $k\geq1$ such that
\begin{equation} \label{eq.perfinire}
\frac{R_0}{2^{k+1}} \leq d_{X}(x,y)
< \frac{R_0}{2^k}.
\end{equation}
On account of \eqref{eq.estimdXdXtildeusare}, this last
estimate implies that
$$
\frac{R_0}{2^{k+1}}
\leq d_{\widetilde{X}}\left(\left(x,0\right)^{-1}*\left(y,\eta\right)\right)
< \frac{R_0}{2},
$$
for every $\eta\in\R^p$ with $|\eta| \leq \delta$.
Thus we have the following computation:
\begin{align*}
&  \int_{|\eta| <\delta}d_{\widetilde{X}}^{\beta-Q}
\left(\left(x,0\right)^{-1}*\left(  y,\eta\right)  \right)\d\eta
\\[0.2cm]
& \quad \leq
\int_{\big\{\eta\in\R^p:\,
\frac{R_0}{2^{k+1}}
\leq d_{\widetilde{X}}((x,0)^{-1}*(y,\eta))
< \frac{R_0}{2}\big\}}d_{\widetilde{X}}^{\beta-Q}
\left(\left(x,0\right)^{-1}*\left(  y,\eta\right)\right)\d\eta
\\[0.2cm]
&  \quad
 = \sum_{j=1}^{k}\int_{\R^p}
 d_{\widetilde{X}}^{\beta-Q}
 \left(\left(x,0\right)^{-1}*\left(  y,\eta\right)\right){\text{\Large{$\chi$}}}_{A_j}(\eta)\,\d\eta
 =: (\star),
\end{align*}
 where we have introduced the notation
 $$A_j := \bigg\{\eta\in\R^p:\,
 \frac{R_0}{2^{j+1}}
 \leq d_{\widetilde{X}}\left((x,0)^{-1}*(y,\eta)\right)
 < \frac{R_0}{2^j}\bigg\}.$$
 We now observe that:
\begin{itemize}
 \item for any $\eta\in A_j$ we have (see also \eqref{eq.misurapallehom})
 $$d_{\widetilde{X}}^Q\Big((x,0)^{-1}*(y,\eta)\Big)\geq
 \Big(\frac{R_0}{2^{j+1}}\Big)^Q = \omega_Q\,\Big|
 B_{\widetilde{X}}\Big((x,0),\frac{R_0}{2^{j+1}}\Big)\Big|;$$
 \item for any $\eta\in A_j$ one has
 $$d_{\widetilde{X}}^\beta\Big((x,0)^{-1}*(y,\eta)\Big)
 \leq c_\beta\,\Big(\frac{R_0}{2^{j}}\Big)^\beta, \qquad\text{where\quad}
 c_\beta = \begin{cases}
 1, & \text{if $\beta\geq 0$}, \\
 2^{-\beta}, & \text{if $\beta < 0$}.
 \end{cases}$$
\end{itemize}
Gathering together these facts, one has
 \begin{align*}
   (\star) & \,\,\,\,\leq\,\,\frac{c_\beta}{\omega_Q}\,\sum_{j=1}^{k}\frac{(2^{-j}R_0)^\beta}
  {\big|B_{\widetilde{X}}((x,0), 2^{-j-1}R_0)\big|}\,
 \int_{\mathbb{R}^{p}}\chi_{A_j}(\eta)\,\d\eta \\
 & \,\,\,\,\leq\,\, \frac{c_\beta}{\omega_Q}\,\sum_{j=1}^{k}\frac{(2^{-j}R_0)^\beta}
  {\big|B_{\widetilde{X}}((x,0), 2^{-j-1}R_0)\big|}\cdot\Big|\big\{\eta\in\mathbb{R}^{p}: (y,\eta)  \in
 B_{\widetilde{X}} ((x,0), 2^{-j}R_0)\big\}\Big| \\
 & \stackrel{\eqref{eq.Sancheztousedopo}}{\leq}
 \frac{c_1 c_\beta}{\omega_Q}\,\sum_{j=1}^{k}\frac{(2^{-j}R_0)^\beta}
  {\big|B_{\widetilde{X}}((x,0), 2^{-j-1}R_0)\big|}\cdot
  \frac{|B_{\widetilde{X}}((x,0), 2^{-j}R_0)|}{|B_{X}(x,2^{-j}R_0)|} \\
  & \stackrel{\eqref{eq.misurapallehom}}{=}
  C(\beta)\,\sum_{j=1}^{k}\frac{(2^{-j}R_0)^\beta}
  {|B_{X}(x,2^{-j}R_0)|} =: (2\star), \qquad \text{with $C(\beta) := \frac{2^Q c_1 c_\beta}{\omega_Q}$}.
\end{align*}
 We now claim that, for a fixed $x$, the (continuous) function
 $$
 (0,\infty)\ni \rho\mapsto\frac{\rho^{\beta-1}}{\big|B_{X}(x,\rho)\big|}
 $$
 is comparable to a (continuous) monotone decreasing function, say $g_x(\rho)$. Indeed, by
 \eqref{eq.NSWmodificata} we have
 \begin{equation} \label{eq.NSWdariferirebeta}
\frac{\big|B_{X}(x,\rho)\big|}{\rho^{\beta-1}}\approx \sum_{h=n}^{q}f_{h}(x)\,\rho^{h+1-\beta}, \qquad\text{for $\rho>0$},
 \end{equation}
 for suitable non-negative functions $f_h$'s. As a consequence, observing that
 the exponents of $\rho$ in \eqref{eq.NSWdariferirebeta} are all non-negative (as $\beta\leq n+1$ by assumption),
 the right-hand side of \eqref{eq.NSWdariferirebeta} is monotone increasing in $\rho$; this proves that
 $$g_x(\rho) := \Bigg(\sum\limits_{h=n}^{q}f_{h}(x)\,\rho^{h+1-\beta}\Bigg)^{-1},$$
 is monotone decreasing, fulfilling our claim. This gives the next chain of inequalities:
 \begin{align*}
   (2\star) & =
   C(\beta)\,\sum_{j=1}^{k}\frac{(2^{-j}R_0)^{\beta-1}}
  {|B_{X}(x,2^{-j}R_0)|}\cdot 2^{-j}R_0 \\
  & \leq \frac{C(\beta)}{\gamma_1}\,\sum_{j = 1}^{k}g_x(2^{-j}R_0)\cdot 2^{-j}R_0
   = \frac{2\,C(\beta)}{\gamma_1}\,\sum_{j = 1}^{k}g_x(2^{-j}R_0)\cdot2^{-j-1}R_0 \\
   & \leq \frac{2\,C(\beta)}{\gamma_1}\,\sum_{j = 1}^{k}
   \int_{2^{-j-1}R_0}^{2^{-j}R_0}g_x(\rho)\,\d \rho =
   \frac{2\,C(\beta)}{\gamma_1}\,\int_{2^{-k-1}R_0}^{2^{-1}R_0}g_x(\rho)\,\d \rho \\
   & \leq \frac{2\,\gamma_2C(\beta)}{\gamma_1}\,\int_{2^{-k-1}R_0}^{2^{-1}R_0}\frac{\rho^{\beta-1}}{\big|B_{X}(x,\rho)\big|}\,\d \rho
   = \frac{\gamma_2C(\beta)}{2^{\beta-1}\,\gamma_1}\,\,\int_{2^{-k}R_0}^{R_0}\frac{t^{\beta-1}}{\big|B_{X}(x,t/2)\big|}\,\d t \\
   & \leq C_3(\beta)\,\int_{d_X(x,y)}^{R_0}\frac{t^{\beta-1}}{\big|B_{X}(x,t)\big|}\,\d t, \qquad \text{with $C_3(\beta) := \frac{C_d\,\gamma_2C(\beta)}{2^{\beta-1}\,\gamma_1}$}.
 \end{align*}
 In the last inequality we used the positivity of $g_x(\rho)$ and \eqref{eq.perfinire},
 jointly with the doubling inequality \eqref{globaldoubling}. This ends the proof.
\end{proof}
%
%[Se invece di essere $\beta\leq n+1$ fosse $\beta\geq q+1$ sarebbe
%$g_{x}\left(  r\right)  $ monotona crescente, e un argomento analogo
%funzionerebbe. Per $\beta$ nel range intermedio, cio\`{e} $\beta\in\left(
%n+1,q+1\right)  $, forse si deve spezzare l'integrale iniziale in una somma
%diversa, facendo s\`{\i} che l'intervallo $\left[  d\left(  x,y\right)
%,R\right]  $ sia suddiviso in intervallini tutti piccoli a piacere. Allora,
%fissati $x,y$, esiste un $\delta>0$ tale che se la massima ampiezza
%dell'intervallino \`{e} $<\delta$ l'integrale di Riemann \`{e}, poniamo,
%maggiore di $1/2$ della somma di Riemann, e si conclude comunque. Per\`{o}
%bisogna scrivere bene la suddivisione in modo da essere sicuri di poter
%applicare le varie stime di comparabilit\`{a}; questo sarebbe da sistemare. In
%effetti Nagel-Stein-Wainger scrivono semplicemente \textquotedblleft We then
%obtain the estimate for $Rk$ by comparing the sum to an
%integral\textquotedblright].
%
\noindent Finally, the following lemma is proved in \cite[Lemma 3.1]{BBMP}, and we provide
the proof for completeness.
\begin{lemma} \label{Lemma nonintegrale}
 For every $\beta\in\R$ there exists a constant $C_4 = C_4(\beta) > 0$ such that, for every $0<a<b$ and every $x\in \R^n$, the following inequalities hold:
\begin{equation} \label{eq.estimNSWfine}
 \int_{a}^{b}\frac{\rho^{\beta-1}}{\vert B_{X}(x,\rho)\vert}\,\d \rho
 \leq
\begin{cases}
  \displaystyle C_4(\beta)\frac{a^{\beta}}{\vert B_{X}(x,a)\vert } & \text{for $\beta<n$}, \\[0.4cm]
  \displaystyle C_4(\beta)\frac{a^{n}}{\vert B_{X}(x,a)\vert}\,\log\left(\frac{b}{a}\right) & \text{for $\beta=n$}, \\[0.4cm]
  \displaystyle C_4(\beta)\frac{a^{n}}{\vert B_{X}(x,a)\vert}\,b^{\beta-n} & \text{for $\beta>n$}.
\end{cases}
\end{equation}
\end{lemma}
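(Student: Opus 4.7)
My plan is to reduce everything to a monotonicity property for $\rho\mapsto |B_X(x,\rho)|/\rho^n$ coming from Theorem B, and then to carry out the elementary one-variable integration in each of the three regimes for $\beta$.

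First, by Theorem B there are structural constants $\gamma_1,\gamma_2>0$ with
\[
 \gamma_1\,P(x,\rho)\leq |B_X(x,\rho)|\leq \gamma_2\,P(x,\rho),\qquad P(x,\rho):=\sum_{k=n}^q f_k(x)\,\rho^k,
\]
for every $x\in\R^n$ and every $\rho>0$. Since all exponents $k-n$ appearing in
$P(x,\rho)/\rho^n=\sum_{k=n}^q f_k(x)\rho^{k-n}$ are non-negative and $f_k\geq 0$, the function $\rho\mapsto P(x,\rho)/\rho^n$ is non-decreasing in $\rho$. Combining this with the two-sided bound above yields a structural constant $c_0>0$ such that, for every $x\in\R^n$ and every $0<a\leq \rho$,
\[
 \frac{|B_X(x,a)|}{a^n}\leq c_0\,\frac{|B_X(x,\rho)|}{\rho^n},\qquad\text{i.e.,}\qquad \frac{1}{|B_X(x,\rho)|}\leq c_0\,\frac{a^n}{\rho^n\,|B_X(x,a)|}.
\]

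Plugging this into the left-hand side of \eqref{eq.estimNSWfine} we obtain
\[
 \int_a^b\frac{\rho^{\beta-1}}{|B_X(x,\rho)|}\,\d\rho\leq c_0\,\frac{a^n}{|B_X(x,a)|}\int_a^b \rho^{\beta-n-1}\,\d\rho.
\]
It remains to estimate the one-dimensional integral $\int_a^b \rho^{\beta-n-1}\,\d\rho$ in the three cases. If $\beta<n$, the exponent $\beta-n-1<-1$, so extending the integral to $+\infty$ gives $\int_a^b \rho^{\beta-n-1}\,\d\rho\leq (n-\beta)^{-1}\,a^{\beta-n}$; multiplying by the prefactor $a^n/|B_X(x,a)|$ produces the first line of \eqref{eq.estimNSWfine} with $C_4(\beta):=c_0/(n-\beta)$. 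If $\beta=n$, $\int_a^b \rho^{-1}\,\d\rho=\log(b/a)$ and we recover the second line. Finally, if $\beta>n$, then $\int_a^b \rho^{\beta-n-1}\,\d\rho\leq (\beta-n)^{-1}\,b^{\beta-n}$, yielding the third line with $C_4(\beta):=c_0/(\beta-n)$.

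There is essentially no obstacle: the only conceptual point is the monotonicity of $|B_X(x,\rho)|/\rho^n$ (up to structural constants), which is forced by the non-negativity of the coefficients $f_k$ in Theorem B and by $k\geq n$ in the sum. Everything else is an elementary integration and bookkeeping of constants.
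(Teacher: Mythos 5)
Your proof is correct and follows essentially the same route as the paper: the paper's proof also rests on the inequality $|B_X(x,\rho)|\geq \tfrac{\gamma_1}{\gamma_2}|B_X(x,a)|(\rho/a)^n$ for $\rho\geq a$ (which is exactly your monotonicity of $|B_X(x,\rho)|/\rho^n$ up to the constant $\gamma_2/\gamma_1$, deduced from Theorem B), followed by the same elementary integration in the three regimes. No issues.
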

 \noindent
 We shall apply this lemma with the choice $a=d_X(x,y)$ (with $x\neq y$) and $b=R_0>d_X(x,y)$.
 Moreover, in order to prove Theorem \ref{thm.estimatesDERIVgamma}, we shall take $\beta=2-r$ (with $r\geq 0$) which falls in the first estimate in
 \eqref{eq.estimNSWfine}, since $2-r\leq2 < n$ in our case.
 \begin{proof}
 Starting from \eqref{eq.NSWmodificata}, one can obtain the following estimate
 (see also \cite[Remark 2.12]{BBMP}):
$$
 \vert B_{X}(x,\rho)\vert \geq \frac{\gamma_1}{\gamma_2}\,\big\vert B_{X}(x,a)\big\vert
 \left(\frac{\rho}{a}\right)^{n} \quad \text{for $\rho\geq a$}.
$$
As a consequence, if $\beta<n$ we have
\begin{align*}
 &\int_{a}^{b}\frac{\rho^{\beta-1}}{\vert B_{X}(x,\rho)\vert}\,\d\rho   \leq \frac{\gamma_2}{\gamma_1}\,\frac{a^{n}}
 {\big\vert B_{X}(x,a)\big\vert}\,
 \int_{a}^{b}\frac{\rho^{\beta-1}}{\rho^{n}}\,\d\rho  =\frac{\gamma_2}{\gamma_1}\,\frac{a^{n}}
 {\big\vert B_{X}(x,a)\big\vert}\,
 \left(\frac{a^{\beta-n}-b^{\beta-n}}{n-\beta}\right) \\
&\qquad  \leq \frac{\gamma_2}{(n-\beta)\gamma_1}\,\frac{a^{n}}
 {\big\vert B_{X}(x,a)\big\vert}\,a^{\beta-n}  = C_4(\beta)\,\frac{a^{\beta}}
 {\big\vert B_{X}(x,a)\big\vert}, \quad \text{where $C_4(\beta) := \frac{\gamma_2}{(n-\beta)\gamma_1}$}.
\end{align*}
 The case $\beta > n$ is completely analogous. Finally, if $\beta=n$, the above computation has to be modified
 according to
 $\displaystyle
 \int_{a}^{b}\frac{\rho^{\beta-1}}{\rho^{n}}\,\d\rho
 = \log\left(\frac{b}{a}\right)$.
 This ends the proof.
\end{proof}
 In due course of the arguments of this section, we have incidentally proved the following:
\begin{corollary}\label{corollatastimaamam}
 Let $n>2$. For every integer $r\geq0$, there exists $C_3 = C_3(r)>0$ such that
\begin{equation} \label{eq.mainestimstepII33333333}
 \int_{\R^p}d_{\widetilde{X}}^{2-Q-r}
 \left((x,0)^{-1}*(y,\eta)\right)\d\eta\leq C_3\,\frac{d_{X}(x,y)^{2-r}}{\left\vert B_{X}\left(x,d_{X}(x,y)\right)\right\vert},
\end{equation}
for every $x,y\in \R^n$ with $x\neq y$.
\end{corollary}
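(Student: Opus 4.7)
The plan is to prove this corollary by almost literally repeating the final part of the proof of Theorem \ref{thm.estimatesDERIVgamma}, but bypassing the use of Proposition \ref{Thm 00} since the inequality we wish to establish is the very integral bound that was obtained along the way. The key observation is that Propositions \ref{Prop 1} and \ref{Prop 2}, taken together, yield exactly the desired estimate for $x,y$ in a compact neighborhood $K$ of the origin; we then extend to all of $\R^n$ by homogeneity.

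First I would fix once and for all a compact neighborhood $K$ of the origin in $\R^n$ and a real $\delta>0$. For $x\neq y$ in $K$, I split
\[
 \int_{\R^p}d_{\widetilde{X}}^{2-Q-r}\big((x,0)^{-1}*(y,\eta)\big)\,\d\eta
 =\int_{|\eta|\geq\delta}+\int_{|\eta|<\delta},
\]
and apply Proposition \ref{Prop 1} to the first piece and Proposition \ref{Prop 2} to the second. Summing the two bounds gives
\[
 \int_{\R^p}d_{\widetilde{X}}^{2-Q-r}\big((x,0)^{-1}*(y,\eta)\big)\,\d\eta
 \leq C(K,r,\delta)\,\frac{d_X(x,y)^{2-r}}{|B_X(x,d_X(x,y))|},
\]
for all $x\neq y$ in $K$.

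The second, and only nonroutine, step is to remove the restriction $x,y\in K$. For this I would invoke Remark \ref{rem.usehomog} applied to the functions
\[
 F(x,y):=\int_{\R^p}d_{\widetilde{X}}^{2-Q-r}\big((x,0)^{-1}*(y,\eta)\big)\,\d\eta,\qquad
 G(x,y):=C\,\frac{d_X(x,y)^{2-r}}{|B_X(x,d_X(x,y))|},
\]
with $C:=C(K,r,\delta)$, on $\Omega:=\{(x,y)\in\R^{2n}:x\neq y\}$, using the dilation $G_\lambda$ from \eqref{eq.dilatazioniEFG}. I need to check that $F$ and $G$ are both $G_\lambda$-homogeneous of the same degree. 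For $G$ this is immediate from \eqref{propertiesdCC}, yielding degree $2-r-q$. For $F$, performing the change of variable $\eta=E_\lambda(\eta')$ (whose Jacobian is $\lambda^{Q-q}$), using the left-invariance/homogeneity identity $(\delta_\lambda(x),0)^{-1}*(\delta_\lambda(y),E_\lambda(\eta'))=D_\lambda\big((x,0)^{-1}*(y,\eta')\big)$, and the $D_\lambda$-homogeneity of degree $1$ of $d_{\widetilde{X}}$, one obtains the factor $\lambda^{(2-Q-r)+(Q-q)}=\lambda^{2-r-q}$, as required. Since $F\leq G$ on $(K\times K)\cap\Omega$, Remark \ref{rem.usehomog} promotes the inequality to all of $\Omega$, which is \eqref{eq.mainestimstepII33333333}.

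The only mild subtlety to keep in mind is the homogeneity bookkeeping on the integral $F$: one must verify that the change of variable $\eta=E_\lambda(\eta')$ is legitimate and produces the expected Jacobian factor $\lambda^{Q-q}=\lambda^{\tau_1+\cdots+\tau_p}$, so that the total exponent matches that of the right-hand side. Everything else reduces to quoting Propositions \ref{Prop 1} and \ref{Prop 2} and the standing assumption $n>2$ (used implicitly through $\beta=2-r\leq 2<n$ in Lemma \ref{Lemma nonintegrale}, which in turn makes Proposition \ref{Prop 2} available). No further preparatory material is needed.
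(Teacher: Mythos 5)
Your proof is correct and follows essentially the same route as the paper, which presents this corollary as having been ``incidentally proved'' in the course of Theorem \ref{thm.estimatesDERIVgamma}: the local bound on a compact neighborhood $K$ comes from summing Propositions \ref{Prop 1} and \ref{Prop 2}, and the global statement follows from Remark \ref{rem.usehomog}. Your explicit verification that the integral itself is $G_\lambda$-homogeneous of degree $2-r-q$ (via the substitution $\eta=E_\lambda(\eta')$ with Jacobian $\lambda^{Q-q}$ and the fact that $D_\lambda$ is a group automorphism) is exactly the bookkeeping the paper leaves implicit, and it is done correctly.
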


%%%%%%%%%%%%%%%%%%%%%%%%%%%%%%%%%%%%%%%%%%%%%%%%%%%%%%%%%%%
\section{Global lower estimates of $\Gamma$}\label{sec:loweresimates}
 The aim of this section is to prove  the following
\begin{theorem}\label{th.stimebasso}
 Let $\Gamma$ be the fundamental solution of $\LL$ as in \eqref{sec.one:mainThm_defGamma}; let
  us suppose that $n>2$.
 Then, there exists a \emph{(}structural\emph{)} constant $C>0$ such that
\begin{equation}\label{eq.1gammabasso}
 \Gamma(x;y) \geq C\frac{d_{X}(x,y)^{2}}{\big|B_{X} \big(x,d_{X} (x,y)\big)\big|}\quad \text{for every $x\neq y$
 in $\R^n$.}
\end{equation}
In particular, for every fixed $x\in\R^n$ we have
\begin{equation}\label{vanishingGammanuguale2}
   \lim_{y\to x}\Gamma(x;y) = \infty.
\end{equation}
\end{theorem}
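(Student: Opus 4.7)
The plan is to mirror the upper-bound strategy of Section \ref{sec:pointwise.estimates}, but using the lower halves of the two key ingredients: Folland's equivalence \eqref{equivalaaaanza} for $\Gamma_{\mathbb{G}}$, and the Sánchez-Calle lower bound \eqref{eq.SanchezII} from Theorem C. Starting from the integral representation
\begin{equation*}
 \Gamma(x;y) = \int_{\R^p} \Gamma_{\mathbb{G}}\bigl((x,0)^{-1}*(y,\eta)\bigr)\,\d\eta
\end{equation*}
and using \eqref{equivalaaaanza}, one immediately reduces the problem to producing a lower bound of the form
\begin{equation*}
 \int_{\R^p} d_{\widetilde{X}}^{2-Q}\bigl((x,0)^{-1}*(y,\eta)\bigr)\,\d\eta \;\geq\; C\,\frac{d_X(x,y)^2}{|B_X(x,d_X(x,y))|}.
\end{equation*}

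To estimate this integral from below, I would set $\rho := d_X(x,y)$ and $r := \rho/\kappa$, where $\kappa\in(0,1)$ is the constant from Theorem C, and restrict the integration to the fiber
\begin{equation*}
 E := \bigl\{\eta\in\R^p : (y,\eta)\in B_{\widetilde{X}}((x,0),r)\bigr\}.
\end{equation*}
On $E$ one has $d_{\widetilde{X}}\bigl((x,0)^{-1}*(y,\eta)\bigr)\leq r$ and hence the integrand is bounded below by $r^{2-Q}$. Since $y\in B_X(x,\rho)=B_X(x,\kappa r)$, the lower estimate \eqref{eq.SanchezII} of Theorem C applies and yields
\begin{equation*}
 |E| \;\geq\; c_2\,\frac{|B_{\widetilde{X}}((x,0),r)|}{|B_X(x,r)|} \;=\; c_2\,\omega_Q\,\frac{r^Q}{|B_X(x,r)|},
\end{equation*}
where \eqref{eq.misurapallehom} has been used. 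Multiplying the two bounds yields a lower estimate of $\Gamma(x;y)$ of the order $r^2/|B_X(x,r)| = (\rho/\kappa)^2/|B_X(x,\rho/\kappa)|$, and one concludes \eqref{eq.1gammabasso} by invoking the global doubling property \eqref{globaldoubling} a bounded number of times to replace $|B_X(x,\rho/\kappa)|$ with $|B_X(x,\rho)|$ up to a structural constant.

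I do not expect any serious obstacle: the only point requiring care is the applicability of Theorem C with center $(x,0)$ (not $(x,\xi)$), but this is the precise form in which the theorem was stated. The global doubling of $B_X$ needed to replace $\rho/\kappa$ by $\rho$ is already guaranteed by the homogeneity argument developed in Section \ref{sec:localtoglobal}, so this step is essentially free.

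Finally, the blow-up \eqref{vanishingGammanuguale2} follows by plugging \eqref{eq.1gammabasso} into the upper estimate of $|B_X(x,\rho)|$ from Theorem B. Indeed, for $\rho:=d_X(x,y)\leq 1$ we have $\rho^k\leq \rho^n$ for every $k\geq n$, so
\begin{equation*}
 |B_X(x,\rho)| \;\leq\; \gamma_2\sum_{k=n}^q f_k(x)\,\rho^k \;\leq\; C(x)\,\rho^n,
\end{equation*}
and hence $\Gamma(x;y)\geq C'(x)\,\rho^{2-n}\to\infty$ as $y\to x$, since $n>2$.
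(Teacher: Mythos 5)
Your argument is correct in substance, but it takes a genuinely different and shorter route than the paper. You run a \emph{single-scale} argument: restrict the $\eta$-integral to the one fiber $E=\{\eta:(y,\eta)\in B_{\widetilde{X}}((x,0),r)\}$ at the scale $r\sim d_X(x,y)$, bound the integrand below by $r^{2-Q}$, and bound $|E|$ below by \eqref{eq.SanchezII} together with \eqref{eq.misurapallehom}; since Theorem C, \eqref{equivalaaaanza} and the doubling property are all already global, this gives \eqref{eq.1gammabasso} for all $x\neq y$ in one stroke, with no need for the local-to-global homogenization. The paper instead runs a \emph{multi-scale} (dyadic) argument: it decomposes $\{d_{\widetilde{X}}((x,0),(y,\eta))<2R\}$ into annuli $\Omega_j$, constructs by an intermediate-value argument a center $(y,\eta_j)$ of a lifted ball sitting inside each $\Omega_j$, applies \eqref{eq.SanchezII} on each scale, and sums, obtaining first the stronger intermediate bound $\Gamma(x;y)\geq C_1\int_{d_X(x,y)}^{R}\rho\,|B_X(x,\rho)|^{-1}\,\d\rho$ (Proposition \ref{prop1.stimabasso}) on compact sets, then converts it to the pointwise bound via Proposition \ref{prop2.stimabasso} and globalizes by homogeneity. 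For $n>2$ your shortcut buys simplicity at no cost; what the paper's longer route buys is precisely the integral lower bound, which is reused in Section \ref{sec:casen2} to obtain the logarithmic lower estimate \eqref{goaln2r0low} when $n=2$ --- your single-scale bound only yields $d_X^2/|B_X|$, which stays bounded near the diagonal when $f_2(x)>0$ and hence cannot detect the logarithm. One small technical point to fix: with $r=d_X(x,y)/\kappa$ the point $y$ lies on the boundary, not in the open ball $B_X(x,\kappa r)$, so \eqref{eq.SanchezII} does not literally apply; take instead $r=2d_X(x,y)/\kappa$, which costs only finitely many applications of the doubling inequality \eqref{globaldoubling} (a structural number, since $\kappa$ is structural). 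The final blow-up argument for \eqref{vanishingGammanuguale2} is the same as the paper's.
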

 \noindent We observe that the function in the right-hand side of
 \eqref{eq.1gammabasso} is not symmetric, as is instead true of $\Gamma(x;y)$; on the other hand,
 an inequality analogous to \eqref{eq.1gammabasso} holds true with interchanged $x$ and $y$, as
 we already pointed out in Remark \ref{rem.simmetriafrac}. \medskip

 The proof of Theorem \ref{th.stimebasso} relies on the next two propositions.
\begin{proposition}\label{prop1.stimabasso}
  Let $K\subset\mathbb{R}^{n}$ be any compact neighborhood of the origin.
 Then there exist a positive number $R=R(K)$ and a \emph{(}structural\emph{)} constant
 $C_1 > 0$ such that
\begin{equation}\label{eq.11propostim1}
 \Gamma(x;y) \geq C_1\int_{d_X(x,y)}^{R}\frac{\rho}{|B_X(x,\rho)|}\,\d \rho,
\end{equation}
  for every $x,y\in K$ with $x\neq y$.
 More precisely, $R$ can be chosen in such a way that $d_X(x,y)<R/2$ for all $x,y\in K$.
\end{proposition}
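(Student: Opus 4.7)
My plan is to mirror, in spirit, the dyadic argument used for the \emph{upper} bound of the analogous integral (Lemma~\ref{Lemma NSW}), but invoking the \emph{lower} S\'anchez-Calle estimate \eqref{eq.SanchezII} in place of \eqref{eq.SanchezI}. I would first combine \eqref{sec.one:mainThm_defGamma22222} with the left-hand inequality of \eqref{equivalaaaanza} to obtain
$$\Gamma(x;y)\ \geq\ c^{-1}\int_{\R^p}d_{\widetilde{X}}^{\,2-Q}\bigl((x,0)^{-1}*(y,\eta)\bigr)\,\d\eta,\qquad x\neq y.$$

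Next, using the elementary identity $a^{2-Q}=(Q-2)\int_a^\infty t^{1-Q}\,\d t$ (valid since $Q>2$) together with Tonelli's theorem on a nonnegative integrand, the right-hand side becomes
$$(Q-2)\int_0^\infty t^{1-Q}\,|E_t|\,\d t,\qquad E_t:=\bigl\{\eta\in\R^p:(y,\eta)\in B_{\widetilde{X}}((x,0),t)\bigr\},$$
where I exploited the left-invariance of $d_{\widetilde{X}}$ to recognize $\{d_{\widetilde{X}}((x,0)^{-1}*(y,\eta))<t\}$ as $E_t$. The decisive step is now Theorem~C: whenever $t>d_X(x,y)/\kappa$, one has $y\in B_X(x,\kappa t)$, so \eqref{eq.SanchezII} together with \eqref{eq.misurapallehom} yields $|E_t|\geq c_2\,\omega_Q\,t^Q/|B_X(x,t)|$. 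Restricting the layer-cake integral to $t>d_X(x,y)/\kappa$ (which only decreases it, since the integrand is positive) and substituting, one is led to
$$\Gamma(x;y)\ \geq\ c'\int_{d_X(x,y)/\kappa}^{\infty}\frac{t}{|B_X(x,t)|}\,\d t,$$
for a structural constant $c'>0$.

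To reach the statement of the proposition I would change variables $t=\rho/\kappa$ and apply the global doubling property \eqref{globaldoubling} a bounded, structural number of times to replace $|B_X(x,\rho/\kappa)|$ by a multiple of $|B_X(x,\rho)|$; this produces, up to a structural constant $C_1>0$, the integral $\int_{d_X(x,y)}^{\infty}\rho/|B_X(x,\rho)|\,\d\rho$. Finally, since $d_X$ is continuous and induces the Euclidean topology (so that $M_K:=\sup_{x,y\in K}d_X(x,y)<\infty$), the choice $R=R(K)>2M_K$ both enforces $d_X(x,y)<R/2$ throughout $K$ and allows truncating the integral at $\rho=R$ — again a positive-integrand operation that preserves the lower bound — yielding \eqref{eq.11propostim1}.

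The main obstacle I expect is purely bookkeeping: tracking the dependencies among $c,c_2,\omega_Q,\kappa,C_d$ and the change-of-variable factor to verify that $C_1$ is indeed structural (independent of $K$), while $R$ is the only object allowed to depend on $K$. A minor side-point is the convergence of the $\infty$-integral above, which follows from Theorem~B (giving $|B_X(x,\rho)|\gtrsim f_q\,\rho^q$ with $f_q>0$ and $q>2$); but since we only need a lower bound and we truncate at $R$ before the end, even divergence of the tail would be harmless.
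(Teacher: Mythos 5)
Your argument is correct, and it reaches \eqref{eq.11propostim1} by a genuinely different route from the paper's. The paper works discretely: it decomposes $\R^p$ into dyadic annuli $\Omega_j$ with respect to $d_{\widetilde{X}}((x,0),\cdot)$, and for each $j$ it constructs --- by an intermediate-value argument along a ray $t\mapsto (y,t\overline{\eta})$ --- an auxiliary point $\eta_j$ at distance exactly $\tfrac32\,R/2^j$ from $(x,0)$, so that the lifted ball $B_{\widetilde{X}}\big((y,\eta_j),R/2^{j+1}\big)$ sits inside $\Omega_j$; it then applies only the \emph{diagonal} instance of \eqref{eq.SanchezII} (fiber over $y$ of a ball centred at $(y,\eta_j)$), sums the dyadic series, and finally must invoke the \emph{symmetry} of $\Gamma$ because the resulting integral carries $|B_X(y,t)|$ rather than $|B_X(x,t)|$. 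You instead use the continuous layer-cake identity $a^{2-Q}=(Q-2)\int_a^\infty t^{1-Q}\,\d t$ plus Tonelli, and then apply \eqref{eq.SanchezII} in its full off-diagonal form (legitimate, since Theorem C is stated for all $y\in B_X(x,\kappa r)$ and all $r>0$, and $t>d_X(x,y)/\kappa$ guarantees exactly this), together with \eqref{eq.misurapallehom}; the shift of the lower limit from $d_X(x,y)/\kappa$ to $d_X(x,y)$ via $t=\rho/\kappa$ and finitely many applications of \eqref{globaldoubling} is sound, and all constants produced ($c$ from \eqref{equivalaaaanza}, $c_2$, $\kappa$, $\omega_Q$, $Q$, $C_d$) are structural, with only $R>2\sup_{x,y\in K}d_X(x,y)$ depending on $K$. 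Your version is shorter, avoids the geometric construction of the $\eta_j$'s entirely, lands directly on the ball centred at $x$, and dispenses with the symmetry of $\Gamma$; the paper's version has the (minor) virtue of using Theorem C only in its diagonal form. Both are valid proofs of the proposition, including in the case $n=2$.
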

\begin{proposition}\label{prop2.stimabasso}
 There exists a \emph{(}structural\emph{)} constant
  $C_2>0$ such that, for all $x\in\R^n$, one has
\begin{equation}\label{eq.11propostim2}
 \int_{a}^{b}\frac{\rho}{|B_X(x,\rho)|}\,\d \rho\geq C_2\,
 \frac{a^{2}}{\big|B_{X} \big(x,a\big)\big|}, \qquad\text{for any $0<a<b/2$}.
\end{equation}
\end{proposition}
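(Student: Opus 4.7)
The plan is to reduce the integration to the subinterval $[a,2a]$ and then exploit the global doubling property \eqref{globaldoubling}. The hypothesis $a<b/2$ ensures $2a<b$, and since the integrand is nonnegative we have
$$
\int_a^b \frac{\rho}{|B_X(x,\rho)|}\,\d\rho \;\geq\; \int_a^{2a}\frac{\rho}{|B_X(x,\rho)|}\,\d\rho.
$$
Next I would bound the denominator uniformly on this interval: by the obvious monotonicity of $\rho\mapsto|B_X(x,\rho)|$ we have $|B_X(x,\rho)|\leq |B_X(x,2a)|$ for $\rho\in[a,2a]$, and the global doubling inequality then gives $|B_X(x,2a)|\leq C_d\,|B_X(x,a)|$.

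Inserting these bounds and computing the elementary integral $\int_a^{2a}\rho\,\d\rho = \tfrac{3}{2}a^{2}$ yields
$$
\int_a^b \frac{\rho}{|B_X(x,\rho)|}\,\d\rho \;\geq\; \frac{1}{C_d\,|B_X(x,a)|}\int_a^{2a}\rho\,\d\rho \;=\; \frac{3}{2C_d}\cdot\frac{a^{2}}{|B_X(x,a)|},
$$
which is the desired estimate with the structural constant $C_2 := 3/(2C_d)$.

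I do not anticipate any real obstacle: the only substantial ingredient is the global doubling property for $|B_X(x,\rho)|$ valid for \emph{all} $x\in\R^n$ and all $\rho>0$, but this has already been established in Section \ref{sec:localtoglobal} by the homogeneity argument (Remark \ref{rem.usehomog}) applied to the local doubling of \cite{NSW}. The reason the hypothesis $a<b/2$ is needed (rather than just $a<b$) is precisely to guarantee that the subinterval $[a,2a]$ — on which the doubling step produces a bound in terms of $|B_X(x,a)|$ — is entirely contained in $[a,b]$.
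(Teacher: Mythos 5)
Your proof is correct. Every step checks out: $a<b/2$ guarantees $[a,2a]\subseteq[a,b]$, the integrand is nonnegative, $\rho\mapsto|B_X(x,\rho)|$ is nondecreasing, the global doubling inequality \eqref{globaldoubling} is available for all $x\in\R^n$ and all radii (as established in Section \ref{sec:localtoglobal}), and $\int_a^{2a}\rho\,\d\rho=\tfrac32 a^2$, yielding the structural constant $C_2=3/(2C_d)$.

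Your route is genuinely different from, and more elementary than, the paper's. The paper does not restrict to $[a,2a]$; instead it invokes Theorem B to replace $\rho/|B_X(x,\rho)|$ (up to the structural constants $\gamma_1,\gamma_2$) by an explicit function $\phi(\rho)=\bigl(\sum_{h=n}^{q}f_h(x)\rho^{h-1}\bigr)^{-1}$ which is monotone decreasing and reverse doubling with constant $\alpha=2^{1-q}$, and then proves the abstract claim that any such $\phi$ satisfies $\int_a^b\phi\geq\alpha^2\,a\,\phi(a)$ for $0<a<b/2$, via a dyadic decomposition of $[b/2^k,b]$. The paper's argument thus isolates a reusable lemma about decreasing reverse-doubling functions and runs parallel to the dyadic machinery used in Lemma \ref{Lemma NSW} and Proposition \ref{prop1.stimabasso}; your argument needs only the monotonicity of ball volume and the global doubling inequality (no appeal to the explicit form of $|B_X(x,\rho)|$ from Theorem B), is shorter, and produces a cleaner constant. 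Both are valid; yours is arguably the more economical proof of this particular statement.
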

  Before giving the proofs of Propositions  \ref{prop1.stimabasso}
 and \ref{prop2.stimabasso}, we show how they together provide the
\begin{proof}[Proof of Theorem \ref{th.stimebasso}]
 The proof of \eqref{eq.1gammabasso} is a local-to-global argument, via homogeneity.
 Gathering together \eqref{eq.11propostim1} and
 \eqref{eq.11propostim2} (this is legitimate, since $d_X(x,y)<R/2$ for all $x,y\in K$), one gets
\begin{equation}\label{stimabassoserveinettermed1}
 \Gamma(x;y) \geq C\,\frac{d_{X}(x,y)^{2}}{\big|B_{X} \big(x,d_{X} (x,y)\big)\big|}\quad \text{for every $x\neq y$
 in $K$,}
\end{equation}
 where $C=C_1\,C_2$ and $K$ is some fixed compact neighborhood of the origin in $\R^n$.

 Next we apply Remark \ref{rem.usehomog} with the choices of $m$, $\Omega$, $M_\lambda$
 as in the proof of Theorem \ref{thm.estimatesDERIVgamma}, and with the functions
 $G,F$ given by the two members of the inequality \eqref{stimabassoserveinettermed1}
 (valid on $(K\times K)\cap\Omega$), as these functions are both
 $M_\lambda$-homogeneous of degree $2-q$ (a consequence of
 \eqref{sec.one:mainThm_defGamma3} and \eqref{propertiesdCC}).

  Finally, we prove the blow-up property in \eqref{vanishingGammanuguale2}.
  Owing to Theorem B, we have
\begin{align*}
 \liminf_{y\to x}
 \Gamma(x;y) & \stackrel{\eqref{eq.1gammabasso}}{\geq}
 C\liminf_{y\to x} \frac{d_{X}(x,y)^{2}}{\big|B_{X} \big(x,d_{X} (x,y)\big)\big|}
 \stackrel{\eqref{eq.NSWmodificata}}{\geq} \frac{C}{\gamma_2}\liminf_{y\to x}\frac{d_X(x,y)^2}{\sum\limits_{k=n}^{q}f_{k}(x)\,d_X(x,y)^k}.
\end{align*}
 The latter $\liminf$ is $\infty$, due to the assumption $n > 2$ and the fact that $f_k\geq 0$ (and $f_q>0$).
\end{proof}
 We then give the proofs of
 Propositions  \ref{prop1.stimabasso}
 and \ref{prop2.stimabasso}.
\begin{proof}[Proof of Proposition \ref{prop1.stimabasso}]
  In this proof we make use
  of \eqref{eq.SanchezII} in Theorem C (with the choice $y = x$): there exists a structural
  constant $c_2 > 0$ such that
\begin{equation}\label{theremCequsata}
 \Big|\Big\{\eta\in\mathbb{R}^{p}: d_{\widetilde{X}}\big((y,\eta),(y,\xi)\big)<r\Big\}\Big|
   \geq c_{2}\frac{|B_{\widetilde{X}}((y,\xi), r)|}{|B_{X}(y, r)|},\qquad \forall\,\,y\in \R^n,\,\,\xi\in \R^p,\,\,r>0.
\end{equation}
 Let $K$ be any fixed compact neighborhood of $0\in\R^n$, and let us
 choose $R  = R(K) > 0$ such that
\begin{equation*}%\label{sceltaKrpo}
 d_X(x,y)<\frac{R}{2} \quad \text{for any $x,y\in K$.}
\end{equation*}
  Next, we fix different points $x,y$ in $K$ and we let $k\geq 1$ be the unique integer such that
\begin{equation}\label{sceltaKrpo22}
 \frac{R}{2^{k+1}}\leq d_X(x,y)< \frac{R}{2^k}\,.
\end{equation}
 The latter and \eqref{eq.proiezionipalled}  ensure that
\begin{equation}\label{sceltaKrpo223}
  \frac{R}{2^{k+1}}\leq  d_{\widetilde{X}}\Big((x,0)^{-1}*(y,\eta)\Big), \qquad \text{for every $\eta\in\R^p$}.
\end{equation}
 By the representation \eqref{sec.one:mainThm_defGamma22222} of $\Gamma$ and owing to
 \eqref{equivalaaaanza}, we have
\begin{align*}
    \Gamma(x;y) &\geq c^{-1}\int_{\R^p}d_{\widetilde{X}}^{2-Q}\Big((x,0)^{-1}*(y,\eta)\Big)\,\d\eta\\
     &\geq c^{-1}\int_{\big\{\eta:\,\, d_{\widetilde{X}}((x,0),(y,\eta))< 2R\big\}}
              d_{\widetilde{X}}^{2-Q}\Big((x,0)^{-1}*(y,\eta)\Big)\,\d\eta\\
              &=c^{-1}\sum_{j=0}^{k+1}
              \int_{\Omega_j}
              d_{\widetilde{X}}^{2-Q}\Big((x,0)^{-1}*(y,\eta)\Big)\,\d\eta=:(\star),
              \end{align*}
 where we have used \eqref{sceltaKrpo223}, together with the notation
 $$\Omega_j :=
 \bigg\{\eta\in\R^p:\,\, \frac{R}{2^{j}}\leq d_{\widetilde{X}}((x,0),(y,\eta))<\frac{R}{2^{j-1}}\bigg\}.$$
 From the very definition of $\Omega_j$ we then have
\begin{align*}
 (\star)&\geq
  c^{-1}\sum_{j=0}^{k+1}
  \left(\frac{R}{2^{j-1}}\right)^{2-Q}
  \big|\Omega_j \big|
  \geq c^{-1}\sum_{j=0}^{k}
  \left(\frac{R}{2^{j-1}}\right)^{2-Q}
  \big|\Omega_j \big| =:(2\star).
  \end{align*}
 We now claim the following assertion: for any $j\in\{0,\ldots,k\}$, there exists
 $\eta_j\in \R^p$ such that
\begin{equation}\label{jfkhshaew}
 \bigg\{\eta\in\R^p:\,\, d_{\widetilde{X}}((y,\eta),(y,\eta_j))<\frac{R}{2^{j+1}}\bigg\}\subseteq \Omega_j.
\end{equation}
 Indeed,  we first observe that
 (see \eqref{eq.proiezionipalled})
 the projection of $B_{\widetilde{X}}\big((x,0),R/2^{k}\big)\big)$ onto $\R^n$
 is precisely $B_X(x,R/2^k)$; thus, being $y\in B_X(x,R/2^k)$ (see \eqref{sceltaKrpo22}),
 there exists $\overline{\eta}\in \R^p\setminus\{0\}$ such that
\begin{equation}\label{gtchenonviene}
 d_{\widetilde{X}}((x,0),(y,\overline{\eta}))<R/2^k.
\end{equation}
 We consider the function $g:[1,\infty)\to (0,\infty)$ defined as follows
 $$g(t):= d_{\widetilde{X}}\big((x,0),(y,t\overline{\eta})\big).$$
 Fixing $j\in \{0,\ldots,k\}$, from \eqref{gtchenonviene} we get $g(1)<R/2^k\leq R/2^j$;
 on the other hand, for $t$ large enough, the point $(y,t\overline{\eta})$ cannot lie
 in $\overline{B}_{\widetilde{X}}((x,0), R/2^{j-1})$, i.e., $g(t)> R/2^{j-1}$. As a consequence (by continuity), there exists $t_j\geq1 $
 such that $$g(t_j)=\frac{1}{2}\left(\frac{R}{2^{j}}+\frac{R}{2^{j-1}}\right)=\frac{3}{2}\,\frac{R}{2^j}.$$
 Setting $\eta_j:=t_j\overline{\eta}$, the above equality means that
 $$d_{\widetilde{X}}\big((x,0),(y,\eta_j)\big)=\frac{3}{2}\,\frac{R}{2^j}. $$
 As simple argument based on the triangle inequality proves that
 $$B_{\widetilde{X}}\big((y,\eta_j),R/2^{j+1}\big)\subseteq
  B_{\widetilde{X}}\big((x,0),R/2^{j-1}\big)\setminus B_{\widetilde{X}}\big((x,0),R/2^{j}\big),$$
  which readily implies \eqref{jfkhshaew}. Owing to \eqref{theremCequsata}, the latter gives
\begin{equation*}%\label{gtchenonviene22}
 |\Omega_j|\geq \left|\bigg\{\eta\in\R^p:\,\, d_{\widetilde{X}}((y,\eta),(y,\eta_j))<\frac{R}{2^{j+1}}\bigg\}\right|
 \geq c_{2}\frac{\big|B_{\widetilde{X}}\big((y,\eta_j), {R}/{2^{j+1}}\big)\big|}{\big|B_{X}\big(y, {R}/{2^{j+1}}\big)\big|}.
\end{equation*}
 If we insert this in $(2\star)$, we get
\begin{align*}
 (2\star)&\,\,\,\geq\,\,
  \frac{c_2}{c}\sum_{j=0}^{k}
    \left(\frac{R}{2^{j-1}}\right)^{2-Q}
 \cdot
  \frac{\big|B_{\widetilde{X}}\big((y,\eta_j), {R}/{2^{j+1}}\big)\big|}{\big|B_{X}\big(y, {R}/{2^{j+1}}\big)\big|}\\
  &\stackrel{\eqref{eq.misurapallehom}}{=}
  \frac{c_2\,\omega_Q}{c\,4^Q}\sum_{j=0}^{k}
   \frac{({R}/{2^{j-1}})^2}{\big|B_{X}\big(y, {R}/{2^{j+1}}\big)\big|}
   \geq
     C'_1\sum_{j=0}^{k}
   \frac{({R}/{2^{j}})^2}{\big|B_{X}\big(y, {R}/{2^{j}}\big)\big|}=:(3\star),
 \end{align*}
 where $C_1'=\frac{4c_2\,\omega_Q}{c\,4^Q}$. We now argue as in the proof of Lemma \ref{Lemma NSW}, thus getting
\begin{align*}
 (3\star)&\geq
 \frac{\gamma_1C_1'}{\gamma_2}\int_{R/2^k}^{2R}\frac{\rho}{|B_X(y,\rho)|}\,\d \rho
 \stackrel{\eqref{sceltaKrpo22}}{\geq}
 \frac{\gamma_1C_1'}{\gamma_2}\int_{2\,d_X(x,y)}^{2R}\frac{\rho}{|B_X(y,\rho)|}\,\d \rho\\
 &=
 \frac{4\gamma_1C_1'}{\gamma_2}\int_{d_X(x,y)}^{R}\frac{t}{|B_X(y,2t)|}\,\d t
 \stackrel{\eqref{globaldoubling}}{\geq}
  \frac{4\gamma_1C_1'}{\gamma_2C_d}\int_{d_X(x,y)}^{R}\frac{t}{|B_X(y,t)|}\,\d t.
 \end{align*}
 Summing up, we have proved that (setting $C_1=\frac{4\gamma_1C_1'}{\gamma_2C_d}$)
\begin{equation*}
 \Gamma(x;y) \geq C_1\int_{d_X(x,y)}^{R}\frac{t}{|B_X(y,t)|}\,\d t,\qquad \text{for every $x\neq y$ in $K$};
\end{equation*}
 finally, by interchanging $x$ and $y$ and bearing in mind that $\Gamma$ is symmetric, we get \eqref{eq.11propostim1}.
\end{proof}
 We are left with the
\begin{proof}[Proof of Proposition \ref{prop2.stimabasso}]
 We fix any $x\in\R^n$ and
 we consider the integrand function in
 \eqref{eq.11propostim2}:
 $$(0,\infty)\ni\rho\mapsto \frac{\rho}{|B_X(x,\rho)|}.$$
 In the proof of
 Lemma \ref{Lemma NSW} we showed that, for every $\rho > 0$, one has
\begin{equation} \label{eq.NSWmodificataTER}
 \frac{1}{\gamma_2}\,\phi(\rho) \leq \frac{\rho}{|B_X(x,\rho)|}\leq \frac{1}{\gamma_1}\,\phi(\rho), \qquad \text{where}\quad
 \phi(\rho):=\left(\sum_{h=n}^{q}f_{h}(x)\,\rho^{h-1}\right)^{-1}.
\end{equation}
 Moreover, we observe that $\phi$ enjoys the
 following properties:
\begin{enumerate}
  \item it is non-negative and monotone decreasing;
  \item it is reverse doubling, i.e., for every $r > 0$ one has
  $$\phi(2\,r)\geq \alpha\,\phi(r)\quad \text{with $\alpha := 2^{1-q}$.}$$
\end{enumerate}
 In order to get \eqref{eq.11propostim2}, it is sufficient to prove this claim:
 \emph{any function $\phi$ with the above properties \emph{(1)} and \emph{(2)} also satisfies the following estimate}
\begin{equation} \label{eq.phiintegralosa}
 \int_a^b \phi(\rho)\,\d\rho\geq \alpha^2\,a\,\phi (a),\quad
 \text{\emph{for any $0<a<b/2$.}}
\end{equation}
  This claim will prove \eqref{eq.11propostim2} as follows:
\begin{align*}
 \int_{a}^{b}\frac{\rho}{|B_X(x,\rho)|}\,\d \rho
 &\stackrel{\eqref{eq.NSWmodificataTER}}{\geq }
 \frac{1}{\gamma_2}
 \int_{a}^{b}\phi(\rho)\,\d \rho
 \stackrel{\eqref{eq.phiintegralosa}}{\geq }
 \frac{\alpha^2}{\gamma_2}\,
 \,a\,\phi(a)\\
  &\stackrel{\eqref{eq.NSWmodificataTER}}{\geq }
  C_2\,\frac{a^{2}}{\big|B_{X}(x,a)\big|} \qquad \text{with $C_2=\frac{\alpha^2\gamma_1}{\gamma_2}$.}
\end{align*}
 We are left to prove the above claim. Let $a\in(0,b/2)$ be fixed, and let $k\in\N$ be such that
\begin{equation}\label{RRRRRmezzi}
  \frac{b}{2^{k+1}}< a \leq \frac{b}{2^k}\,.
\end{equation}
 This gives the following computation
 \begin{align*}
    \int_a^b \phi(\rho)\,\d\rho &\geq
    \int_{b/2^k}^b \phi(\rho)\,\d\rho=
    \sum_{j=1}^k \int_{b/2^j}^{b/2^{j-1}} \phi(\rho)\,\d\rho
    \geq \sum_{j=1}^k \phi(b/2^{j-1})\,\frac{b}{2^j}\\
    &\geq \phi(b/2^{k-1})\,\frac{b}{2^k} \geq \alpha^2 \phi(b/2^{k+1})\,\frac{b}{2^k}
    \geq \alpha^2\,a\,\phi(a).
 \end{align*}
 Here, we repeatedly used (1) and (2), and \eqref{RRRRRmezzi}.
\end{proof}
%%%%%%%%%%%%%%%%%%%%%%%%%%%%%%%%%%%%%%%%%%%%%%%%%%%%%%%%%%%%%%%%
\begin{example}
Let us consider the two vector fields on $\mathbb{R}^{n}$ ($n\geq3$)
 $$
 X_{1}=\partial_{x_{1}},\quad X_{2}=x_{1}\,\partial_{x_{2}}+x_{2}\partial_{x_{3}}+\ldots +x_{n-1}\partial_{x_{n}},
 $$
 and the corresponding PDO $\LL=X_1^2+X_2^2$ on $\mathbb{R}^{n}$.
 The vector fields $X_{1}$ and $X_{2}$ are homogeneous of degree $1$ with respect to the dilations
$$
 \delta_{\lambda}(x)=(\lambda x_{1},\lambda^{2}x_{2} ,\lambda^{3}x_{3},\ldots ,\lambda^{n}x_{n}),
$$
 which gives $q=n (  n+1 )  /2\geq 6$.
 Observe that
$$
Y_{k}:= [[[X_1,\underbrace{X_2],X_2]\cdots X_2}_{\text{$k$ times}}]= \de_{x_{k+1}},\quad \text{for $k=1,\ldots, n-1$.}
$$
 Thus, assumptions (H1)-to-(H3)
 of Section \ref{sec:introductionMain} are satisfied (here $N=n+1$).
 Following the construction in
 Remark \ref{Remark explicit f_k}, the possible choices of a basis of $\mathbb{R}^{n}$ are:
\begin{itemize}
  \item
  $X_{1},Y_{1},\ldots ,Y_{n-1}$, which has weight $1+2+\cdots+n=n (n+1 )/2$ and $f_{q}=1$;

  \item
 $X_{1},X_{2}$ and $n-2$ out of the $n-1$
 commutators $Y_{j}$. Denoting by $\mathcal{B}_{j}$ ($j=1,\ldots ,n-1$) the choice
 containing all the $Y_{k}$'s except for $k=j$, we have
\begin{align*}
 | \mathcal{B}_{j} |  =1+1+ (  2+3+\cdots +n )-(j+1)=q-j,\quad \text{and}\quad f_{q-j}(x)=| x_{j} | .
\end{align*}
\end{itemize}
 Hence
$$
 \Lambda (  x,\rho )  =\rho^{q}+\sum_{j=1}^{n-1} |x_{j}|\,\rho^{q-j}.
$$
 Thus, owing to our Theorem \ref{th.teoremone}-(III), we have the global estimates
$$
\Gamma (x;y) \approx \Big(d_{X}(x,y)^{q-2} +\sum_{j=1}^{n-1} |x_{j}|\,d_{X}(x,y)^{q-j-2}\Big)^{-1}.
$$

\end{example}

%%%%%%%%%%%%%%%%%%%%%%%%%%%%%%%%%%%%%%%%%%%%%%%%%%%%%%%%%%%%%
\section{The case $n = 2$} \label{sec:casen2}
  In this section we investigate the case $n = 2$, not covered in
  the previous sections. The obstructions imposed by taking $n = 2$
  are not only technical: jointly with the trivial example of
  the logarithmic fundamental solution of Laplace's operator in $\R^2$, we also have
  less trivial examples of fundamental solutions which (near the diagonal, at least) exhibit
  either a ``logarithmic-type'' behavior or a ``power-like'' behavior, depending on the point.
  As we shall detail in Example \ref{exa.grushink}, this happens for instance for the class of operators
  $$\mathcal{G} = (\de_{x_1})^2+ (x_1^k\,\de_{x_2})^2\quad (k\in \mathbb{N}).$$
  What is really pathological is the case $(n,r) = (2,0)$ in Theorem \ref{thm.estimatesDERIVgamma}, which
  means to obtain a global pointwise upper estimate of $\Gamma$; likewise, Theorem \ref{th.stimebasso} must be adapted in
  the case $n = 2$.
  On the contrary, the case $n = 2$ and $r\geq 1$ goes like in Theorem \ref{thm.estimatesDERIVgamma}, as
  the following result shows.
\begin{theorem} \label{thm.derivn2}
  Let $n = 2$ and let $\Gamma$ be the fundamental solution of $\LL$ in \eqref{sec.one:mainThm_defGamma}.
 Then, for any integer $r\geq 1$, there exists $C>0$ \emph{(}depending on $r$ and on the set $X=\{X_1,\ldots,X_m\}$\emph{)} such that
\begin{equation}\label{goaln2}
   \Big\vert Z_1\cdots Z_r\Gamma(x;y) \Big\vert
   \leq C\,
   \frac{d_{X}(x,y)^{2-r}}
   {\big\vert B_{X}(x,d_{X}(x,y)) \big\vert },
\end{equation}
 for any $x,y\in\mathbb{R}^{2}$ \emph{(}with $x\neq y$\emph{)} and any
 choice of
\begin{equation*}%\label{goalZZZZZn2}
 Z_1,\ldots,Z_r\in\Big\{X^x_1,\ldots,X^x_m,X^y_1,\ldots,X^y_m\Big\}.
\end{equation*}
In particular,
 for every fixed $x\in\R^2$ we have
 \begin{equation} \label{eq.dergammavanishn2}
  \lim_{|y|\to\infty}Z_{1}\cdots Z_{r}\Gamma (x;y) = 0.
 \end{equation}
  \end{theorem}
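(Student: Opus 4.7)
The plan is to mirror, step by step, the proof of Theorem \ref{thm.estimatesDERIVgamma} and to observe that every single intermediate lemma remains valid when $n=2$, provided $r\geq 1$. In particular, Lemma \ref{Thm 0} and Proposition \ref{Thm 00} hold without any restriction on $n$, since their proofs only exploit the homogeneity of $\Gamma_{\mathbb{G}}$ and its lifted $\widetilde{X}$-derivatives, together with the integrability produced by the condition $2-Q-r<q-Q$ (which is granted by $q>2$). Thus one starts from
\[
 \Big\vert Z_1\cdots Z_r\Gamma(x;y) \Big\vert\leq c\int_{\R^p}d_{\widetilde{X}}^{2-Q-r}\Big((x,0)^{-1}*(y,\eta)\Big)\,\d\eta,
\]
and splits the integral into $\{|\eta|\geq \delta\}$ and $\{|\eta|<\delta\}$, fixing first a compact neighborhood $K$ of $0$ in $\R^2$.

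The first key point is that Proposition \ref{Prop 1} continues to hold when $n=2$, for every $r\geq 0$: scrutinizing its proof, the unique place where the dimension plays a role is the estimate \eqref{eq.tojustifyn2}, whose finiteness requires $n+r-2\geq 0$; this is satisfied for $n=2$ as soon as $r\geq 0$, hence a fortiori for $r\geq 1$. The second (and really decisive) point is that Proposition \ref{Prop 2} retains its validity when $n=2$, \emph{provided} $r\geq 1$. Indeed, its proof is obtained by combining Lemma \ref{Lemma NSW} (which asks $\beta\leq n+1$, here trivially verified since $\beta=2-r\leq 1<3$) with Lemma \ref{Lemma nonintegrale}, applied with $a=d_X(x,y)$, $b=R_0$, and $\beta=2-r$. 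The first, non-logarithmic, case of \eqref{eq.estimNSWfine} is the one we need, and it requires $\beta<n$; for $n=2$ this amounts exactly to $2-r<2$, i.e., $r\geq 1$. This is precisely the reason why the case $r=0$ (which would trigger the logarithmic line of \eqref{eq.estimNSWfine}) must be excluded here and treated separately later on.

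With Propositions \ref{Prop 1} and \ref{Prop 2} in hand, one obtains \eqref{goaln2} first for $x\neq y$ in $K$, with a structural constant. Then, exactly as in the proof of Theorem \ref{thm.estimatesDERIVgamma}, the estimate is extended to all of $\R^2\times\R^2\setminus\{x=y\}$ via Remark \ref{rem.usehomog}: both sides of \eqref{goaln2} are $G_\lambda$-homogeneous of degree $2-q-r$ (a consequence of \eqref{sec.one:mainThm_defGamma3}, \eqref{propertiesdCC} and the $\delta_\lambda$-homogeneity of the $Z_i$'s), and the inequality is known on $(K\times K)\cap\Omega$, so it propagates globally.

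Finally, for the vanishing property \eqref{eq.dergammavanishn2}, one uses Theorem B to bound
\[
 \frac{d_X(x,y)^{2-r}}{|B_X(x,d_X(x,y))|}\leq \frac{1}{\gamma_1\,f_q\,d_X(x,y)^{q-2+r}},
\]
which goes to $0$ as $|y|\to\infty$, since $q-2+r\geq q-1>1$ (because $q>2$ and $r\geq 1$) and $d_X(x,y)\to\infty$ as $|y|\to\infty$. The main obstacle in this argument is, as indicated above, the subtle dimensional bookkeeping in Lemma \ref{Lemma nonintegrale}, which pinpoints exactly the borderline role of $r=1$ in separating the $n=2$ derivative estimates from the genuinely pathological behavior of $\Gamma$ itself ($r=0$), which will instead demand the separate analysis of Section \ref{sec:casen2}.
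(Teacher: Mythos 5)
Your proposal is correct and takes essentially the same route as the paper's proof: one checks that Lemma \ref{Thm 0}, Proposition \ref{Thm 00} and Proposition \ref{Prop 1} survive unchanged when $n=2$ (the condition $n+r-2\geq 0$ in \eqref{eq.tojustifyn2} being satisfied), and that Proposition \ref{Prop 2} still follows from Lemma \ref{Lemma NSW} combined with Lemma \ref{Lemma nonintegrale} because $\beta=2-r<2=n$ precisely when $r\geq 1$, after which the homogeneity globalization and the vanishing property proceed as in Section \ref{sec:pointwise.estimates}. Your identification of Lemma \ref{Lemma nonintegrale} as the exact point where $r\geq 1$ is needed (and where $r=0$ would trigger the logarithmic case) matches the paper's reasoning.
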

 \begin{proof}
  An inspection of the proofs of the results in Section \ref{sec:pointwise.estimates}
  shows that, when $n = 2$ and $r\geq 1$,
  \begin{itemize}
    \item Lemma \ref{Thm 0} remains unvaried, and
     Proposition \ref{Thm 00} follows from Lemma \ref{Thm 0};
    \item Proposition \ref{Prop 1} holds unvaried: indeed, in
    \eqref{eq.tojustifyn2} we have $M(K,r) < \infty$,
    as $n+r-2 = r\geq 1$;
    \item Lemma \ref{Lemma NSW} holds true for any $\beta\leq n+1 = 3$.
    This enables us to obtain Proposition \ref{Prop 2} (for $n = 2$ and $r\geq 1$),
  by combining Lemma \ref{Lemma NSW} with Lemma
  \ref{Lemma nonintegrale} (with the choice $\beta = 2-r < 2 = n$).
 \end{itemize}
  Finally, with Propositions \ref{Thm 00}-to-\ref{Prop 2} at hand, the proofs of
  \eqref{goaln2} and \eqref{eq.dergammavanishn2} follow as in Section \ref{sec:pointwise.estimates}.
 \end{proof}
 In the above arguments, we have incidentally proved the following:
\begin{corollary}\label{corollatastimaamambis44444}
 Let $n=2$. For every integer $r\geq 1$, there exists $C= C(r)>0$ such that
\begin{equation} \label{eq.mainestimstepII33333333jjjjjj}
 \int_{\R^p}d_{\widetilde{X}}^{2-Q-r}
 \left((x,0)^{-1}*(y,\eta)\right)\d\eta\leq C\,\frac{d_{X}(x,y)^{2-r}}{\left\vert B_{X}\left(x,d_{X}(x,y)\right)\right\vert},
\end{equation}
for every $x,y\in \R^2$ with $x\neq y$.
\end{corollary}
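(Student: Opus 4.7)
The statement to prove is Corollary \ref{corollatastimaamambis44444}, which is the $n=2$, $r\geq 1$ analogue of Corollary \ref{corollatastimaamam}. Since the corollary is announced as being \emph{incidentally proved} in the arguments of this section, the plan is simply to reassemble the pieces already established in the proof of Theorem \ref{thm.derivn2}, splitting the integral in two parts and then globalizing by homogeneity.

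\medskip

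First, I fix a compact neighborhood $K$ of the origin in $\R^2$ and a parameter $\delta>0$, and I restrict attention to $x\neq y$ in $K$. I then split
\begin{equation*}
 \int_{\R^p} d_{\widetilde{X}}^{2-Q-r}\bigl((x,0)^{-1}*(y,\eta)\bigr)\,\d\eta
 = \int_{|\eta|\geq\delta}(\cdots)\,\d\eta + \int_{|\eta|<\delta}(\cdots)\,\d\eta,
\end{equation*}
and apply the two building blocks from Section \ref{sec:pointwise.estimates} to each piece. For the tail $\{|\eta|\geq\delta\}$, Proposition \ref{Prop 1} is available without modification in the case $n=2$, $r\geq 1$: the only point requiring $n+r-2\geq 0$ is the estimate \eqref{eq.tojustifyn2}, which is still valid since $n+r-2 = r\geq 1$. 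For the bulk $\{|\eta|<\delta\}$, Proposition \ref{Prop 2} with $n=2$, $r\geq 1$ follows by chaining Lemma \ref{Lemma NSW} (valid whenever $\beta\leq n+1 = 3$, hence in particular for $\beta = 2-r \leq 1$) with Lemma \ref{Lemma nonintegrale}, whose \emph{first} case applies since $\beta = 2-r < 2 = n$.

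\medskip

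Combining the two bounds yields a constant $C=C(K,r,\delta)>0$ such that
\begin{equation*}
 \int_{\R^p} d_{\widetilde{X}}^{2-Q-r}\bigl((x,0)^{-1}*(y,\eta)\bigr)\,\d\eta \leq C\,\frac{d_X(x,y)^{2-r}}{|B_X(x,d_X(x,y))|}, \qquad x\neq y\in K.
\end{equation*}
To remove the restriction $x,y\in K$, I invoke Remark \ref{rem.usehomog} with $m=2n=4$, $\Omega=\{(x,y)\in\R^{2}\times\R^2:x\neq y\}$, and $M_\lambda=G_\lambda$ from \eqref{eq.dilatazioniEFG}. The left-hand side $F(x,y)$ is $M_\lambda$-homogeneous of degree $2-q-r$, because $d_{\widetilde{X}}$ is $D_\lambda$-homogeneous of degree $1$, the translation-left of $(x,0)^{-1}*(y,\eta)$ pushes forward under $D_\lambda$, and the Jacobian of $E_\lambda$ produces a factor $\lambda^{Q-q}$; the right-hand side $G(x,y)$ has the same degree of homogeneity by \eqref{propertiesdCC}. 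Since $F\leq G$ has been established on $(K\times K)\cap\Omega$, Remark \ref{rem.usehomog} extends the inequality to all of $\Omega$, completing the proof.

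\medskip

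There is no real obstacle: the whole point of the $r\geq 1$ restriction in the case $n=2$ is precisely to keep $\beta = 2-r$ in the ranges where Lemmas \ref{Lemma NSW} and \ref{Lemma nonintegrale} (first case) and the finiteness in \eqref{eq.tojustifyn2} all remain in force. The pathological behavior occurs only at $(n,r) = (2,0)$, which is exactly the case \emph{excluded} here and treated separately later in the paper.
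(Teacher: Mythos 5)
Your proof is correct and follows exactly the paper's route: Corollary \ref{corollatastimaamambis44444} is obtained by combining Proposition \ref{Prop 1} (valid since $n+r-2=r\geq 1$) and Proposition \ref{Prop 2} (via Lemma \ref{Lemma NSW} with $\beta=2-r\leq 3$ and the first case of Lemma \ref{Lemma nonintegrale} since $2-r<2=n$) on a compact neighborhood of the origin, and then globalized through Remark \ref{rem.usehomog}, with both sides $G_\lambda$-homogeneous of degree $2-q-r$. Nothing is missing.
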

 As for the estimates of $\Gamma$, both from above and from below,
 one cannot expect global results: this is due to the fact that, when $n = 2$,
 $\Gamma$ can behave logarithmically near the diagonal and much differently off the diagonal.
 Thus,  we are firstly content with the following estimates (Theorem \ref{thm.estimateGamman2}) valid on compact sets $K$, uniformly as $x$ and $y$ vary in $K$.
 Secondly, we shall prove optimal estimates (in that both upper and lower bounds are of the same form) near the diagonal and depending upon the pole $x$
 (Theorem \ref{thm.estimateGamman2bis}).
 \begin{theorem} \label{thm.estimateGamman2}
 Let the assumptions of Theorem \ref{thm.derivn2} apply, and let $K\subseteq\R^n$ be any compact set. Then,
 there exist a structural constant $C_0 > 0$ and a real $R_0 = R_0(K) > 0$ such that
 \begin{equation} \label{goaln2r0}
   \Gamma(x;y)\leq C_0\,\frac{d_{X}(x,y)^{2}}{\big|B_{X}(x,d_{X}(x,y))\big|}
  \cdot \log\Big(\frac{R_0}{d_X(x,y)}\Big),
\end{equation}
 for every $x,y\in K$ with $x\neq y$.

 Moreover, there exist
 positive numbers $C_1 = C_1(K)$ and $R_1 = R_1(K)$ such that
 \begin{equation} \label{goaln2r0low}
  \Gamma(x;y)\geq C_1\,\log\Big(\frac{R_1}{d_X(x,y)}\Big)
\end{equation}
 for every $x,y\in K$ with $x\neq y$.
 In particular, for every fixed $x\in\R^2$ we have
\begin{equation} \label{eq.Gammapolexn2}
   \lim_{y\to x}\Gamma(x;y) = \infty.
\end{equation}
\end{theorem}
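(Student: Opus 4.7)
The plan is to revisit the estimates carried out in Sections \ref{sec:pointwise.estimates} and \ref{sec:loweresimates} in the borderline dimension $n=2$, $r=0$. The integral representation of $\Gamma$ in terms of $d_{\widetilde{X}}^{2-Q}$ continues to hold; what changes is that the exponent $\beta=2$ in Lemma \ref{Lemma nonintegrale} now coincides with $n$, which is precisely the case producing the logarithmic factor that was absent when $n>2$.

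For the upper bound \eqref{goaln2r0}, I would start from Proposition \ref{Thm 00} with $r=0$, which gives
$$\Gamma(x;y)\leq c\int_{\R^p} d_{\widetilde{X}}^{2-Q}\bigl((x,0)^{-1}*(y,\eta)\bigr)\,\d\eta,$$
and I would split this integral at $\{|\eta|\geq\delta\}$ and $\{|\eta|<\delta\}$. The outer piece is controlled by Proposition \ref{Prop 1} with $r=0$, whose proof still works when $n=2$ since the condition $n+r-2\geq 0$ holds (in the limit case). For the inner piece I would apply Lemma \ref{Lemma NSW} with $\beta=2\leq n+1=3$, reducing to $\int_{d_X(x,y)}^{R_0}\rho/|B_X(x,\rho)|\,\d\rho$, and then invoke Lemma \ref{Lemma nonintegrale} precisely in the borderline case $\beta=n=2$: this yields the logarithmic factor $\log(R_0/d_X(x,y))$ multiplying $d_X(x,y)^2/|B_X(x,d_X(x,y))|$. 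Since the $\{|\eta|\geq\delta\}$ contribution is dominated on $K$ by the same quantity without the $\log$, it is absorbed into \eqref{goaln2r0}.

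For the lower bound \eqref{goaln2r0low}, I would observe that Proposition \ref{prop1.stimabasso} nowhere uses $n>2$ and therefore still delivers, for some $R=R(K)>0$,
$$\Gamma(x;y)\geq C_1\int_{d_X(x,y)}^{R}\frac{\rho}{|B_X(x,\rho)|}\,\d\rho,$$
uniformly for $x\neq y$ in $K$. To conclude, I would use the upper half of Theorem B, namely $|B_X(x,\rho)|\leq \gamma_2\sum_{k=2}^{q} f_k(x)\rho^k$, jointly with the continuity of the $f_k$'s on $K$: this gives $|B_X(x,\rho)|\leq M(K,R)\,\rho^2$ uniformly for $x\in K$ and $\rho\in(0,R)$, hence $\rho/|B_X(x,\rho)|\geq c(K,R)/\rho$. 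Integrating produces the desired $C_1(K)\log(R_1/d_X(x,y))$ after choosing $R_1\leq R$ so that $d_X(x,y)<R_1$ for all $x,y\in K$.

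Finally, the blow-up \eqref{eq.Gammapolexn2} is immediate from \eqref{goaln2r0low}: applying the lower bound on any compact neighbourhood $K$ of a fixed $x\in\R^2$, one sees that $d_X(x,y)\to 0$ as $y\to x$ forces $\log(R_1/d_X(x,y))\to\infty$. The main point to watch, and the only genuine novelty with respect to the case $n>2$, is the careful bookkeeping of the borderline case $\beta=n=2$ of Lemma \ref{Lemma nonintegrale}, which is responsible for the $\log$ factor; this factor is intrinsic in dimension two and cannot be dispensed with, as the Grushin-type examples will show.
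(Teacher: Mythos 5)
Your proposal is correct and follows essentially the same route as the paper: Proposition \ref{Thm 00} with $r=0$, the $\{|\eta|\geq\delta\}/\{|\eta|<\delta\}$ splitting handled by Proposition \ref{Prop 1} and Lemma \ref{Lemma NSW}, the borderline case $\beta=n=2$ of Lemma \ref{Lemma nonintegrale} for the logarithm, and Proposition \ref{prop1.stimabasso} plus the quadratic upper volume bound from Theorem B for the lower estimate. The only (immaterial) differences are that the paper absorbs the non-logarithmic term by enlarging $R_0$ so that $\log(R_0/d_X(x,y))\geq\log 3$ on $K$, and it splits the lower bound into the cases $d_X(x,y)\geq 1$ and $d_X(x,y)<1$ instead of using your $R$-dependent constant $M(K,R)$.
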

 It can be noticed that a serious obstruction in globalizing \eqref{goaln2r0}-\eqref{goaln2r0low}
 is the lack of homogeneity
 of logarithmic members in the right-hand sides.
 However, this does not prevent $\Gamma(x;y)$ from vanishing as $|y|$ (or $|x|$) goes to infinity,
 as is proved in \cite{BB} (see (3) in Theorem A).
 \begin{proof}
  We begin by establishing \eqref{goaln2r0}.
  Scrutinizing the proofs of the results in Section \ref{sec:pointwise.estimates} with $(n,r) = (2,0)$
 we see that
  \begin{itemize}
    \item Proposition \ref{Thm 00} follows from the representation
    formula \eqref{sec.one:mainThm_defGamma22222};
    \item Proposition \ref{Prop 1} holds unvaried, as $n+r-2 = 0$ (see again
    \eqref{eq.tojustifyn2});
    \item Lemma \ref{Lemma NSW} holds true with $\beta = 2 < n+1$.
 \end{itemize}
 Then, fixing any compact set $K$, we obtain the following chain of inequalities:
 \begin{align*}
   & \Gamma(x;y) \stackrel{\eqref{stimadel_THEO0}}{\leq}
   c
   \int_{\mathbb{R}^{p}}
    d_{\widetilde{X}}^{2-Q}\Big(  (x,0)^{-1}*(y,\eta)  \Big)\,\d\eta \\
   & \,\,\,=c
   \int_{|\eta|\geq 1}
    d_{\widetilde{X}}^{2-Q}\Big(  (x,0)^{-1}*(y,\eta)  \Big)\,\d\eta
   + c
   \int_{|\eta| < 1}
    d_{\widetilde{X}}^{2-Q}\Big(  (x,0)^{-1}*(y,\eta)  \Big)\,\d\eta \\
   & \stackrel{\eqref{eq.mainestimstepI}}{\leq}
    c_1\,\frac{d_{X}(x,y)^{2}}{\big|B_{X}(x,d_{X}(x,y))\big|}+
    c
   \int_{|\eta| < 1}
    d_{\widetilde{X}}^{2-Q}\Big(  (x,0)^{-1}*(y,\eta)  \Big)\,\d\eta \\
   & \stackrel{\eqref{eq.lemmaSanchezCalleeNSW}}{\leq}
   c_1\,\frac{d_{X}(x,y)^{2}}{\big|B_{X}(x,d_{X}(x,y))\big|}+
   c_2\,\int_{d_X(x,y)}^{R_0}\frac{\rho}{\big|B_{X}(x,\rho)\big|}\,\d\rho.
\end{align*}
 From this, we obtain the upper estimate in
 \eqref{goaln2r0} by applying Lemma \ref{Lemma nonintegrale} in the case $\beta = n = 2$, and by possibly enlarging
 $R_0$ in such a way that $R_0>3\sup\{d_X(x,y):x,y\in K\}$.

 We then prove \eqref{goaln2r0low}. To this end, let $R = R(K) > 0$ be as in
 Proposition \ref{prop1.stimabasso}. If $x,y\in K$ and $d_X(x,y)\geq 1$ (whence $R\geq 2$), then
 \eqref{goaln2r0low} is trivially satisfied with the choice
 $$C_1(K) := \frac{1}{\log(R)}\inf\big\{\Gamma(x;y):\,\text{$x,y\in K$ and $d_X(x,y)\geq 1$}\big\} > 0.$$
 If, instead, $x,y\in K$ and $0< d_X(x,y) < 1$,
 an inspection of the proof
 of Proposition \ref{prop1.stimabasso} shows that \eqref{eq.11propostim1}
 is valid also in the case $n = 2$, thus giving
 $$ \Gamma(x;y) \geq C\int_{d_X(x,y)}^{R}\frac{\rho}{|B_X(x,\rho)|}\,\d \rho
  \geq C\int_{d_X(x,y)}^{\min\{R,1\}}\frac{\rho}{|B_X(x,\rho)|}\,\d \rho = (\star).$$
 On the other hand, if $0<\rho\leq 1$, by Theorem B we have
 $$|B(x,\rho)|\leq \gamma_2\,\sum_{h = 2}^qf_h(x)\rho^h\leq
  \gamma'_2\,\rho^2, \qquad \text{where $\gamma'_2 :=
  \gamma_2\Big(f_q+\max_{x\in K}\sum_{h = 2}^{q-1}f_h(x)\Big)$}.$$
 Notice that $\gamma_2'\geq f_q > 0$. Thus, we obtain
 \begin{align*}
   (\star) \geq \frac{C}{\gamma_2'}\int_{d_X(x,y)}^{\min\{R,1\}}\frac{1}{\rho}\,\d\rho =
   \frac{C}{\gamma_2'}\log\Big(\frac{\min\{R,1\}}{d_X(x,y)}\Big).
\end{align*}
 Summing up, \eqref{goaln2r0low} is satisfied by possibly replacing $C_1$ with
 $\min\{C_1, C/\gamma_2'\}$, and with the choice $R_1 := \min\{R(K),1\}$.
 Finally, we show \eqref{eq.Gammapolexn2}. We apply \eqref{goaln2r0low} with the choice $K = \{y:\|y-x\|\leq 1\}$:
 \begin{align*}
  \liminf_{y\to x}
 \Gamma(x;y) & \geq
 C_1(K)\liminf_{y\to x} \log\Big(\frac{R_1(K)}{d_X(x,y)}\Big),
 \end{align*}
 for a suitable $R_1(K) > 0$. The latter $\liminf$ is clearly $\infty$, and the proof is complete.
 \end{proof}
\begin{remark}\label{re.featureuniformvsondiagonal}
 The main feature of Theorem \ref{thm.estimateGamman2} is to provide uniform estimates,
 valid as \emph{both $x$ and $y$ may vary} (in some compact set). In a different spirit, we next consider the case
 when $x$ is \emph{fixed} and $y$ is near the pole $x$.
 We shall see
 in Theorem \ref{thm.estimateGamman2bis}  that a different (and more precise) behavior arises.  Roughly put,
 the more ``rigid'' situation of a fixed pole $x$ will allow us to obtain upper and lower estimates
 with the same type of bounding functions (compare \eqref{goaln2r0bis} to \eqref{goaln2r0}-\eqref{goaln2r0low}).
 However, as we shall show in Example \ref{exa.grushin}, this is not in contrast
 with the different nature of the uniform  estimates of $\Gamma$, where the variability
 of the lower and upper bounds \eqref{goaln2r0}-\eqref{goaln2r0low} depicts the more general situation of both variable $x$'s and $y$'s.
\end{remark}
 \begin{theorem}\label{thm.estimateGamman2bis}
 Let $n=2$ and let the assumptions of Theorem \ref{thm.derivn2} apply. Let $f_2$
 be the nonnegative function introduced in Theorem B. Then, for any $x\in\R^n$,
 there exist positive constants $\gamma_1(x),\gamma_2(x)$ and a small
 $\varepsilon(x) < 1$ \emph{(}all depending on $x$ only\emph{)} such that
 \begin{equation} \label{goaln2r0bis}
   \gamma_1(x)\,F(x,y)\leq \Gamma(x;y)\leq \gamma_2(x)\,F(x,y),
\end{equation}
 for any $y$ such that $0<d_X(x,y)<\varepsilon(x)$,
 where
 \begin{equation}\label{goaln2r0bissssss}
     F(x,y)=      \begin{cases}
            \log\left(\dfrac{1}{d_X(x,y)}\right) & \text{if $f_2(x)>0$,} \\[0.4cm]
            \dfrac{d_X(x,y)^{2}}{\big|B_{X}(x,d_X(x,y))\big|} & \text{if $f_2(x)=0$.}
          \end{cases}
 \end{equation}
 In the case $f_2(x)=0$, the estimate \eqref{goaln2r0bis} holds true
 with $\varepsilon(x)=1/2$ and $\gamma_1(x)$ independent of $x$.
 In the case $f_2(x)=0$, $F(x,y)$ diverges like $d_X(x,y)^{2-k}$, for
 some $k\in \{3,\ldots,q\}$.
\end{theorem}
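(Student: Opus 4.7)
My plan divides into two cases according to whether $f_{2}(x)>0$ or $f_{2}(x)=0$; Theorem B provides the starting point $|B_{X}(x,\rho)|\approx \sum_{k=2}^{q}f_{k}(x)\rho^{k}$ (with $f_{q}>0$ a structural constant). Let $k(x):=\min\{j\in\{2,\ldots,q\}:f_{j}(x)>0\}$, so $k(x)=2$ exactly when $f_{2}(x)>0$; otherwise $k(x)=:k\in\{3,\ldots,q\}$. In the latter situation, for $0<\rho\leq 1/2$ one estimates $\sum_{j\geq k}f_{j}(x)\rho^{j}\leq \rho^{k}\sum_{j\geq k}f_{j}(x)(1/2)^{j-k}$ from above, and trivially by $f_{k}(x)\rho^{k}$ from below, so $|B_{X}(x,\rho)|\approx f_{k}(x)\rho^{k}$ on $(0,1/2]$ with $x$-dependent constants. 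Hence $F(x,y)\approx d_{X}(x,y)^{2-k}/f_{k}(x)$ diverges like $d_{X}(x,y)^{2-k}$, as required by the final sentence of the statement.

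When $f_{2}(x)>0$, I would apply Theorem \ref{thm.estimateGamman2} on a compact neighbourhood $K$ of $x$. By the first paragraph, $d_{X}(x,y)^{2}/|B_{X}(x,d_{X}(x,y))|\approx 1/f_{2}(x)$ for $d_{X}(x,y)$ small; substituting into \eqref{goaln2r0} yields $\Gamma(x;y)\leq c(x)\log(R_{0}/d_{X}(x,y))$, while \eqref{goaln2r0low} gives $\Gamma(x;y)\geq c'(x)\log(R_{1}/d_{X}(x,y))$. Since $\log(R_{i}/d_{X})\approx \log(1/d_{X})$ for $d_{X}$ sufficiently small, a suitable choice of $\varepsilon(x)$ turns both estimates into the desired $F(x,y)=\log(1/d_{X}(x,y))$ form.

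The case $f_{2}(x)=0$ is the delicate one. For the \emph{lower} bound I would combine Propositions \ref{prop1.stimabasso} and \ref{prop2.stimabasso} to obtain $\Gamma(x;y)\geq C\,d_{X}(x,y)^{2}/|B_{X}(x,d_{X}(x,y))|$ for $x,y$ in a fixed compact neighbourhood of $0\in\R^{n}$ with $d_{X}(x,y)<R/2$, and then extend the inequality to all $x\neq y$ via Remark \ref{rem.usehomog}: both members are $M_{\lambda}$-homogeneous of degree $2-q$ under $M_{\lambda}(x,y)=(\delta_{\lambda}x,\delta_{\lambda}y)$, and this step is identical to the one carried out in Theorem \ref{th.stimebasso}, nowhere using $n>2$. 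This delivers the lower bound with a structural constant $\gamma_{1}$, valid for every $x\neq y$, in particular for $d_{X}(x,y)<1/2$. For the \emph{upper} bound I would return to Proposition \ref{Thm 00} (with $r=0$) and split $\int_{\R^{p}}=\int_{|\eta|\geq 1}+\int_{|\eta|<1}$. The piece over $|\eta|\geq 1$ is controlled by Proposition \ref{Prop 1} applied to a compact set containing $\overline{B_{X}(x,1/2)}$, giving a bound of the form $C_{1}(x)\,d_{X}(x,y)^{2}/|B_{X}(x,d_{X}(x,y))|$. The piece over $|\eta|<1$ is reduced by Lemma \ref{Lemma NSW} (with $\beta=2\leq n+1=3$) to $\int_{d_{X}(x,y)}^{R_{0}}\rho/|B_{X}(x,\rho)|\,\d\rho$; the universal lower bound $|B_{X}(x,\rho)|\geq \gamma_{1}f_{k}(x)\rho^{k}$ (valid for every $\rho>0$), together with the integrability of $\rho^{1-k}$ on $[d_{X},\infty)$ forced by $k\geq 3$, dominates this integral by $d_{X}(x,y)^{2-k}/[(k-2)\gamma_{1}f_{k}(x)]$; by the first paragraph, the latter is $\leq C_{2}(x)\,d_{X}(x,y)^{2}/|B_{X}(x,d_{X}(x,y))|$ for $d_{X}(x,y)<1/2$. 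Summing the two contributions completes the upper bound with $\varepsilon(x)=1/2$.

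The main obstacle is precisely this upper bound in the $f_{2}(x)=0$ case, since Theorem \ref{thm.estimateGamman2} is too crude (it carries an unwanted logarithmic factor). The crucial observation is that $f_{2}(x)=0$ forces $k=k(x)\geq 3$, which is exactly the integrability threshold making $\int_{d_{X}}^{\infty}\rho^{1-k}\,\d\rho$ finite and of polynomial order $d_{X}^{2-k}$; this matches the structural lower bound up to $x$-dependent multiplicative factors. The explicit value $\varepsilon(x)=1/2$ then arises from the uniform comparison $|B_{X}(x,\rho)|\approx f_{k}(x)\rho^{k}$ on $(0,1/2]$ established in the first paragraph, which allows the $x$-dependent constants to be written down explicitly in terms of $\sum_{j\geq k}f_{j}(x)(1/2)^{j-k}$ and $f_{k}(x)$.
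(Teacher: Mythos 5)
Your proposal is correct and follows essentially the same route as the paper: the same splitting of the $\eta$-integral into $|\eta|\geq 1$ (Proposition \ref{Prop 1}) and $|\eta|<1$ (Lemma \ref{Lemma NSW}), the lower bound coming from Proposition \ref{prop1.stimabasso} (valid for $n=2$), and the same dichotomy on $f_2(x)$ driven by Theorem B. The only cosmetic difference is that you organize the volume comparison via $|B_X(x,\rho)|\approx_x f_{k(x)}(x)\,\rho^{k(x)}$ on $(0,1/2]$, whereas the paper uses the two-sided growth bounds $\lambda^3\tfrac{\gamma_1}{\gamma_2}|B_X(x,r)|\leq|B_X(x,\lambda r)|\leq\lambda^q\tfrac{\gamma_2}{\gamma_1}|B_X(x,r)|$ for $\lambda\geq 1$; both yield the same conclusion.
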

\begin{proof}
 Let $x\in\R^n$ be arbitrarily fixed. We take any $y\in\R^n$ conveniently close to $x$, namely
\begin{equation}\label{sceltayconveniently}
    0<d_{X}(x,y)\leq 1/2.
\end{equation}
 Next, we take a compact set $K$ which is a neighborhood of $0\in\R^n$,
 and such that $K$ contains the closure of $B_X(x,1/2)$.
 Finally, we choose $R_0(x)\gg 1$ such that
 \begin{equation*}
  d_{\widetilde{X}}
  \left(\left(x,0\right)^{-1}*\left(y,\eta\right)\right) <
  \frac{R_0(x)}{2},
 \end{equation*}
 uniformly for any $y$ as in \eqref{sceltayconveniently} and any $|\eta|\leq 1$.
 This implicitly implies that $d_X(x,y)\leq R_0(x)/2$. With all these choices,
 in due course of the proof of
 Theorem \ref{thm.estimateGamman2}, we proved the existence of two constants $c_1(x),c_2(x)>0$
 and of a structural constant $c>0$ such that
\begin{equation}\label{stimosainitinere0}
 c\,\int_{d_{X}(x,y)}^{1}
 \frac{\rho}{|B_{X}(x,\rho)|}\,\d\rho
 \leq\Gamma(x;y)\leq
 c_{1}(x)\,\frac{d_{X}(x,y)^{2}}{\big|B_{X}(x,d_{X}(x,y))\big|}+
 c_{2}(x)\,\int_{d_{X}(x,y)}^{R_0(x)}\frac{\rho}{\big|B_{X}(x,\rho)\big|}\d\rho.
\end{equation}
 We observe that $c_1(x),c_2(x)$ depend on $x$ only through
 the compact set $K$ containing $x$.

 On account of \eqref{eq.NSWmodificata}, we remind that (as $n=2$) we have nonnegative functions $f_k$ such that
\begin{equation}\label{NSWnelcasonug2}
 \gamma_1\,\sum_{k=2}^{q}f_{k}(x)\,\rho^{k} \leq {\big|B_{X}(x,\rho)\big|} \leq \gamma_2\,\sum_{k=2}^{q}f_{k}(x)\,\rho^{k},\qquad \forall \,x\in\R^n,\,\,\rho>0.
\end{equation}
 Then we distinguish two cases:\medskip

 (I).\,\, \emph{Suppose that $f_{2}(x)>0$.} Then \eqref{NSWnelcasonug2} gives
$$
\gamma_1\,f_{2}(x)\,\rho^{2}
 \leq \big|B_{X}(x,\rho)\big|\leq
 \gamma_2\,\rho^{2} \left(f_{2}(x)+\sum_{k=3}^{q} f_{k}(x)\,\rho^{k-2}\right).$$
 Hence the choices $\gamma_1(x):=\gamma_1\,f_{2}(x)$ and $\gamma_2(x):=\gamma_2\sum_{k=2}^{q} f_{k}(x)\,R_0(x)^{k-2}$
 are two positive constants (only depending on $x$) such that
\begin{equation}\label{stimosainitinere12}
 \gamma_1(x)\,\rho^{2}
 \leq
 \big|B_{X}(x,\rho)\big|\leq
 \gamma_2(x)\,\rho^{2},\qquad \text{for any $0<\rho\leq R_0(x)$.}
\end{equation}
 Therefore, the latter inequalities give (taking $\rho=d_X(x,y)\leq 1/2<1\ll R_0(x)$)
\begin{equation}\label{stimosainitinere1}
 \frac{1}{\gamma_2(x)}\leq \frac{d_{X}(x,y)^{2}}{\big|B_{X}(x,d_{X}(x,y))\big|}\leq \frac{1}{\gamma_1(x)},\qquad
 \forall\,\, y\in \overline{B_X(x,1/2)}.
\end{equation}
 Again by \eqref{stimosainitinere12} one also has
\begin{equation*}
 \frac{1}{\gamma_2(x)\,\rho}
 \leq
 \frac{\rho}{\big|B_{X}(x,\rho)\big|}
 \leq
 \frac{1}{\gamma_1(x)\,\rho},\qquad \text{for any $0<\rho\leq R_0(x)$.}
\end{equation*}
 Thus, upon an integration of the latter, we can give a lower bound for the
 far left-hand side of  \eqref{stimosainitinere0} and an upper bound
 for the second summand in the far right-hand side of  \eqref{stimosainitinere0}:
 gathering these bounds together with \eqref{stimosainitinere1}, we deduce from
 \eqref{stimosainitinere0} the following estimates:
\begin{equation*}%\label{stimosainitinere00000}
 \frac{c}{\gamma_2(x)}\log\left(\frac{1}{d_X(x,y)}\right)
  \leq\Gamma(x;y)\leq
 \frac{c_{1}(x)}{\gamma_1(x)}+
 \frac{c_{2}(x)}{\gamma_1(x)}\,
 \log\left(\frac{R_0(x)}{d_X(x,y)}\right),
\end{equation*}
 holding true for any $y$ as in \eqref{sceltayconveniently}.
 This is sufficient\footnote{First
 we choose $\varepsilon(x)>0$ such that
 $\frac{c_{1}(x)}{\gamma_1(x)}\leq \frac{c_{2}(x)}{\gamma_1(x)}\,
 \log\left(\frac{R_0(x)}{d_X(x,y)}\right)$ for $0<d_X(x,y)\leq \varepsilon(x)$; for instance
 the choice $\varepsilon(x):=R_0(x)\,\exp(-c_1(x)/c_2(x))$ does the job. Then one
 can use the inequality $\log\left(\frac{R_0(x)}{d_X(x,y)}\right)\leq \alpha(x)\log\left(\frac{1}{d_X(x,y)}\right)$
 valid for $d_X(x,y)\leq \varepsilon(x)<1$ (and any $R_0(x)\geq 1$), if one chooses $\alpha(x)=1-\log(R_0(x))/\log(\varepsilon(x))$.}
 to infer the existence of positive constants $\gamma_i(x)$ ($i=1,2$) and
 $\varepsilon(x)\ll 1$ such that \eqref{goaln2r0bis} holds, with $F(x,y)$ as in the first case
 of \eqref{goaln2r0bissssss}.\medskip

 (II).\,\, \emph{Suppose that $f_{2}(x)=0$.}
  In this case, \eqref{NSWnelcasonug2} gives
\begin{equation}\label{NSWnelcasonug2rifa}
 \gamma_1\,\sum_{k=3}^{q}f_{k}(x)\, r ^{k} \leq {\big|B_{X}(x, r )\big|} \leq \gamma_2\,\sum_{k=3}^{q}f_{k}(x)\, r ^{k},\qquad \forall \,x\in\R^n,\,\, r >0.
\end{equation}
 We fix $x\in\R^n$ and $ r >0$, and we set $\Lambda(x, r ):=\sum_{k=3}^{q}f_{k}(x)\, r ^{k}$;
 from \eqref{NSWnelcasonug2rifa} (rewritten with $\lambda\, r $ replacing $ r $) we derive that, for any $\lambda\geq 1$, one has
\begin{align*}
  {\big|B_{X}(x,\lambda r )\big|} \leq \gamma_2\, \Lambda(x,\lambda r )= \gamma_2\,\sum_{k=3}^{q}f_{k}(x)\, r ^{k}\lambda^k
 \leq\lambda^q \gamma_2\,\Lambda(x, r )\stackrel{\eqref{NSWnelcasonug2rifa}}{\leq}\lambda^q\frac{\gamma_2}{\gamma_1}\, \big|B_{X}(x, r )\big|.
\end{align*}
 Analogously, again for any $\lambda\geq 1$,
\begin{align*}
  {\big|B_{X}(x,\lambda r )\big|} \geq \gamma_1\, \Lambda(x,\lambda r )= \gamma_1\,\sum_{k=3}^{q}f_{k}(x)\, r ^{k}\lambda^k
 \geq\lambda^3 \gamma_1\,\Lambda(x, r )\stackrel{\eqref{NSWnelcasonug2rifa}}{\geq}\lambda^3\frac{\gamma_1}{\gamma_2}\, \big|B_{X}(x, r )\big|.
\end{align*}
  This gives at once, for any $\lambda\geq 1$,
\begin{equation}\label{NSWnelcasonug2rifacapito}
 \lambda^3\frac{\gamma_1}{\gamma_2}\, \big|B_{X}(x, r )\big|\leq {\big|B_{X}(x,\lambda r )\big|}\leq \lambda^q\frac{\gamma_2}{\gamma_1}\, \big|B_{X}(x, r )\big|
\end{equation}
 If $\rho\geq d_X(x,y)>0$, we are entitled to choose $\lambda=\rho/d_X(x,y)\geq 1$ and $r=d_X(x,y)$ in
 \eqref{NSWnelcasonug2rifacapito}, getting
\begin{equation}\label{NSWnelcasonug2rifacapito333333333333333333333333}
 \frac{\gamma_1}{\gamma_2}\,\frac{\rho^3}{d_X(x,y)^3}\,\big|B_{X}(x, d_X(x,y) )\big|\leq {\big|B_{X}(x,\rho)\big|}\leq
 \frac{\gamma_2}{\gamma_1}\,\frac{\rho^q}{d_X(x,y)^q}\,\big|B_{X}(x, d_X(x,y) )\big|.
\end{equation}
 The first inequality in \eqref{NSWnelcasonug2rifacapito333333333333333333333333}
 allows us to estimate the second summand in the far right-hand side of  \eqref{stimosainitinere0}
 in the following way:
\begin{align*}
 &\int_{d_{X}(x,y)}^{R_0(x)}\frac{\rho}{\big|B_{X}(x,\rho)\big|}\,\d\rho
 \leq
 \frac{\gamma_2}{\gamma_1}\,\frac{d_X(x,y)^3}{\big|B_{X}(x, d_X(x,y) )\big|}
 \int_{d_{X}(x,y)}^{R_0(x)}\frac{\rho}{\rho^3}
 \,\d\rho\\
 &\qquad =\frac{\gamma_2}{\gamma_1}\,\frac{d_X(x,y)^3}{\big|B_{X}(x, d_X(x,y) )\big|}
 \frac{R_0(x)-d_{X}(x,y)}{d_{X}(x,y)R_0(x)}\leq
 \frac{\gamma_2}{\gamma_1}\,\frac{d_X(x,y)^2}{\big|B_{X}(x, d_X(x,y) )\big|}.
\end{align*}
 The second inequality in \eqref{NSWnelcasonug2rifacapito333333333333333333333333}
 gives the following estimate for the first summand in the far left-hand side of  \eqref{stimosainitinere0}
 (we are also using \eqref{sceltayconveniently}, so that $1\geq 2\,d_X(x,y)$):
\begin{align*}
 &\int_{d_{X}(x,y)}^{1}\frac{\rho}{\big|B_{X}(x,\rho)\big|}\,\d\rho
 \geq
 \frac{\gamma_1}{\gamma_2}\,\frac{d_X(x,y)^q}{\big|B_{X}(x, d_X(x,y) )\big|}
 \int_{d_{X}(x,y)}^{1}\frac{\rho}{\rho^q}
 \,\d\rho\\
 &\qquad \geq
 \frac{\gamma_1}{\gamma_2}\,\frac{d_X(x,y)^q}{\big|B_{X}(x, d_X(x,y) )\big|}
 \int_{d_{X}(x,y)}^{2\,d_X(x,y)}\frac{1}{\rho^{q-1}}
 \,\d\rho=
 \frac{\gamma_1\,(1-2^{2-q})}{\gamma_2\,(q-2)}\,\frac{d_X(x,y)^2}{\big|B_{X}(x, d_X(x,y) )\big|}.
\end{align*}
 Summing up, \eqref{stimosainitinere0} gives
 \begin{equation*}
 \frac{c\,\gamma_1\,(1-2^{2-q})}{\gamma_2\,(q-2)}\,\frac{d_{X}(x,y)^{2}}{\big|B_{X}(x,d_{X}(x,y))\big|}
 \leq\Gamma(x;y)\leq
 \frac{d_{X}(x,y)^{2}}{\big|B_{X}(x,d_{X}(x,y))\big|}
 \left(c_{1}(x)+ \frac{c_{2}(x)\gamma_2}{\gamma_1}\right).
\end{equation*}
 This proves \eqref{goaln2r0bis}, with $F(x,y)$ as in the second case
 of \eqref{goaln2r0bissssss}. Incidentally, this also proves that \eqref{goaln2r0bis} holds true
 with $\varepsilon(x)=1/2$ and $\gamma_1(x)$ independent of $x$. As for the last
 assertion of the theorem, this immediately  follows from \eqref{goaln2r0bissssss} and \eqref{NSWnelcasonug2rifa}.
\end{proof}
%%%%%%%%%%%%%%%%%%%%%%%%%%%%%%%%%%%%%%%%%%%%%%%%%%%%
\begin{example}\label{exa.grushink}
 Let us consider the vector fields on $\mathbb{R}^{2}$
$$
 X_{1}=\partial_{x_{1}},\quad X_{2}=x_{1}^{k}\,\partial_{x_{2}},
$$
 for a fixed integer $k\geq 1$, and the corresponding PDO on $\mathbb{R}^{2}$
$$
\mathcal{L}=\partial_{x_{1}}^{2}+x_{1}^{2k}\,\partial
_{x_{2}}^{2}.
$$
 It is readily seen that $X_{1}$ and $X_{2}$ are homogeneous of degree $1$ with
 respect to the dilations
$$
 \delta_{\lambda}(x_{1},x_{2})=(\lambda x_{1},\lambda^{k+1}x_{2}).
$$
 Obviously, assumptions (H1)-to-(H3) of Section \ref{sec:introductionMain}
 are satisfied (here $q=N=k+2$ are both $>2$). In particular (see Remark
 \ref{Remark explicit f_k} for the following construction), the possible bases
 of $\mathbb{R}^{2}$ built by commutators of $X_{1},X_{2}$, are:
\begin{alignat*}{3}
 &X_{1}=\partial_{x_{1}},\,\,X_{2}=x_{1}^{k}\,\partial_{x_{2}}         & &\qquad \text{of weight $2$}  & & \qquad\text{with $f_{2}(x)= |x_{1}|^{k}$}\\
 &X_{1},\,\, [ X_{1},X_{2} ]=k\,x_{1}^{k-1}\,\partial_{x_{2}}           & &\qquad \text{of weight $3$}  & & \qquad\text{with $f_{3}(x)= k\,| x_{1}|^{k-1}$}\\
 &X_{1},\,\,[X_{1},[X_{1},X_{2}]]=k(k-1)\,x_{1}^{k-2}\,\partial_{x_{2}} & &\qquad \text{of weight $4$}  & & \qquad\text{with $f_{4}(x)=k(k-1)\,|x_{1}|^{k-2}$}\\
 &\vdots                                                                 &&\qquad  \vdots               &  & \qquad\vdots\\
 &X_{1},\,\, [X_{1},[X_{1},\ldots [X_{1},X_{2}]]]=k!\,\partial_{x_{2}}   & &\qquad \text{of weight $k+2$}& & \qquad\text{with $f_{k+2}=k!$}
\end{alignat*}
Therefore, for every $x$ and $\rho$, one has
\begin{equation}\label{stimaballanug2}
 |B_X(x,\rho)|\approx \Lambda (  x,\rho )  = |x_{1}|^{k} \rho ^{2}+k | x_{1}|^{k-1}\rho^{3}+k(k-1)|x_{1}|^{k-2}\rho^{4}+\cdots +k!\,\rho^{k+2}. %\simeq |x_{1} | ^{k}\rho^{2}+\rho^{k+2}
\end{equation}
 Thus, according to our Theorem  \ref{thm.estimateGamman2bis}, we have, when $y$ is sufficiently close to $x$,
 $$ \Gamma(x;y) \approx_x
 \begin{cases}
            \log\left(\dfrac{1}{d_X(x,y)}\right) & \text{if $x_1\neq 0$,} \\[0.4cm]
            \dfrac{1}{d_X(x,y)^k} & \text{if $x_1=0$.}
          \end{cases}
$$
  where $\approx_x$ means upper/lower bounds with constant possibly depending on the fixed $x$;
 this exhibits a different asymptotic behavior for $y\rightarrow x$, at different points $x$.
\end{example}
  Example \ref{exa.grushink} exhibits the different behavior
  (logarithmical vs.\,\,power-like) of $\Gamma(x;y)$ as $y$ ap\-proa\-ches different poles $x$.
  Thus, the lower estimate \eqref{goaln2r0low} cannot be improved to become analogous to
  the upper estimate \eqref{goaln2r0}.
  A question remains open whether the upper estimate \eqref{goaln2r0} is optimal or not:
  in this regard, Theorem \ref{thm.estimateGamman2bis} would induce one to think
  that the upper bound be either logarithmic or power-like, and it does not seem to
  forecast the presence of a product of these bounds. However, the next Example  \ref{exa.grushin}
  will show that, if $x$ and $y$ can vary the same time (which is possible, in the
 case of uniform estimates, as we already pointed out in Remark \ref{re.featureuniformvsondiagonal}),
 then one can  exactly obtain the product of a logarithm and a power of $d_X(x,y)$.

 Indeed, when one takes $k = 1$ in Example \ref{exa.grushink},
 the global fundamental solution of
 $$\mathcal{G} = (\de_{x_1})^2+(x_1\de_{x_2})^2$$ can
 be explicitly computed, so that we can verify the optimality of our estimates
 in Theorems \ref{thm.estimateGamman2} and \ref{thm.estimateGamman2bis}:
 in this sense, Example \ref{exa.grushink} differs from Example \ref{exa.grushin}
 in that in the latter we start from what is known about $\Gamma$ and then we check our results, rather than
 confining ourselves in showing what our results state.
%%%%%%%%%%%%%%%%%%%%%%%%%%%%%%%%%%%%%%%%%%%%%%%%%%%%%%%%
\begin{example}[The Grushin case for $k = 1$]\label{exa.grushin}
 Take $k = 1$ in Example \ref{exa.grushink}, and consider the associated vector fields $X_1=\de_{x_1}$ and $X_2=x_1\de_{x_2}$.
 Due to the results by Franchi, Lanconelli performed for the vector fields
 $\de_{x_1}$ and $|x_1|\de_{x_2}$ in \cite{FranchiLanconelli} (see also
 Kogoj, Lanconelli \cite{KogojLanconelli}),\footnote{It is not difficult to recognize that the 
 CC-distance induced by $\de_{x_1}$ and $|x_1|\de_{x_2}$ coincides with $d_X$.}
  one can get the explicit estimate for the Carnot-Carathéodory distance $d_X$:\footnote{Here and in the sequel,
 $\approx$ means upper/lower estimates, up to some universal constants; the variant $\approx_x$
 means that upper/lower estimates are true, modulo constants possibly depending on $x$. With the notation `$f(x)\sim g(x)$ as $x\to x_0$' we mean, as usual,
 $f(x)=g(x)\,\omega(x)$ with $\omega(x)\to 1$ as $x\to x_0$.}
\begin{equation}\label{kLexplici}
    d_X(x,y)\approx
 |x_1-y_1|+\sqrt{|x_1|^2+|x_2-y_2|}-|x_1|,\quad \text{for every $x,y\in\R^2$.}
\end{equation}
 Moreover, Theorem B gives (see also \eqref{stimaballanug2} with $k=1$)
\begin{equation}\label{kLexplici2222222}
   \big|B_X(x,d_X(x,y))\big|\approx |x_1|\,d_X(x,y)^2+d_X(x,y)^3,\quad \text{for every $x,y\in\R^2$.}
\end{equation}
 With the notation in \eqref{eq.NSWmodificata}, we have $f_2(x)=|x_1|$ and $f_3(x)\equiv 1$.
 Furthermore, the lifting Carnot group as in Theorem A is $\G = (\R^3\equiv \R^2_x\times \R_\xi,*)$, where
  \begin{equation*}% \label{sec.th:exGrushinComposition}
   (x_1,x_2,\xi)*(x'_1,x'_2,\xi') = (x_1+x'_1, x_2+x'_2+x_1\xi', \xi+\xi').
  \end{equation*}
  Thus, the vector fields $\widetilde{X}_1,\widetilde{X}_2$ lifting $X_1$ and $X_2$ are
  \begin{equation*}% \label{sec.th:exGrushinZs}
   \widetilde{X}_1 = \de_{x_1}, \qquad \widetilde{X}_2 = x_1\,\de_{x_2}+\de_\xi.
  \end{equation*}
  The operator $\LL= X_1^2+X_2^2$ is lifted to
  the sublaplacian $\LL_\G = \widetilde{X}_1^2+\widetilde{X}_2^2$.
  The latter is (up to a change of variable)
  the Kohn Laplacian on the first Heisenberg group $\mathbb{H}^1$; after simple computations (manipulating the well-known fundamental solution
  for the Kohn Laplacian on $\mathbb{H}^1$), one finds the fundamental solution
  with pole at the origin of $\LL_\G$:
  $$\Gamma_\G(x,\xi) = \gamma_0\, \Big((x_1^2+\xi^2)^2+16\,(x_2-\tfrac{1}{2}\,x_1\xi)^2 \Big)^{-1/2},
  \quad (x,\xi)\neq (0,0),$$
  where $\gamma_0 > 0$ is a suitable constant.
  According to Theorem A, the function
\begin{equation} \label{sec.th:GammaGrushin}
  \Gamma(x_1,x_2;y_1,y_2) = \gamma_0\,\int_{\R}\frac{\d\eta}
  {\sqrt{ ((x_1-y_1)^2+\eta^2 )^2+ 4\,
    (2\,x_2 - 2\,y_2 + \eta\,(x_1+y_1) )^2}},
 \end{equation}
   is the  unique fundamental solution for the Grushin operator $\LL$ vanishing at infinity.
   The integral in \eqref{sec.th:GammaGrushin} can be expressed in terms of
   elliptic functions in a standard way: more precisely,
   \begin{equation} \label{sec.th:GammaGrushinExplicit}
    \Gamma(x;y) =\frac{\gamma_0\,\sqrt{2}}{\sqrt[4]{(x_1^2+y_1^2)^2+4\,(x_2-y_2)^2}}\cdot
    \mathrm{K}\left(\frac{1}{2}+
    \frac{x_1y_1}{\sqrt{(x_1^2+y_1^2)^2+4\,(x_2-y_2)^2}}\right),
   \end{equation}
   where $\mathrm{K}$ denotes the complete elliptic integral of the first
   kind, that is,
   $$\mathrm{K}(m) := \int_0^{\pi/2} \frac{\d\theta}{\sqrt{1-m\,\sin^2(\theta)}}\,,
   \quad \text{for $-1 < m < 1$}.$$
   This gives back a formula obtained by Greiner \cite{Greiner}
   (see also \cite{BealsGaveauGreiner1, BealsGaveauGreiner3, BealsGaveauGreinerKannai2, BauerFurutaniIwasaki}).
  In the sequel, we write \eqref{sec.th:GammaGrushinExplicit} as $\Gamma(x;y)=H(x,y)\cdot\mathrm{K}(x,y)$, with the obvious meaning.
  When $x_1=0$ \eqref{sec.th:GammaGrushinExplicit} gives
\begin{equation*} %\label{sec.th:GammaGrushinExplicit222222222}
    \Gamma(0,x_2;y_1,y_2) = \frac{\gamma_0\sqrt2\,\mathrm{K}(1/2)}{\sqrt[4]{y_1^4+4\,(x_2-y_2)^2}}.
\end{equation*}
 Setting $\Omega:=\{x\in\R^2:x_1\neq 0\}$, if $x$ is fixed in $\Omega$, then
 only the factor $\mathrm{K}(x,y)$ diverges  as $y\to x$ (while $H(x,y)$ remains bounded); conversely,
 if $x$ is fixed outside $\Omega$ (i.e., $x_1=0$), only the factor ${H}(x,y)$ diverges
 as $y\to x$ (while $\mathrm{K}(x,y)=\mathrm{K}(1/2)$).

  In Remark \ref{re.asymptoKK}, we will show that
\begin{equation}\label{asimptoKKKKKKKKKK}
 \mathrm{K}(m)\sim -\frac{1}{2}\ln(1-m)\quad \text{as $m\to 1^-$.}
\end{equation}
 Thus, fixing $x\in \R^2$, \eqref{asimptoKKKKKKKKKK} gives a very precise asymptotic behavior of $\Gamma(y;x)$
 as $y\to x$:
\begin{equation}\label{asimptoKKKKKKKKKKasy}
 \Gamma(x;y)\sim
   \begin{cases}
     -\dfrac{\gamma_0}{2\,|x_1|}\log\left(\dfrac{1}{2}-\dfrac{x_1y_1}{\sqrt{(x_1^2+y_1^2)^2+4\,(x_2-y_2)^2}}\right), &\qquad \text{if $x_1\neq 0$}\\[0.5cm]
     \dfrac{\gamma_0\sqrt2\,\mathrm{K}(1/2)}{\sqrt[4]{y_1^4+4\,(x_2-y_2)^2}},&\qquad \text{if $x_1=0$.}
   \end{cases}
 \end{equation}

 We now compare the above formulas with our estimates in Theorems \ref{thm.estimateGamman2} and \ref{thm.estimateGamman2bis}.\medskip

 $\bullet$ \emph{Theorem \ref{thm.estimateGamman2bis}:} The estimates in \eqref{goaln2r0bis} have full feedback from
 \eqref{asimptoKKKKKKKKKKasy}. Indeed, due to \eqref{kLexplici2222222}, the function $F(x,y)$ in \eqref{goaln2r0bissssss} is in the present case
\begin{equation*}\label{goaln2r0bisssssskkkk}
     F(x,y)=      \begin{cases}
            \log\left(\dfrac{1}{d_X(x,y)}\right) & \text{if $x_1\neq 0$,} \\[0.4cm]
            \dfrac{1}{d_X(x,y)} & \text{if $x_1=0$.}
          \end{cases}
 \end{equation*}
 Thus \eqref{goaln2r0bis} is in accordance with \eqref{asimptoKKKKKKKKKKasy} for the following reasons (here we used \eqref{kLexplici}
 and some Taylor approximation\footnote{One can show that both following functions have the same Taylor expansion as $y\to x$:
 $$|x_1-y_1|+\sqrt{|x_1|^2+|x_2-y_2|}-|x_1|\approx \left(\frac{1}{2}-\frac{x_1y_1}{\sqrt{(x_1^2+y_1^2)^2+4\,(x_2-y_2)^2}}\right)^{1/2}.$$}):
\begin{align*}
 &\textrm{- if $x_1\neq 0$:}\quad \log\left(\dfrac{1}{d_X(x,y)}\right){\approx}
 \,\,\,\log\left(\dfrac{1}{|x_1-y_1|+\sqrt{|x_1|^2+|x_2-y_2|}-|x_1|}\right)\\
 &\quad\qquad\qquad \qquad\qquad\qquad\quad\,\,\,\,\approx_x
  -\frac{1}{2}\log\left(\dfrac{1}{2}-\dfrac{x_1y_1}{\sqrt{(x_1^2+y_1^2)^2+4\,(x_2-y_2)^2}}\right);\\
 &\textrm{- if $x_1=0$:}\quad d_X(x,y)\approx |y_1|+\sqrt{|x_2-y_2|}\approx \sqrt[4]{y_1^4+4\,(x_2-y_2)^2}.
\end{align*}

$\bullet$ \emph{Theorem \ref{thm.estimateGamman2}:} Firstly we consider the estimate in \eqref{goaln2r0}.
 At first glance, Theorem \ref{thm.estimateGamman2bis} does not seem to match with the product appearing in
 \eqref{goaln2r0}. However, the peculiarity of the latter is to provide an estimate which is uniform as
 \emph{both $x$ and $y$ vary} (in some compact set). We show that a suitable choice of variable $x$'s and $y$'s
 confirm the product behavior of $\Gamma$. Indeed, let us take
 $$x(\epsilon):=(\epsilon,2\,\epsilon^4)\quad\text{and}\quad y(\epsilon):=(\epsilon, \epsilon^4).$$
  As $\epsilon\to 0^+$, \eqref{sec.th:GammaGrushinExplicit} gives (on account of \eqref{asimptoKKKKKKKKKK})
$$\Gamma\big(x(\epsilon);y(\epsilon)\big)=\frac{\gamma_0}{\sqrt[4]{\epsilon^4+\epsilon^8}}\cdot
\mathrm{K}\bigg(\frac{1}{2}+\frac{1}{2\sqrt{1+\epsilon^4}}\bigg)\sim
 %-\frac{c}{2\,\epsilon}\,\log \bigg(\frac{1}{2}-\frac{1}{2\sqrt{1+\epsilon^4}}\bigg)\sim
  \frac{2\,c}{\epsilon}\,\log \bigg(\frac{1}{\epsilon}\bigg)=:h(\epsilon).$$
 On the other hand, due to \eqref{kLexplici}, we have
\begin{equation}\label{asimptoKKKKKKKKKKstimoepsi}
 d_X(x(\epsilon),y(\epsilon))\approx \sqrt{\epsilon^2+\epsilon^4}-\epsilon \sim \frac{\epsilon^3}{2},\quad \text{as $\epsilon\to 0^+$}.
\end{equation}
 Thus, the estimate \eqref{asimptoKKKKKKKKKKstimoepsi} together with \eqref{kLexplici2222222} give
\begin{equation*}
   \frac{d_{X}(x(\epsilon),y(\epsilon))^{2}}{\big|B_{X}(x(\epsilon),d_{X}(x(\epsilon),y(\epsilon)))\big|}
  \cdot \log\Big(\frac{1}{d_X(x(\epsilon),y(\epsilon))}\Big)
  %\approx  \frac{-\log\big({d_X(x(\epsilon),y(\epsilon))}\big)}{\epsilon+d_X(x(\epsilon),y(\epsilon))}
 \approx \frac{1}{\epsilon+\epsilon^3}\cdot \log \Big(\frac{1}{\epsilon^3}\Big) \approx \frac{\log(1/\epsilon)}{\epsilon}.
\end{equation*}
 Since the latter term is $\approx$ $h(\epsilon)$,
 these computations fully confirm \eqref{goaln2r0} in Theorem \ref{thm.estimateGamman2}.\medskip

 Secondly we consider the estimate in \eqref{goaln2r0low}: we observe that the function bounding $\Gamma$ from below in \eqref{goaln2r0low} cannot be
 of the same product form as in \eqref{goaln2r0}. Indeed, suitable choices of $x$'s and $y$'s show
 either logarithmic or power-like behaviors; for example, one can easily recognize what follows
 (starting from the explicit formula \eqref{sec.th:GammaGrushinExplicit}):
\begin{align*}
    &\Gamma\big((1,2\,\epsilon);(1,\epsilon)\big)=
   \frac{\gamma_0\sqrt2}{\sqrt[4]{4+4\,\epsilon^2}}\cdot
   \mathrm{K}\bigg(\frac{1}{2}+\frac{1}{\sqrt{4+4\,\epsilon^2}}\bigg) \sim \gamma_0\ln\Big(\frac{1}{\epsilon}\Big),\quad \text{as $\epsilon\to 0^+$;}\\
    &\Gamma\big((0,\epsilon);(\epsilon,\epsilon)\big)=
  \frac{\gamma_0\,\sqrt2\,\mathrm{K(1/2)}}{\epsilon}\quad \text{for every $\epsilon>0$}.
\end{align*}
% As $\mathrm{K}(m)$ diverges only when $m\to 1^-$, ad since one has
% $$\frac{x_1y_1}{\sqrt{(x_1^2+y_1^2)^2+4\,(x_2-y_2)^2}}=\frac{1}{2}\quad\text{iff}\quad \text{$x_1=y_1\neq 0$ and $x_2 = y_2$}, $$
% formula \eqref{sec.th:GammaGrushinExplicit} shows that $\Gamma$ is the product of two functions, not diverging at
% the same time when $y\to x$.
% Precisely, on any pole $x$ belonging to the $x_2$-axis $\{x\in\R^2:\,x_1=0\}$,
% only the factor in \eqref{sec.th:GammaGrushinExplicit222222222} diverges; on the other hand,
% at points $(x_1,x_2)$ with $x_1\neq 0$, the singularity of $\Gamma(x;y)$ at the pole $x$
% is completely determined by $\mathrm{K}(x,y)$. \medskip
%
% Notice that, on the open set $\Omega:=\{x\in\R^2:\,x_1\neq 0\}$, Grushin's operator $\LL=(\de_{x_1})^2 +x_1^2\,(\de_{x_2})^2$
% is an elliptic operator. Thus it is reasonable that $\Gamma(x;y)$ behaves logarithmically when $y\to x\in \Omega$; we turn to prove this fact, starting from \eqref{sec.th:GammaGrushinExplicit}.
\end{example}
 For completeness, we provide a full argument proving \eqref{asimptoKKKKKKKKKK} in the next remark.
\begin{remark}\label{re.asymptoKK}
 After the change of variable $t=\sin\theta$, one has
\begin{equation*}% \label{sec.th:GammaGrushinExplicit222222222kk}
   \mathrm{K}(m) = \int_0^{1} \frac{1}{\sqrt{1-m\,t^2}}\,\frac{\d t}{\sqrt{1-t^2}}.
\end{equation*}
 We rewrite the integrand function as
 $$\frac{f(t,m)}{2\,\sqrt{1-t}\sqrt{1-\sqrt{m}\,t}},\quad \text{where}\quad
  f(t,m)=\frac{2}{\sqrt{1+t}\sqrt{1+\sqrt{m}\,t}}.$$
 As $f(1,1)=1$, for any fixed $\epsilon>0$ we choose $0<m(\epsilon),\delta(\epsilon)\ll 1$ such that
 $1-\epsilon\leq f(t,m)\leq 1-\epsilon$, whenever $1-m(\epsilon)\leq m\leq 1$ and $1-\delta(\epsilon)\leq t\leq 1$.
 We have
\begin{align*}
 \mathrm{K}(m) = \int_0^{1-\delta(\epsilon)} \frac{1}{\sqrt{1-m\,t^2}}\,\frac{\d t}{\sqrt{1-t^2}}+
  \int_{1-\delta(\epsilon)}^{1} \frac{f(t,m)}{2\,\sqrt{1-t}\sqrt{1-\sqrt{m}\,t}}\,\d t=:A(m,\epsilon)+B(m,\epsilon).
\end{align*}
 The first summand satisfies
\begin{equation*} %\label{sec.th:GammaGrushinExplicit222222222kk.EQ1}
 0\leq A(m,\epsilon)\leq \int_0^{1-\delta(\epsilon)} \frac{\d t}{1-t^2}=:A'(\epsilon),\quad\forall\,\,m\in [0,1].
\end{equation*}
 Due to the estimate on $f(t,m)$, the second summand satisfies
\begin{equation*}% \label{sec.th:GammaGrushinExplicit222222222kk.EQ2}
 (1-\epsilon)\int_{1-\delta(\epsilon)}^{1} \frac{\d t}{2\,\sqrt{1-t}\sqrt{1-\sqrt{m}\,t}}
 \leq B(m,\epsilon)\leq
 (1+\epsilon)\int_{1-\delta(\epsilon)}^{1} \frac{\d t}{2\,\sqrt{1-t}\sqrt{1-\sqrt{m}\,t}},
\end{equation*}
 whenever $1-m(\epsilon)\leq m\leq 1$.
  A direct computation gives
\begin{align*}
 \int_{1-\delta(\epsilon)}^{1} \frac{\d t}{2\,\sqrt{1-t}\sqrt{1-\sqrt{m}\,t}}&=
 \frac{1}{\sqrt[4]{m}}\left\{
 \ln\bigg(\sqrt{1-(1-\delta(\epsilon))\sqrt{m}}+\sqrt{\delta(\epsilon)}\sqrt[4]{m}\bigg)
 -\ln\sqrt{1-\sqrt{m}}
\right\}\\
 &=:C(m,\epsilon)-\frac{\ln\sqrt{1-\sqrt{m}}}{\sqrt[4]{m}}=:C(m,\epsilon)+q(m),
\end{align*}
  where $C(m,\epsilon)$ satisfies (uniformly for $m\in[\tfrac{1}{16},1]$)
 $$\ln\left(\frac{3}{2}\,\sqrt{\delta(\epsilon)}\right)
 =:
 C'(\epsilon)
\leq C(m,\epsilon) \leq
 C''(\epsilon):= 2\ln\left(\frac{1}{2}\sqrt{3+\delta(\epsilon)}+\sqrt{\delta(\epsilon)}\right).$$
 Gathering together all the estimates on $A,B,C$  we get
\begin{align*}
  \frac{\mathrm{K}(m)}{q(m)}&=\frac{A(m,\epsilon)}{q(m)}+\frac{B(m,\epsilon)}{q(m)},
\end{align*}
 where (since $q(m)\longto\infty$ as $m\to 1^-$)
\begin{align*}
  0\leq \frac{A(m,\epsilon)}{q(m)}\leq \frac{A'(\epsilon)}{q(m)}\longto 0\quad \text{as $m\to 1^-$},
\end{align*}
 and moreover (if $1-m(\epsilon)\leq m\leq 1$)
\begin{align*}
 (1-\epsilon)\bigg(\frac{C(m,\epsilon)}{q(m)}+1\bigg)\leq \frac{B(m,\epsilon)}{q(m)}\leq (1+\epsilon)\,\bigg(\frac{C(m,\epsilon)}{q(m)}+1\bigg),
\end{align*}
 where
 $$
 \frac{C'(\epsilon)}{q(m)}\leq \frac{C(m,\epsilon)}{q(m)}\leq \frac{C''(\epsilon)}{q(m)},\qquad \text{and}
\quad \frac{C'(\epsilon)}{q(m)},\frac{C''(\epsilon)}{q(m)}\longto 0\quad \text{as $m\to 1^-$}.
$$
 The above computations show that (for some $0<m''(\epsilon)\ll 1$)
 $$(1-\epsilon)^2\leq \frac{\mathrm{K}(m)}{q(m)}\leq \epsilon+(1+\epsilon)^2,\quad \text{whenever $ 1-m''(\epsilon)\leq m\leq 1$.} $$
 This proves  that $\frac{\mathrm{K}(m)}{q(m)} \longto 1$, as $m\to 1^-$, that is, $\mathrm{K}(m)\sim q(m)$ as $m\to 1^-$.
 On the other hand,
 $$q(m)=-\frac{\ln\sqrt{1-\sqrt{m}}}{\sqrt[4]{m}}\sim - \ln\sqrt{1-\sqrt{m}}\sim -\frac{1}{2}\ln(1-m)\quad \text{as $m\to 1^-$.} $$
 This ends the proof of \eqref{re.asymptoKK}.
\end{remark}
%%%%%%%%%%%%%%%%%%%%%%%%%%%%%%%%%%%%%%%%%%%%%%%%%%%%%%%%%%%%%
\section{Applications to potential theory} \label{sec:potentialtheory}
 We let $\LL:=X_1^2+\cdots+X_m^2$,
 where $X_1,\ldots,X_m$ are smooth vector fields satisfying our assumptions (H.1)-to-(H.3) in
 Section \ref{sec:notations.review}. As usual, $\Gamma$ is its global fundamental solution as in
 Theorem A. The assumption $q>2$ (see \eqref{eq.defq}) is still valid throughout the section.
 In what follows, we say that a function $u$ is $\LL$-harmonic on an open set $\Omega\subseteq\R^n$ if
 $u\in C^2(\Omega)$ and $\LL u = 0$ in $\Omega$.

 The aim of this section is to show that our operator $\LL$ enjoys
 all the axioms in
 \cite{AbbondanzaBonfiglioli, BattagliaBonfiglioli, BattBonf, BLJems}, thus allowing us to derive the following potential-theoretic results:
 several characterizations of the cone of the $\LL$-subharmonic functions in \cite{BLJems};
 the topological properties of the sheaf of the $\LL$-harmonic functions in \cite{BattagliaBonfiglioli};
 the rigidity inverse-mean-value theorem in \cite{AbbondanzaBonfiglioli};
 the non-homogeneous and invariant Harnack inequality for $\LL$ in \cite{BattBonf}.

% The aim of this section is twofold. Firstly, we show that our operator $\LL$ enjoys
% all the axioms in
% \cite{AbbondanzaBonfiglioli, BattagliaBonfiglioli, BattBonf, BLJems}, thus allowing to derive:
% the potential theoretic characterization of the cone of the $\LL$-subharmonic functions from \cite{BLJems};
% the topological properties of the sheaf of the $\LL$-harmonic functions from \cite{BattagliaBonfiglioli};
% the rigidity inverse-mean-value theorem from \cite{AbbondanzaBonfiglioli};
% the non-homogeneous and invariant Harnack inequality for $\LL$ from \cite{BattBonf}. Secondly, we prove a Lagrange-mean-value
% theorem associated with the so-called horizontal gradient $\nabla_X u = (X_1u,\ldots,X_m u)$. \medskip

 To this end, all that we have to do is to check that the axiomatic assumptions
 in  \cite{AbbondanzaBonfiglioli, BattagliaBonfiglioli, BattBonf, BLJems} are fulfilled in our case; this is contained in the following
 list (A.1)-to-(A.9). We are not interested in describing how any single axiom is involved in obtaining Theorems
 \ref{main-theorem}-to-\ref{theo.harnackWW}; the interested reader will find details in the mentioned papers.
 We only confine ourselves to a few remarks:
 the verification of axiom (A.8) is quite delicate, and it requires the upper estimates of $\Gamma$ and of its first derivatives
 given in Theorem \ref{thm.estimatesDERIVgamma}, together with the lower estimates of $\Gamma$ in Theorem \ref{th.stimebasso};
 the validity of a global Poincaré inequality as in axiom (A.9) is obviously of independent interest. %obviously not end in itself.

 \bigskip

 \textbf{(A.1)}\,\,As a consequence of our assumption (H.1), all the $X_i$'s have null divergence; hence, $\LL$
 is a purely divergence-form operator, that is,
 $$\LL=\sum_{i,j=1}^n
    \de_{x_i}(a_{i,j}(x)\,\de_{x_j})=\text{div}(A(x)\,\nabla),$$
 where $A(x) = S(x)\cdot S(x)^T$ and $S(x)$ is the $n\times m$ matrix whose $i$-th column is $X_i(x)$. \medskip

 \textbf{(A.2)}\,\,$\LL$ admits a direction of strict ellipticity; in other words, there exists $i\in\{1,\ldots,n\}$ such that $a_{i,i}$ is a positive
 constant. This is another consequence of the $\dela$-homogeneity of $X_1,\ldots,X_m$ (see e.g., \cite[\S 1.3.1]{BLUlibro}).
 \medskip

 \textbf{(A.3)}\,\,$\LL$ is $C^\infty$-hypoelliptic on every open subset $\Omega$ of $\R^n$: this a consequence
 of the H\"ormander Hypoellipticity Theorem, which we are allowed to invoke
 owing to assumption (H.2).
 As a matter of fact, our $\LL$ has polynomial (hence, real analytic) coefficient
 functions (due to our homogeneity assumption), so that hypoellipticity is indeed equivalent
 to (H.2); see, e.g., \cite{Derridj}. \medskip

 \textbf{(A.4)}\,\,$\LL$ satisfies the so-called Regularity Axiom, namely, there exists a basis $\mathcal{B}$
 for the Euclidean topology of $\R^n$, whose elements are bounded open sets $\Omega$, such that
 the Dirichlet problem
\begin{equation*}%\label{diripro}
 \begin{cases}
  \LL u = 0 & \text{on $\Omega$}, \\
  u = \varphi & \text{on $\de\Omega$}
 \end{cases}
\end{equation*}
 admits a (unique) solution $u\in C^2(\Omega)\cap C(\overline{\Omega})$, for every $\varphi\in C(\de\Omega)$.
 This is true of any sums of squares of H\"ormander vector fields, as proved by Bony in the seminal paper
  \cite{Bony}. \medskip

 \textbf{(A.5)}\,\,$\LL$ satisfies the so-called Doob Convergence Axiom:
 if $\{u_k\}_k$
  is an increasing sequence of $\LL$-har\-mo\-nic functions on
 an open set $\Omega\subseteq \R^n$, then $u:=\sup_k u_k$ is
 $\LL$-harmonic in $\Omega$, provided that $u$ is finite in a dense
 subset of $\Omega$. This is a consequence of the Harnack inequality proved by
 Bony in \cite{Bony} for H\"ormander sums of squares (see also \cite{BBB}). \medskip

 \textbf{(A.6)}\,\,$\LL$ is endowed with a global fundamental solution $\Gamma$, with the
 following basic properties: $\Gamma > 0$ out of the diagonal;
 $\Gamma$ is locally integrable in $\R^n\times\R^n$; $\Gamma(x;\cdot)$ vanishes at
 infinity, for every fixed $x$. This is contained in our Theorem A. \medskip

 \textbf{(A.7)}\,\,$\Gamma(x;\cdot)$ has a pole at every fixed $x$.
 This is contained in Theorems \ref{th.stimebasso} and
 \ref{thm.estimateGamman2}.\bigskip

 We are now ready to derive from axioms (A.1)-to-(A.7) plenty of potential theoretic results
 for $\LL$ (see Theorems \ref{main-theorem} and \ref{th.Koebe}).
 In order to do this, as a crucial tool of this section, we replace the geometry of the CC-balls $B_X(x,r)$
 with the superlevel sets of $\Gamma(x;\cdot)$, that is,
 $$\Omega_r(x):=\big\{y\in\R^n\,:\,\Gamma(x;y)>1/r\big\}\cup\{x\},\qquad x\in\R^n,\,\,r>0.$$
 One of the greatest advantages of $\Omega_r(x)$ is that its boundary
  $$\de\Omega_r(x)=\big\{y\in\R^n\,:\,\Gamma(x;y)=1/r\big\}.$$
 is a smooth manifold of dimension $n-1$, at least for almost every $r$
 (by Sard's Lemma).
 %Moreover,
 %$\de\Omega_r(x)$ is a regular domain for $\LL$, i.e.,
 %\eqref{diripro} is solvable (in the classical sense) for any continuous $\varphi$.
 %\textcolor{}

 For simplicity, we tacitly agree that all the statements of this section involving
 $\de\Omega_r(x)$ hold for those $r>0$ for which
 $\de\Omega_r(x)$ is smooth (hence, for almost every $r$).
 Let now $x\in\R^n$, and let us consider the functions, defined for $y\neq
 x$,
 $$\Gamma_x(y):=\Gamma(x;y),\qquad \mathcal{K}_x(y):=\frac{\sum_{j=1}^m |X_j\Gamma_x(y)|^2}{|\nabla \Gamma_x(y)|}.$$
 Let $u$ be upper semicontinuous on $\Omega$.
 For every fixed $\alpha>0$,  every $x\in\R^n$ and $r>0$ such that
 $\overline{\Omega_r(x)}\subset\Omega$, we introduce the following mean-value integral operators (here $H^{n-1}$ is the standard
 $(n-1)$-dimensional Hausdorff measure in $\R^n$):
\begin{align*}
  m_r(u)(x)&=\int_{\de\Omega_r(x)} u(y)\,\mathcal{K}_x(y)\,\d H^{n-1}(y),&
  M_r(u)(x)&=\frac{\alpha+1}{r^{\alpha+1}} \int_0^r\rho^\alpha\,m_\rho(u)(x)\,\d \rho.
\end{align*}
 An alternative representation of $M_r$ is the following one:
\begin{align}\label{defi-KKsolidoaltern}
  M_r(u)(x)&=\frac{\alpha+1}{r^{\alpha+1}}
  \int_{\Omega_r(x)}
  u(y)\,K^\alpha_x(y)\,\d y,\quad \text{where}\quad
 {K}^\alpha_x(y) :=\frac{\sum_{j=1}^m |X_j\Gamma_x(y)|^2}{\Gamma_x^{2+\alpha}(y)}.
\end{align}
 Furthermore,  for every
 $x\in\R^n$ and every $r>0$, we set
\begin{gather}\label{qQom}
\begin{split}
  q_r(x)&=\int_{\Omega_r(x)} \Big(\Gamma_x(y)-\frac{1}{r}\Big)\,\d y,\qquad Q_r(x)=\frac{\alpha+1}{r^{\alpha+1}}
  \int_0^r\rho^\alpha\,q_\rho(x)\,\d\rho,\\
  \omega_r(x)&=\frac{1}{\alpha\,r^{\alpha+1}}
  \int_{\Omega_r(x)}\big(r^{\alpha}-\Gamma^{-\alpha}_x(y)\big)\,\d y.
\end{split}
\end{gather}
\begin{remark}
 The above operators $m_r,M_r$ permit to obtain the following analogs of the classical
 Gauss-Green formulas for Laplace's operator
 (see, e.g., \cite{BLJems}):
\begin{align*}
  u(x)&=m_r(u)(x)-\int_{\Omega_r(x)}\Big(\Gamma_x(y)-\frac{1}{r}\Big)\,\LL u(y)\,\d y,\\
  u(x)&=M_r(u)(x)-\frac{\alpha+1}{r^{\alpha+1}}\int_0^r \rho^{\alpha}\bigg(
 \int_{\Omega_\rho(x)}\Big(\Gamma_x(y)-\frac{1}{\rho}\Big)\,\LL u(y)\,\d
 y\bigg)\d\rho,
\end{align*}
 holding true for every function $u$ of class $C^2$ on an open set
 containing $\overline{\Omega_r(x)}$.
\end{remark}
 From now on, $\Omega$ will denote an open set.
 An upper semicontinuous function $u:\Omega\to[-\infty,\infty)$ is called
  $\LL$-subharmonic in $\Omega$ if $u \not\equiv -\infty$ on every component of $\Omega$, and
 the following holds:
 for every bounded open set $V\subset \overline{V}\subset \Omega$ and for every
 $\LL$-harmonic function $h\in C^2(V)\cap C(\overline{V})$ such that $u\leq h$ on $\de
 V$, one has
  $u\leq h$ in $V$.

 For simplicity, the following result, providing
 characterizations of $\LL$-subharmonicity, is stated for continuous functions $u$, but it also holds
 for a u.s.c. function, with minor modification (see \cite{BLJems}); this is a consequence of axioms (A.1)-to-(A.7).
\begin{theorem}\label{main-theorem}
 Suppose that $u\in C(\Omega)$ and let $q_r,Q_r,\omega_r$ be as in \eqref{qQom}.
 Let also $$R(x):=\sup\{r>0\,:\,\,\Omega_r(x)\subseteq \Omega\}.$$
 Then, the following conditions are equivalent:
\begin{enumerate}
  \item  $u$ is $\LL$-subharmonic in $\Omega$.

 \item
 $\LL u\geq 0$ in the weak
 sense of distributions.

  \item $u(x)\leq m_r(u)(x)$, for every $x\in \Omega$ and $r\in (0,R(x))$.

  \item $u(x)\leq M_r(u)(x)$, for every $x\in \Omega$ and $r\in (0,R(x))$.

  \item
  $M_r(u)(x)\leq m_r(u)(x)$, for every $x\in \Omega$ and $r\in (0,R(x))$.

  \item
  $r\mapsto m_r(u)(x)$
  is monotone increasing on $(0,R(x))$, for every $x\in \Omega$.

\item
  $r\mapsto M_r(u)(x)$
  is monotone increasing on $(0,R(x))$, for every $x\in \Omega$.

  \item  For every $x\in \Omega$ it holds that
  \begin{equation*}\label{blaschke}
    \limsup_{r\to 0} \frac{m_r(u)(x)-u(x)}{q_r(x)}\geq 0.
  \end{equation*}

  \item  For every $x\in \Omega$ it holds that
  \begin{equation*}\label{privaloff}
    \limsup_{r\to 0} \frac{M_r(u)(x)-u(x)}{Q_r(x)}\geq 0.
  \end{equation*}

  \item
  For every $x\in \Omega$ it holds that
  \begin{equation*}\label{reade}
    \liminf_{r\to 0} \frac{m_r(u)(x)-M_r(u)(x)}{\omega_r(x)}\geq 0.
  \end{equation*}

\end{enumerate}
\end{theorem}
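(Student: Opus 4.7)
The plan is to deduce Theorem \ref{main-theorem} from the abstract potential-theoretic framework developed in \cite{BLJems}, where such characterizations of subharmonicity are proved for any linear second order PDO satisfying axioms (A.1)-to-(A.7). Since these axioms have already been verified for our $\LL$ in the list preceding the theorem, the proof boils down to specializing the abstract statements to our concrete setting. First I would record (or quote) the two Gauss-Green-type representation formulas stated in the preceding remark, which express $u(x)$ as the surface mean $m_r(u)(x)$ (respectively the solid mean $M_r(u)(x)$) minus a solid integral of $\LL u$ weighted against $\Gamma_x - 1/r$ (respectively against an $\alpha$-average of such truncations). These formulas are the engine of the whole proof: they are established by integrating by parts against $\Gamma_x - 1/r$ over $\Omega_r(x)$, after noting that $\de\Omega_r(x)$ is a smooth hypersurface for a.e.\ $r$ (Sard's Lemma applied to $\Gamma_x$, which is smooth off the diagonal by hypoellipticity (A.3)), and that $\mathcal{K}_x$ is precisely the surface density arising from the co-area formula applied to $\Gamma_x$.

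With the Gauss-Green formulas at hand, the core equivalences follow cleanly. The equivalence (1) $\Leftrightarrow$ (2), i.e.\ between the majorization-type definition of $\LL$-subharmonicity and $\LL u \geq 0$ in the distributional sense, rests on hypoellipticity (A.3), the Regularity Axiom (A.4) for the Dirichlet problem, and Bony's weak maximum principle. The implications (2) $\Rightarrow$ (3) and (2) $\Rightarrow$ (4) are immediate from the Gauss-Green identities, since $\Gamma_x - 1/r \geq 0$ on $\Omega_r(x)$, so positivity of $\LL u$ forces the correction terms to be non-positive; the converse directions are obtained by localizing at Lebesgue points of $\LL u$ and rescaling. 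The monotonicities (6) and (7) are derived by differentiating $r \mapsto m_r(u)(x)$ and $r \mapsto M_r(u)(x)$ and rewriting the derivatives through the same formulas; inequality (5), $M_r \leq m_r$, reflects the fact that $M_r$ is a weighted $\rho$-average of $m_\rho$'s with $\rho \leq r$, combined with (6).

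The asymptotic (Blaschke-Privaloff-Reade-type) characterizations (8), (9), (10) require more care. One shows that $q_r(x), Q_r(x), \omega_r(x)$ are strictly positive and vanish, as $r\to 0^+$, at a rate that matches exactly the rate at which the mean-value deficits $m_r(u)(x)-u(x)$, $M_r(u)(x)-u(x)$, $m_r(u)(x)-M_r(u)(x)$ vanish whenever $\LL u$ is bounded; this is a converse mean-value argument where the quantitative size of $\Omega_r(x)$ (controlled by our estimates for $\Gamma$ via (A.6)-(A.7) and the ball estimates from Theorem B) plays a role. Combined with the distributional characterization (2), these ratios then detect the sign of $\LL u$.

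The main obstacle, assuming one quotes the abstract theorems of \cite{BLJems}, is verifying that all the hypotheses required there are met in the non-left-invariant, homogeneous H\"ormander setting; in particular, the quantitative behavior of $\Omega_r(x)$ and of the kernels $\mathcal K_x, K_x^\alpha$ near $x$, which ensures non-degeneracy of $q_r, Q_r, \omega_r$. This is where our sharp upper and lower estimates for $\Gamma$ and its $X$-derivatives (Theorems \ref{thm.estimatesDERIVgamma}, \ref{th.stimebasso}, \ref{thm.estimateGamman2}, \ref{thm.estimateGamman2bis}) enter decisively: they provide pointwise control on $\Gamma_x$, $X_j\Gamma_x$, hence on $\mathcal K_x$, and thus guarantee that the level sets $\Omega_r(x)$ and the weights in $M_r, q_r, Q_r, \omega_r$ behave as in the model Carnot group case, so that the abstract machinery of \cite{BLJems} applies without modification.
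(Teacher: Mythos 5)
Your proposal matches the paper's approach exactly: the paper proves Theorem \ref{main-theorem} simply by verifying axioms (A.1)-to-(A.7) for $\LL$ and invoking the axiomatic theory of \cite{BLJems}, which is precisely your strategy (your additional sketch of the Gauss--Green machinery describes the internal workings of \cite{BLJems} rather than anything the paper redoes). The only minor imprecision is that the sharp two-sided estimates of $\Gamma$ enter here only through the verification of (A.6)--(A.7) (positivity, local integrability, vanishing at infinity, and the pole), not as a separate hypothesis of the abstract theorem.
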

 As for the sheaf of the $\LL$-harmonic functions, we have the following result, obtained by collecting
 the axiomatic investigations in \cite{BattagliaBonfiglioli}: this provides a characterization
 of $\LL$-harmonicity, together with some topological properties of the vector space of
 the $\LL$-harmonic functions on an open set. This is again a consequence of axioms (A.1)-to-(A.7).
\begin{theorem}\label{th.Koebe}
 Suppose that $u\in C(\Omega)$. Then, the following conditions are equivalent:
\begin{enumerate}
  \item  $u$ is $C^\infty$ and is $\LL$-harmonic in $\Omega$.

  \item $u(x)= m_r(u)(x)$, for every $x\in \Omega$ and $r\in (0,R(x))$.

  \item $u(x)= M_r(u)(x)$, for every $x\in \Omega$ and $r\in (0,R(x))$.
\end{enumerate}
 Furthermore, we have the following compactness Montel-type result for $\LL$. Let $\mathcal{F}$ be a family of $\LL$-harmonic functions on $\Omega$ which is
 locally bounded, that is,
\begin{equation*}
 \sup_{f\in \mathcal{F}} \Big(\sup_{K}|f|\Big)<\infty,\quad \text{for every compact set $K\subset \Omega$.}
\end{equation*}
 Then $\mathcal{F}$ is a normal family, that is, for every
 sequence $\{f_n\}_n$ in $\mathcal{F}$, there exists a subsequence of $\{f_n\}_n$ which is
 uniformly convergent on the compact subsets of $\Omega$.

 Finally, the set of the $\LL$-harmonic functions
 in $\Omega$ endowed with the $L^1_\loc$-topology inherited from $C(\Omega)$
 \emph{(}or, equivalently, endowed with the $L^\infty_{\mathrm{loc}}$-topology\emph{)} is a Heine-Borel topological vector space.
\end{theorem}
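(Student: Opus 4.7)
The strategy is to apply directly the axiomatic results of \cite{BattagliaBonfiglioli}, whose hypotheses (namely (A.1)--(A.7)) have all been verified for our $\LL$ earlier in the section. Thus the proof essentially reduces to: (a) a symmetrization argument combining Theorem \ref{main-theorem} for $u$ and $-u$; (b) the Harnack inequality; (c) a self-improvement from $L^1_\loc$ to $L^\infty_\loc$ convergence via mean values.

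For the equivalence of (1), (2), (3), the plan is as follows. A continuous function $u$ is $\LL$-harmonic if and only if $\LL u = 0$ in the distributional sense (by the hypoellipticity axiom (A.3), any continuous distributional solution is automatically $C^\infty$). By the implication (1)$\Leftrightarrow$(2) in Theorem \ref{main-theorem}, $\LL u \geq 0$ distributionally is equivalent to $u$ being $\LL$-subharmonic, which in turn is equivalent (via (2)$\Leftrightarrow$(3)$\Leftrightarrow$(4) of the same theorem) to $u(x) \leq m_r(u)(x)$ for all admissible $x,r$, and also to $u(x) \leq M_r(u)(x)$. Applying the same chain to $-u$ (whose mean values are the opposites of those of $u$), the two one-sided inequalities combine into the sought equalities $u = m_r(u)$ and $u = M_r(u)$. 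Conversely, these equalities imply that both $u$ and $-u$ are $\LL$-subharmonic, forcing $\LL u = 0$ distributionally and hence $u\in C^\infty$ with $\LL u=0$ classically.

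For the Montel-type compactness, the plan is to exploit the Harnack inequality for $\LL$ (provided by Bony's theorem, used in checking axiom (A.5)). Given a locally bounded family $\mathcal{F}$ and a compact $K\subset\Omega$, pick an open $V$ with $K\subset V\subset\overline{V}\subset\Omega$ and a constant $C_V$ with $|f|\leq C_V$ on $V$ for all $f\in\mathcal{F}$. Applied to the nonnegative functions $C_V\pm f$, the Harnack inequality yields local equicontinuity of $\mathcal{F}$ on $K$; Ascoli--Arzel\`a then extracts a subsequence converging uniformly on compacta. The limit $u$ is continuous and satisfies $u = m_r(u)$ on $\Omega$, since the mean-value identity of part (2) passes to uniform limits (by continuity of the kernel $\mathcal{K}_x$ and of the superlevel boundaries $\partial\Omega_r(x)$ for a.e. $r$); hence $u$ is $\LL$-harmonic by the first part of the theorem.

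Finally, for the Heine-Borel property, the plan is to show that, on the space of $\LL$-harmonic functions, the $L^1_\loc$ and $L^\infty_\loc$ topologies coincide, so that the Montel result furnishes compactness. From the representation \eqref{defi-KKsolidoaltern} and part (3), for any $\LL$-harmonic $u$ and any $x\in\Omega$ with $\overline{\Omega_r(x)}\subset\Omega$,
$$
|u(x)| \;\leq\; \frac{\alpha+1}{r^{\alpha+1}}\int_{\Omega_r(x)}|u(y)|\,K^\alpha_x(y)\,\d y \;\leq\; C(x,r)\,\|u\|_{L^1(\Omega_r(x))},
$$
because $K^\alpha_x$ is locally bounded off the pole (a consequence of the upper estimates on $\Gamma$ and $X_i\Gamma$ in Theorems \ref{thm.estimatesDERIVgamma} and \ref{th.stimebasso}, together with the lower bound on $\Gamma$ that prevents $\Omega_r(x)$ from degenerating). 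Thus an $L^1_\loc$-bounded family of $\LL$-harmonic functions is locally uniformly bounded, and by the Montel property it is relatively compact in $C(\Omega)$, hence in $L^\infty_\loc$ and a fortiori in $L^1_\loc$. This gives both the equivalence of the two topologies and the Heine-Borel property.

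The main technical step is the Montel-type compactness, which rests on the invariant Harnack inequality. This is available here in its local form via Bony, and its global form for $\LL$ is actually the content of \cite{BattBonf} (applicable once axioms (A.1)--(A.7) are verified); the key prerequisite is thus the pointwise behavior of $\Gamma$ proved in Theorems \ref{th.stimebasso} and \ref{thm.estimateGamman2}, which secures axiom (A.7).
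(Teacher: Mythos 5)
Your top-level strategy is exactly the paper's: Theorem \ref{th.Koebe} is not proved there at all, but imported from the axiomatic theory of \cite{BattagliaBonfiglioli} once axioms (A.1)--(A.7) are checked, and your reconstruction of what lies behind that citation is largely sound. The symmetrization argument for (1)$\Leftrightarrow$(2)$\Leftrightarrow$(3) (apply Theorem \ref{main-theorem} to $u$ and $-u$, use linearity of $m_r,M_r$, and upgrade the continuous distributional solution to a classical one by hypoellipticity) is the standard derivation and is correct. The Montel part via Harnack is viable, but ``the Harnack inequality yields local equicontinuity'' compresses the actual step: one must first apply Harnack to $\sup_{B(x,2r)}f-f$ and $f-\inf_{B(x,2r)}f$ to get an oscillation decay $\mathrm{osc}_{B(x,r)}f\leq\theta\,\mathrm{osc}_{B(x,2r)}f$ with $\theta<1$, hence uniform interior H\"older continuity with respect to $d_X$, hence equicontinuity.

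The genuine gap is in the Heine--Borel step. Your inequality $|u(x)|\leq C(x,r)\,\|u\|_{L^1(\Omega_r(x))}$ needs $K^\alpha_x$ to be bounded on all of $\Omega_r(x)$, including at the pole $y=x$. The paper establishes this only for $n>2$ and $\alpha>2/(q-2)$, and does so while verifying axiom (A.8) --- which is needed for Theorem \ref{th.potato} but explicitly \emph{not} for Theorem \ref{th.Koebe}. For $n=2$ the bound is unavailable and in general false: when $f_2(x)>0$ one has $\Gamma_x(y)\approx\log(1/d_X(x,y))$ near the pole, while the best available estimate for the numerator is $|X_j\Gamma_x(y)|^2\lesssim d_X(x,y)^{-2}$ (and this order is attained, e.g., for the Grushin operator), so $K^\alpha_x$ is locally integrable but not locally bounded near its pole. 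Since Theorem \ref{th.Koebe} is asserted for every $n\geq2$, you must either restrict this part to $n>2$ or obtain the $L^1_{\loc}$-to-$L^\infty_{\loc}$ estimate for $\LL$-harmonic functions by a different route, e.g. the a priori bound $\sup_K|u|\leq C\,\|u\|_{L^1(V)}$ for $K\Subset V\Subset\Omega$, which hypoellipticity yields through a closed-graph argument on the space of distributional solutions.
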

 In order to have the next Theorem \ref{th.potato}, we need to verify another axiom: \medskip

 \textbf{(A.8)}\,\,There exists $\alpha > 0$ such that
  the sequence of functions
   $$f_k(x):=\int_{\Omega_{1/k}(x)}\Gamma(x;y)\,K^\alpha_0(y)\,\d y$$
  vanishes as $k\to \infty$, uniformly in $x$ (when $x$ lies in a compact set). \medskip

 Let us prove that (A.8) is fulfilled in our case, when $n > 2$, with the choice $\alpha > 2/(q-2)$.
 To begin with, by the very definition
 of $K^\alpha_0$, and owing to Theorems \ref{thm.estimatesDERIVgamma} and \ref{th.stimebasso}, one easily proves
 $$K^\alpha_0(y) \leq C\,\frac{\big|B_X\big(0,d_X(0,y)\big)\big|^\alpha}{d_X(0,y)^{2+2\alpha}}
   \stackrel{\eqref{propertiesdCC}}{=} |B_X(0,1)|^\alpha\,d_X(0,y)^{\alpha(q-2)-2}.$$
  Since, by assumption $\alpha(q-2)-2 > 0$, we conclude that $K^\alpha_0$ is bounded on any compact set.
  Then, if $x$ lies in a fixed compact set $F\subseteq\R^n$ and $y\in \Omega_{1/k}(x)$, one has
 $$
 \sup_{y\in\Omega_{1/k}(x)}K_0^\alpha(y) \leq M(F), \quad \text{for every $x\in F$ and every $k\in\N$}.
 $$
 Thus (A.8) is fulfilled if we show that
 \begin{equation} \label{eq.toprovealpostodiA8}
  \int_{\Omega_{1/k}(x)}\Gamma(x;y)\,\d y\longto 0\quad \text{as $k\to\infty$, uniformly for $x\in F$}.
 \end{equation}
 To prove this, we first remark that, due to Theorem B, if $y\in\Omega_{r}(x)$ one has
 \begin{align*}
  1/r < \Gamma(x;y) & \stackrel{\eqref{goal}}{\leq} C\,
   \frac{d_{X}(x,y)^{2}}
   {\big\vert B_{X}(x,d_{X}(x,y)) \big\vert} \\
 & \stackrel{\eqref{eq.NSWmodificata}}{\leq}
  \frac{C}{\gamma_1}\,\bigg(\sum_{h = n}^qf_h(x)\,d_X(x,y)^{h-2}\bigg)^{-1}
  \leq \frac{C}{f_q\gamma_1}\,d_X(x,y)^{2-q}.
 \end{align*}
  This shows that (with the structural choice $\theta^{q-2} := C/(f_q\gamma_1)$)
 \begin{equation} \label{eq.inclusionballsBIS}
   \Omega_r(x)\subseteq B_X\big(x,\theta\,r^{1/(q-2)}\big), \qquad\text{for any $x\in\R^n$ and any $r > 0$}.
 \end{equation}
  Due to \eqref{eq.inclusionballsBIS}, we derive \eqref{eq.toprovealpostodiA8} from Theorem \ref{thm.estimatesDERIVgamma} and the following result (with $p = 2$).
 \begin{lemma} \label{lem.integrabA8}
  For every $p > 0$, every $x\in\R^n$ and every $r > 0$, one has
 \begin{equation} \label{eq.integrabA8}
  \int_{B_X(x,r)}\frac{d_{X}(x,y)^{p}}
   {\big\vert B_{X}(x,d_{X}(x,y)) \big\vert}\,\d y \leq {c}_p\,r^p, \qquad \text{where $c_p = C_d\,\frac{2^p}{2^p-1}$},
\end{equation}
 and $C_d$ is the doubling constant in \eqref{globaldoubling}.
 \end{lemma}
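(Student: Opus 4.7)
The plan is to carry out a standard dyadic annular decomposition of $B_X(x,r)$, using only the global doubling inequality \eqref{globaldoubling} together with the obvious monotonicity of the map $\rho\mapsto |B_X(x,\rho)|$. First, I would write
$$B_X(x,r)\setminus\{x\} = \bigcup_{k=0}^{\infty} A_k, \qquad A_k := B_X\big(x,r/2^k\big)\setminus B_X\big(x,r/2^{k+1}\big),$$
noting that the exceptional set $\{x\}$ is negligible and so can be ignored in the integration.

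The second step is to estimate the integrand uniformly on $A_k$. For every $y\in A_k$ one has $r/2^{k+1}\leq d_X(x,y)<r/2^k$, so $d_X(x,y)^p\leq (r/2^k)^p$, and $|B_X(x,d_X(x,y))|\geq |B_X(x,r/2^{k+1})|$ by monotonicity. Applying the doubling inequality \eqref{globaldoubling} with center $x$ and radius $r/2^{k+1}$ then gives
$$\big|B_X\big(x,r/2^{k+1}\big)\big|\geq \frac{1}{C_d}\,\big|B_X\big(x,r/2^k\big)\big|.$$
Hence, on $A_k$,
$$\frac{d_X(x,y)^p}{|B_X(x,d_X(x,y))|}\leq C_d\,\frac{(r/2^k)^p}{|B_X(x,r/2^k)|}.$$

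The third step is to integrate this uniform bound and sum the resulting geometric series. Using $|A_k|\leq |B_X(x,r/2^k)|$, one obtains
$$\int_{A_k}\frac{d_X(x,y)^p}{|B_X(x,d_X(x,y))|}\,\d y\leq C_d\,\Big(\frac{r}{2^k}\Big)^{p},$$
and therefore
$$\int_{B_X(x,r)}\frac{d_X(x,y)^p}{|B_X(x,d_X(x,y))|}\,\d y\leq C_d\,r^p\sum_{k=0}^{\infty}2^{-kp}=C_d\,\frac{2^p}{2^p-1}\,r^p,$$
which is precisely the required inequality with $c_p=C_d\,2^p/(2^p-1)$. There is no real obstacle here: the only ingredient beyond arithmetic is the global doubling property, which was already established in Section \ref{sec:localtoglobal} via the homogeneity argument of Remark \ref{rem.usehomog}.
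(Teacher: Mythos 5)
Your proof is correct and follows essentially the same route as the paper's: the same dyadic annular decomposition, the same uniform bound on each annulus via monotonicity of $\rho\mapsto|B_X(x,\rho)|$ and a single application of the global doubling inequality, and the same geometric series yielding the constant $c_p=C_d\,2^p/(2^p-1)$. Nothing to add.
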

 \begin{proof}
  We have the following argument, only based on the doubling inequality:
 \begin{align*}
   & \int_{B_X(x,r)}\frac{d_{X}(x,y)^{p}}
   {\big\vert B_{X}(x,d_{X}(x,y)) \big\vert}\,\d y = \sum_{k = 0}^\infty\int_{\frac{r}{2^{k+1}}\leq d_X(x,y) <\frac{r}{2^k}}
   \frac{d_{X}(x,y)^{p}}{\big\vert B_{X}(x,d_{X}(x,y)) \big\vert}\,\d y \\
   & \leq \sum_{k = 0}^\infty\frac{\big\vert B_{X}(x,r/2^k) \big\vert}{\big\vert B_{X}(x,r/2^{k+1}) \big\vert}\,\Big(\frac{r}{2^k}\Big)^p
  \leq C_d\,r^p\,\sum_{k = 0}^\infty\Big(\frac{1}{2^p}\Big)^k = c_p\,r^p.
 \end{align*}
 This completes the proof of \eqref{eq.integrabA8}.
 \end{proof}
 Thanks to the validity of axioms (A.1)-to-(A.8) in our framework (with $n > 2$), we have the following
 rigidity-type result for $\LL$ (also referred to as an inverse-mean-value theorem); see \cite{AbbondanzaBonfiglioli}.
\begin{theorem}\label{th.potato}
 Let $n > 2$. We choose $\alpha > 2/(q-2)$ and, with reference to \eqref{defi-KKsolidoaltern},
 we consider the measure $\d\mu(y) = K_0^\alpha(y)\,\d y$. Then \emph{(3)} in Theorem \ref{th.Koebe} gives
 $$
  u(0)=\frac{1}{\mu(\Omega_r(0))}
  \int_{\Omega_r(0)}
  u(y)\,\d \mu(y),
 $$
for every $u$ which is $\LL$-harmonic on a neighborhood of $\overline{\Omega_r(0)}$.

 Conversely, we have the following characterization of the superlevel sets $\Omega_r(0)$ of $\Gamma$.
 Suppose that $D$ is a bounded open neighborhood of $0$ such that
\begin{equation}\label{casopartic3}
 u(0)=\frac{1}{\mu(D)}
  \int_{D}
  u(y)\,\d \mu(y),
\end{equation}
 for every $u$ which is $\LL$-harmonic and $\mu$-integrable on $D$.
 Then $D=\Omega_r(0)$ for some $r>0$.

 More precisely, it suffices to
 suppose that \eqref{casopartic3} holds for the family of
 $\LL$-harmonic functions
 $$\Big\{D\ni y\mapsto \Gamma_x(y)\Big\}_{x\notin D}. $$
\end{theorem}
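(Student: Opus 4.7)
The plan consists in recognizing Theorem \ref{th.potato} as a direct application of the axiomatic rigidity theorem of \cite{AbbondanzaBonfiglioli}, once axioms (A.1)-to-(A.8) have been verified for $\LL$ (as done in the body of this section).

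For the direct statement, I would apply Theorem \ref{th.Koebe}(3) at the pole $x=0$ to the $\LL$-harmonic function $u\equiv 1$, together with the alternative representation \eqref{defi-KKsolidoaltern}: this identifies the normalization
\begin{equation*}
 \mu(\Omega_r(0)) = \int_{\Omega_r(0)}K_0^\alpha(y)\,\d y = \frac{r^{\alpha+1}}{\alpha+1},
\end{equation*}
so that Theorem \ref{th.Koebe}(3) applied to a generic $\LL$-harmonic $u$ on a neighborhood of $\overline{\Omega_r(0)}$ is precisely the desired averaging formula with respect to $\mu$.

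For the converse, pick the unique $r > 0$ such that $\mu(\Omega_r(0)) = \mu(D)$, which is possible because $r\mapsto r^{\alpha+1}/(\alpha+1)$ is a homeomorphism of $(0,\infty)$, and introduce the two $\Gamma$-potentials
\begin{equation*}
 U(x) := \int_D\Gamma(x;y)\,\d\mu(y),\qquad V(x) := \int_{\Omega_r(0)}\Gamma(x;y)\,\d\mu(y).
\end{equation*}
Both $U$ and $V$ are continuous on $\R^n$ by the local integrability of $\Gamma$ and the control of $K_0^\alpha$ near the origin ensured by the estimates of Sections \ref{sec:pointwise.estimates}-\ref{sec:loweresimates}. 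Applying the hypothesis \eqref{casopartic3} to the $\LL$-harmonic function $y\mapsto \Gamma(x;y)$ when $x\notin D$, and the direct part just proved to the same function when $x\notin \Omega_r(0)$, one finds
\begin{equation*}
 U(x) = \mu(D)\,\Gamma(x;0)\quad\forall\,x\notin D,\qquad V(x) = \mu(\Omega_r(0))\,\Gamma(x;0)\quad\forall\,x\notin \Omega_r(0),
\end{equation*}
so that $W := U-V$ vanishes identically on $\R^n\setminus (D\cup\Omega_r(0))$. Since $\LL W = K_0^\alpha\,(\chi_{\Omega_r(0)}-\chi_D)$ in the distributional sense, an Aharonov-Schiffer-Zalcman type argument based on the strong minimum principle for $\LL$-superharmonic functions (available through axioms (A.4)-(A.5)) rules out $D\neq \Omega_r(0)$: any non-empty component of $D\setminus \overline{\Omega_r(0)}$ (resp.\,of $\Omega_r(0)\setminus \overline{D}$) would support a strictly $\LL$-superharmonic (resp.\,$\LL$-subharmonic) $W$ which vanishes on a part of its boundary, yielding $W\leq 0$ on one side and $W\geq 0$ on the other, and iterating these sign conditions forces a contradiction precisely as in \cite{AbbondanzaBonfiglioli}.

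The last assertion of the theorem is automatic, since the above derivation only used \eqref{casopartic3} for the $\LL$-harmonic family $\{\Gamma_x\}_{x\notin D}$. The main obstacle is not in these steps themselves but in the verification of axiom (A.8), under which the machinery of \cite{AbbondanzaBonfiglioli} applies: this requires the sharp global upper estimate of $\Gamma$ (Theorem \ref{thm.estimatesDERIVgamma}) combined with the matching lower estimate (Theorem \ref{th.stimebasso}) to produce the pointwise bound $K_0^\alpha(y)\leq C\,d_X(0,y)^{\alpha(q-2)-2}$, and then the integrability Lemma \ref{lem.integrabA8} together with the inclusion $\Omega_r(x)\subseteq B_X(x,\theta r^{1/(q-2)})$, which together give the uniform vanishing in (A.8) for any $\alpha > 2/(q-2)$.
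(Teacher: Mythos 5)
Your proposal matches the paper's treatment: the paper likewise proves Theorem \ref{th.potato} by verifying axiom (A.8) — via the pointwise bound $K_0^\alpha(y)\leq C\,d_X(0,y)^{\alpha(q-2)-2}$ obtained from Theorems \ref{thm.estimatesDERIVgamma} and \ref{th.stimebasso}, the inclusion $\Omega_r(x)\subseteq B_X(x,\theta\,r^{1/(q-2)})$, and Lemma \ref{lem.integrabA8} — and then invoking the axiomatic rigidity theorem of \cite{AbbondanzaBonfiglioli}. Your additional sketch of the converse (the potential comparison $W=U-V$ and the Aharonov--Schiffer--Zalcman-type argument) is a faithful outline of what the cited reference does, which the paper simply delegates to that reference.
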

 As a last application, in order to have the following Harnack Theorem \ref{theo.harnackWW} we need
 to check the validity of the next axiom in our framework:\medskip

 \textbf{(A.9)}\,\,The global doubling inequality \eqref{globaldoubling} holds, together with the following result:
\begin{theorem}[Global Poincaré inequality]
 Let $X=\{X_1,\ldots,X_m\}$ satisfy axioms \emph{(H.1)} and \emph{(H.2)}.
 There exists a constant $C_P>0$ such that, for
  every $x\in \R^n$, $r>0$ and every $u$ which is $C^1$ in a neighborhood of $B_{X}(x,2r)$, one has
\begin{equation}\label{GDGlobalPoincareXX}
 \meanint_{B_X(x,r)}\Big|u(y)- u_{B_X(x,r)}\Big|\,\d y \leq C_P\,r\, \meanint_{B_{X}(x,2r)} \Big|
 X u(y)\Big|\,\d y.
\end{equation}
 As usual we have set $|Xu|:=\sqrt{\sum_{j=1}^m|X_j u|^2}$, and \emph{(}if $B$ is any $d_X$-ball\emph{)}
\begin{align*}
 &u_{B}:=\meanint_{B}u:=\frac{1}{|B|}\int_{B} u(y)\,\d y.
\end{align*}
\end{theorem}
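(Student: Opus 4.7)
My plan is to derive the stated global Poincaré inequality from its local counterpart by a dilation/scaling argument, in the very same spirit as the proofs of Theorems B and C in Section \ref{sec:localtoglobal}.

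\textbf{Step 1 (local input).} The starting point is the classical Poincaré inequality for Hörmander vector fields proved by Jerison (\emph{The Poincaré inequality for vector fields satisfying Hörmander's condition}, Duke Math.\ J., 1986): under (H.2) alone, there exist a neighborhood $U_0$ of $0\in\R^n$, a radius $r_0>0$ and a constant $C_P>0$ such that \eqref{GDGlobalPoincareXX} holds for every $x\in U_0$, every $r\in(0,r_0)$, and every $u\in C^1$ on a neighborhood of $B_X(x,2r)$. (If the version one has at hand produces $B_X(x,\sigma r)$ on the right with some $\sigma>2$, then the form with $\sigma=2$ can be recovered by a standard Whitney-type covering of $B_X(x,\sigma r)$ by balls of radius $2r$, using the doubling property \eqref{globaldoubling}; this only affects the numerical value of $C_P$.)

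\textbf{Step 2 (scaling).} Fix an arbitrary $(x_\ast,r_\ast)\in\R^n\times(0,\infty)$ and any $u\in C^1$ near $B_X(x_\ast,2r_\ast)$. Choose $\lambda>0$ so small that $\dela(x_\ast)\in U_0$ and $\lambda r_\ast<r_0$ (possible since $\dela(x_\ast)\to 0$ as $\lambda\to 0^+$). Set $v:=u\circ \dela^{-1}$, which is $C^1$ on a neighborhood of $\dela(B_X(x_\ast,2r_\ast))=B_X(\dela(x_\ast),2\lambda r_\ast)$ by \eqref{propertiesdCC}. A direct application of (H.1) gives the pointwise identity
\begin{equation*}
 X_i v(z)=\lambda^{-1}(X_iu)(\dela^{-1}(z)), \qquad i=1,\ldots,m,
\end{equation*}
hence $|Xv(z)|=\lambda^{-1}|Xu|(\dela^{-1}(z))$.

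\textbf{Step 3 (change of variable).} Writing Step 1's local Poincaré inequality for $v$ at $(\dela(x_\ast),\lambda r_\ast)$ and performing the change of variables $z=\dela(y)$ in both mean integrals, I would compute:
\begin{itemize}
\item on the left, using $|B_X(\dela(x_\ast),\lambda r_\ast)|=\lambda^q|B_X(x_\ast,r_\ast)|$ and $dz=\lambda^q\,dy$, the mean of $v$ over $B_X(\dela(x_\ast),\lambda r_\ast)$ equals $u_{B_X(x_\ast,r_\ast)}$, and the $\lambda^q$ factors cancel, yielding
$\meanint_{B_X(x_\ast,r_\ast)}|u-u_{B_X(x_\ast,r_\ast)}|\,dy$;
\item on the right, combining $|Xv(z)|=\lambda^{-1}|Xu|(\dela^{-1}(z))$ with the same change of variable and the factor $\lambda r_\ast$ in front, one obtains $C_P\,r_\ast\,\meanint_{B_X(x_\ast,2r_\ast)}|Xu|\,dy$, again with cancellation of $\lambda^q$ versus $\lambda^{q-1}$ and absorption of one $\lambda$ in the radius.
\end{itemize}
This gives \eqref{GDGlobalPoincareXX} at the arbitrary point $(x_\ast,r_\ast)$ with the same structural constant $C_P$.

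I do not foresee a genuine obstacle: the argument is essentially the functional analogue of Remark \ref{rem.usehomog}, with the twist that one rescales also the test function $u$ (rather than an explicit expression in $(x,r)$). The only point requiring mild attention is matching the radius $2r$ on the right-hand side with whatever radius appears in the particular local Poincaré result being invoked; this is however routinely fixed via the global doubling inequality \eqref{globaldoubling}, which has already been proved in Section \ref{sec:localtoglobal}.
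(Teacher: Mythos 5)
Your proposal is correct and follows essentially the same route as the paper: start from a local Poincar\'e inequality near the origin, rescale the test function via $v=u\circ\delta_{1/\lambda}$ using the $\delta_\lambda$-homogeneity of the $X_j$'s (which gives $|Xv|=\lambda^{-1}|Xu|\circ\delta_{1/\lambda}$), and transport the inequality to arbitrary $(x,r)$ by the change of variables $y=\delta_{1/\lambda}(z)$ together with \eqref{propertiesdCC}. The only cosmetic differences are the source of the local inequality (the paper invokes Haj\l{}asz--Koskela rather than Jerison) and the fact that the paper scales the non-averaged inequality and then divides by $|B_X(x,r)|$, invoking the global doubling inequality \eqref{globaldoubling} at the very end, whereas you work with mean integrals throughout.
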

 We have already proved, via a homogeneity argument, that the global doubling inequality \eqref{globaldoubling} holds
 in our case. The validity of \eqref{GDGlobalPoincareXX} follows likewise: indeed, from
 the results in \cite{HajlaszKoskela} one knows of the existence of a neighborhood $U_0$ of $0$, a constant $P_0>0$
 and $r_0>0$ such that
\begin{equation}\label{GDGlobalPoincareXXBIS}
 \int_{B_X(\xi,\rho)}\Big|v(y)- v_{B_X(\xi,\rho)}\Big|\,\d y \leq P_0\,\rho\, \int_{B_X(\xi,2\rho)} \Big|
 X v(y)\Big|\,\d y,
\end{equation}
 for every $\xi\in U_0$, $\rho\in [0,r_0]$ and $v\in C^1(\overline{B_{2\rho}(\xi)})$.
 Let $x\in\R^n$, $r>0$ and $u\in C^1(\overline{B_{X}(x,2r)})$; there certainly exists $0<\lambda\ll 1$ such that
 $\dela(x)\in U_0$ and $\lambda r<r_0$.
 It is easy to check (via \eqref{propertiesdCC}) that
% We perform the change of variable
% $y=\delta_{1/\lambda}(z)$ and we get (see also \eqref{propertiesdCC})
% $$\int_{B_X(x,r)}u(y)\,\d y=\lambda^{-q}\int_{\dela(B_X(x,r))} u(\delta_{1/\lambda}(z))\,\d z=
% \lambda^{-q} \int_{B_X(\dela(x),\lambda r)} v(z)\,\d z,$$
% where $v=u\circ \delta_{1/\lambda}$.  As a consequence we get (again by \eqref{propertiesdCC})
% $$\meanint_{B_X(x,r)}u(y)\,\d y= \frac{\lambda^{-q}}{|B_X(x,r)|} \int_{B_X(\dela(x),\lambda r)} v
% =\frac{1}{|B_X(\dela(x),\lambda\,r)|} \int_{B_X(\dela(x),\lambda r)} v=
% \meanint_{B_X(\dela(x),\lambda r)} v.$$
% This shows that
\begin{equation*}%\label{omog.campiexample6}
 \meanint_{B_X(x,r)}u=\meanint_{B_X(\dela(x),\lambda r)} v,\qquad \text{where $v=u\circ \delta_{1/\lambda}$.}
\end{equation*}
 The change of variable $y=\delta_{1/\lambda}(z)$ (applied twice) proves the following computation:
\begin{align*}
    &\int_{B_X(x,r)}\Big|u(y)- u_{B_X(x,r)}\Big|\,\d y=
  \lambda^{-q}\int_{B_X(\dela(x),\lambda r)}\Big|v-v_{B_X(\dela(x),\lambda r)}\Big| \,\d z\\
 &\stackrel{\eqref{GDGlobalPoincareXXBIS}}{\leq}
\lambda^{-q}\,P_0\,\lambda\,r\,\int_{{B_X(\dela(x),2\lambda r)}}|Xv|\,\d z
 = \lambda^{-q}\,P_0\,\lambda\,r\,\int_{{B_X(\dela(x),2\lambda r)}}\frac{1}{\lambda}\,|Xu|(\delta_{1/\lambda}(z))\,\d z\\
 &= P_0\,r\,\int_{{B_X(x,2r)}}\,|Xu|(y)\,\d y.
\end{align*}
 Summing up, we have proved that (for general $x,r,u$ as above)
\begin{equation*}
 \int_{B_X(x,r)}\Big|u(y)- u_{B_X(x,r)}\Big|\,\d y \leq P_0\,r\, \int_{B_X(x,2r)}
 \Big|X u\Big|\,\d y.
\end{equation*}
 If we divide both sides by $|B_X(x,r)|$ and if we apply the global doubling inequality \eqref{globaldoubling}, we get at once
 \eqref{GDGlobalPoincareXX} of axiom (A.9), with $C_P=P_0\,C_d$, where $C_d$
 is as in \eqref{globaldoubling}.\medskip

 Thanks to the axiomatic theory carried out in \cite{BattBonf}, due to axioms (A.1) and (A.9) only (plus our hypotheses (H.1) and (H.2)),
 we can deduce the following
 non-homogeneous and invariant Harnack inequality for $\LL$. For simplicity, we state it for
 classical solutions, but it also holds for $W^1_\loc$-weak solutions
 (in the sense of the Sobolev spaces associated with $X_1,\ldots,X_m$); see \cite{BattBonf}.
\begin{theorem}[Global scale-invariant Harnack inequality]\label{theo.harnackWW}
 Let $g\in L^p(\Omega)$, with $p>\frac{q}{2}$.
 There exists a structural constant $C_p>0$
 such that, for every ball $B_X(x,R)$ satisfying $\overline{B_{X}(x,4R)}\subset \Omega$, one has
\begin{equation}\label{theo.harnackWW.EQ1}
    \sup_{B_X(x,R)}u\leq C\,\left\{\inf_{B_X(x,R)}u+R^2
  \bigg(\meanint_{B_X(x,4R)} |g|^p\bigg)^{1/p}
 \right\},
\end{equation}
 for any nonnegative solution $u$ of $\LL u=g$ in $\Omega$.

 The invariance of the above Harnack inequality
 proves at once the classical Liouville theorem for $\LL$: if $\LL v=0$ on $\R^n$ and if $\inf_{\R^n} v>-\infty$, then
 $v$ is constant.
\end{theorem}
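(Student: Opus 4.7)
The plan is to read off the Harnack inequality \eqref{theo.harnackWW.EQ1} directly from the axiomatic framework of \cite{BattBonf}. That paper establishes a scale-invariant, non-homogeneous Harnack inequality of exactly the form \eqref{theo.harnackWW.EQ1} for any second-order divergence-form operator $\mathrm{div}(A(x)\nabla\,\cdot)$ with symmetric, nonnegative $A$, provided the induced CC-metric enjoys the global doubling inequality and a global $L^1$-Poincar\'e inequality on the associated balls (these two ingredients driving, respectively, a Moser iteration and a weak Harnack estimate). Our $\mathcal{L}$ fits this template by axiom (A.1), while the two geometric ingredients are precisely what was verified globally in axiom (A.9)---each of them obtained from the known local counterpart (Nagel--Stein--Wainger for doubling, Haj\l asz--Koskela for Poincar\'e) through the localization-by-homogeneity device of Remark \ref{rem.usehomog}. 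No further property of $\mathcal{L}$ or of $\Gamma$ has to be invoked, so \eqref{theo.harnackWW.EQ1} follows verbatim from \cite{BattBonf}.

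For the Liouville assertion I would argue as follows. Let $v\in C^2(\R^n)$ satisfy $\mathcal{L}v = 0$ on $\R^n$ with $m := \inf_{\R^n} v > -\infty$, and set $u := v-m$; since $\mathcal{L}$ annihilates constants (by the divergence-form structure recorded in (A.1)) the function $u$ is nonnegative and $\mathcal{L}$-harmonic on the whole of $\R^n$. Fix an arbitrary $x_0\in\R^n$. Because $\Omega = \R^n$, the condition $\overline{B_X(x_0,4R)}\subset\Omega$ is automatic for every $R>0$, so applying \eqref{theo.harnackWW.EQ1} with $g\equiv 0$ yields
\[
 0 \;\leq\; u(x_0) \;\leq\; \sup_{B_X(x_0,R)} u \;\leq\; C\,\inf_{B_X(x_0,R)} u,
\]
with a \emph{structural} constant $C$ independent of $R$. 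Letting $R\to\infty$, the balls $B_X(x_0,R)$ exhaust $\R^n$, hence $\inf_{B_X(x_0,R)} u\downarrow \inf_{\R^n} u = 0$. Therefore $u(x_0)=0$; since $x_0\in\R^n$ is arbitrary, $u\equiv 0$ and $v\equiv m$. The crucial feature here is the scale-invariance of $C$, which turns the Harnack inequality applied on arbitrarily large balls into the Liouville theorem.

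The only real obstacle is a bookkeeping one: to make sure that the theorem of \cite{BattBonf} truly requires nothing beyond (A.1) and (A.9), and in particular does not tacitly assume left-invariance or a group structure. This is confirmed by inspecting the proof there, which is a standard Moser iteration in a doubling/Poincar\'e metric measure space, with the divergence-form structure supplying the Caccioppoli inequality and the $L^p$-estimate on $g$ entering via a Sobolev embedding whose constant depends on the doubling dimension and on $p>q/2$. The global nature of the doubling and Poincar\'e inequalities (which would normally be available only for small radii and centers in a compact set) is the only genuinely new input, and that is precisely what the homogeneity argument of Remark \ref{rem.usehomog} delivers.
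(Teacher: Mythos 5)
Your proposal matches the paper's own treatment: the inequality \eqref{theo.harnackWW.EQ1} is obtained exactly by verifying axioms (A.1) and (A.9) (the latter via the local-to-global homogeneity device of Remark \ref{rem.usehomog} applied to the Nagel--Stein--Wainger doubling and the Haj\l asz--Koskela Poincar\'e inequalities) and then invoking the axiomatic theory of \cite{BattBonf}. Your Liouville argument (subtract the infimum, apply the scale-invariant Harnack estimate on $B_X(x_0,R)$ and let $R\to\infty$) is the standard one the paper leaves implicit, and it is correct.
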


%%%%%%%%%%%%%%%%%%%%%%%%%%%%%%%%%%%%%%%%%%%%%%%%%%%%%%%%%%%%%
\section{Applications to Singular Integrals} \label{sec:meanvalue}
Starting with the representation formula
$$
 -\phi(  x)  =\int_{\mathbb{R}^{n}}\Gamma(  x;y)\,\mathcal{L}\phi(  y)\,  \d y,
$$
 for every $\phi\in C_{0}^{\infty}(  \mathbb{R}^{n})  $ and
 $x\in\mathbb{R}^{n}$ (see Theorem A), it is reasonable to expect that one can
 prove a representation formula for second order derivatives of $\phi$,
 involving a singular integral, of the kind:
\begin{equation}\label{repr formula}
 X_{i}X_{j}\phi(  x)  =\operatorname{PV}\int_{\mathbb{R}^{n}}
 X_{i}^{x}X_{j}^{x}\Gamma(  x;y)\,  (  -\mathcal{L}\phi)(  y)\, \d y +c_{i,j}(  x) \, \mathcal{L}\phi(  x),
\end{equation}
 valid for any $i,j=1,2,\ldots,m$, for suitable bounded functions $c_{i,j}$.
 This means that it
 is worthwhile studying the properties of the singular kernel
\begin{equation*}%\label{sing ker}
 k (  x,y)  :=X_{i}^{x}X_{j}^{x}\Gamma(  x;y)\qquad \text{(for fixed $i,j\in\{1,\ldots,m\}$)}.
\end{equation*}
 Actually, in view of the global doubling condition established in
 \eqref{globaldoubling}, the space $\mathbb{R}^{n}$, endowed with the
 CC-distance of the vector fields $X_{1},\ldots,X_{m}$ and the Lebesgue measure,
 is (globally) a space of homogeneous type in the sense of Coifman-Weiss \cite{CW}.
  Hence, once we have established a suitable set of properties of
 this kernel, the continuity on $L^{p}(  \mathbb{R}^{n})  $ ($1<p<\infty$) of the operator
\begin{equation*}%\label{sing int}
 T:f\mapsto\operatorname{PV}
  \int_{\mathbb{R}^{n}}X_{i}X_{j}\Gamma(\,\cdot\,;y)\,  f(  y)\,   \d y
\end{equation*}
 should follow, hopefully, just through the application of some existing general abstract theory. This fact
 would provide global estimates in $W_{X}^{2,p}$ spaces for the operator
 $\mathcal{L}$.

 In this section we shall prove some properties of the kernel $k(x,y)  $
  which are relevant to this aim. We shall not prove, here, a
 representation formula (\ref{repr formula}), nor shall we develop the
 subsequent theory which would give the alluded Sobolev estimates. Actually,
 this material would overburden the present paper, and will be the subject of a
 separate paper. We just point out that this technique can also be used to
 prove global Sobolev estimates for ``nonvariational operators'' of the kind
$$
 Lu=\sum_{i,j=1}^{m}a_{i,j}(  x)\,  X_{i}X_{j}u,
$$
 where $(a_{i,j})_{i,j}$ is a symmetric, uniformly positive matrix of
 bounded coefficient functions, possessing some minimal regularity.\medskip

 For notational simplicity, in this section we write $d$ and $B$ instead of $d_X$ and $B_X$.
 The theory of singular integrals usually requires to check that both a kernel
 $k(  x,y)  $ and its adjoint $k(  y,x)  $ satisfy some
 pointwise and/or integral properties. The result we prove is the following:

\begin{theorem}[Properties of the singular kernel $k(  x,y)  $]\label{Thm prop sing kern}
 Let the assumptions and notation of the previous
 sections be in force and, for some fixed $i,j\in\{  1,2,\ldots,m\}  $, let
$$
 k(  x,y)  :=X_{i}^{x}X_{j}^{x}\Gamma(x;y)\qquad\text{for $x,y\in\mathbb{R}^{n}$, $x\neq y$.}
$$
 There exist constants $A,B,C>0$ such that:
\begin{enumerate}
  \item[\emph{(i)}]
   for every $x,y\in\mathbb{R}^{n}$ \emph{(}$x\neq y$\emph{)} one has
  $$\displaystyle | k(  x,y)  | +| k(  y,x)|
  \leq\frac{A}{\big| B(  x,d(  x,y)  )\big| };$$

  \item[\emph{(ii)}]
  for every $x,x_{0},y\in\mathbb{R}^{n}$ such that $d( x_{0},y) \geq 2\,d(  x_{0},x) >0$, it holds
  $$\displaystyle
 | k(  x,y)  -k(  x_{0},y)  |+| k(  y,x)  -k(  y,x_{0})  |
  \leq B\,\frac{d(  x_{0},x)  }{d(  x_{0},y)  }\cdot
  \frac{1}{\big| B(  x_{0},d(  x_{0},y)  )  \big| };$$

  \item[\emph{(iii)}]
  for every $z\in\mathbb{R}^{n}$ and $0<r<R<\infty$, one has
 $$ \bigg| \int_{\{r<d( z,y)  <R\}} k(  z,y)  \,\d y \bigg| +\bigg| \int_{\{r<d(  z,x)  <R\}} k(  x,z) \,  \d x \bigg| \leq C.$$
\end{enumerate}
\end{theorem}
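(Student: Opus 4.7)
The plan is to prove (i)--(iii) in turn, combining the pointwise estimates of Theorem~\ref{th.teoremone}-(II) with $r = 1, 2, 3$, the integral representations of Lemma~\ref{Thm 0}, and the divergence-free property of $X_1,\ldots,X_m$ that is a consequence of (H.1).

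Claim (i) is immediate from Theorem~\ref{th.teoremone}-(II) applied with $r = 2$: taking $(Z_1, Z_2) = (X_i^x, X_j^x)$ yields $|k(x,y)| \leq C/|B_X(x, d_X(x,y))|$, while by the symmetry $\Gamma(x;y) = \Gamma(y;x)$ (Theorem~A-(3)) one has $k(y,x) = X_i^y X_j^y\,\Gamma(y;x) = X_i^y X_j^y\,\Gamma(x;y)$, so taking $(Z_1, Z_2) = (X_i^y, X_j^y)$ gives $|k(y,x)| \leq C/|B_X(y, d_X(x,y))|$; the latter is equivalent to $C/|B_X(x, d_X(x,y))|$ by the doubling property~\eqref{globaldoubling}, as noted in Remark~\ref{rem.simmetriafrac}.

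Claim (ii) will be established via a sub-Riemannian Lagrange mean value theorem (Theorem~\ref{Ch1-Thm Lagrange}, proved later in this section) applied to $k(\cdot, y)$. Under the hypothesis $d_X(x_0, y) \geq 2\,d_X(x_0, x)$, any nearly-minimizing sub-unit curve $\gamma:[0, d_X(x_0, x)] \to \R^n$ joining $x_0$ to $x$ satisfies $d_X(\gamma(s), y) \in [d_X(x_0,y)/2,\, 3\,d_X(x_0,y)/2]$ by the triangle inequality, and by~\eqref{globaldoubling} the balls $B_X(\gamma(s), d_X(\gamma(s), y))$ are comparable to $B_X(x_0, d_X(x_0, y))$. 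Writing $k(x,y) - k(x_0,y) = \int_0^{d_X(x_0,x)} \sum_\ell a_\ell(s)\,(X_\ell^x k)(\gamma(s), y)\,\d s$ and bounding the integrand via Theorem~\ref{th.teoremone}-(II) with $r = 3$ gives $|X_\ell^x X_i^x X_j^x\,\Gamma(\gamma(s); y)| \leq C/(d_X(\gamma(s), y)\,|B_X(\gamma(s), d_X(\gamma(s), y))|)$, hence the desired estimate after integration along $\gamma$. The analogous bound for $|k(y,x) - k(y, x_0)|$ follows by applying the same argument to $h(x) := X_i^y X_j^y\,\Gamma(x;y)$, using the symmetry of $\Gamma$.

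Claim (iii), the main difficulty, requires separate treatment of the two integrals. For $\int_{\{r < d_X(z, x) < R\}} k(x, z)\,\d x$, where the derivatives $X_i^x X_j^x$ act on the integration variable, I will use the divergence-free character of $X_i$ (a consequence of (H.1); cf.\ axiom (A.1) in Section~\ref{sec:potentialtheory}) and Euclidean integration by parts: after regularizing the characteristic function of the annulus and passing to the limit via the $L^1_{\mathrm{loc}}$-bound from (i), the integral reduces to boundary-type contributions on the level sets $\{d_X(z, \cdot) = \rho\}$ with $\rho \in \{r, R\}$, which are uniformly bounded using Theorem~\ref{th.teoremone}-(II) with $r = 1$ (giving $|X_j^x\Gamma(x;z)| \leq C\rho/|B_X(z, \rho)|$ on these level sets) together with the coarea-type estimate $\tfrac{\d}{\d\rho}|B_X(z, \rho)| \approx |B_X(z, \rho)|/\rho$ derived from Theorem~B. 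For $\int_{\{r < d_X(z, y) < R\}} k(z, y)\,\d y$, where derivatives do not act on the integration variable, I will invoke the representation formula~\eqref{derixxxx0} from Lemma~\ref{Thm 0} to express $k(z,y) = \int_{\R^p}\widetilde{X}_i\widetilde{X}_j\,\Gamma_\G((y, 0)^{-1} * (z, \eta))\,\d\eta$, and reduce the claim to the classical cancellation of $\widetilde{X}_i\widetilde{X}_j\,\Gamma_\G$ over $d_{\widetilde{X}}$-spherical shells on the Carnot group $\G$ (a consequence of $D_\lambda$-homogeneity and Folland's construction~\cite{Fo2}). The principal obstacle lies in matching the unlifted product domain $\{r < d_X(z, y) < R\} \times \R^p_\eta$ with a genuine $d_{\widetilde{X}}$-spherical shell via a change of variable in the spirit of Lemma~\ref{lem.tecnico}: the measure discrepancy on each lifted shell will be controlled through Theorem~C, and the size estimate from Theorem~\ref{th.teoremone}-(II) together with absolute integrability (Lemma~\ref{Thm 0}-(I)) will yield a uniformly bounded remainder in $r, R, z$.
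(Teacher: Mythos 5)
Your proofs of (i) and (ii) coincide with the paper's: (i) is Theorem \ref{th.teoremone}-(II) with $r=2$ plus the doubling property, and (ii) is the sub-unit Lagrange theorem (Theorem \ref{Ch1-Thm Lagrange}) applied to $f_1(x)=X_i^xX_j^x\Gamma(x;y)$ and $f_2(x)=X_i^yX_j^y\Gamma(y;x)$, fed with the third-order pure and \emph{mixed} derivative bounds. Your treatment of $\int_{\{r<d_X(z,y)<R\}}k(z,y)\,\d y$ in (iii) is also the paper's route: representation formula, change of variable $\eta=\Phi_{y,z}(\zeta)$ from Lemma \ref{lem.tecnico}, cancellation of $(\widetilde X_i\widetilde X_j\Gamma_\G)\circ\iota$ on lifted spherical shells, and size estimates on the mismatch between the product domain and the lifted shell. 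One correction there: the mismatch is not controlled "through Theorem C" but through the bound $|\widetilde X_i\widetilde X_j\Gamma_\G|\lesssim d_{\widetilde X}^{-Q}\le R^{-1}d_{\widetilde X}^{1-Q}$ on $\{d_{\widetilde X}>R\}$, followed by Corollary \ref{corollatastimaamam} (or \ref{corollatastimaamambis44444}) to integrate out $\eta$ and Lemma \ref{lem.integrabA8} with $p=1$ to integrate the resulting $d_X(x,z)/|B_X(x,d_X(x,z))|$ over $\{d_X<R\}$.

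The genuine gap is in your divergence-theorem treatment of $\int_{\{r<d_X(z,x)<R\}}k(x,z)\,\d x$. The pivotal step is the claim that $\tfrac{\d}{\d\rho}|B_X(z,\rho)|\approx|B_X(z,\rho)|/\rho$ is "derived from Theorem B". It is not: Theorem B provides two-sided bounds on the \emph{volume} $|B_X(z,\rho)|$, and comparability of two monotone functions implies nothing about comparability of their derivatives. Concretely, your regularization argument needs $\tfrac1\epsilon\big(|B_X(z,\rho+\epsilon)|-|B_X(z,\rho-\epsilon)|\big)\lesssim|B_X(z,\rho)|/\rho$ uniformly as $\epsilon\to0$ at the two prescribed radii $\rho\in\{r,R\}$, and Theorem B only yields $|B_X(z,\rho+\epsilon)|-|B_X(z,\rho-\epsilon)|\le\gamma_2\Lambda(z,\rho+\epsilon)-\gamma_1\Lambda(z,\rho-\epsilon)$, which does not tend to $0$ with $\epsilon$. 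The further identification of the limit with a surface integral over the CC-sphere, and the bound on that surface measure (the horizontal perimeter of a CC-ball), are additional nontrivial facts nowhere available in the paper. The paper sidesteps all of this by treating this integral exactly like the other one: by \eqref{derixxxx0} and Fubini, the product domain $\{r<d_X(x,z)<R\}\times\R^p$ is decomposed into the genuine lifted shell $\{r<d_{\widetilde X}((z,0),(x,\eta))<R\}$ — on which the integral vanishes \emph{identically} by Folland's cancellation property on $\G$ — plus two correction regions $\{d_{\widetilde X}>R,\ d_X(x,z)<R\}$ and $\{d_{\widetilde X}>r,\ d_X(x,z)<r\}$, each bounded by a structural constant as above. (No change of variable is needed for this integral, since the representation already has the convolution form $(z,0)^{-1}*(x,\eta)$ in the integration variables.) Your integration-by-parts idea could in principle be repaired by cutting off at scale comparable to $r$ and $R$ rather than infinitesimally — then only $|X\psi|\lesssim1/\rho$ on an annulus of measure $\lesssim|B_X(z,\rho)|$ is needed, via the a.e. eikonal bound $|Xd_X(z,\cdot)|\le1$, with the transition annuli absorbed using (i) and \eqref{globaldoubling} — but as written the coarea step fails.
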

 Inequalities (i)-(ii) are usually called the \emph{standard estimates} of
 singular kernels, while (iii) is a kind of \emph{cancelation property}, and
 is crucial in order to give sense to the principal value integral
 \eqref{repr formula}. These three properties are one of the possible sets of
 reasonable assumptions to prove that the singular integral operator with
 kernel $k$ is continuous on $L^{p}(  \mathbb{R}^{n})  $ for every
 $p\in(  1,\infty)  $.

 We explicitly note that the above theorem still holds in the case $n=2$ (since
 we are dealing with second order derivatives of $\Gamma$).\medskip

 For the proof, we need to use the following:
\begin{theorem}[``Lagrange Theorem''; \protect{\cite[Theorem  1.55]{BBbook}}]\label{Ch1-Thm Lagrange}
 Let $X_{1},\ldots,X_{m}$ be any system of H\"{o}r\-man\-der vector fields in a domain $\Omega\subseteq\mathbb{R}^{n}$,
 and let $f\in C^{1}(  \overline{B(  x_{0},r)  })  $, with $B(  x_{0},r)  \Subset\Omega$. Then
\begin{equation}\label{Ch1-Lagrange}
 | f(  x)  -f(  x_{0})  | \leq \sqrt{m}\,d(  x,x_{0}) \,
  \sup_{B(  x_{0},r)}|Xf|,\qquad  \text{for every $x\in B(  x_{0},r)$,}
\end{equation}
 where $| Xf| =\sqrt{\sum\limits_{i=1}^{m}|X_{i}f|^{2}}$.
\end{theorem}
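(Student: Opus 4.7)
The statement to prove is the sub-Riemannian ``Lagrange'' (mean-value) inequality
$|f(x)-f(x_0)|\leq \sqrt{m}\,d(x,x_0)\sup_{B(x_0,r)}|Xf|$ for $f\in C^1(\overline{B(x_0,r)})$,
and the natural strategy is to unwind the very definition of $d=d_X$ in \eqref{eq.defdCC}
along a near-optimal subunit path. Concretely, fix $x\in B(x_0,r)$, so that $\rho:=d(x,x_0)<r$, and
choose $\e>0$ small enough that $\rho+\e<r$. By the definition of the CC-distance there exists
an absolutely continuous curve $\gamma:[0,1]\to\R^n$ with $\gamma(0)=x_0$, $\gamma(1)=x$ and
$\gamma'(t)=\sum_{j=1}^m a_j(t)\,X_j(\gamma(t))$ a.e., with $|a_j(t)|\leq \rho+\e$ for all $j$.

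The first key step will be to check that $\gamma$ stays inside the open ball $B(x_0,r)$.
For any $s\in[0,1]$ the reparametrized curve $\gamma_s(t):=\gamma(st)$ satisfies
$\gamma_s'(t)=s\sum_j a_j(st)\,X_j(\gamma_s(t))$, so its controls are bounded by $s(\rho+\e)\leq \rho+\e$;
therefore $d(x_0,\gamma(s))\leq \rho+\e<r$, which is exactly the inclusion
$\gamma([0,1])\subset B(x_0,\rho+\e)\subset B(x_0,r)$. This is the small but important point:
without it we could not control $|Xf|\circ\gamma$ by $\sup_{B(x_0,r)}|Xf|$.

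The second step is the fundamental theorem of calculus for the composition $f\circ\gamma$.
Since $\gamma$ is absolutely continuous with values in $\overline{B(x_0,r)}$ and $f\in C^1$ on this
closed ball, $f\circ\gamma$ is absolutely continuous and
\begin{equation*}
f(x)-f(x_0)=\int_0^1 \frac{\d}{\d t}f(\gamma(t))\,\d t
=\int_0^1\sum_{j=1}^m a_j(t)\,(X_jf)(\gamma(t))\,\d t.
\end{equation*}
A pointwise Cauchy--Schwarz inequality in $\R^m$ gives
\begin{equation*}
\Bigl|\sum_{j=1}^m a_j(t)(X_jf)(\gamma(t))\Bigr|
\leq \Bigl(\sum_j a_j(t)^2\Bigr)^{1/2}|Xf|(\gamma(t))
\leq \sqrt{m}\,(\rho+\e)\,|Xf|(\gamma(t)),
\end{equation*}
using $|a_j(t)|\leq \rho+\e$. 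Since $\gamma(t)\in B(x_0,r)$ the last factor is bounded by
$\sup_{B(x_0,r)}|Xf|$, and integrating in $t\in[0,1]$ yields
$|f(x)-f(x_0)|\leq \sqrt{m}(\rho+\e)\sup_{B(x_0,r)}|Xf|$. Letting $\e\to 0^+$ closes the argument.

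The only genuinely delicate point, and the main obstacle, is the first step above: the standard
definition of $d_X$ only guarantees the existence of a subunit curve, not that a near-optimal one
stays in the geodesic ball of radius $r$. The rescaling trick $\gamma_s(t)=\gamma(st)$ is what
resolves this, because the class of subunit curves is stable under affine reparametrization and
the distance is monotone along such restrictions. Everything else (Cauchy--Schwarz and the
fundamental theorem of calculus for $f\circ\gamma$) is routine, provided one knows $f\in C^1$
on the closed ball and the $X_j$'s are smooth on a neighborhood, which is exactly the hypothesis.
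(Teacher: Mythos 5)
Your proof is correct and follows essentially the same route as the paper's: pick a subunit curve with controls bounded by $d(x,x_0)+\varepsilon$, differentiate $f\circ\gamma$, apply Cauchy--Schwarz, and let $\varepsilon\to 0^+$. The only difference is that you explicitly justify, via the reparametrization $\gamma_s(t)=\gamma(st)$, the containment $\gamma([0,1])\subset B(x_0,r)$, which the paper asserts directly ``by definition of the CC-distance''; this is a welcome extra detail, not a divergence in method.
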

 For the sake of completeness, we give the proof of this known result.
\begin{proof}
 Let $x\in B(  x_{0},r)$, $d(  x,x_{0})  =:\delta <r$.
  For every fixed $0<\varepsilon<r-\delta$, there exists a curve
 $\gamma(  t)  $ joining $x_{0}$ to $x$ such that
 $$
 \gamma^{\prime}(  t)  =\sum_{i=1}^{m}a_{i}(  t)\,X_{i}({\gamma(  t)}),
 $$
 with $| a_{i}(  t)  | \leq \delta+\varepsilon$ for any $i=1,\ldots,m$. Then
$$
 f(  x)  -f(  x_{0})  =\int_{0}^{1}\frac{\d}{\d t}(f( \gamma(  t)  )  )\, \d t=
 \int_{0}^{1}\sum_{i=1}^{m}a_{i}(  t)\,  X_{i}f  (  \gamma(t)  )\, \d t.
$$
 By definition of the CC-distance, all the points of the path $\gamma(t)  $
 are inside the ball $B(  x_{0},r)  $, hence
$$
 | f(  x)  -f(  x_{0})  | \leq 
 \sup_{t\in [0,1]}\sqrt{\sum_{i=1}^m|a_i(t)|^2}\cdot
 \sqrt{\sum_{i=1}^m|X_if(\gamma(t))|^2}\leq
 \sqrt{m}\,(  \delta+\varepsilon)\sup_{B_{X}(  x_{0},r)}| Xf|,
$$
 and, for the arbitrariness of $\varepsilon$, \eqref{Ch1-Lagrange} follows.
\end{proof}
 With this result at hand, let us pass to the
\begin{proof}[Proof of Theorem \ref{Thm prop sing kern}]
 By Theorem \ref{thm.estimatesDERIVgamma}, we have
 $$
 | k(  x,y)  | \leq\frac{c}{\big| B( x,d(  x,y)  )  \big| },
 $$
 for every $x,y\in\mathbb{R}^{n}$, $x\neq y$.
 Since, as already noted in Remark \ref{rem.simmetriafrac} using the global doubling property,
$$
 \big| B(  x,d(  x,y)  )  \big| \approx \big| B(  y,d(  x,y)  )  \big| ,
$$
 this implies (i).
 To prove (ii), we apply Theorem  \ref{Ch1-Thm Lagrange} to the following functions
  (here $y$ is fixed)
$$
 f_{1}(  x)   :=X_{i}^{x}X_{j}^{x}\Gamma(  x;y),\qquad f_{2}(  x)   :=X_{i}^{y}X_{j}^{y}\Gamma(y;x),
$$
 and we apply the upper bound on the third order derivatives of
 $\Gamma$ proved in Theorem  \ref{thm.estimatesDERIVgamma}.
 Next we take any $x,x_0\in\R^n$ such that  $d( x_{0},y) \geq 2\,d(  x_{0},x) >0$.
 Letting
$$
 r=\frac{3}{2}\,d(  x,x_{0}),
$$
 we have that $x\in B(  x_{0},r)  $ and, by \eqref{Ch1-Lagrange},
\begin{equation}\label{owingtosingkernel.EQ0}
 | k(  x,y)  -k(  x_{0},y)  |
 =| f_{1}(  x)  -f_{1}(  x_{0})  |
 \leq\sqrt{m}\,d(  x,x_{0})\,\sup_{z\in B(x_{0},r)  }| Xf_{1}(  z)  |,
\end{equation}
    where, owing to Theorem  \ref{thm.estimatesDERIVgamma}, for any $k\in\{1,\ldots,m\}$ and any $z\in B(  x_{0},r)$,
\begin{equation}\label{owingtosingkernel.EQ1}
     | X_{k}f_{1}(  z)  | =| X_{k}^{z}X_{i}^{z}X_{j}^{z}\Gamma(  z,y)  | \leq
 \frac{c}{d(z,y)\,\big| B(  z,d(  y,z)  )  \big| }.
\end{equation}
 Since $z\in B(  x_{0},r)$ with $r=\frac{3}{2}\,d(x,x_{0})$, and since $d(  x_{0},y)  \geq2\,d(  x_{0},x)=\frac{4}{3}r$,
  we infer that
$$
 \frac{1}{4}\,d(x_{0},y) \leq d(  z,y)  \leq \frac{7}{4}\,d(x_{0},y),
$$
hence, by the doubling condition and by \eqref{owingtosingkernel.EQ1}, we get
\begin{equation}\label{owingtosingkernel.EQ2}
 \sup_{z\in B(  x_{0},r)  }| Xf_1(  z) | \leq \frac{c'}{d(  x_{0},y) \, \big| B(x_{0},d(  x_{0},y)  )  \big| }.
\end{equation}
 If we insert \eqref{owingtosingkernel.EQ2} in \eqref{owingtosingkernel.EQ0}, we obtain
\begin{equation}\label{owingtosingkernel.EQ3}
 | k(  x,y)  -k(  x_{0},y)  |
  \leq \frac{c''\,d(  x,x_{0})}{d(  x_{0},y) \, \big| B(x_{0},d(  x_{0},y)  )  \big| }.
\end{equation}
Analogously, one can reproduce this argument for
 $$
 | k(  y,x)  -k(  y,x_{0})  |
 =| f_{2}(  x)  -f_{2}(  x_{0})  |, $$
 this time the upper bound on the mixed third order derivatives $X_{k}f_{2}(z)  =  X_{k}^{z}X_{i}^{y}X_{j}^{y}\Gamma(y;z)$ being needed.
 Thus we get
\begin{equation}\label{owingtosingkernel.EQ4}
 | k(  y,x)  -k(  y,x_{0})  |
  \leq \frac{c'''\,d(  x,x_{0})}{d(  x_{0},y) \, \big| B(x_{0},d(  x_{0},y)  )  \big| }.
\end{equation}
 Gathering together \eqref{owingtosingkernel.EQ3} and \eqref{owingtosingkernel.EQ4}, we obtain the proof of (ii).

 The proof of (iii) is inspired to \cite[Prop.\,5.23]{BBMP}.
 By the representation formula \eqref{derixxxx0}, we have
\begin{equation}\label{owingtosingkernel.EQ4.5}
 k(  x,y)  =X_{i}^{x}X_{j}^{x}\Gamma(x;y)=\int_{\mathbb{R}^{p}}
  (\widetilde{X}_{i}\widetilde{X}_{j}\Gamma_\G )\big((y,0)^{-1}*(x,\eta)\big)\,\d \eta.
\end{equation}
 Then, thanks to Fubini's Theorem,\footnote{We are entitled to interchange the order of integration: indeed,
 the summands $D^{R}$ with or without an absolute value in the integrand function
 can be estimated analogously; as for the summand $C^{r,R}$ (which is null without the absolute value in the integrand),
 if we insert an absolute value in its integrand function, we obtain an integral that can be upper-bounded by $c\,\log(R/r)<\infty$.} we have
\begin{align*}
 \int_{\{r<d(  x,z)  <R\}}k(  x,z)  \,\d x &  =
 \int_{\mathbb{R}^{p}}\int_{\{r<d(  x,z)  <R\}}
  (\widetilde{X}_{i}\widetilde{X}_{j}\Gamma_\G )\big((z,0)^{-1}*(x,\eta)\big)  \,\d x \, \d \eta\\
 &  =\int_{\big\{r<d_{\widetilde{X}}((z,0),(x,\eta)) <R\big\}}
  (\widetilde{X}_{i}\widetilde{X}_{j}\Gamma_\G)\big((z,0)^{-1}*(x,\eta)\big)  \,\d x \, \d \eta\\
 & +\int_{\big\{d_{\widetilde{X}}((z,0),(x,\eta))>R,\,\,d(x,z)<R\big\}}
 (\widetilde{X}_{i}\widetilde{X}_{j}\Gamma_\G)\big((z,0)^{-1}*(x,\eta)\big) \,\d x \,\d \eta\\
 & -\int_{\big\{d_{\widetilde{X}}((z,0),(x,\eta))>r,\,\,d(x,z)<r\big\}}
 (\widetilde{X}_{i}\widetilde{X}_{j}\Gamma_\G)\big((z,0)^{-1}*(x,\eta)\big) \,\d x \,\d \eta\\
 &=: C^{r,R}(  z)  +D^{R}(  z)  -D^{r}(z)  .
\end{align*}
 However, we know that the singular kernel $\widetilde{X}_{i}\widetilde{X}_{j}\Gamma_\G $
 on the Carnot group $\mathbb{G}$ satisfies the
 vanishing property (see \cite[Propositions 1.5, 1.8]{Fo2})
\begin{align*}
 &
 \int_{\big\{r<d_{\widetilde{X}}((z,0),(x,\eta))<R\big\}}
 (\widetilde{X}_{i}\widetilde{X}_{j}\Gamma_\G)\big((z,0)^{-1}*(x,\eta)\big)  \,\d x \,\d \eta\\
 &   =\int_{\big\{r<d_{\widetilde{X}}((0,0),(x,\eta))<R\big\}}
 (\widetilde{X}_{i}\widetilde{X}_{j}\Gamma_\G)(x,\eta)\,\d x\, \d \eta=0,\qquad \text{for any $R>r>0$},
\end{align*}
 so that $C^{r,R}(z)  \equiv0$.  Owing to Corollaries \ref{corollatastimaamam} and \ref{corollatastimaamambis44444}, we have
\begin{align*}
 | D^{R}(  z)  |  &  \leq  c\int_{\big\{d_{\widetilde{X}}((z,0),(x,\eta))>R,\,\,d(x,z)<R\big\}}
  d_{\widetilde{X}}((z,0),(x,\eta))^{-Q}\,\d x \,\d \eta\\
 &  \leq \frac{c}{R}
 \int_{\{d(x,z)\leq  R\}}
  \left(\int_{\mathbb{R}^{p}}
  d_{\widetilde{X}}((x,0),(z,\eta))^{-Q+1}\,\d \eta\right)\,\d x \\
 &  \leq \frac{c}{R}
 \int_{\{d(x,z)  \leq  R\}}
 \frac{d(x,z)}{\big| B(  x,d(  x,z)  ) \big|} \,\d z \leq \frac{c}{R}\, R=c,
\end{align*}
 where in the last inequality we have exploited Lemma \ref{lem.integrabA8} (with $p=1$).
 The estimate of $|D^{r}(z)|$ is the very same, replacing $R$ with $r$. Summing up, we have proved that
$$
\bigg| \int_{\{r<d(  x,z)  <R\}}k(  x,z)  \,\d x\bigg| \leq |D^{R}(z)|+|D^{r}(z)|\leq c,
$$
 with $c$ independent of $R,r,z$.

 To prove the analogous bound on the integral
 with respect to $y$, let us write
$$
 \int_{\{r<d(z,y)<R\}} k(z,y)\,\d y \stackrel{\eqref{owingtosingkernel.EQ4.5}}{=}
  \int_{\mathbb{R}^{p}}\int_{\{r<d(z,y)<R\}}
  (\widetilde{X}_{i}\widetilde{X}_{j}\Gamma_\G)\big((y,0)^{-1}* (z,\eta)\big)\,\d y \,\d \eta;
$$
 via the change of variable $\eta=\Phi_{y,z}(\zeta)$ as in Lemma \ref{lem.tecnico}, the latter integral is equal to
\begin{align*}
&  \int_{\mathbb{R}^{p}}\int_{\{r<d(  z,y)  <R\}}
 (\widetilde{X}_{i}\widetilde{X}_{j}\Gamma_\G)
  \big((y,u)^{-1} * (z,0)\big)\,\d y \,\d u\\
 &  =\int_{\mathbb{R}^{p}}\int_{\{r<d(z,y)  <R\}}
 \big((\widetilde{X}_{i}\widetilde{X}_{j}\Gamma_\G)\circ \iota\big) \big((z,0)^{-1}*(y,u)\big)\,\d y \,\d u.
\end{align*}
 Then we can proceed as above, exploiting the fact that the kernel
 $(\widetilde{X}_{i}\widetilde{X}_{j}\Gamma_\G) \circ\iota$
  also satisfies the vanishing property on spherical annuli in $\mathbb{G}$. So we are done.
\end{proof}
\begin{remark}\label{Remark final}
 Note that the proof of the above point (iii) also exploits
 the explicit representation that we have for $X_{i}X_{j}\Gamma$
 (in terms of the analogous function for the sublaplacian on the lifting Carnot group),
 and it does not simply follow from growth conditions on the derivatives of $\Gamma$.
 Also, note that the proof of point (ii) also depends on the estimate on the
 \emph{mixed} third order derivatives of $\Gamma$, which required some extra
 work to be proved, compared to pure derivatives.
\end{remark}
%%%%%%%%%%%%%%%%%%%%%%%%%%%%%%%%%%%%%%%%%%%%%%%%%%%%%%%%%%%%%%%%%%%
\section{Extension to H\"{o}rmander operators with drift} \label{sec.stimeDrift}
As we have already announced in Remark \ref{rem.drift}, most of the results in this
paper still hold for homogeneous H\"{o}rmander operators of the kind
$$
\mathcal{L}=\sum_{i=1}^{m}X_{i}^{2}+X_{0},
$$
i.e., possessing a `drift' term $X_{0}$. Let
us now restate our assumptions in the present context. \medskip

Throughout the sequel, we assume that
$X= \{X_{1},...,X_{m},X_{0}\}$ is a set
of smooth vector fields in $\R^n$ satisfying assumptions
(H.2)-(H.3), and we replace (H.1) with the following:
\begin{itemize}
\item[\textbf{(H.1)'}] 
There exists a family of dilations of the form \eqref{intro.dela} such
that $X_{1},...,X_{m}$ are $\delta_{\lambda}$-homogeneous of degree $1$, and
$X_{0}$ is $\delta_{\lambda}$-homogeneous of degree $2$. 
\end{itemize}
As in the previous sections, we set
$q := \txt\sum_{j = 1}^m\sigma_j$. By assumption (H.3), one has
$$q > 2.$$
\begin{example}
(1). In $\mathbb{R}^{2}$, the operator
$$
X_1^{2}+X_0 = \big(\de_{x_{1}}\big)^2 + x_1^{k}\,\de_{x_{2}}\qquad\text{(with
$k\in\mathbb{N}$)},
$$
is homogeneous of degree $2$ with respect to the dilations 
$\dela(x) = (\lambda x_{1},\lambda^{k+2}x_{2})$. \medskip

(2). In $\mathbb{R}^{n}$, the operator
$$
X_1^2+X_0 = \big(\de_{x_1}\big)^2
+ x_{1}\,\de_{x_2}+x_2\,\de_{x_3}+\ldots+x_{n-1}\,\de_{x_n},$$
is homogeneous of degree $2$ with respect to the dilations 
$$\delta_{\lambda}(x)=(\lambda x_{1},\lambda^{3}x_{2}, 
\lambda^{5}x_{3},\cdots,\lambda^{2n-1}x_{n}).$$
\end{example}
The lifting procedure described in Theorem A can be naturally
adapted to the present situation. More precisely,
by arguing essentially as in \cite{BB}, one can prove the following result.
\begin{theorem}
\label{ThmA_drift}
Assume that $X=\{X_0, X_{1},\ldots ,X_{m}\}$ satisfies assumptions
\emph{(H.1)'} and \emph{(H.2)}-\emph{(H.3)},
 of which we inherit the notation. 
 Then the following facts hold:\medskip
 
 \emph{(1)}\,\,There exists a \emph{graded but not stratified}
 homogeneous group 
 $\mathbb{G}=(\mathbb{R}^{N},\ast,D_{\lambda})$
\emph{(}in the sense of \cite{Fo2}\emph{)} of homogeneous dimension
$Q > q$, and there exists a system
$$\widetilde{X} = \{\widetilde{X}_{0},\widetilde{X}_{1},\ldots,\widetilde{X}_{m}\}$$ 
of
Lie-generators for $\mathrm{Lie}(\mathbb{G})$ such that 
\eqref{lifting} in Theorem A holds for every $j = 0,\ldots,m$.
In particular, $\widetilde{X}_1,\ldots,\widetilde{X}_m$ are $D_\lambda$-homogeneous of degree $1$,
while $\widetilde{X}_0$ is $D_\lambda$-homogeneous of degree $2$. \medskip

\emph{(2)}\,\,If $\widetilde{\Gamma}$ is the \emph{(}unique\emph{)} smooth fundamental
solution of $\sum_{i=1}^{m}\widetilde{X}_{i}^{2}+\widetilde{X}_{0}$ vanishing
at infinity con\-struc\-ted in \cite{Fo2}, then $\mathcal{L}$ admits a global
fundamental solution $\Gamma(x;y)$ under the form
$$\Gamma (x;y):=\int_{\mathbb{R}^{p}}\widetilde{\Gamma}
 \big((x,0);(y,\eta )\big)\, \d\eta \qquad (\text{for $x\neq y$
 in $\mathbb{R}^{n}$}).$$
 Furthermore,
if we set $\Gamma_\G := \widetilde{\Gamma}(0;\cdot)$, we have
$$\widetilde{\Gamma}(x,\xi;y,\eta) = \Gamma_\G\big((x,\xi)^{-1}*(y,\eta)\big),$$
and a formula analogous to \eqref{sec.one:mainThm_defGamma22222} holds. \medskip

\emph{(3)} $\Gamma$ is smooth out of the diagonal, it is nonnegative \emph{but not
strictly positive} \emph{(}as in Theorem A\emph{)}; it is locally integrable on
$\mathbb{R}^{n}\times\mathbb{R}^{n}$; it vanishes when $x$ or $y$ go to
infinity; it is jointly $\dela$-homogeneous of degree $2-q<0$, that is,
\eqref{sec.one:mainThm_defGamma3} holds. \medskip

\noindent Finally, $\Gamma$ is not symmetric but
$\Gamma^{\ast}(x;y)  =\Gamma(y;x)$
is the fundamental solution of
$$
\mathcal{L}^{\ast}=\sum_{i=1}^{m}X_{i}^{2}-X_{0},
$$
and enjoys analogous properties of $\Gamma$.
\end{theorem}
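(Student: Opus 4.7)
The plan is to follow, essentially verbatim, the two-step scheme of Theorems 3.2 and 4.4 of \cite{BB} that underpins Theorem A, modifying it only where the drift $X_0$ (of weight $2$) and the loss of symmetry of $\LL$ force a genuine adjustment.

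First I would establish part (1) by repeating the construction recalled in Remark \ref{rem.costruzioneG}. Since $(\mathrm{H.1})'$ gives each iterated bracket $X_{[I]}$ weight $\sum_k w(i_k)$ (with $w(0)=2$ and $w(i)=1$ for $1\leq i\leq m$), assumptions $(\mathrm{H.1})'$ and $(\mathrm{H.2})$ still force $\aa := \Lie\{X_0,X_1,\ldots,X_m\}$ to be nilpotent. One then forms the BCH group on $\aa$, picks a basis $\mathcal{E} = \{X_0,X_1,\ldots,X_m,X_{m+1},\ldots,X_N\}$ consisting of iterated brackets of $X_0,\ldots,X_m$ grouped by weight, and reads the BCH operation in $\mathcal{E}$-coordinates to produce $\bullet$ on $\R^N$; the argument of \cite{BB} showing surjectivity of the Folland map $\Pi$ in \eqref{eq.defiPiFolland} and the existence of indices $j_1,\ldots,j_p$ making $T$ in \eqref{eq.definizioneT} a diffeomorphism is insensitive to the presence of a weight-$2$ generator. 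Transporting $\bullet$ through $T$ yields the operation $*$ and the lifted vector fields $\widetilde{X}_0,\widetilde{X}_1,\ldots,\widetilde{X}_m$, which by construction satisfy \eqref{lifting} and carry the $D_\lambda$-homogeneities declared in the statement. The layers $V_k$ spanned by weight-$k$ basis elements give $\Lie(\G)=\bigoplus_k V_k$ a grading with $[V_i,V_j]\subseteq V_{i+j}$, but $V_2$ typically contains $\widetilde{X}_0$, which is not a bracket of weight-$1$ generators, so $V_2\neq [V_1,V_1]$ and $\G$ is graded but \emph{not stratified}.

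For part (2), I would invoke Folland's theorem \cite{Fo2}, which applies to any left-invariant, $D_\lambda$-homogeneous of degree $2$, hypoelliptic second-order operator on a graded group of homogeneous dimension $Q>2$: it produces a smooth fundamental solution $\widetilde{\Gamma}$ vanishing at infinity, $D_\lambda$-homogeneous of degree $2-Q$; left-invariance of the $\widetilde{X}_j$'s on $\G$ gives the convolution form $\widetilde{\Gamma}(z;z')=\Gamma_\G(z^{-1}\ast z')$. I would then \emph{define} $\Gamma$ by the displayed integral. Its convergence and its fundamental-solution identity are established by the same two arguments as in \cite[Thm.\,4.4]{BB}: convergence is a homogeneity argument analogous to step (I) of the proof of Lemma \ref{Thm 0} (the key exponent inequality $2-Q<q-Q$ holds precisely because $q>2$, which is why (H.3) is retained here); the identity $\int_{\R^n} \Gamma(x;y)\LL\varphi(y)\,\d y = -\varphi(x)$ follows by choosing $\widetilde{\varphi}(y,\eta):=\varphi(y)\chi(\eta)$ with $\chi\in C^\infty_0(\R^p)$ and $\int\chi=1$, applying Fubini to $\int \widetilde{\Gamma}((x,0);(y,\eta))\widetilde{\LL}\widetilde{\varphi}(y,\eta)\,\d y\,\d\eta=-\varphi(x)$ and exploiting that the pure $\eta$-derivative terms contributed by $\widetilde{\LL}-\LL$ integrate to zero in $\eta$.

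For part (3), each listed property is a direct transfer from $\widetilde{\Gamma}$ via the integral representation: smoothness off the diagonal from hypoellipticity of $\LL$ applied to $\LL\Gamma(x;\cdot)=0$ on $\R^n\setminus\{x\}$; joint $\dela$-homogeneity of degree $2-q$ from the $D_\lambda$-homogeneity of $\widetilde{\Gamma}$ of degree $2-Q$ combined with the change of variable $\eta=E_\lambda(\eta')$, whose Jacobian factor $\lambda^{Q-q}$ combines with $\lambda^{2-Q}$ to give $\lambda^{2-q}$; vanishing at infinity and local integrability follow verbatim as in \cite{BB}. Non-negativity of $\Gamma$ comes from that of $\widetilde{\Gamma}$, which in Folland's construction is obtained as the distributional limit $\int_0^\infty h_t\,\d t$ of the nonnegative heat kernels $h_t$ associated with $\widetilde{\LL}$; strict positivity, however, is no longer available, since the argument used in the symmetric case (self-adjointness plus the strong maximum principle) breaks for a drift operator. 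Finally, for $\Gamma^\ast$, the homogeneity argument that yields $\div X_i=0$ for $1\leq i\leq m$ (footnote 2 of Theorem A) also yields $\div X_0=0$, so the formal $L^2$-adjoint of $\LL$ is precisely $\LL^\ast=\sum X_i^2-X_0$; this $\LL^\ast$ satisfies $(\mathrm{H.1})'$, $(\mathrm{H.2})$, $(\mathrm{H.3})$ (with drift $-X_0$), so the construction above applied to $\LL^\ast$ produces a fundamental solution $\Gamma'$, and pairing the defining identity of $\Gamma$ against $\LL^\ast\varphi$ and integrating by parts yields $\Gamma^\ast(x;y)=\Gamma(y;x)=\Gamma'(x;y)$, together with the analogues of all properties just listed.

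The main obstacle is neither the lifting nor the integral representation (both mechanical adaptations of \cite{BB} once the weight-$2$ generator is carefully handled in the grading), but rather the proof of non-negativity of $\widetilde{\Gamma}$ without appealing to the selfadjointness of $\widetilde{\LL}$: one must either invoke Folland's heat-semigroup construction directly or adapt the positivity argument via the weak maximum principle for non-symmetric hypoelliptic operators. The loss of strict positivity is not a defect of the proof but a genuine feature of the drift setting.
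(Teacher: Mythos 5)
Your proposal is correct and follows exactly the route the paper intends: the paper itself offers no proof of this theorem beyond the remark that it is obtained ``by arguing essentially as in [BB]'' together with Folland's results in [Fo2], and your sketch is a faithful (indeed considerably more detailed) rendering of that adaptation, including the correct identification of where the drift genuinely changes things (the grading with $\widetilde{X}_0\in V_2\setminus[V_1,V_1]$, the loss of symmetry and of strict positivity, and $\div X_0=0$ by homogeneity so that $\mathcal{L}^{\ast}=\sum_i X_i^2-X_0$). No gaps.
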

\begin{remark} \label{rem.Ladjoint}
 By replacing $X_{0}$ with $Y = -X_{0}$, we see that 
 $\mathcal{L}^{\ast}$ satisfies properties analogous to $\mathcal{L}$. 
 In particular, if $\widetilde{\Gamma}^{\ast}$ is the fundamental solution of 
 $\sum_{i=1}^{m}\widetilde{X}_{i}^{2}-\widetilde{X}_{0}$, then the function 
 $\Gamma^{\ast}$ can be obtained by saturating 
 $\widetilde{\Gamma}^{\ast}$ as in 
 \eqref{sec.one:mainThm_defGamma}. Furthermore, setting
 $\Gamma_\G^* = \Gamma^*(0;\cdot)$, we have
 $$
\Gamma_{\mathbb{G}}^{\ast}\big((x,\xi)^{-1}\ast(y,\eta)\big) =
 \widetilde{\Gamma}^{\ast}(x,\xi;y,\eta).$$
\end{remark}
Let us now check how the results about the geometry of vector fields we
have reviewed and adapted in Section \ref{sec:notations.review} can be extended to the
drift case. 
First of all, the definition of control distance must be adapted to this
situation, as already done in \cite{NSW}:
\begin{equation} \label{eq.defdCC_drift}
d_{X}(x,y):=\inf\left\{  r>0:\,\text{there exists $\gamma\in C(r)$ with
$\gamma(0)=x$ and $\gamma(1)=y$}\right\}  ,%
\end{equation}
where $C(r)$ is the set of the absolutely continuous maps 
$\gamma:[0,1]\rightarrow\mathbb{R}^{n}$ satisfying (a.e.\,on $[0,1]$)
$$
\gamma'(t)=\sum_{j=0}^{m}a_{j}(t)\,X_{j}(\gamma(t)), \qquad
\text{with $|a_{0}(t)|\leq r^{2}$ and $|a_{j}(t)|\leq r$ for all $j=1,\ldots,m$}.
$$
We also have to replace the notion of \emph{length} of a commutator with that
of \emph{weight} of a commutator; this is another fact which is by now standard
after \cite{NSW}. For a multi-index
$$
I=(i_{1},\ldots,i_{k}),\qquad\text{with $i_{1},\ldots,i_k
 \in\{0,1,2,\ldots,m\}$},$$
and a commutator
$$
X_{[I]}:=\left[  \left[  \left[  X_{i_{1}},X_{i_{2}}\right]  ,X_{i_{3}%
}\right]  ,\ldots,X_{i_{k}}\right]
$$
we define the weight of $I$ as
$$
|I| = \sum_{j=1}^{k} p_{i_j}, \qquad \text{where $p_0 = 2$
and $p_{i}=1$ for $i=1,2,...,m$}.
$$
With these modifications, 
all the properties stated in Section
\ref{sec:notations.review} still hold with the same statements and
the same proofs, as they rely on the theory developed in \cite{NSW}
(which covers also the drift case). \medskip

Now, all the properties of $\Gamma$ that we have proved in this paper
throughout Sections 4-to-6 are consequences of the geometric properties
stated in Section \ref{sec:notations.review} and of the properties of $\Gamma$
stated in Theorem A, \emph{with no reference to the explicit form of
the operator} $\mathcal{L}$.
As a consequence, all the pro\-per\-ties proved in Sections 4-to-6 not depending on
the \emph{symmetry} or on the \emph{strict positivity}
 of $\Gamma$ (which are the only basic properties of $\Gamma$ not extending
  to the drift case, see Theorem \ref{ThmA_drift}) still hold under the assumptions of the present
section, with the same proof. \vspace*{0.07cm}

The strict positivity of $\Gamma$ has been used in the proof of the estimates
from below on $\Gamma$ (and only to this aim); thus, these estimates do not
extend to the the drift case.
The symmetry of $\Gamma$ has been used only once, in the proof
of Lemma \ref{Thm 0}, formula \eqref{derixxxx0}. An easy 
variation of such a proof leads
to a slight modification of this identity:
by Remark \ref{rem.Ladjoint}, and using the same notation, one has
$$
\Gamma(x,y)=\Gamma^{\ast}(y,x)  =
\int_{\mathbb{R}^{p}}\Gamma_{\mathbb{G}}^{\ast}\Big((y,0)^{-1}\ast(x,\eta)\Big)d\eta
$$
as a consequence, 
$$
X_{j_{1}}^{x}\cdots X_{j_{t}}^{x}\big(\Gamma(\cdot;y)\big)(x)=\int%
_{\mathbb{R}^{p}}\big(\widetilde{X}_{j_{1}}\cdots\widetilde{X}_{j_{t}}%
\Gamma_{\mathbb{G}}^{\ast}\big)\Big((y,0)^{-1}\ast(x,\eta)\Big)d\eta.
$$
Since the function $\Gamma_{\mathbb{G}}^{\ast}$ has the same homogeneity and
smoothness properties of $\Gamma_{\mathbb{G}}$, the subsequent arguments
proceed unchanged, and we have the following theorem.
\begin{theorem} \label{th.teoremone_drift} 
Let $\mathcal{L}=\sum_{j=1}^{m}X_{j}^{2}+X_{0}$
satisfy assumptions \emph{(H1)'}-\emph{(H2)}-\emph{(H3)}. Moreover, let 
$\Gamma,\,\Gamma_{\mathbb{G}}$ and $\Gamma_{\mathbb{G}}^{\ast}$ be as above. 
Then the following
facts hold.\medskip

\emph{(I).} For any $s,t\geq1$, and any choice of indexes 
$i_{1},\ldots,i_{s},j_{1},\ldots,j_{t}\in\{0,\ldots,m\}$, the following representation formulas
hold true for $x\neq y$ in $\mathbb{R}^{n}$:
\begin{align*}
&  X_{i_{1}}^{y}\cdots X_{i_{s}}^{y}\big(\Gamma(x;\cdot)\big)(y)=\int%
_{\mathbb{R}^{p}}\Big(\widetilde{X}_{i_{1}}\cdots\widetilde{X}_{i_{s}}%
\Gamma_{\mathbb{G}}\Big)\Big((x,0)^{-1}\ast(y,\eta)\Big)d\eta;\\[0.2cm]
&  X_{j_{1}}^{x}\cdots X_{j_{t}}^{x}\big(\Gamma(\cdot;y)\big)(x)=\int%
_{\mathbb{R}^{p}}\Big(\widetilde{X}_{j_{1}}\cdots\widetilde{X}_{j_{t}}%
\Gamma_{\mathbb{G}}^{\ast}\Big)\Big((y,0)^{-1}\ast(x,\eta)\Big)\,d\eta
;\\[0.2cm]
&  X_{j_{1}}^{x}\cdots X_{j_{t}}^{x}X_{i_{1}}^{y}\cdots X_{i_{s}}^{y}%
\Gamma(x;y)\\
&  \qquad\qquad=\int_{\mathbb{R}^{p}}\bigg(\widetilde{X}_{j_{1}}%
\cdots\widetilde{X}_{j_{t}}\Big(\big(\widetilde{X}_{i_{1}}\cdots
\widetilde{X}_{i_{s}}\Gamma_{\mathbb{G}}\big)\circ\iota
\Big)\bigg)\Big((y,0)^{-1}\ast(x,\eta)\Big)d\eta.
\end{align*}
Here $\iota$ denotes the inversion map of the Lie group $\mathbb{G}$.\medskip

\emph{(II).} Given any $Z_1,\ldots,Z_r\in\big\{X_{0}^{x},\ldots,X_{m}^{x},X_{0}^{y}%
,\ldots,X_{m}^{y}\big\}$, we define  \medskip
\begin{align*}
 & \mathcal{Z} := Z_1\cdots Z_r; \\[0.15cm]
 & |\mathcal{Z}| := 
\sum_{k = 1}^r|Z_k|, \qquad\text{where $|Z_k| := 
\begin{cases}
2, & \text{if $Z\in \{X_0^x,X_0^y\}$}, \\
1, & \text{otherwise}
\end{cases}$}
\end{align*}
Then, there exists a constant $C > 0$
$$
\Big\vert \mathcal{Z}\Gamma(x;y)\Big\vert\leq C\,\frac
{d_{X}(x,y)^{2-|\mathcal{Z}|}}{\big\vert B_{X}(x,d_{X}(x,y))\big\vert},
$$
for any $x\neq y\in\mathbb{R}^{n}$.
In par\-ti\-cu\-lar, for every fixed pole $x\in\mathbb{R}^{n}$
we have
$$
\lim_{|y|\rightarrow\infty}\mathcal{Z}\Gamma(x;y)=0.
$$

\emph{(III).} Suppose that $n>2$. Then one has
$$0\leq
\Gamma(x;y)\leq C\,\frac{d_{X}(x,y)^{2}}{\big\vert B_{X}(x,d_{X}%
(x,y))\big\vert},
$$
for any $x,y\in\mathbb{R}^{n}$ \emph{(}with $x\neq y$\emph{)}. Here $C\geq1$
is a structural constant.\medskip

\emph{(IV).} Suppose that $n=2$. For every compact set $K\subseteq
\mathbb{R}^{n}$ there exist a structural constant $c > 0$
and a real number $R>0$ \emph{(}all depending on $K$\emph{)} such
that
$$0\leq
\Gamma(x;y)\leq c\,\frac{d_{X}(x,y)^{2}}{\big|B_{X}(x,d_{X}(x,y))\big|}%
\cdot\log\Big(\frac{R}{d_{X}(x,y)}\Big),
$$
uniformly for $x\neq y$ in $K$. Moreover, for every fixed pole $x\in
\mathbb{R}^{n}$, there exists a constant $\gamma(x)>0$ and $0<\varepsilon
(x)<1$ such that
\[
\Gamma(x;y)\leq\gamma(x)\,F(x,y),
\]
for any $y$ such that $0<d_{X}(x,y)<\varepsilon(x)$.
Here, $F(x,y)$ is as in Theorem \ref{th.teoremone}.
\end{theorem}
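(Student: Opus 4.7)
The plan is to track, step by step, which results of Sections \ref{sec:notations.review}--\ref{sec:casen2} carry over to the drift case verbatim and which ones require a minor adjustment. Theorem \ref{ThmA_drift} supplies the lifted graded group $\G=(\RN,*,D_\lambda)$, the lifted vector fields $\widetilde{X}_0,\widetilde{X}_1,\ldots,\widetilde{X}_m$ (with $\widetilde{X}_0$ of $D_\lambda$-homogeneous degree $2$) and the fundamental solution representation $\Gamma(x;y)=\int_{\R^p}\Gamma_\G((x,0)^{-1}*(y,\eta))\,\d\eta$. The geometric results of Section \ref{sec:notations.review} (Theorems B and C, the global doubling \eqref{globaldoubling}, and the homogeneity identities \eqref{propertiesdCC}) hold in the drift case as well, once the length of a multi-index is replaced by its weight $p_{i_1}+\cdots+p_{i_k}$ with $p_0=2$, $p_i=1$ for $i\geq 1$; as the author already notes in Remark \ref{rem.dacontrollare}, the arguments of \cite{NSW,SC} accommodate this change. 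The only basic properties of $\Gamma$ lost in the drift case are symmetry and strict positivity (Theorem \ref{ThmA_drift}-(3)); so the lower bounds of Section \ref{sec:loweresimates} do not extend, but every upper estimate and every representation formula does, after the adjustment described below.

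For part (I), I would adapt Lemma \ref{Thm 0} as follows. Its preparatory step (I) (dominated convergence for passing a vector field under the integral sign) and step (II) (the $y$-derivative formula \eqref{deriyyyyy}) go through verbatim: they only exploit the left-invariance of the $\widetilde{X}_i$ on $\G$ and the $D_\lambda$-homogeneity of $\Gamma_\G$ and its derivatives, accounting for the fact that a derivative by $\widetilde{X}_0$ lowers the degree by $2$ instead of $1$. In step (III) (the $x$-derivative formula) the symmetry of $\Gamma$ is unavailable; I would replace it by Remark \ref{rem.Ladjoint}, writing $\Gamma(x;y)=\Gamma^*(y;x)=\int_{\R^p}\Gamma_\G^*((y,0)^{-1}*(x,\eta))\,\d\eta$ and repeating the step-(II) argument with $\Gamma_\G^*$ (which has the same homogeneity and smoothness properties as $\Gamma_\G$). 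Step (IV) (the mixed derivative formula) then follows unchanged via the change of variable $\eta=\Phi_{x,y}(\zeta)$ from Lemma \ref{lem.tecnico}, which depends only on the group structure of $\G$.

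For part (II), I would follow the proofs of Theorems \ref{thm.estimatesDERIVgamma} and \ref{thm.derivn2}, replacing the order $r$ of differentiation by the weight $|\mathcal{Z}|$. The analog of Proposition \ref{Thm 00} then reads
\[
\big|\mathcal{Z}\Gamma(x;y)\big|\leq c\int_{\R^p}d_{\widetilde{X}}^{2-Q-|\mathcal{Z}|}\big((x,0)^{-1}*(y,\eta)\big)\,\d\eta,
\]
since each $\widetilde{X}_0$-differentiation lowers the $D_\lambda$-degree of $\Gamma_\G$ by $2$ and each $\widetilde{X}_i$-differentiation by $1$. Propositions \ref{Prop 1} and \ref{Prop 2} adapt with $r$ replaced by $|\mathcal{Z}|$: the parameter $\beta=2-|\mathcal{Z}|$ satisfies $\beta\leq 2<n+1$, so Lemma \ref{Lemma NSW} applies, while $\beta<n$ in Lemma \ref{Lemma nonintegrale} whenever $n>2$, or when $n=2$ and $|\mathcal{Z}|\geq 1$. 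The local-to-global passage via Remark \ref{rem.usehomog} closes the pointwise bound, and the vanishing at infinity follows from Theorem B as in \eqref{eq.derGammavanish}.

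Parts (III) and (IV) correspond to the special case $|\mathcal{Z}|=0$: (III) is the $n>2$ upper bound obtained in part (II), while (IV) combines the $n=2$, $r=0$ upper bound of Theorem \ref{thm.estimateGamman2} with the $x$-fixed upper estimate of Theorem \ref{thm.estimateGamman2bis}; both proofs use only upper bounds on $\Gamma$ and therefore transfer unchanged, and the nonnegativity $\Gamma\geq 0$ comes from Theorem \ref{ThmA_drift}-(3). The main obstacle, and the only genuine bookkeeping issue, is the weight convention when $X_0$ is involved: once the definitions $|Z_k|\in\{1,2\}$ and $|\mathcal{Z}|=\sum_k|Z_k|$ are tracked consistently through Propositions \ref{Thm 00}--\ref{Prop 2} and through the integrability threshold $\alpha<q-Q$ in step (I) of Lemma \ref{Thm 0}, the entire machinery of Sections \ref{sec:pointwise.estimates} and \ref{sec:casen2} transfers without further surprises.
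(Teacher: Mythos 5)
Your proposal is correct and follows essentially the same route as the paper: it identifies symmetry and strict positivity as the only properties of $\Gamma$ lost in the drift case, replaces the single use of symmetry (formula \eqref{derixxxx0} in Lemma \ref{Thm 0}) by the adjoint representation $\Gamma(x;y)=\Gamma^*(y;x)=\int_{\R^p}\Gamma_\G^*((y,0)^{-1}*(x,\eta))\,\d\eta$ from Remark \ref{rem.Ladjoint}, and observes that the remaining arguments of Sections \ref{sec:pointwise.estimates}--\ref{sec:casen2} transfer once lengths are replaced by weights (with $p_0=2$) in the homogeneity bookkeeping. This is exactly the paper's argument.
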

%%%%%%%%%%%%%%%%%%%%%%%%%%%%%%%%%%%%%%%%%%%%%%%%%%%%%%%%%%%%%%%%%%%

\end{document}